\newcommand{\bd}{\boldsymbol{d}}
\newcommand{\bu}{\boldsymbol{u}}
\newcommand{\bv}{\boldsymbol{v}}
\newcommand{\bx}{\boldsymbol{x}}
\newtheorem{theorem}{Theorem}
\newtheorem{assumption}{Assumption}
\newtheorem{definition}{Definition}
\newtheorem{lemma}{Lemma}
\def\tsc#1{\csdef{#1}{\textsc{\lowercase{#1}}\xspace}}
\begin{document}
\let\WriteBookmarks\relax
\def\floatpagepagefraction{1}
\def\textpagefraction{.001}
\shorttitle{Multi-Level Deep Framework}
\shortauthors{Yu Yang et~al.}

\title [mode = title]{ 
A Multi-Level Deep Framework for Deep Solvers of Partial Differential Equations}                      
\tnotemark[1]   

\tnotetext[1]{This research is partially sponsored by the National key R \& D Program of China (No.2022YFE03040002) and the National Natural Science Foundation of China (No.11971020, No.12371434). }


        \author[1]{Yu Yang}
        [type=editor,
        auid=000,bioid=1,
        orcid=0009-0005-1428-6745
        ]
        \ead{yangyu1@stu.scu.edu.cn}

        \address[1]{School of Mathematics, Sichuan University, 610065, Chengdu, China.}

        \author[1]{Qiaolin He}[style=chinese]
                                
        \cormark[1] 
        
        \ead{qlhejenny@scu.edu.cn}
        
        

        \cortext[cor1]{Corresponding author}

\begin{abstract}
In this paper, inspired by the multigrid method, we propose a multi-level deep framework for deep solvers. Overall, it divides the entire training process into different levels of training. At each level of training, an adaptive sampling method proposed in this paper is first employed to obtain new training points, so that these points become increasingly concentrated in computational regions corresponding to high-frequency components. Then, the generalization ability of deep neural networks are utilized to update the PDEs for the next level of training based on the results from all previous levels. Rigorous mathematical proofs and detailed numerical experiments are employed to demonstrate the effectiveness of the proposed method.

\end{abstract}



\begin{keywords}
deep learning \sep neural network \sep partial differential equation \sep multi-level sampling \sep multi-level training
\end{keywords}

\maketitle


 	\section{Introduction}
	\label{sec:intro}

    In recent years, propelled by the wave of artificial intelligence, deep learning has been widely adopted for numerically solving partial differential equations (PDEs). Leveraging the formidable approximation power of deep neural networks, researchers have developed a host of outstanding solvers. From the perspective of function learning, Physics-Informed Neural Networks (PINNs) \cite{PINN} and Deep Galerkin Method \cite{DGM} are well known for their use of the strong form of PDEs, whereas Deep Ritz method \cite{DeepRitz} and  Weak Adversarial Network \cite{WAN} achieve excellent results based on the weak form. From the operator-learning viewpoint, the Fourier Neural Operator (FNO,\cite{FNO}), and deep operator networks (DeepONet,\cite{DeepONet})  serve as representative deep operator-learning models.

    Because a deep neural network is a highly nonlinear and over-parameterized complex system, its internal regularities and operating mechanisms constitute a challenging and worthwhile research topic when it is employed to solve PDEs. Xu et al. \cite{xu2018frequency,xu2024overview} proposed the frequency principle: when deep networks solve PDEs they typically learn the low-frequency components of the target function first, while high-frequency components are acquired much more slowly. Going further, Tancik et al. \cite{tancik2020fourier} analyzed the output of deep networks with the neural tangent kernel (NTK) and showed that the convergence speed of a network is governed by the eigenvalues of the NTK matrix—larger eigenvalues lead to faster convergence. Because low-frequency modes of the target function usually correspond to larger eigenvalues of this matrix, whereas high-frequency modes correspond to smaller ones, Tancik et al. arrive at a conclusion similar to Xu’s frequency principle: deep networks preferentially fit the low-frequency content of the target. This behavior, however, is precisely the opposite of what is observed with traditional numerical methods.
    

     Classical numerical methods, such as finite difference, finite volume, and finite element schemes, cast the PDE problem into the solution of a system of linear algebraic equations by means of mesh discretization. The resulting linear systems are then solved iteratively, typically via Gauss–Seidel, Jacobi, or successive over-relaxation (SOR) iterations. These iterative methods are highly effective at rapidly eliminating the high-frequency components of the error, i.e., components that oscillate violently on the grid, but are far less efficient at eliminating the low-frequency components, a behavior that is precisely the opposite of what is observed with deep neural networks. To overcome this limitation, researchers developed the multigrid method. The multigrid method is an efficient numerical algorithm for solving partial differential equations whose core idea is to treat the error on a hierarchy of grids ranging from fine to coarse. High-frequency errors are first damped on the fine grid; the remaining low-frequency residuals are then transferred to a coarser grid, where they reappear as higher-frequency components that are easily reduced. Finally, the coarse-grid correction is interpolated back to the fine grid, thereby allowing errors across all frequency bands to be eliminated rapidly.
    Indeed, combining multigrid methods with neural networks has already been proposed by researchers. A hybrid solution framework is introduced in \cite{dong2024pinn}, composed of traditional iterative solvers and PINNs. During the solution process, the iterative scheme eliminates local high-frequency errors, while PINNs correct the low-frequency components. In \cite{he2019mgnet}, He et al., inspired by the solution and data spaces of PDE systems, introduce analogous feature and data spaces within a Convolutional Neural Network (CNNs). By regarding an image as a function on a grid and images of different resolutions as functions on grids of varying sizes, they integrate CNNs with multigrid structures and propose MgNet for image-classification tasks. In \cite{chen2022meta}, Chen et al. extend MgNet to develop PDE-MgNet for solving PDEs, further incorporating meta-learning techniques to handle parameterized PDEs.
    
    Inspired by the multigrid methods, we hope to explicitly attenuate the high-frequency components that deep neural networks struggle to learn when solving PDEs. Note that an actual computational grid is not required during training of a deep solver; what is indispensable are the learning samples, i.e., the collocation points. Adaptive sampling techniques that cluster these points in regions dominated by high frequencies have already produced several promising results.  In the study \cite{wu2023comprehensive} introduces a residual-guided adaptive distribution (RAD) technique which builds a probability density from residual magnitudes and then draws new samples according to this density.  The contribution in \cite{FIPINN} defines a failure probability that serves as an a-posteriori error indicator for placing fresh training data. A different adaptive sampling method for partial differential equations is developed in \cite{das-pinns}; it quantifies the variance of the residual loss and exploits the KRNet architecture proposed in \cite{tang2020deep}. Departing from residual-oriented strategies, \cite{MSPINN} presents MMPDE-Net, which borrows the moving-mesh idea: it constructs a monitor function from the current solution and redistributes training points accordingly.

    \begin{figure}[htbp]
    \centering
    \includegraphics[width = 0.95\textwidth]{./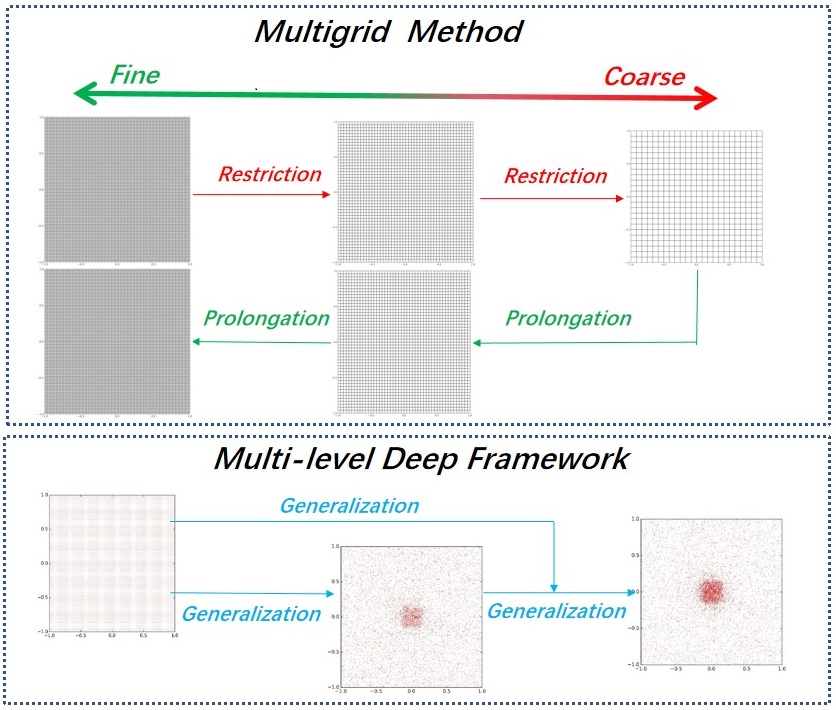}
    \caption{This figure illustrates a comparison between a two-level multigrid method and a multi-level framework. In the multigrid method, computation is first performed on the fine grid, after which the residual is restricted to the coarse grid. An error equation is then solved on the coarse grid to approximate the low-frequency error components that are difficult to eliminate on the fine grid. Finally, the correction obtained on the coarse grid is interpolated back to the fine grid to update the solution there. In the multi-level framework, training is first conducted on uniformly distributed sample points. The generalization capability of the trained neural network and a multi-level sampling strategy are then leveraged to update the set of sample points. A new equation is subsequently solved on these new sample points. Ultimately, the predicted solutions from the different neural networks are summed to yield the final predicted solution for the original equation (for a detailed introduction of our method, please refer to Section \ref{sec:Methods}).}
    \label{fig:MGvsMLF}
    \end{figure}

    In this paper, we propose a multi-level training framework for deep solvers. Overall, it divides the entire training process into different levels of training. At each level of training, an adaptive sampling method proposed in this paper is first employed to obtain new training points, so that these points become increasingly concentrated in computational regions corresponding to high-frequency components. Then, the generalization ability of deep neural networks are utilized to update the PDEs for the next level of training based on the results from all previous levels. The multi-level training process repeats until a stopping criterion is met or the maximum number of iterations is reached. Indeed, researchers have also investigated methods that split a complete training process into multiple training stages. Different from the focus of our approach, in \cite{xu2025multi}, Xu gradually adjusts the size of the neural network across successive training runs to obtain good results; in \cite{wang2024multi}, Wang et al. vary critical parameters such as the magnitude prefactor and scale factor during multiple training stages to achieve superior performance.

    In summary, our contributions in this work are as follows:
		\begin{itemize}
			\item We develope a multi-level sampling method that is driven by the residual of the loss function and the gradient of the objective function; as the level increases, the sampling points become increasingly concentrated in regions corresponding to high-frequency components.
			\item A multi-level training framework is proposed, where the PDEs at the current level are modified according to the results from all previous levels, and advanced optimization methods are utilized to achieve improved accuracy.
			\item Rigorous mathematical proofs and detailed numerical experiments are employed to demonstrate the effectiveness of the proposed method.
		\end{itemize}

The remainder of the paper is organized as follows. Section 2 provides a general description of partial differential equations and offers a brief introduction to PINNs. Section 3 presents the proposed multi-level deep framework, covering sampling, training, and optimization, and provides a theoretical error analysis. Section 4 reports numerical experiments that validate the effectiveness of our approach. Concluding remarks are given in Section 5.

	\section{Preliminary work}
	\label{sec:Preliminary Work}

\subsection{Problem formulation}
\label{sec:Problem formulation}
Consider the following general form of a system of partial differential equations
\begin{equation}\label{eq:gen_PDE}
    \left\{
  \begin{aligned}
 \mathcal{A}[\bu(\bx)] &= f(\bx), \quad \bx \in \Omega, \\
 \mathcal{B}[\bu(\bx)] &= g(\bx), \quad \bx \in \partial\Omega.
  \end{aligned}
    \right.
\end{equation}
where $\bu(\bx)$ is the unique solution of this system.

The interior of the entire computational domain $\Omega \subset \mathbb{R}^d$ is a bounded, open, and connected region , and it has a polygonal boundary $\partial\Omega$. The system is governed by the operator $\mathcal{A}$ and a source term $f(\bx)$ within the domain $\Omega$, subject to boundary conditions on $\partial \Omega$ prescribed by the operator $\mathcal{B}$ and the function $g(\bx)$.


 \subsection{ Introduction to PINNs}
\label{sec:Brief introduction to PINNs}

Unlike traditional neural networks that rely heavily on large datasets, Physics-Informed Neural Networks (PINNs) incorporates physical principles as constraints, enabling the model to maintain scientific consistency even with limited or scarce observed data. The core idea is to embed equation residuals, boundary and initial conditions as part of the loss function, guiding the network to learn solutions that adhere to physical principles.

Let $\theta$ denote the parameters of PINNs, let $\bx \subset \Omega \cup \partial \Omega$  represent the input to PINNs, and let $\bu(\bx;\theta)$ denote the output of PINNs, i.e., the predicted solution to the equation \ref{eq:gen_PDE}.
Suppose that $r(\bx;\theta) =\mathcal{A}[\bu(\bx;\theta)]-f(\bx)\in \mathbb{L_2}(\Omega)$ and $b(\bx;\theta) =\mathcal{B}[\bu(\bx;\theta)]-g(\bx)\in \mathbb{L_2}(\partial\Omega)$, the loss function of a general PINN can be expressed as

\begin{equation}\label{eq:L2_loss}
    \begin{aligned}
     \mathcal{L}(\bx;\theta) &= \widetilde{\alpha_1}\Vert  \mathcal{A}[\bu(\bx;\theta)]-f(\bx) \Vert_{2,\Omega}^2 +  \widetilde{\alpha_2}\Vert  \mathcal{B}[\bu(\bx;\theta)]-g(\bx) \Vert_{2,\partial\Omega}^2 \\
     &= \widetilde{\alpha_1}\int_{\Omega} |\mathcal{A}[\bu(\bx;\theta)]-f(\bx)|^2 d\bx  + \widetilde{\alpha_2}\int_{\partial\Omega} |\mathcal{B}[\bu(\bx;\theta)]-g(\bx)|^2 d\bx \\
     &  = \widetilde{\alpha_1}\int_{\Omega} |r(\bx;\theta)|^2 d\bx + \widetilde{\alpha_2}\int_{\partial\Omega} |b(\bx;\theta)|^2 d\bx 
    \end{aligned}
\end{equation}
where $\widetilde{\alpha_1}$ is the weight of the residual loss  and $\widetilde{\alpha_2}$ is the weight of the boundary loss.

 After sampling training points $\left\{\bx_i\right\}_{i=1}^{N_r} \subset \Omega$ and boundary training points $\left\{\bx_i\right\}_{i=1}^{N_b}\subset \partial\Omega$, the empirical loss with a total of $N =N_{r}+N_{b}$ training points can be written as
\begin{equation}\label{eq:L2_empiricalloss}
\begin{aligned}
     \mathcal{L}_N(\bx;\theta) &=   \frac{\widetilde{\alpha_1}V(\Omega)}{N_{r}} \sum_{i = 1}^{N_{r}} |r(\bx_i;\theta)|^2 +
     \frac{\widetilde{\alpha_2}V(\Omega)}{N_{b}} \sum_{i = 1}^{N_{b}} |b(\bx_i;\theta)|^2\\
     & \triangleq   \alpha_1 \sum_{i = 1}^{N_{r}} |r(\bx_i;\theta)|^2 +\alpha_2 \sum_{i = 1}^{N_{b}} |b(\bx_i;\theta)|^2 \\
      & \triangleq  \alpha_1 \mathcal{L}_{r,N_{r}}(\bx_i;\theta) + \alpha_2 \mathcal{L}_{b,N_{b}}(\bx_i;\theta).
\end{aligned}
\end{equation}
where $V(\Omega)$ is the volume of $\Omega$. Without loss of generality, we use $\alpha_1$ to denote the weight of the discrete residual loss function $\mathcal{L}_{r,N_{r}}(\bx_i;\theta)$, and $\alpha_2$ to denote the weight of the discrete boundary loss function $\mathcal{L}_{b,N_{b}}(\bx_i;\theta)$. 
Ultimately, the optimization method is employed to minimize the empirical loss $\mathcal{L}_N(\bx;\theta)$, steering PINNs toward learning the latent solution of the Eq \eqref{eq:gen_PDE}.

\section{Main results}
\label{sec:MainlyWork}
\subsection{Methods}
\label{sec:Methods}

\subsubsection{Multi-level Sampling}
\label{sec:MLS}

In this section, we will elaborate on the sampling algorithm within our Multi-Level depp framework, which named as Multi-Level sampling. The vision of this sampling method is that sampling points can be concentrated in areas with large loss residuals and drastic changes in the predicted solution, and become increasingly concentrated as the level increases. This indicates that the Multi-Level sampling method will be used multiple times in our framework. Below, we will start with the introduction of this algorithm from the first-level.

Let $\mathcal{X}^1_{N_r}=\left\{x_i \right \}_{i=1}^{N_r} \subset \Omega$ be a set of points obtained through uniform sampling and let the output of the neural network after the first-level training be $u(\bx;\theta^1)$. Then, based on the first-level predicted solution $u(\bx;\theta^1)$, the first-level residual $r(\bx;\theta^1) $ can be obtained.
\begin{equation}\label{eq:1st_Res}
    \begin{aligned}
    \mathcal{R}(\bx;\theta^1) = \vert \mathcal{A}[\bu(\bx;\theta^1)]-f(\bx) \vert.
    \end{aligned}
\end{equation}
Then, using automatic differentiation, the partial differential terms $\nabla u(\bx;\theta^1)$ corresponding to the predicted solution $u(\bx;\theta^1)$ can also be obtained, and the following function can be constructed.
\begin{equation}\label{eq:1st_Hu}
    \begin{aligned}
    \mathcal{H}(\bx;\theta^1) = \gamma  \vert \nabla u(\bx;\theta^1) \vert,
    \end{aligned}
\end{equation}
where $\gamma$ is a given constant.
Following the guidance of Eq  \eqref{eq:1st_Res}, sampling points can be concentrated in regions with larger residuals and sharp gradients. Similarly, Eq  \eqref{eq:1st_Hu} enables the concentration of sampling points according to the properties of the predicted solution $u(\bx;\theta^1)$. Therefore, we construct the following monitor function.
\begin{equation}\label{eq:1st_Wu}
    \begin{aligned}
    \mathcal{W}(\bx;\theta^1) = \frac{\left( \mathcal{R}(\bx;\theta^1) + \mathcal{H}(\bx;\theta^1)\right)^{s}}{\int_{\Omega}\left( \mathcal{R}(\bx;\theta^1) + \mathcal{H}(\bx;\theta^1)\right)^{s}d\bx},
    \end{aligned}
\end{equation}
where the exponent s is a positive parameter. A larger value of s leads to a higher concentration trend. Finally, the second-level residual training points $\mathcal{X}^2_{N_r}=\left\{x_i \right \}_{i=1}^{N_r}$ can be sampled according to the density function $p(\bx;\theta^1)$ of the monitoring function $\mathcal{W}(\bx;\theta^1)$.
\begin{equation}\label{eq:1st_pu}
    \begin{aligned}
    p(\bx;\theta^1) \propto \mathcal{W}(\bx;\theta^1) + c.
    \end{aligned}
\end{equation}
Where c is a penalty term constant, which ensures that even in regions where the monitoring function is small, a sufficient number of sampling points will still be obtained during sampling.

After the second-level training is completed, second-level sampling is needed to obtain third-level residual training points. Here, based on the predicted solution $\bu(\bx;\theta^2)$ obtained from the second-level training, the residuals in Eq \eqref{eq:1st_Res} are updated.
\begin{equation}\label{eq:2nd_Res}
    \begin{aligned}
    \mathcal{R}\left(\bx;(\theta^1,\theta^2)\right) = \left\vert \mathcal{A}[\bu(\bx;\theta^1) + \bu(\bx;\theta^2)]-f(\bx) \right\vert.
    \end{aligned}
\end{equation}
Similarly, Eq  \eqref{eq:1st_Hu}  is also updated.
\begin{equation}\label{eq:2nd_Hu}
    \begin{aligned}
    \mathcal{H}\left(\bx;(\theta^1,\theta^2)\right) = \gamma \vert  \nabla [u(\bx;\theta^1)+ u(\bx;\theta^2)]  \vert.
    \end{aligned}
\end{equation}
As mentioned above, the sampling method aims to make the sampling points more concentrated with increasing levels. Therefore, when constructing the new monitorfunction, the previous one isn't completely discarded but is included. Hence, the monitoring function for the second-level sampling is as shown below.
\begin{equation}\label{eq:2nd_Wu}
    \begin{aligned}
    \mathcal{W}\left(\bx;(\theta^1,\theta^2)\right) &= \mu \frac{\left[\mathcal{R}(\bx;\theta^1) + \mathcal{H}(\bx;\theta^1)\right]^{s}}{\int_{\Omega}\left[ \mathcal{R}(\bx;\theta^1) + \mathcal{H}(\bx;\theta^1)\right]^{s}d\bx} + \frac{\left[ \mathcal{R}\left(\bx;(\theta^1,\theta^2)\right) + \mathcal{H}\left(\bx;(\theta^1,\theta^2)\right)\right]^{s}}{\int_{\Omega}\left[ \mathcal{R}\left(\bx;(\theta^1,\theta^2)\right) + \mathcal{H}\left(\bx;(\theta^1,\theta^2)\right)\right]^{s}d\bx} \\
    &= \mu  \mathcal{W}(\bx;\theta^1)+ \frac{\left[ \mathcal{R}\left(\bx;(\theta^1,\theta^2)\right) + \mathcal{H}\left(\bx;(\theta^1,\theta^2)\right)\right]^{s}}{\int_{\Omega}\left[ \mathcal{R}\left(\bx;(\theta^1,\theta^2)\right) + \mathcal{H}\left(\bx;(\theta^1,\theta^2)\right)\right]^{s}d\bx},
    \end{aligned}
\end{equation}
where $\mu$ is a given postive constant.  The presence of $\mu$ ensures that as the level increases, the points become more concentrated. Thus, the corresponding density function is obtained from the updated monitoring function to sample the third - level training points $\mathcal{X}^3_{N_r}=\left\{x_i \right \}_{i=1}^{N_r}$.
\begin{equation}\label{eq:2nd_pu}
    \begin{aligned}
    p\left(\bx;(\theta^1,\theta^2)\right) \propto \mathcal{W}\left(\bx;(\theta^1,\theta^2)\right) + 1.
    \end{aligned}
\end{equation}

The details of the subsequent-level sampling will not be introduced one by one but will be summarized in Algorithm \ref{alg:MLS}. For details on the parameter settings in the MLS method, please refer to Table \ref{tab:parameter-MLS}.

\begin{algorithm}[htbp]
\caption{Algorithm of Multi-level Sampling for the kth-level (k>2).}\label{alg:MLS}

\textbf{1. Update the residual function:}\\
$ \mathcal{R}\left(\bx;(\theta^1,\theta^2,...,\theta^k)\right) = \left\vert \mathcal{A}[\bu(\bx;\theta^1) + \bu(\bx;\theta^2)+...+\bu(\bx;\theta^k)]-f(\bx) \right\vert.$

\textbf{2.Update the function with partial derivatives:}\\
\begin{equation*}
    \begin{aligned}
\mathcal{H}\left(\bx;(\theta^1,\theta^2,...,\theta^k)\right) &= \gamma  \vert \nabla  [u(\bx;\theta^1)+ u(\bx;\theta^2)+...+u(\bx;\theta^k] \vert
    \end{aligned}
\end{equation*}

\textbf{3.Update the monitor function:}\\

\begin{equation*}
    \begin{aligned}
\mathcal{W}\left(\bx;(\theta^1,\theta^2,...,\theta^k)\right) &=(k-1) \mu \frac{\left[\mathcal{R}(\bx;\theta^1) + \mathcal{H}(\bx;\theta^1)\right]^{s}}{\int_{\Omega}\left[ \mathcal{R}(\bx;\theta^1) + \mathcal{H}(\bx;\theta^1)\right]^{s}d\bx}  + 
(k-2)\mu \frac{\left[ \mathcal{R}\left(\bx;(\theta^1,\theta^2)\right) + \mathcal{H}\left(\bx;(\theta^1,\theta^2)\right)\right]^{s}}{\int_{\Omega}\left[ \mathcal{R}\left(\bx;(\theta^1,\theta^2)\right) + \mathcal{H}\left(\bx;(\theta^1,\theta^2)\right)\right]^{s}d\bx}\\
&+...+
\frac{\left[ \mathcal{R}\left(\bx;(\theta^1,\theta^2,...,\theta^k)\right) + \mathcal{H}\left(\bx;(\theta^1,\theta^2,...,\theta^k)\right)\right]^{\frac{1}{2}}}{\int_{\Omega}\left[ \mathcal{R}\left(\bx;(\theta^1,\theta^2,...,\theta^k)\right) + \mathcal{H}\left(\bx;(\theta^1,\theta^2,...,\theta^k)\right)\right]^{\frac{1}{2}}d\bx}\\
&= \mu \mathcal{W}\left(\bx;(\theta^1,\theta^2,...,\theta^{k-1})\right) +\frac{\left[ \mathcal{R}\left(\bx;(\theta^1,\theta^2,...,\theta^k)\right) + \mathcal{H}\left(\bx;(\theta^1,\theta^2,...,\theta^k)\right)\right]^{\frac{1}{2}}}{\int_{\Omega}\left[ \mathcal{R}\left(\bx;(\theta^1,\theta^2,...,\theta^k)\right) + \mathcal{H}\left(\bx;(\theta^1,\theta^2,...,\theta^k)\right)\right]^{\frac{1}{2}}d\bx}
    \end{aligned}
\end{equation*}

\textbf{4.Update the density function:}\\

$    p\left(\bx;(\theta^1,\theta^2,...,\theta^k)\right) \propto \mathcal{W}\left(\bx;(\theta^1,\theta^2,...,\theta^k)\right) + 1.$

\textbf{5.Obtain the residual training points $\mathcal{X}^{k+1}_{N_r}=\left\{x_i \right \}_{i=1}^{N_r}$ for the k+1 level.}
\end{algorithm}

\subsubsection{Multi-level Training}
\label{sec:MLT}
In this section, we will elaborate on the training process within the Multi-Level framework.
As is well known, the multi-grid method performs calculations on different grid levels, using coarse grids to address low-frequency errors and fine grids to tackle high-frequency errors, thereby effectively eliminating error components of all frequencies. Traditional iterative methods, such as the Jacobi and Gauss-Seidel iteration methods, are good at eliminating high - frequency errors but slow to attenuate low - frequency ones. Thus, the conventional multi - grid method starts with a fine grid, carries out several relaxation iterations (like the Gauss-Seidel iteration method) to eliminate high - frequency errors, and then transfers the remaining low - frequency errors to a coarse grid via a restriction operator for correction. However, in the process of solving PDEs with neural networks, the high-frequency parts of the solution are difficult to learn. So, our Multi-Level framework starts with uniformly sampled points for the first neural network training to eliminatE low-frequency errors. Then, using the Multi-Level sampling, it leverages the neural network's generalization ability to transfer the remaining high-frequency errors onto new adaptive sampling points. Finally, it conducts the second training on these adaptive sampled points. In fact, this process is repeatable until the neural network effectively learns both the low-frequency and high-frequency information of the solution. Below, we'll start with the first level and go into the training process in detail. 

The first - level training is straightforward. Sampling for the first - level training set is typically done via uniform sampling, which generally includes residual points $\mathcal{X}^1_{N_r}=\left\{x_i \right \}_{i=1}^{N_r} \subset \Omega$ and boundary points $\mathcal{X}_{N_b}=\left\{x_i \right \}_{i=1}^{N_b} \subset 
 \partial\Omega$. The deep solver aims to solve the general PDE system (Eq \eqref{eq:gen_PDE}). Once the training is complete, the deep solver's predicted solution is denoted as $u(\bx;\theta^1)$.

Using the Multi-level sampling method, the residual training points $\mathcal{X}^2_{N_r}=\left\{x_i \right \}_{i=1}^{N_r}$ for the second-level training can be obtained based on the first-level predicted solution $u(\bx;\theta^1)$. Thus, when combined with the boundary training points, the second - level training points can be denoted as 
 $\mathcal{X}^2_{N} = \mathcal{X}^2_{N_r} \cup \mathcal{X}_{N_b}$. Then, the more critical issue in the second - level training is determining the new objective of the deep solver.
If the true solution of Eq \eqref{eq:gen_PDE} is denoted as $\bu^*(\bx) $, then the next goal is to make the deep solver learn $\bv_1(\bx;\theta^1) = \bu^*(\bx)- \bu(\bx;\theta^1)$. That is to say, in the second - level training, the deep solver needs to solve the following new partial differential equation.
\begin{equation}\label{eq:error_PDE1}
    \left\{
    \begin{aligned}
     \mathcal{A}[ \bu(\bx;\theta^1)+\bv_1(\bx;\theta^1)] &=f(\bx) , \quad \bx \in \Omega \\
     \mathcal{B}[\bu(\bx;\theta^1)+\bv_1(\bx;\theta^1)] &= g(\bx) . \quad \bx \in \partial\Omega
    \end{aligned}
    \right.
\end{equation}
Unlike multigrid methods that require restriction and interpolation operators to link computational results across different grids, the Multi-Level framework leverages the neural network's generalization ability. This allows it to provide predictions at new sampling points  $u(\mathcal{X}^2_{N};\theta^1)$, without introducing additional errors. Meanwhile, by combining automatic differentiation techniques, the partial derivative information about the first-level predicted solution at the second-level sampling points $\mathcal{X}^2_{N}$ can be obtained, such as $\nabla u(\mathcal{X}^2_{N};\theta^1)$ and $\Delta u(\mathcal{X}^2_{N};\theta^1)$. Depending on the specific forms of the partial differential operators $ \mathcal{A}$ and $ \mathcal{B}$ , the required parts can be filtered from the predicted solution $u(\mathcal{X}^2_{N};\theta^1)$ and the partial derivative terms $\nabla u(\mathcal{X}^2_{N};\theta^1)$ and $\Delta u(\mathcal{X}^2_{N};\theta^1)$ to serve as new inputs. These new inputs, along with the second-level sampling points $\mathcal{X}^2_{N}$, are fed into the deep solver for the second-level training. And  this second-level training aims to learn $\bv_1(\bx;\theta^1)$ in order to solve Eq \eqref{eq:error_PDE1}.

After the second-level training, we obtain the second-level predicted solution $u(\bx;\theta^2)$. Similarly, if the sum of the predicted solutions from the first two layers is still not accurate enough, the target of the deep solver during the training of the third level should be $\bv_2\left(\bx;(\theta^1,\theta^2) \right) = \bu^*(\bx)- \bu(\bx;\theta^1)-u(\bx;\theta^2)$. We skip detailed introduction of the third-level training, and summarize it into Algorithm \ref{alg:MLT} for the case of kth-level (k>2).

\begin{algorithm}[htbp]
\caption{Algorithm of Multi-level Training for the kth-level (k>2).}\label{alg:MLT}

\textbf{1. Update the training points at the kth-level:}\\

After completing the training at the (k-1)th-level, use the Multi-level Sampling method to obtain the training points  $\mathcal{X}^k_{N}$ for the kth-level.

\vskip 0.5cm

\textbf{2.Update the predicted solutions and partial derivative terms at the kth-level:}\\

Leveraging automatic differentiation and the generalization ability of neural networks, obtain $u(\mathcal{X}^k_{N};\theta^{k-1})$, $\nabla u(\mathcal{X}^k_{N};\theta^{k-1})$ and $\Delta u(\mathcal{X}^k_{N};\theta^{k-1})$.

\vskip 0.5cm

\textbf{3.Solve the new partial differential equation at the kth-level}\\
Let the deep solver aim to learn $\bv_{k-1}\left(\bx;(\theta^1, \theta^2,...,\theta^{k-1}) \right)= \bu^*(\bx)- \bu(\bx;\theta^1)-u(\bx;\theta^2) -...- u(\bx;\theta^{k-1})$ in solving the following equation.

\begin{equation}\label{eq:error_PDEk}
    \left\{
    \begin{aligned}
     \mathcal{A}[ \bu(\bx;\theta^1)+ \bu(\bx;\theta^2) + ...+\bv_{k-1}\left(\bx;(\theta^1, \theta^2,...,\theta^{k-1}) \right)] &=f(\bx) , \quad \bx \in \Omega \\
     \mathcal{B}[ \bu(\bx;\theta^1)+ \bu(\bx;\theta^2) + ...+\bv_{k-1}\left(\bx;(\theta^1, \theta^2,...,\theta^{k-1}) \right)] &= g(\bx) . \quad \bx \in \partial\Omega
    \end{aligned}
    \right.
\end{equation}

\textbf{4.Determine whether to perform training at the (k+1)th-level.}\\

\end{algorithm}

\subsubsection{Optimization methods}

In deep learning solvers, optimization methods are crucial as they help in updating the model's parameters to minimize the loss function. They enable the model to learn from data and iteratively adjust parameters during training, working to reduce the error between the predicted and true solutions. In the selection of optimization methods for deep solvers, two approaches are most common. The first is the Adam method \cite{Adam}, which is a popular stochastic gradient descent approach. The Adam method represents a stochastic gradient descent variant that exhibits linear convergence. Its combination of momentum mechanisms and stochastic properties enables efficient escape from local minima and saddle points, which explains its widespread adoption during initial optimization phases. The second one is the LBFGS method \cite{LBFGS}, which is a quasi-Newton  optimization approach. It determines the optimization direction by approximating the Hessian matrix and uses only limited memory to compute this approximation. Moreover, the superlinear convergence property of LBFGS enables it to find optimal parameters more rapidly than first - order methods. However, compared to the Adam method, it is more prone to getting stuck in the neighborhoods of saddle points. Therefore, the LBFGS method is often used as the second part of the optimization process for fine-tuning.

In this paper, we adopt two additional optimization methods, namely the SOAP method \cite{vyas2024soap} and the SSBroyden method \cite{al1998numerical,al2005wide,urban2025unveiling}. Although neither the SOAP method nor the SSBroyden method was first proposed in this paper, their combination seems not to have been attempted by other scholars before. SOAP (ShampoO with Adam in the Preconditioner's eigenbasis) is a novel deep learning optimization algorithm. It improves and stabilizes the Shampoo optimizer by incorporating the concept of the Adam optimizer, which runs Adam in the eigenbasis provided by Shampoo's preconditioner. SOAP combines Shampoo's higher-order preconditioning with Adam's adaptive learning rate mechanism. 
Specifically, during the kth training epoch, the steps of SOAP are mainly as follows: 
\begin{itemize}
    \item[(i)] Obtain the eigenvectors $Q_L$ and $Q_R$ of the preconditioners $L_k$ and $R_k$ from the shampoo algorithm.
    \item[(ii)] Project the gradient $G_k$ corresponding to the neural network parameters $\theta_k$ to the eigenspace:
        $$\tilde{G}_k = Q_L^T G_k Q_R$$
    \item[(iii)] Update the parameters using the Adam algorithm:

        \begin{equation*}
        \left\{
        \begin{aligned}
         \widehat{Am}_k &=\frac{\beta_1\widehat{Am}_{k-1}+(1-\beta_1)\tilde{G}_k}{1-\beta_1^k}, \\
         \widehat{Av}_k &=\frac{\beta_2\widehat{Av}_{k-1}+(1-\beta_2)\tilde{G}^2_k}{1-\beta_2^k}, \\
         \tilde{\theta}_{k+1} &=\tilde{\theta}_{k}-\eta_k \frac{\widehat{Am}_k}{\sqrt{\widehat{Av}_k}+\epsilon},
        \end{aligned}
        \right.
        \end{equation*}
where $\eta_k$ represents the learning rate at the kth epoch, while $\beta_1 ,\beta_2$ and $\epsilon$ are hyperparameters.
    \item[(iv)] Rotate the updated neural network parameters back to the original parameter space:
        $$ \theta_{k+1}= Q_L \tilde{\theta}_{k+1} Q_R^T$$
\end{itemize}
Compared to Shampoo, SOAP reduces the number of hyperparameters and shows stronger robustness to changes in preconditioning frequency. Compared to Adam, it introduces only one extra hyperparameter (preconditioning frequency) but significantly cuts the required iterations and actual runtime.

Newton's method is a commonly - used optimization algorithm. It uses the gradient and Hessian matrix of a function to find the function's extreme value points. Its parameter update formula is as follows:
$$ \theta_{k+1} = \theta_{k} - \eta_k \frac{G_k}{H^{-1}_k}$$
where $H^{-1}_k$ is  the inverse Hessian matrix of the loss function with parameter $\theta_k$. However, explicitly calculating the inverse Hessian matrix is extremely costly in large-scale optimization problems. Instead, it's more common to approximate the inverse Hessian matrix.
SSBroyden (self-scaled Broyden) method is a quasi-Newton iteration algorithm. It introduces the self-scaling factor $\tau_k$ and the updating parameter $\phi_k$ to improve the approximation of the inverse Hessian matrix. Specifically, the following formula is used for the update:

\begin{equation*}
\left\{
\begin{aligned}
 &\delta \theta_k = \theta_{k+1} -\theta_{k} , \\
 &\delta G_k =G_{k+1}-G_{k}, \\
 &\psi_k = \sqrt{\delta G_k \cdot H^{-1}_k \delta G_k} \left[ \frac{\delta \theta_k}{\delta G_k\cdot \delta \theta_k} - \frac{H^{-1}_k \delta G_k}{\delta G_k \cdot H^{-1}_k \delta G_k}\right]\\ 
 &H^{-1}_{k+1} =\frac{1}{\tau_k}\left[ H^{-1}_k - \frac{H^{-1}_k \delta G_k  \otimes H^{-1}_k \delta G_k}{\delta G_k \cdot H^{-1}_k \delta G_k} + \phi_k \psi_k \otimes \psi_k\right] + \frac{\delta \theta_k\otimes\delta \theta_k}{\delta G_k \cdot \delta \theta_k} ,
\end{aligned}
\right.
\end{equation*}
where $\otimes$ is the tensor product. When $\tau_k= 1$  and $\phi_k =1$, the SSBroyden algorithm reduces to the standard BFGS algorithm. For details on how $\tau_k$ and $\phi_k$ are updated, refer to \cite{al2005wide}. Compared to the standard BFGS algorithm, by adjusting the self-scaling factor $\tau_k$ and update parameters $\tau_k$, it can effectively control the condition number of the inverse Hessian matrix, improve the stability of the optimization process.

\begin{figure}[htbp]
\centering
\includegraphics[width = 0.95\textwidth]{./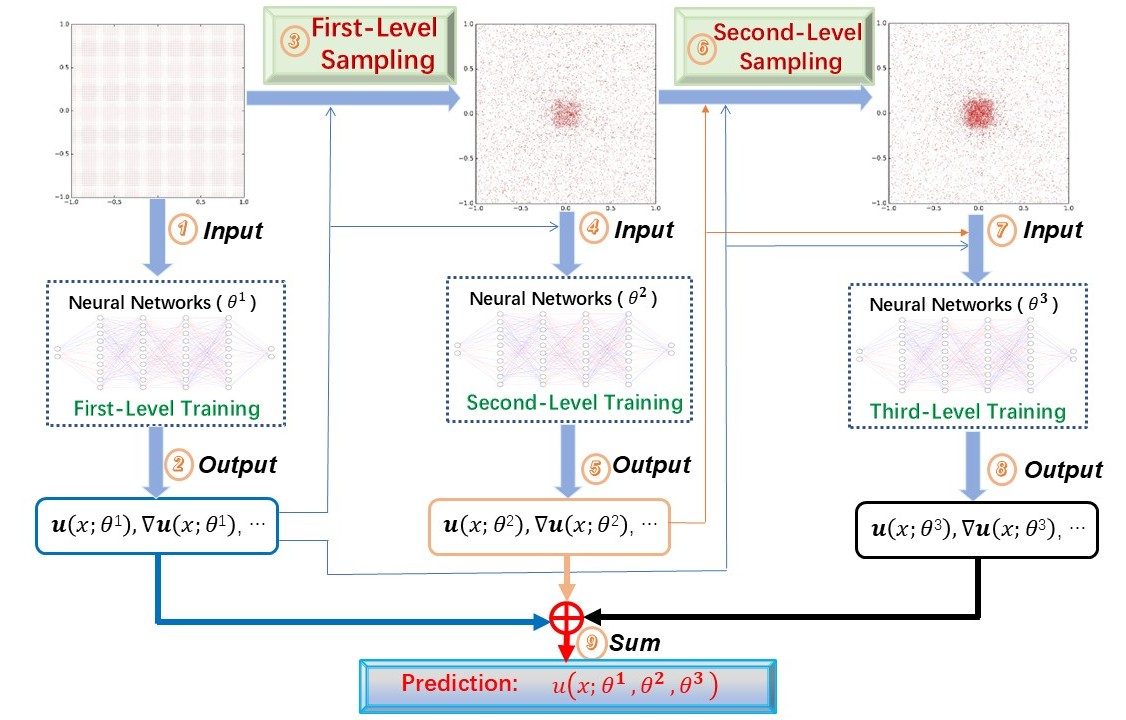}
\caption{Flow chart of our framework.}
\label{fig:flowchart}
\end{figure}

\subsection{Error Estimation}
\label{sec:Error estimation}

Suppose that $\Omega \cup \partial \Omega = \bar{\Omega} \subset \mathbb{R}^d $ is compact. 
For a function $\bu = [u_1,...,u_s]^T$ mapping from $\bar{\Omega} \to \mathbb{R}^s$, we define the following $\mathbb{L_2}$ norm:

\begin{equation}
\label{eq:L2norm}
\begin{aligned}
 \Vert \bu  \Vert_2 =\left [  \sum_{i=1}^s \Vert u_i  \Vert_2^2 \right ]^{\frac{1}{2}}, 
\end{aligned}
\end{equation}
where $\Vert u_i  \Vert_2= \left( \int_{\bar{\Omega}} \vert u_i(\bx) \vert^2 d\bx\right)^{\frac{1}{2}}$. Then, we can define a Hilbert space $\mathbb{H}$ consisting of $\bu \in \mathbb{L_2}(\bar{\Omega})$, whose inner product is defined as:

\begin{equation}
\label{eq:innerproduct}
\begin{aligned}
\langle\bu,\bv\rangle = \sum_{i=1}^s \int_{\bar{\Omega}} u_i(\bx) v_i(\bx) dx, \text{where } \bu,\bv \in \mathbb{L_2}(\bar{\Omega}).
\end{aligned}
\end{equation}

Now let the solution of Eq \eqref{eq:gen_PDE} be denoted as $u^{*}$ , and let $u^{*} \in \mathbb{H}$ , then we have the following theorem:

\begin{theorem}
\label{thm:1}
For any $\bu_1 \in \mathbb{H}$ and $ \forall k \geq 2$, there exists a sequence of functions $\left\{\bu_i \right\}_{i=1}^k \subset \mathbb{H}$ such that:

(i) $\sum_{i=1}^k \bu_i = \bu^*$;

(ii) $\left \{\Vert\sum_{i=1}^j \bu_i -\bu^* \Vert_2 \right \}_{j=1}^k$ is a decreasing sequence.

\end{theorem}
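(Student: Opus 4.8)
The plan is to exploit the linear (Hilbert-space) structure of $\mathbb{H}$, which reduces the statement to the trivial fact that a fixed error vector can be scaled down to zero in finitely many steps with non-increasing norm. Since both $\bu_1$ and $\bu^*$ lie in $\mathbb{H}$ and $\mathbb{H}$ is a vector space, every linear combination of them again lies in $\mathbb{H}$; this is what will let me define $\bu_2,\dots,\bu_k$ freely without having to check membership separately.

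First I would recast the two requirements in terms of the partial-sum errors $\be_j := \sum_{i=1}^j \bu_i - \bu^*$. Condition (i) is then exactly $\be_k = \boldsymbol{0}$, and condition (ii) is the monotonicity of $\{\Vert\be_j\Vert_2\}_{j=1}^k$. Two elementary observations drive the argument: $\be_1 = \bu_1 - \bu^*$ is completely fixed by the given $\bu_1$, and for $j \ge 2$ one has the telescoping identity $\bu_j = \be_j - \be_{j-1}$. Consequently, producing the sequence $\{\bu_i\}$ is equivalent to producing a finite chain $\be_1,\be_2,\dots,\be_k$ in $\mathbb{H}$ that begins at the prescribed vector $\be_1$, terminates at $\boldsymbol{0}$, and has non-increasing norm.

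Second I would interpolate linearly between $\be_1$ and $\boldsymbol{0}$: set $\be_j = \lambda_j\,\be_1$ with real coefficients $1 = \lambda_1 > \lambda_2 > \cdots > \lambda_k = 0$, for example $\lambda_j = (k-j)/(k-1)$. Then $\Vert\be_j\Vert_2 = \lambda_j\Vert\be_1\Vert_2$ is decreasing, $\be_k = \boldsymbol{0}$ delivers (i), and the telescoping formula gives the explicit recipe $\bu_j = (\lambda_j - \lambda_{j-1})\be_1 = (\lambda_j - \lambda_{j-1})(\bu_1 - \bu^*)$ for $j \ge 2$, each term manifestly an element of $\mathbb{H}$. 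This simultaneously verifies both conclusions.

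There is no genuinely hard step here; the only point needing a word of care is the degenerate case $\bu_1 = \bu^*$, in which $\be_1 = \boldsymbol{0}$ and strict monotonicity cannot hold. I would dispose of it by reading ``decreasing'' as ``non-increasing'' (the error sequence is then identically zero and the claim is immediate), the construction above remaining valid and weakly decreasing in every case. I would also add the caveat that this result is purely existential: it certifies that the additive multi-level decomposition targeted by the framework is always attainable in principle within $\mathbb{H}$, but it does not claim that the concrete sampling-and-training procedure of Algorithms~\ref{alg:MLS}--\ref{alg:MLT} reproduces this particular sequence $\{\bu_i\}$.
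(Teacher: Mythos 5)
Your proposal is correct and, once you substitute $\lambda_j = (k-j)/(k-1)$ into the telescoping formula, it produces $\bu_j = \frac{\bu^* - \bu_1}{k-1}$ for $j \ge 2$, which is exactly the sequence constructed in the paper's own proof; the verification of membership in $\mathbb{H}$, of condition (i), and of the decay $\Vert \be_j \Vert_2 = \frac{k-j}{k-1}\Vert \bu_1 - \bu^*\Vert_2$ matches as well. Your reading of ``decreasing'' as non-strict agrees with the paper's usage (its Theorem 2 reserves ``strictly decreasing'' for a separate conclusion), so no gap remains.
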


\begin{proof}

$\forall \bu_1 \in \mathbb{H}$ and $\forall k \geq 2$,  the following sequence can be constructed.
\begin{equation}
\label{eq:seq_thm1}
\left\{
\begin{aligned}
 &\bu_{1} = \bu_1, \\
 &\bu_{i} = \frac{(\bu^* - \bu_1)}{k-1}, 2\leq i \leq k. \\
\end{aligned}
\right.
\end{equation}
Then, it can be verified that the sequence $\left\{\bu_i \right\}_{i=1}^k$ defined by Eq \eqref{eq:seq_thm1} satisfies Theorem \ref{thm:1}.

Firstly, since $\mathbb{H}$ is closed under scalar multiplication and addition, and $\bu_1,\bu^* \in \mathbb{H}$, it follows that 
\begin{equation}
\label{eq:thm1_2}
\begin{aligned}
\frac{ (\bu^* - \bu_1)}{k-1} \in \mathbb{H}, \ 2\leq i \leq k.
\end{aligned}
\end{equation}
In other words, the sequence $\left\{\bu_i \right\}_{i=1}^k$ is a subset of $\mathbb{H}$.

Secondly, it can be verified that the sequence defined by Eq \eqref{eq:seq_thm1} satisfies conclusion (i) in Theorem \ref{thm:1}.
\begin{equation}
\label{eq:thm1_3}
\begin{aligned}
\sum_{i=1}^k \bu_i &= \bu_1 + \sum_{i=2}^{k}\bu_i = \bu_1 + \sum_{i=2}^{k}\frac{(\bu^* - \bu_1)}{k-1} =\bu_1 + \bu^*-\bu_1 = \bu^*.
\end{aligned}
\end{equation}

Finally, let $1\leq j \leq k-1$, we have

\begin{equation}
\label{eq:thm1_4}
\begin{aligned}
\Vert\sum_{i=1}^j \bu_i -\bu^* \Vert_2 = \Vert \bu_1+ \frac{j-1}{k-1}(\bu^* - \bu_1)-\bu^* \Vert_2 = \Vert \frac{k-j}{k-1}(\bu_1 - \bu^*) \Vert_2 = \frac{k-j}{k-1}\Vert \bu^* - \bu_1 \Vert_2.
\end{aligned}
\end{equation}
Since $\left\{\frac{k-j}{k-1}\right\}_{j=1}^k$ is a decreasing sequence, $\left\{\Vert \sum_{i=1}^j \bu_i -\bu^* \Vert_2\right\}_{j=1}^k$ is also a decreasing sequence.
\end{proof}

It is worth mentioning that although Eq \ref{eq:seq_thm1} in the proof of Theorem \ref{thm:1} provides one feasible form of the sequence that satisfies Theorem \ref{thm:1}, it is evident that the sequence of functions satisfying Theorem \ref{thm:1} is not unique.
Theorem \ref{thm:1}'s existence suggests that it's feasible to approximate solutions by seeking a finite number of functions within the solution space. However, when using PINNs to obtain predicted solutions, we must consider approximation errors. That is, we need to consider the situation where some functions in H are not in the set of all predicted solutions output by deep neural networks.

Consider a fully connected neural network with an input layer of dimension d, an output layer of dimension s, a total of $l_s$ layers, and the number of neurons per layer being $[m_1, m_2, ..., m_{l_s}]$. The network has a nonlinear activation function $\sigma$, and the parameters of each layer consist of weights $w$ and biases $b$.  Let the set of predicted functions formed by the s outputs of the output layer be denoted as $\bu = [u_1, ..., u_s]$, and let this set be denoted as $D_1^{NN}$. Assuming $D_1^{NN} \subset \mathbb{H} $, then $D_1^{NN}$ possesses the properties stated in Lemma \ref{lem:1}.

\begin{lemma}
    \label{lem:1}
(i) (The existence of the zero element) \ $0 \in D_1^{NN}$.

(ii)  (Closed under scalar multiplication) \ $\forall c \in \mathbb{R}$, if $\bu \in D_1^{NN}$, then $c \bu \in D_1^{NN}$.

\end{lemma}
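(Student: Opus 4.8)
The plan is to exploit the structure of the standard fully connected architecture used here: its final (output) layer applies only an \emph{affine} map --- a linear transformation by the last-layer weight $w^{(l_s)}$ followed by addition of the bias $b^{(l_s)}$ --- with no nonlinear activation applied to the output. Writing the forward pass as $\bu(\bx) = w^{(l_s)} \bh(\bx) + b^{(l_s)}$, where $\bh(\bx)$ denotes the output of the last hidden layer (a function of $\bx$ and the earlier parameters only), both claimed properties reduce to elementary manipulations of the last-layer parameters, with the hidden layers left untouched.

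For part (i), I would exhibit an explicit parameter choice realizing the zero function: keep the hidden-layer parameters arbitrary and set $w^{(l_s)} = 0$ and $b^{(l_s)} = 0$. Then $\bu(\bx) = 0 \cdot \bh(\bx) + 0 \equiv 0$ for every $\bx$, so the zero element of $\mathbb{H}$ is realized by some choice of network parameters and hence lies in $D_1^{NN}$.

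For part (ii), given $\bu \in D_1^{NN}$ realized by parameters $(w,b)$ and any $c \in \mathbb{R}$, I would construct new parameters that leave every hidden-layer weight and bias unchanged (so that $\bh(\bx)$ is unchanged) and replace the last-layer parameters by $(c\,w^{(l_s)},\, c\,b^{(l_s)})$. By linearity of the affine output map in its own parameters, the resulting network computes $c\,w^{(l_s)} \bh(\bx) + c\,b^{(l_s)} = c\,\bu(\bx)$, so $c\bu \in D_1^{NN}$. Its membership in $\mathbb{H}$ is inherited directly from the standing assumption $D_1^{NN} \subset \mathbb{H}$.

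The main obstacle --- really the only point demanding care --- is that both properties hinge entirely on the output layer being affine rather than being passed through $\sigma$. If a nonlinearity acted on the output, then scaling the last-layer parameters would not scale the output (unless $\sigma$ were itself linear), and the zero function would generally not be representable. So the crucial step is to make explicit the convention that the output layer carries no activation, which is standard for regression and PDE networks; under that convention the two arguments above are immediate. I would also remark that, in contrast to a genuine linear subspace, $D_1^{NN}$ is in general \emph{not} closed under addition --- summing two networks of this architecture requires widening it --- which is precisely why the lemma asserts only these two vector-space properties and not the full linear-subspace structure.
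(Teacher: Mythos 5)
Your proposal is correct and follows essentially the same route as the paper: the paper also writes each output $u_i(\bx)=\sum_{j=1}^{m_{l_s}} w_{i,j}\,\sigma(f_{l_s-1}(\bx)) + b_{i,j}$, obtains the zero function by setting the last-layer weights and biases to zero, and obtains $c\bu$ by scaling those same parameters by $c$, leaving all hidden layers untouched. Your added remarks (the necessity of an activation-free output layer, and the failure of closure under addition) are sound observations but not part of the paper's argument.
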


\begin{proof}
Consider the i-th output $u_i,1\leq i \leq s$ among the s outputs $[u_1, ..., u_s]$ of the neural network, and focusing on the weights and biases from the last hidden layer to the output layer, we can express $u_i$ in the following form.

\begin{equation}
\label{eq:lem1_1}
\begin{aligned}
u_i (\bx)= \sum_{j=1}^{m_{l_s}} w_{i,j} \sigma(f_{l_s-1}(\bx)) + b_{i,j} , \ 1\leq i \leq s.
\end{aligned}
\end{equation}
where $f_{l_s-1}(\bx)$ represents the information encoded by the neural network in the first $l_s -1$ layers.

For each $u_i,1\leq i \leq s$, if we set $w_{i,j} = b_{i,j} =0 ,1\leq j \leq m_{l_s}$, then $u_i (\bx) = 0$, i.e., $\bu = [u_1, ..., u_s] = 0$. Thus, $0 \in D_1^{NN}$.

For each $u_i, 1\leq i \leq s$, $\forall c \in \mathbb{R}$, let $\hat{w}_{i,j} = cw_{i,j}$ and  $\hat{b}_{i,j} = c {b}_{i,j}$. 
Denote $\hat{u}_i, 1\leq i \leq s$ as the following equation.
\begin{equation}
\label{eq:lem1_2}
\begin{aligned}
\hat{u}_i (\bx)= \sum_{j=1}^{m_{l_s}} \hat{w}_{i,j} \sigma(f_{l_s-1}(\bx)) + \hat{b}_{i,j} , \ 1\leq i \leq s.
\end{aligned}
\end{equation}
Obviously, if $\bu = [u_1, ..., u_s] \in D_1^{NN}$, then $\hat{\bu} = [\hat{u}_1, ..., \hat{u}_s] \in D_1^{NN}, 1\leq i \leq s$. Thus, $c\bu \in D_1^{NN}$

\end{proof}

\begin{definition} \label{de:distance}
Suppose $D$ is a subset of  \ $\mathbb{H}$ and $\bu^* \in \mathbb{H}$, the distance between $D$ and $\bu^*$  is given below.
\begin{equation}\label{eq:distance}
\bd(D,\bu^*) = \inf \left\{  \Vert \bu-\bu^*\Vert_2, \forall \bu \in D \right\} 
\end{equation}
If there exists some $\bu \in D$ such that $d(D,\bu^*) = \Vert \bu-\bu^*\Vert_2$, then u is called the optimal approximation of $u^*$ in $D$.
\end{definition}

In our framework, when $k=2$, after fixing $\forall \bu_1 \in D_{1}^{NN}$, we look for an element $\bu_2$ in $D_{1}^{NN}$ such that $\bu_1 + \bu_2 $ approximates $u^*$ . This is equivalent to finding an element in the new set $D_2^{NN} = \left\{ \bu_{1} + \bu_{2} | \forall \bu_2 \in D_{1}^{NN} \right\} $ to approximate $u^*$.

\begin{definition} \label{de:DKNN}
When $k \geq3$, suppose $\bu_{k-1}^{opt}$ is the optimal approximation of $u^*$ in $D_{k-1}^{NN}$, the following definition of $D_k^{NN}$ can be given.
\begin{equation}\label{eq:DKNN}
D_k^{NN} = \left\{ \bu + \bu_{k-1}^{opt} | \forall \bu \in D_{1}^{NN} \right\} 
\end{equation}
\end{definition}

In the following assumption, only the approximation error arising from the representational capacity of the neural network is considered, without accounting for errors introduced by the discretization of integrals or optimization methods.

\begin{assumption}
    \label{asu:u_opt}
      For any  $k \geq 2$, the optimization method finds the optimal approximation  $\bu_{k}^{opt}$ of $u^*$ in $D_{k}^{NN}$  and $\Vert\bu_{k}^{opt} - \bu^*\Vert_2 >0$.
\end{assumption}

\begin{theorem}
\label{thm:2}
For any $\bu_1 \in D_{1}^{NN}$ and $\forall k \geq 2$, under Assumption \ref{asu:u_opt}, there exists a sequence of functions $\left\{\bu_i \right\}_{i=1}^k \subset D_{1}^{NN}$ such that:

(i) $\left \{\Vert\sum_{i=1}^j \bu_i -\bu^* \Vert_2 \right \}_{j=1}^k$ is a decreasing sequence;

(ii) If \ $\forall \bu \in \left\{\bu_i \right\}_{i=2}^k, \bu  \neq 0$, then $\left \{\Vert\sum_{i=1}^j \bu_i -u^* \Vert_2 \right \}_{j=1}^k$ is a  strictly decreasing sequence.

\end{theorem}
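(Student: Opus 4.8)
The plan is to build the required sequence $\{\bu_i\}_{i=1}^k$ directly out of the chain of optimal approximations $\bu_2^{opt},\bu_3^{opt},\dots,\bu_k^{opt}$ that Assumption \ref{asu:u_opt} guarantees to exist, and then read off the monotonicity from the structure of the sets $D_j^{NN}$ in Definition \ref{de:DKNN}. Concretely, with $\bu_1\in D_1^{NN}$ fixed, Assumption \ref{asu:u_opt} yields an optimal approximation $\bu_2^{opt}\in D_2^{NN}$; since $D_2^{NN}=\{\bu_1+\bu\mid \bu\in D_1^{NN}\}$, I may write $\bu_2^{opt}=\bu_1+\bu_2$ with $\bu_2\in D_1^{NN}$, and take this $\bu_2$ as the second term. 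Inductively, for $3\le j\le k$ Definition \ref{de:DKNN} gives $D_j^{NN}=\{\bu+\bu_{j-1}^{opt}\mid \bu\in D_1^{NN}\}$, so $\bu_j^{opt}=\bu_{j-1}^{opt}+\bu_j$ for a unique increment $\bu_j\in D_1^{NN}$, which I take as the $j$-th term. By construction every $\bu_j\in D_1^{NN}$, so $\{\bu_i\}_{i=1}^k\subset D_1^{NN}$, and a one-line induction gives $\sum_{i=1}^j\bu_i=\bu_j^{opt}$ for $2\le j\le k$, while $\sum_{i=1}^1\bu_i=\bu_1$.

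For part (i) the engine is Lemma \ref{lem:1}(i): because $0\in D_1^{NN}$, the previous iterate is always an admissible competitor at the current level. For the step $j=1\to2$ we have $\bu_1=\bu_1+0\in D_2^{NN}$, so optimality of $\bu_2^{opt}$ gives $\Vert \bu_2^{opt}-\bu^*\Vert_2\le\Vert \bu_1-\bu^*\Vert_2$. For $j\ge2$ we have $\bu_j^{opt}=\bu_j^{opt}+0\in D_{j+1}^{NN}$, and optimality of $\bu_{j+1}^{opt}$ in $D_{j+1}^{NN}$ gives $\Vert \bu_{j+1}^{opt}-\bu^*\Vert_2\le\Vert \bu_j^{opt}-\bu^*\Vert_2$. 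Substituting $\sum_{i=1}^j\bu_i=\bu_j^{opt}$ shows $\{\Vert\sum_{i=1}^j\bu_i-\bu^*\Vert_2\}_{j=1}^k$ is decreasing, which is (i).

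For part (ii), assume $\bu_i\ne0$ for all $2\le i\le k$. At each step the competitor (namely $\bu_1$ when $j=1$, and $\bu_j^{opt}$ when $j\ge2$) differs from the new optimal approximation $\bu_{j+1}^{opt}$ exactly by the nonzero increment $\bu_{j+1}$, so it is a genuinely distinct point of $D_{j+1}^{NN}$. I would then argue strictness contrapositively: if the two distances were equal, the competitor would itself realize the minimal distance $\bd(D_{j+1}^{NN},\bu^*)$, hence be an optimal approximation in $D_{j+1}^{NN}$, so one could take $\bu_{j+1}^{opt}$ to be the competitor and force $\bu_{j+1}=0$, contradicting $\bu_{j+1}\ne0$; therefore the inequality is strict. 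The hard part will be precisely this last implication: it is clean only when the optimal approximation is the \emph{unique} minimizer (as for the projection onto a convex set in the Hilbert space $\mathbb{H}$), whereas $D_1^{NN}$, being the image of a neural network, need not be convex. I would resolve this either by reading Definition \ref{de:distance} and Assumption \ref{asu:u_opt} as designating $\bu_j^{opt}$ as the distinguished (unique) minimizer, or by stating (ii) in the contrapositive form above, so that equality of consecutive distances \emph{permits} the choice $\bu_{j+1}=0$ and hence $\bu_{j+1}\ne0$ yields strict decrease. Everything else reduces to bookkeeping of the identity $\sum_{i=1}^j\bu_i=\bu_j^{opt}$.
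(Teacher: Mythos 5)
Your construction of the sequence and your proof of part (i) coincide with the paper's: the increments $\bu_2=\bu_2^{opt}-\bu_1$ and $\bu_i=\bu_i^{opt}-\bu_{i-1}^{opt}$, their membership in $D_1^{NN}$ via Definition \ref{de:DKNN}, and the competitor argument via $0\in D_1^{NN}$ (Lemma \ref{lem:1}) are exactly the paper's steps. The genuine gap is in part (ii), and you have located it yourself: from ``the two consecutive distances are equal'' you cannot conclude ``the increment could have been taken to be zero'' unless the minimizer is unique, and nothing in Definition \ref{de:distance} or Assumption \ref{asu:u_opt} grants uniqueness. Neither of your proposed repairs matches the paper, and neither is satisfactory: postulating uniqueness adds a hypothesis the theorem does not have, while re-choosing $\bu_{j+1}^{opt}$ to be the old competitor changes the sequence (and, through Definition \ref{de:DKNN}, all subsequent sets $D_j^{NN}$), so at best it proves (ii) vacuously in precisely the case at issue; under Assumption \ref{asu:u_opt} the chain of optima is produced by the optimization method, not freely re-selectable.

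The idea you are missing is that the Hilbert-space geometry, together with Lemma \ref{lem:1} (closure of $D_1^{NN}$ under scalar multiplication), rules out such a tie outright, so no uniqueness is needed. The paper expands
\begin{equation*}
\Vert\bu_{i-1}^{opt}-\bu^*\Vert_2^2=\Vert\bu_i^{opt}-\bu^*\Vert_2^2+\Vert\bu_i\Vert_2^2+2\langle\bu_i^{opt}-\bu^*,\,\bu_{i-1}^{opt}-\bu_i^{opt}\rangle ,
\end{equation*}
so strict decrease follows from $\bu_i\neq 0$ once one shows $\langle\bu_i^{opt}-\bu^*,\bu_{i-1}^{opt}-\bu_i^{opt}\rangle\geq 0$. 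This nonnegativity is proved by contradiction: since $(1-\lambda)\bu_i\in D_1^{NN}$ for every $\lambda\in\mathbb{R}$, the whole line $\bu_{i-1}^{opt}+(1-\lambda)\bu_i=\bu_i^{opt}+\lambda(\bu_{i-1}^{opt}-\bu_i^{opt})$ lies in $D_i^{NN}$; if the inner product were negative, then any sufficiently small $\lambda>0$ would produce a point of $D_i^{NN}$ strictly closer to $\bu^*$ than $\bu_i^{opt}$, contradicting optimality. This first-order variational argument is the actual content of part (ii) --- it shows that a nonzero increment forces strict improvement even when minimizers are non-unique (indeed, it shows the previous optimum can never tie with a distinct minimizer) --- and it is exactly the step your proposal needs in order to be complete.
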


\begin{proof}

$\forall \bu_1 \in D_{1}^{NN}$ and $\forall k \geq 2$,  the following sequence can be constructed.
\begin{equation}
\label{eq:seq_thm2}
\left\{
\begin{aligned}
 &\bu_{1} = \bu_1, \\
 &\bu_{2} =\bu_{2}^{opt}-\bu_1, \\
 &\bu_{i} =\bu_{i}^{opt}-\bu_{i-1}^{opt}, 3 \leq i \leq k\\
\end{aligned}
\right.
\end{equation}
Then, it can be verified that the sequence $\left\{\bu_i \right\}_{i=1}^k$ defined by Eq \eqref{eq:seq_thm2} satisfies Theorem \ref{thm:2}.

Firstly, for each $\bu_{i},3 \leq i \leq k$, according to Definition \ref{de:DKNN}, we have 

\begin{equation}
\label{eq:thm2_2}
\begin{aligned}
 \exists  \bu \in  D_{1}^{NN}, s.t., \ \bu_{i-1}^{opt} + \bu  = \bu_{i}^{opt} \in D_{i}^{NN}, \ 3\leq i \leq k.
\end{aligned}
\end{equation}
So $\forall \bu_i \in D_{1}^{NN},3 \leq i \leq k $. Similarly, according to the definition of $D_{2}^{NN}$, there is $\bu_2 \in D_{1}^{NN}$. Hence, the sequence $\left\{\bu_i \right\}_{i=1}^k \subset D_{1}^{NN}$.

Secondly, according to Eq \eqref{eq:seq_thm2}, to prove conclusion (i), it suffices to show that $$\left \{\Vert\bu_1 -\bu^* \Vert_2, \Vert\bu_2^{opt} -\bu^* \Vert_2 ,...,\Vert\bu_k^{opt} -\bu^* \Vert_2 \right \}$$ is a decreasing sequence. We will prove that, for $3 \geq i \leq k$ , $\Vert\bu_{i-1}^{opt} -\bu^* \Vert_2 \leq \Vert\bu_{i}^{opt} -\bu^* \Vert_2$. Based on the definition of $\bu_{i}^{opt}$, there is 
\begin{equation}
\label{eq:thm2_3}
\begin{aligned}
 \forall   \bu \in  D_{i}^{NN}, s.t., \Vert\bu -\bu^* \Vert_2 \geq \Vert\bu_{i}^{opt} -\bu^* \Vert_2.
\end{aligned}
\end{equation}
According to Lemma \ref{lem:1}, $0 \in D_{1}^{NN}$, which means $ \bu_{i-1}^{opt} + 0  =  \bu_{i-1}^{opt} \in D_{i}^{NN}$. Based on Equation 3, we can draw the following conclusion.
\begin{equation}
\label{eq:thm2_4}
\begin{aligned}
 \Vert\bu_{i-1}^{opt} -\bu^* \Vert_2 \geq \Vert\bu_{i}^{opt} -\bu^* \Vert_2, \ 3 \leq i \leq k.
\end{aligned}
\end{equation}
Similarly, it can also be proved that $\Vert\bu_1 -\bu^* \Vert_2  \geq \Vert\bu_2^{opt} -\bu^* \Vert_2 $. Thus, $\left \{\Vert\bu_1 -\bu^* \Vert_2, \Vert\bu_2^{opt} -\bu^* \Vert_2 ,...,\Vert\bu_k^{opt} -\bu^* \Vert_2 \right \}$ is a decreasing sequence.

Finally, according to Eq \eqref{eq:seq_thm2}, to prove conclusion (ii), it suffices to prove that if \ $\forall \bu \in \left\{\bu_i \right\}_{i=2}^k, \bu  \neq 0$, $$\left \{\Vert\bu_1 -\bu^* \Vert_2, \Vert\bu_2^{opt} -\bu^* \Vert_2 ,...,\Vert\bu_k^{opt} -\bu^* \Vert_2 \right \}$$ is a strictly decreasing sequence. Still, first consider the case where $3 \leq i \leq k$. Based on Eq \eqref{eq:seq_thm2}, we have
\begin{equation}
\label{eq:thm2_5}
\begin{aligned}
\Vert\bu_{i-1}^{opt} -\bu^* \Vert_2 = \Vert\bu_{i}^{opt} - \bu_i -\bu^* \Vert_2
\end{aligned}
\end{equation}
Take the square and apply the inner product, then we have
\begin{equation}
\label{eq:thm2_6}
\begin{aligned}
\Vert\bu_{i-1}^{opt} -\bu^* \Vert_2^2 &= \Vert\bu_{i}^{opt} - \bu_i -\bu^* \Vert_2^2 = \Vert\bu_{i}^{opt}-\bu^* \Vert_2^2 +  \Vert \bu_i \Vert_2^2 +2\langle\bu_{i}^{opt}-\bu^*, -\bu_i\rangle \\
&=\Vert\bu_{i}^{opt}-\bu^* \Vert_2^2 +  \Vert \bu_i \Vert_2^2 +2\langle\bu_{i}^{opt}-\bu^*, \bu_{i-1}^{opt}-\bu_{i}^{opt}\rangle
\end{aligned}
\end{equation}
Since $\forall \bu \in \left\{\bu_i \right\}_{i=2}^k, \bu  \neq 0$, there is 
\begin{equation}
\label{eq:thm2_7}
\begin{aligned}
\Vert\bu_{i-1}^{opt} -\bu^* \Vert_2^2  > \Vert\bu_{i}^{opt}-\bu^* \Vert_2^2  +2\langle\bu_{i}^{opt}-\bu^*, \bu_{i-1}^{opt}-\bu_{i}^{opt}\rangle
\end{aligned}
\end{equation}
Next, we only need to prove that $\langle \bu_{i}^{opt}-\bu^*, \bu_{i-1}^{opt}-\bu_{i}^{opt}\rangle \geq0$. 

Using proof by contradiction, suppose $\langle \bu_{i}^{opt}-\bu^*, \bu_{i-1}^{opt}-\bu_{i}^{opt}\rangle < 0$.
According to Lemma \ref{lem:1}, if $\bu_i \in D_{1}^{NN}$, $\forall \lambda \in \mathbb{R}$, then $(1-\lambda)\bu_i \in D_{1}^{NN}$.
So we have

\begin{equation}
\label{eq:thm2_8}
\begin{aligned}
\bu_i^{NN} &:=  \bu_{i}^{opt} + \lambda(\bu_{i-1}^{opt}- \bu_{i}^{opt}) = \lambda \bu_{i-1}^{opt} + (1-\lambda)\bu_i^{opt} \\
&= \bu_{i-1}^{opt} + (1-\lambda)(\bu_i^{opt}- \bu_{i-1}^{opt}) \\
&= \bu_{i-1}^{opt} + (1-\lambda)\bu_i  \in D_{i}^{NN}\\
\end{aligned}
\end{equation}

Then, we have
\begin{equation}
\label{eq:thm2_9}
\begin{aligned}
\Vert\bu^* - \bu_i^{NN}\Vert_2^2 &= \Vert\bu^* - \bu_{i}^{opt} - \lambda(\bu_{i-1}^{opt}- \bu_{i}^{opt})\Vert_2^2 \\
& = \Vert\bu^* - \bu_{i}^{opt}\Vert_2^2-2\lambda \langle \bu^*-\bu_{i}^{opt}, \bu_{i-1}^{opt}-\bu_{i}^{opt}\rangle +  \lambda^2\Vert\bu_{i-1}^{opt}- \bu_{i}^{opt}\Vert_2^2 
\end{aligned}
\end{equation}

Since $\langle \bu_{i}^{opt}-\bu^*, \bu_{i-1}^{opt}-\bu_{i}^{opt}\rangle < 0$, there is 
$\langle \bu^*-\bu_{i}^{opt}, \bu_{i-1}^{opt}-\bu_{i}^{opt}\rangle > 0$.

Then, $\exists \lambda $  satisfy $0<  \lambda< \frac{2\langle \bu^*-\bu_{i}^{opt}, \bu_{i-1}^{opt}-\bu_{i}^{opt}\rangle }{\Vert\bu_{i-1}^{opt}- \bu_{i}^{opt}\Vert_2^2}$ such that
\begin{equation}
\label{eq:thm2_10}
\begin{aligned}
-2\lambda \langle \bu^*-\bu_{i}^{opt}, \bu_{i-1}^{opt}-\bu_{i}^{opt}\rangle +  \lambda^2\Vert\bu_{i-1}^{opt}- \bu_{i}^{opt}\Vert_2^2 < 0
\end{aligned}
\end{equation}
Baed on Eq \eqref{eq:thm2_9} and \eqref{eq:thm2_10}, there is 
\begin{equation}
\label{eq:thm2_11}
\begin{aligned}
\Vert\bu^* - \bu_i^{NN}\Vert_2^2 < \Vert\bu^* - \bu_{i}^{opt}\Vert_2^2
\end{aligned}
\end{equation}
It is evident that Eq \eqref{eq:thm2_11} contradicts the definition of $\bu_{i}^{opt}$. So, there is $\langle \bu_{i}^{opt}-\bu^*, \bu_{i-1}^{opt}-\bu_{i}^{opt}\rangle \geq0$. Thus, we complete the proof for $3 \leq i \leq k$. As for the case $i=2$, replacing $\bu_{i-1}^{opt}$  with  $\bu_1$ in the above proof yields the same conclusion.

\end{proof}

\begin{assumption}
    \label{asu:Normrelations}
  Let $\forall \bu \in \mathbb{H}$, the following assumption holds
    \begin{equation}\label{eq:Normrelations}
        C_1 \Vert \bu \Vert_{2} \leq \Vert \mathcal{A}(\bu) \Vert_{2} +  \Vert \mathcal{B}(\bu) \Vert_{2} \leq C_2 \Vert \bu \Vert_{2}, 
    \end{equation}
    where  $C_1$ and $C_2$ are positive constants.
    
\end{assumption}

In order to give an upper bound of the error, we need to introduce the the Rademacher complexity.

\begin{definition}\label{de:Rademacher}
    Let $\mathcal{X} := \{x_i\}_{i=1}^{n}$ be a collection of i.i.d. random samples, the Rademacher complexity of the function class $\mathcal{F}$ can be defined as 
    $$\mathcal{R}_n(\mathcal{F}) = \mathbb{E}_{\mathcal{X},\epsilon}\left( \sup_{f \in \mathcal{F}} \vert \frac{1}{n} \sum_{i=1}^{n} \epsilon_i f(x_i)\vert \right),$$
    where $(\epsilon_1,\epsilon_2,...,\epsilon_n)$ are i.i.d. Rademacher variables, i.e., $P(\epsilon_i=1) =  P(\epsilon_i=-1)=0.5 $.

\end{definition}

Now we consider the approximate solution in the set $\mathcal{F}^{NN}_{k}(\bu_1,...,\bu_{k-1}) = \left\{ \bu_{k} +\sum_{i=1}^{k-1}\bu_{i} | \forall \bu_k \in D_{1}^{NN}\right\}$.

\begin{assumption}
    \label{asu:r-boundary}
    $\forall \bu  \in \mathcal{F}^{NN}_{k}(\bu_1,...,\bu_{k-1})$, there exists a positive number $0 < B_r \in \mathbb{R}$ such that
    \begin{equation}\label{eq:r-boundary}
    \begin{aligned}
    &\sup \Vert  \mathcal{A}[\bu(\bx)] - f(x) \Vert_{\infty} = B_r,  \\
    &\sup \Vert  \mathcal{B}[\bu(\bx)] - g(x) \Vert_{\infty} = B_g .
    \end{aligned}
    \end{equation}
  
\end{assumption}

According to \cite{wainwright2019high}, we use Lemma \ref{lem:Uniform laws of large numbers-Rademacher} and propose the following theorem about the Rademacher complexity.
\begin{lemma}
    \label{lem:Uniform laws of large numbers-Rademacher}
    Let $\mathcal{F}$ be a b-uniformly bounded class of integrable real-valued functions with domain $\Omega$, for any postive integer $n \geq 1$ and any scalar $\delta \geq 0$, we have
    $$\sup_{f \in \mathcal{F}} \left\vert \frac{1}{n} \sum_{i=1}^{n} f(x_i) - \mathbb{E}[f(x)]\right \vert \leq 2\mathcal{R}_n(\mathcal{F}) + \delta, $$
    with probability at least $1-exp\left(-\frac{n\delta^2}{2b^2}\right)$. 
    
\end{lemma}

\begin{theorem}
\label{thm:3}
Suppose that Assumption \ref{asu:Normrelations} and Assumption \ref{asu:r-boundary} hold, let $\bu^*$ be the exact solution of Eq \eqref{eq:gen_PDE}.  $\forall \delta > 0$, $\{x_i\}_{i=1}^{M_r}$ is a collection of i.i.d. random samples 
and $\forall \bu(\bx;\theta) \in \mathcal{F}^{NN}_{k}(\bu_1,...,\bu_{k-1})$, the following error estimate holds

\begin{equation}\label{eq:errorestimate}
    \Vert \bu^*(\bx)-\bu(\bx;\theta) \Vert_{2} \leq  \frac{\sqrt{2}}{C_1} \left(\Vert  r(\bx;\theta) \Vert_{2,N_r} ^2 + 
     \Vert b(\bx;\theta)\Vert_{2,N_b}^2 + 2\mathcal{R}_{N_r}(\mathcal{F}^{NN}_{k}(\bu_1,...,\bu_{k-1}))
     + 2\mathcal{R}_{N_b}(\mathcal{F}^{NN}_{k}(\bu_1,...,\bu_{k-1}))
     +\delta \right)^{\frac{1}{2}},
\end{equation}
with probability at least $\left(1-\exp\left(-\frac{N_r\delta^2}{8B_r^2}\right)\right)\left(1-\exp\left(-\frac{N_b\delta^2}{8B_g^2}\right)\right)$.
\end{theorem}

\begin{proof}
By  Assumption \ref{asu:Normrelations}, we have
\begin{equation}
     \begin{aligned}
     C_1 \Vert \bu^*(\bx)-\bu(\bx;\theta) \Vert_{2}
     &\leq \Vert \mathcal{A}(\bu^*(\bx))-\mathcal{A}(\bu(\bx;\theta)) \Vert_{2}  + \Vert \mathcal{B}(\bu^*(\bx))-\mathcal{B}(\bu(\bx;\theta)) \Vert_{2} \\
     &= \Vert \mathcal{A}(\bu(\bx;\theta)) - f \Vert_{2} + \Vert \mathcal{B}(\bu(\bx;\theta)) - g \Vert_{2} \\
     &= \Vert r(\bx;\theta) \Vert_{2} + \Vert b(\bx;\theta)\Vert_{2}\\
     &\leq  \sqrt{2}  \left(\Vert r(\bx;\theta) \Vert_{2}^2 + 
     \Vert b(\bx;\theta)\Vert_{2}^2\right)^{\frac{1}{2}}.
     \end{aligned}     
\end{equation}
Using Lemma \ref{lem:Uniform laws of large numbers-Rademacher}, we have

\begin{equation}
     \Vert r(\bx;\theta) \Vert_{2}^2 \leq \Vert  r(\bx;\theta) \Vert_{2,N_r} ^2+2\mathcal{R}_{N_r}(\mathcal{F}^{NN}_{k}(\bu_1,...,\bu_{k-1})) + \frac{\delta}{2},
\end{equation}
with probability at least $1-\exp\left(-\frac{N_r\delta^2}{8B_r^2}\right)$. 

\begin{equation}
     \Vert b(\bx;\theta) \Vert_{2}^2 \leq \Vert  b(\bx;\theta) \Vert_{2,N_b} ^2+2\mathcal{R}_{N_b}(\mathcal{F}^{NN}_{k}(\bu_1,...,\bu_{k-1})) + \frac{\delta}{2},
\end{equation}
with probability at least $1-\exp\left(-\frac{N_b\delta^2}{8B_g^2}\right)$. 

Therefore,
\begin{equation}
    \begin{aligned}
     \Vert \bu^*(\bx)-\bu(\bx;\theta) \Vert_{2} &\leq  \frac{\sqrt{2}}{C_1}(\Vert r(\bx;\theta) \Vert_{2}^2 + 
     \Vert b(\bx;\theta)\Vert_{2}^2)^{\frac{1}{2}}\\
     &\leq \frac{\sqrt{2}}{C_1} \left(\Vert  r(\bx;\theta) \Vert_{2,N_r} ^2 + 
     \Vert b(\bx;\theta)\Vert_{2,N_b}^2 + 2\mathcal{R}_{N_r}(\mathcal{F}^{NN}_{k}(\bu_1,...,\bu_{k-1}))
     + 2\mathcal{R}_{N_b}(\mathcal{F}^{NN}_{k}(\bu_1,...,\bu_{k-1}))
     +\delta \right)^{\frac{1}{2}},
     \end{aligned} 
\end{equation}
with probability at least $\left(1-\exp\left(-\frac{N_r\delta^2}{8B_r^2}\right)\right)\left(1-\exp\left(-\frac{N_b\delta^2}{8B_g^2}\right)\right)$.
\end{proof}


\section{Numerical Experiments}
\label{sec:Numericalsurvey}


\subsection{Symbols and parameter settings}
\label{sec:Symbols and parameter settings}
Let the exact solution be  denote as $\bu^*$, and the approximate solution be  denote as $\bu(\bx;\theta)$. The relative errors in the test set $\left\{\bx_i\right\}_{i=1}^{M_t}$ are defined as follows
\begin{equation}
    \label{eq:measure}
    \hspace{-0.3cm}
    \begin{array}{r@{}l}
        \begin{aligned}
            e_\infty(\bu) & = \frac{\max \limits_{1 \leq i \leq M_t}\vert \bu^*(x_i) -\bu(x_i;\theta)\vert}{\max \limits_{1 \leq i \leq M_t}\vert \bu^*(x_i) \vert},\\
            e_2(\bu) & 
            = \frac{\sqrt{\sum_{i=1}^{M_t}\vert \bu^*(x_i) -\bu(x_i;\theta)\vert^2}}{\sqrt{\sum_{i=1}^{M_t}\vert \bu^*(x_i)\vert^2}} .
        \end{aligned}
    \end{array}
\end{equation}
In order to reduce the effect of randomness on the calculation results, we set all the seeds of the random numbers to 100.
The default parameter settings in MLS method and PINNs are given in Table (\ref{tab:parameter-MLS} and \ref{tab:parameter-PINN}). 
All numerical simulations are carried on NVIDIA A100 with 80GB of memory and NVIDIA A800 with 80GB of memory.


\begin{table}[htbp]
	\centering
	\caption{Default settings for main parameters in MLS method.}
	\label{tab:parameter-MLS} 
	\setlength{\tabcolsep}{10mm}{
		\begin{tabular}{|c|c|c|c|}
			\hline
			 $\gamma$  & s &  c  &  $\mu$  \\
			\hline
			1 & 0.5 & 1 & 4\\
            \hline
		\end{tabular}
	}
\end{table}

\begin{table}[h]
	\centering
	\caption{Default settings for main parameters in PINNs.}
	\label{tab:parameter-PINN} 
	\setlength{\tabcolsep}{0.9mm}{
		\begin{tabular}{|c|c|c|c|c|c|}
			\hline\noalign{\smallskip}
			 Torch  & Activation &  Initialization  &  Loss Weights&Net Size&Random \\
			 Version&  Function  &            Method Rate &($\alpha_1$ / $\alpha_2$)&& Seed \\
			\hline
			2.4.0 &Tanh & Xavier  & 1/1& $40 \times 4$ &100\\
            \hline
		\end{tabular}
	}
\end{table}
\subsection{One-dimensional Advection-Diffusion equation}

Considering the following one-dimensional convection-diffusion equation
\begin{equation}
	\label{eq:1d_AdvectionDiffusion}
	\hspace{-0.3cm}
	\begin{array}{r@{}l}
		\left\{
		\begin{aligned}
			 -\epsilon u^{\prime\prime} + bu^{\prime} & = f(x), \quad (x,y) \ \mbox{in} \ \Omega, \\
                  u(x,y) & = 0,  \quad  (x,y) \ \mbox{on} \  \partial \Omega,
		\end{aligned}
		\right.
	\end{array}
\end{equation}
where $\Omega = (-1,1)$. When  $f(x) \equiv 1$, the solution to this problem is given by
\begin{equation}
    \label{eq:1d_AdvectionDiffusionsolution}
    \hspace{-0.3cm}
    \begin{array}{r@{}l}
        \begin{aligned}
            u = \frac{x}{b} + \frac{1}{b} \left( \frac{e^{-\frac{2b}{\epsilon}+1}}{e^{-\frac{2b}{\epsilon}-1}} \right )  \left( \frac{2e^{\frac{b}{\epsilon}(x-1)}}{e^{-\frac{2b}{\epsilon}}+1}-1 \right ).
        \end{aligned}
    \end{array}
\end{equation}
When the value of $\epsilon$ is small, it is difficult to solve the boundary layer accurately. Therefore, we set $\epsilon =0.01$ and $b=-1$.

In this experiment, we sample 200 points within $\Omega$ as the residual training set, and we employ a three-level framework for training. In the first-level pre-training, we trained 5000 epochs using the SOAP method. Then, in the second-level training, we again trained 5000 epochs of the SOAP method and 3000  epochs of the SSB method.  Finally, in the third-level training, we employed 3300  epochs of the SSB method. 

During training at the different levels, we employed the MLS method described in Section \ref{sec:MLS}. Fig \ref{fig:1dAdvectionDiffusion_points} illustrates the effect of the MLS method across the various levels; it can be seen that as the level increases, the points progressively concentrate toward $x=-1$, because the solution has very low regularity in the vicinity of $x=-1$. It can be seen that the MLS method is able to capture this property effectively.
\begin{figure}[htbp]
\centering
\subfloat[1st level]{\includegraphics[width = 0.33\textwidth]{./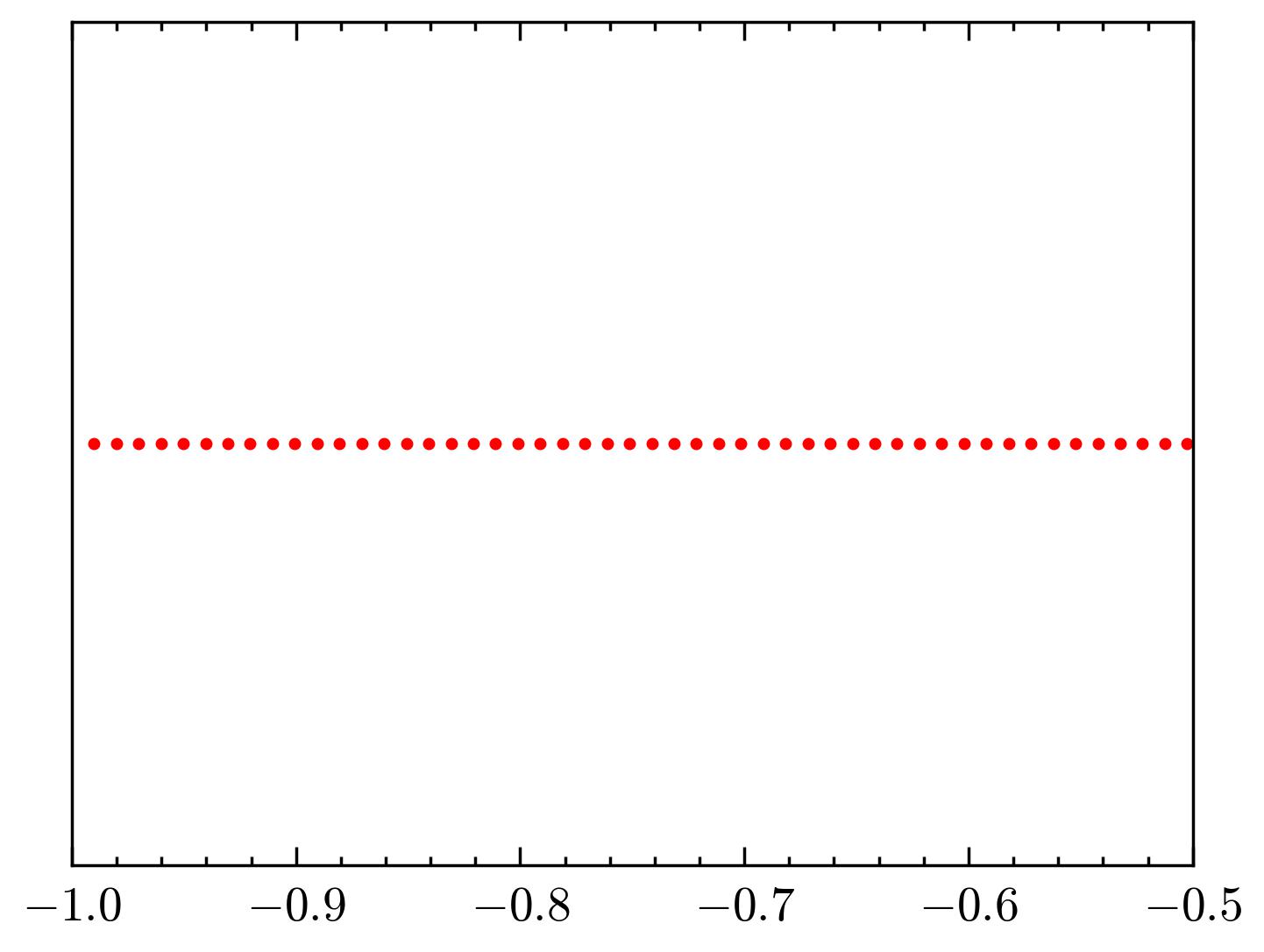}}
\subfloat[2nd level]{\includegraphics[width = 0.33\textwidth]
{./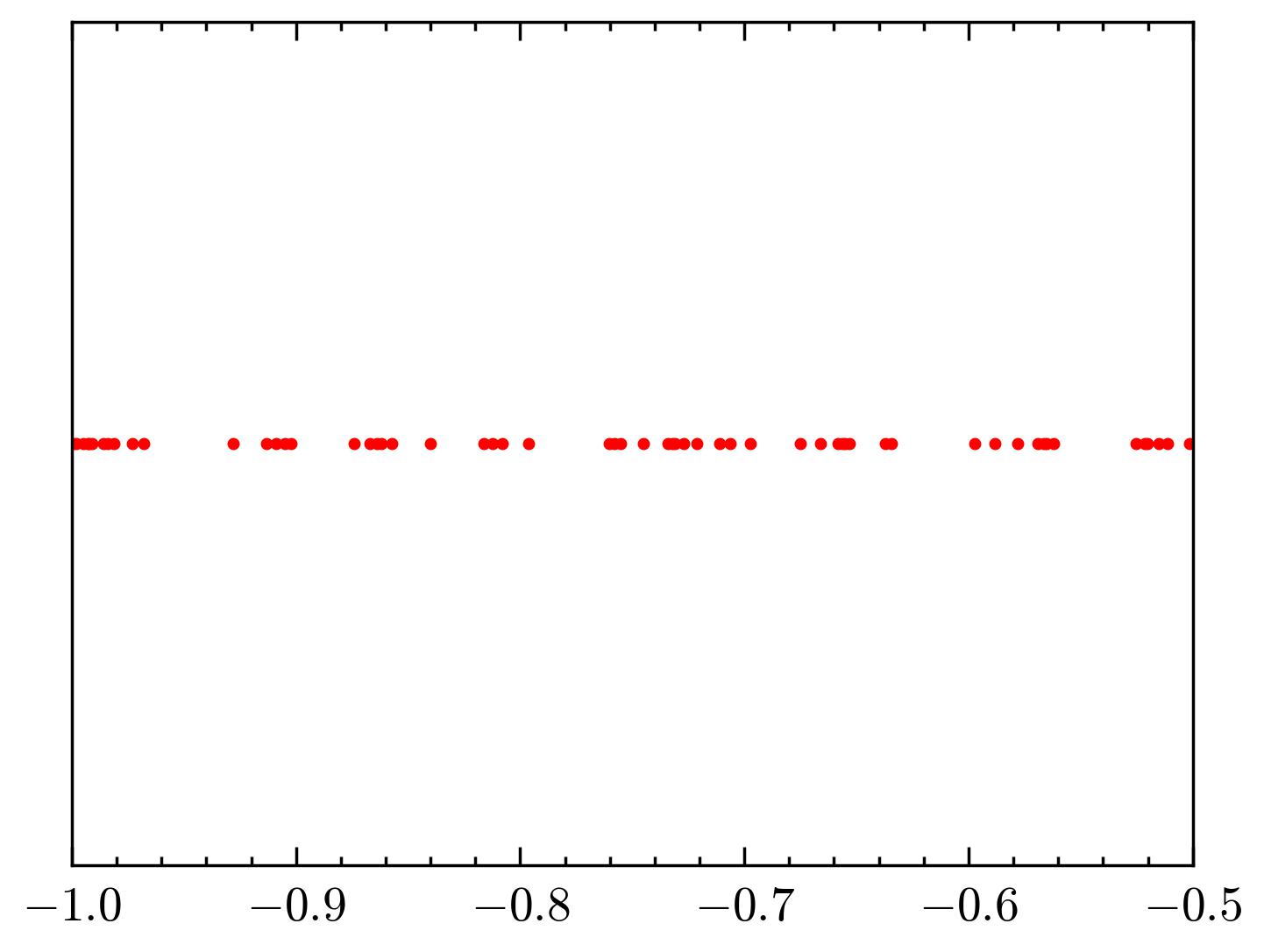}}
\subfloat[3rd level]{\includegraphics[width = 0.33\textwidth]{./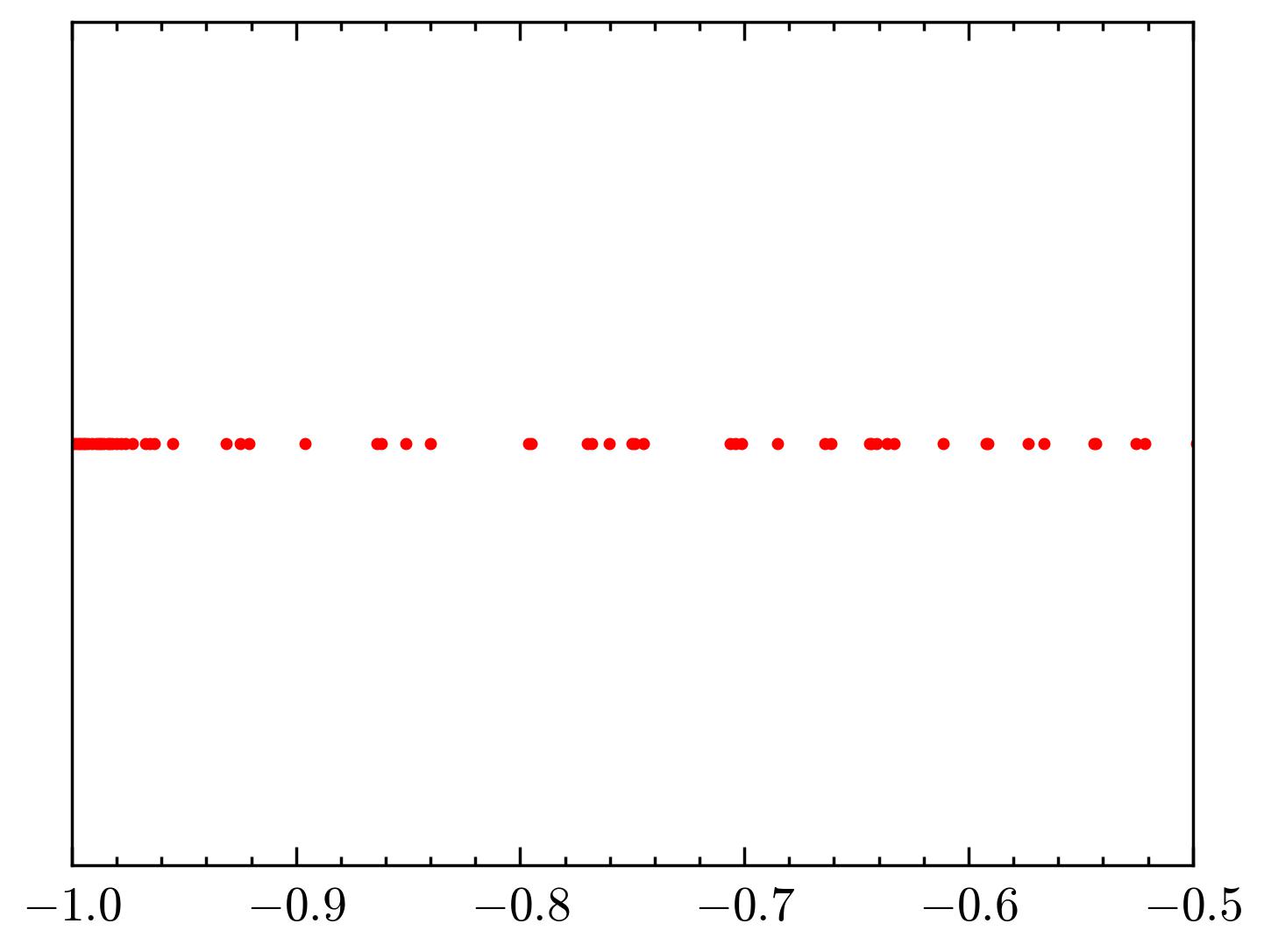}}
\caption{The sampling points at different levels for the one-dimensional advection-diffusion equation  with the solution Eq \eqref{eq:1d_AdvectionDiffusionsolution}. This figure shows the behavior of sampling points in $[-1, -0.5]$.}
\label{fig:1dAdvectionDiffusion_points}
\end{figure}

The numerical results of different levels are given in Fig \ref{fig:1dAdvectionDiffusion_results}. Fig \ref{fig:1dAdvectionDiffusion_results} illustrates figures based on the predicted solutions $\bu(\bx;\theta) \vert$ and the absolute error $\vert \bu^*(\bx) - \bu(\bx;\theta) \vert$. It can be observed that as the level increases, the approximation error decreases, demonstrating the effectiveness of our multi-level deep framework.

\begin{figure}[htbp]
\centering
\subfloat[1st prediction]{\includegraphics[width = 0.33\textwidth]{./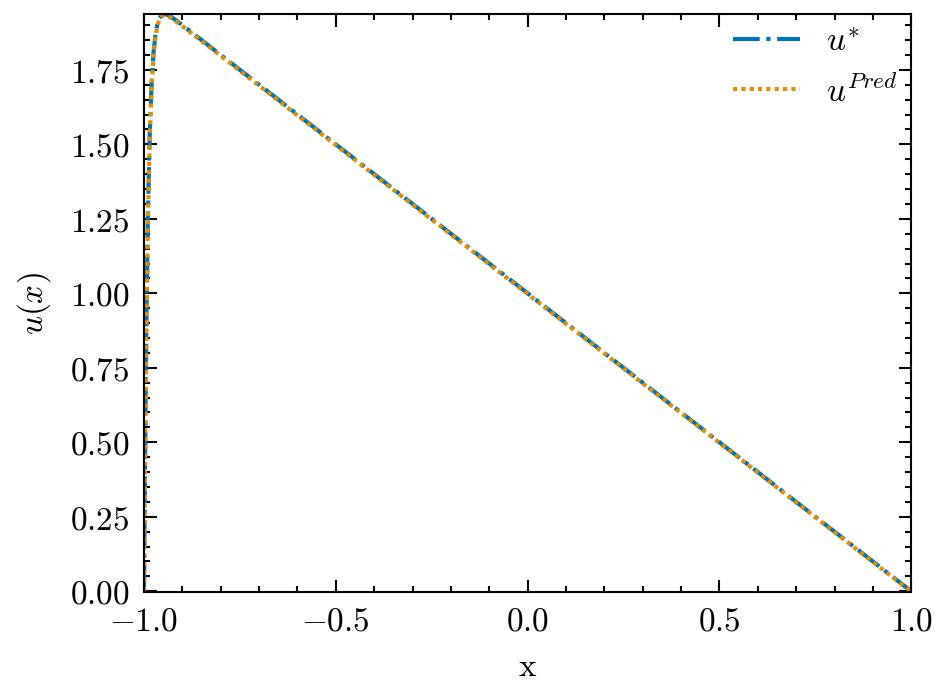}}
\subfloat[2nd prediction]{\includegraphics[width = 0.33\textwidth]
{./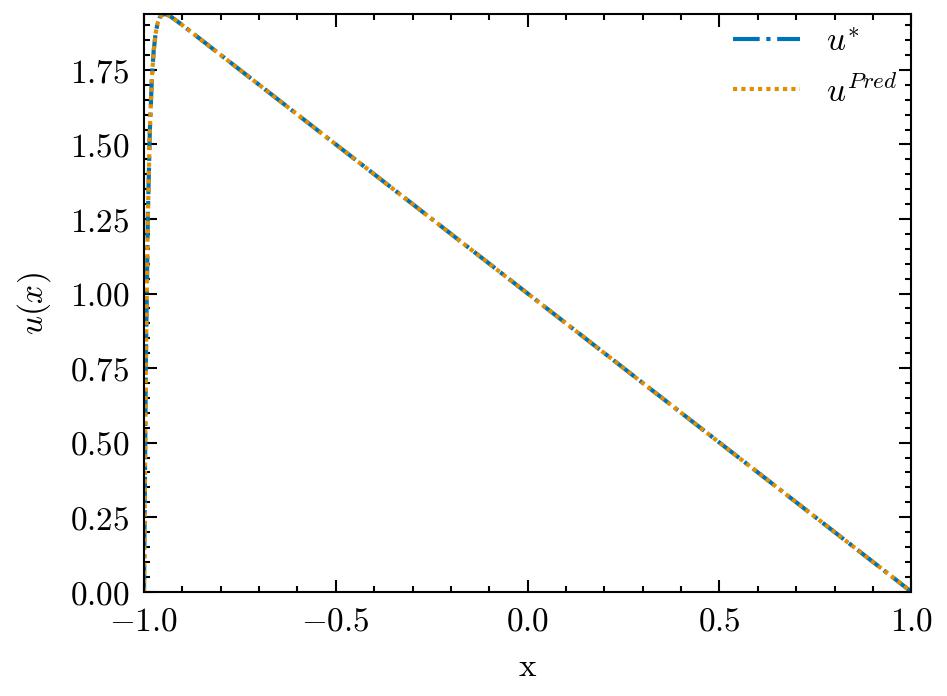}}
\subfloat[3rd prediction]{\includegraphics[width = 0.33\textwidth]{./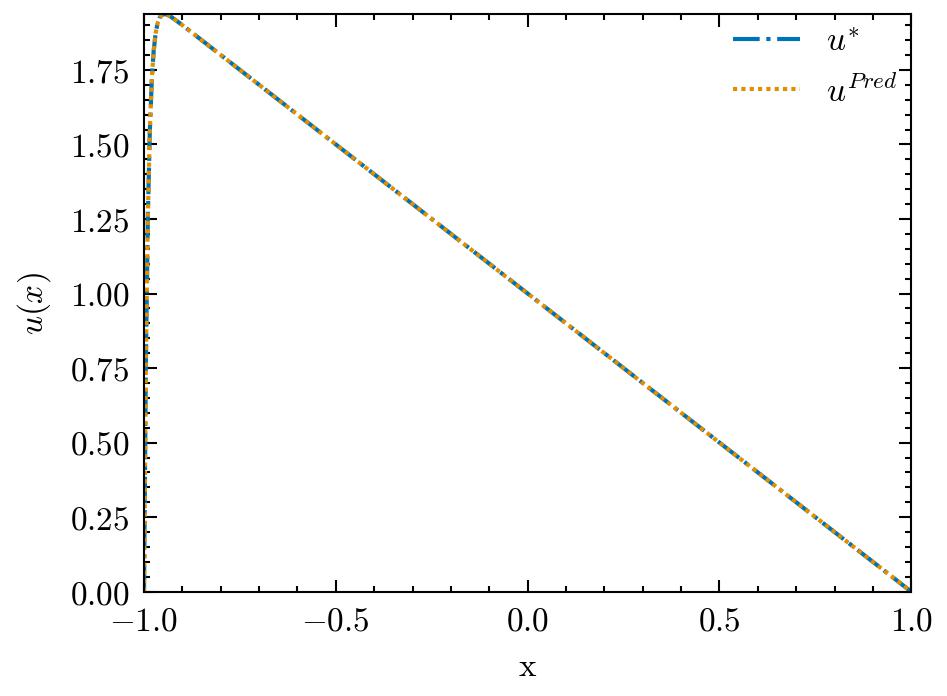}}\\
\subfloat[1st error]{\includegraphics[width = 0.33\textwidth]{./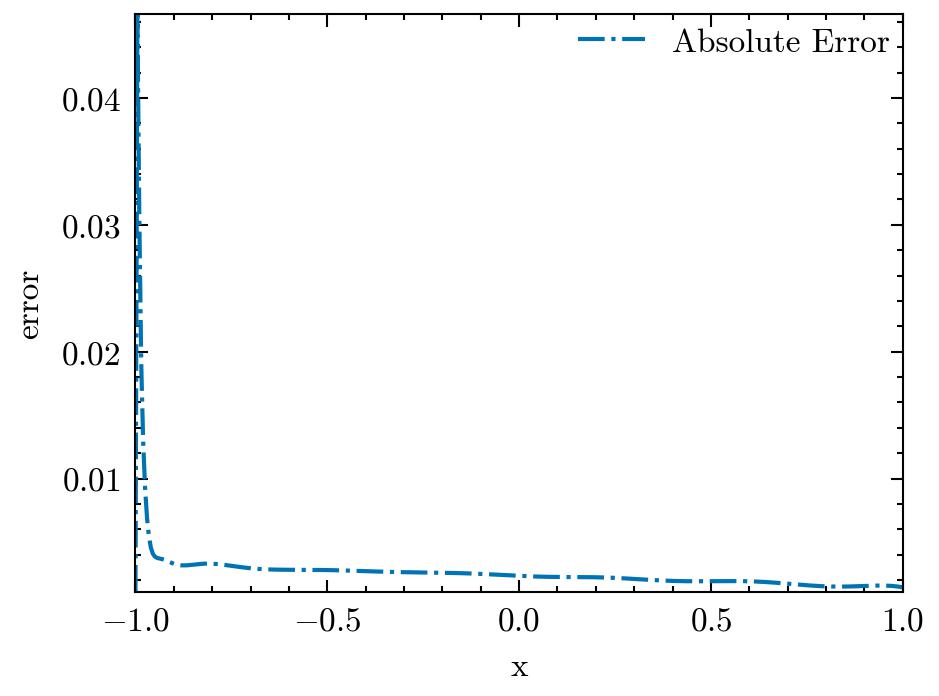}}
\subfloat[2nd error]{\includegraphics[width = 0.33\textwidth]
{./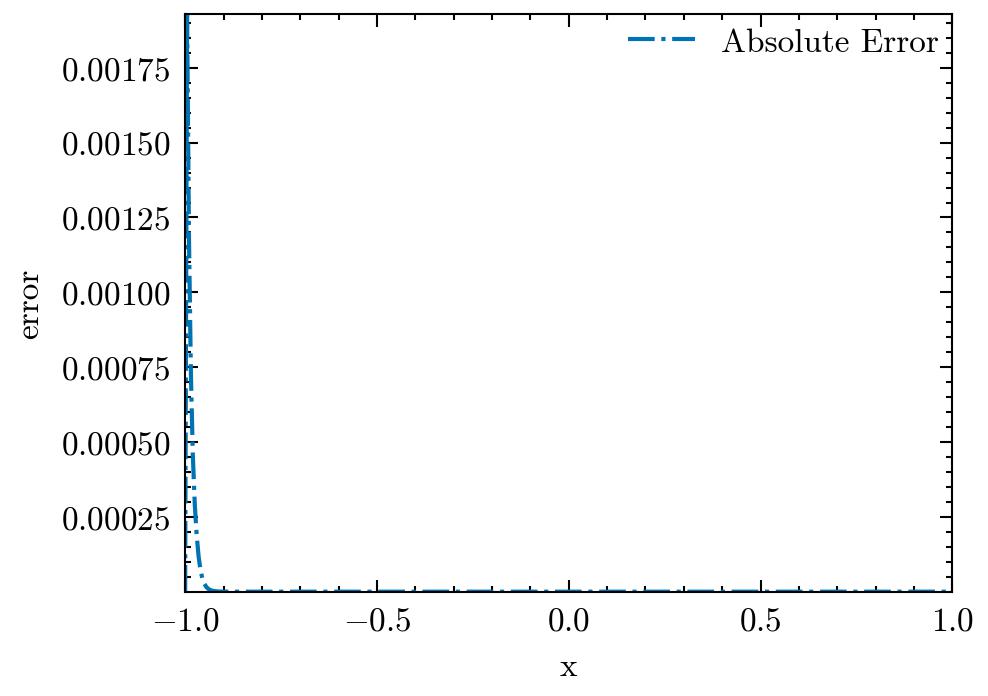}}
\subfloat[3rd error]{\includegraphics[width = 0.33\textwidth]{./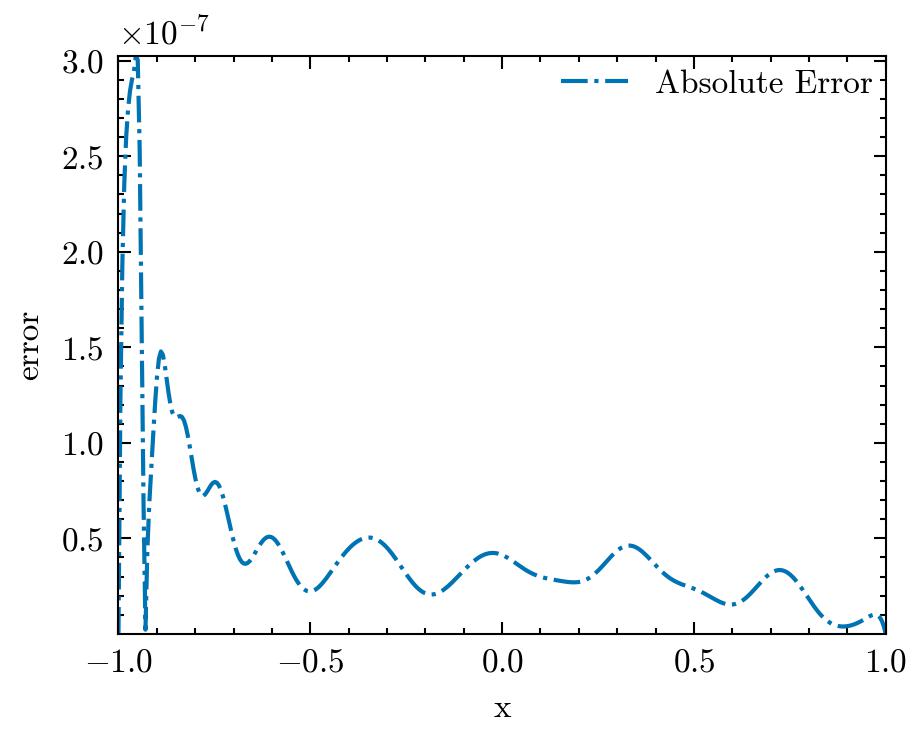}}
\caption{The numerical result of different levels for the one-dimensional advection-diffusion equation  with the solution Eq \eqref{eq:1d_AdvectionDiffusionsolution}. (a)--(c) Comparison between the predicted $\bu(\bx;\theta) $ and exact solutions $\bu^*(\bx)$. (d)–(f) Performance of the absolute error $\vert \bu^*(\bx) - \bu(\bx;\theta) \vert$.}
\label{fig:1dAdvectionDiffusion_results}
\end{figure}

In  Fig \ref{fig:1d_AdvectionDiffusionsolution_ErrorEpochs}, we compare the commonly used RAD method in PINNs under the same number of training epochs. 
Specifically, the training process consists of pre-training with 5000 iterations of Adam optimizer, followed by sampling with the RAD method, finally trains again for 5000 Adam epochs and 6300 L-BFGS epochs. Fig \ref{fig:1d_AdvectionDiffusionsolution_ErrorEpochs} illustrates the variation of the relative errors during the training process, from which the superiority of our method can be observed. 
In the subsequent experiments, we will conduct a more detailed comparison on more complex cases.
Ultimately,  the relative errors of our method in solving the one-dimensional advection-diffusion equation  with the solution Eq \eqref{eq:1d_AdvectionDiffusionsolution} are  $e_\infty(\bu) = 1.561 \times 10^{-7}$ and $e_2(\bu) =5.494 \times 10^{-8}$. 

\begin{figure}[htbp]
\centering
\subfloat[performance of $e_\infty(u)$ ]{\includegraphics[width = 0.45\textwidth]{./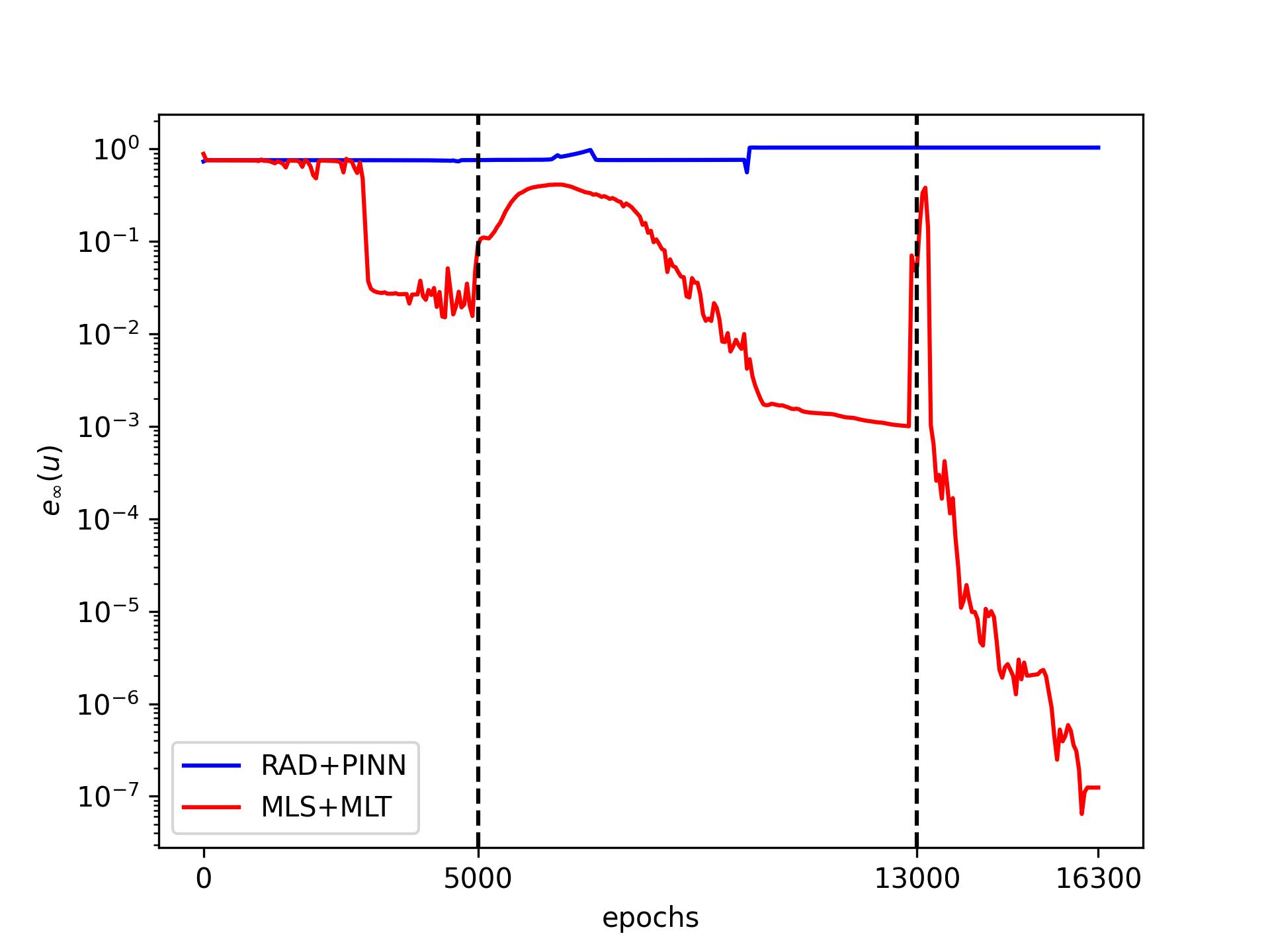}} \quad \quad \quad
\subfloat[performance of $e_2(u)$ ]{\includegraphics[width = 0.45\textwidth]
{./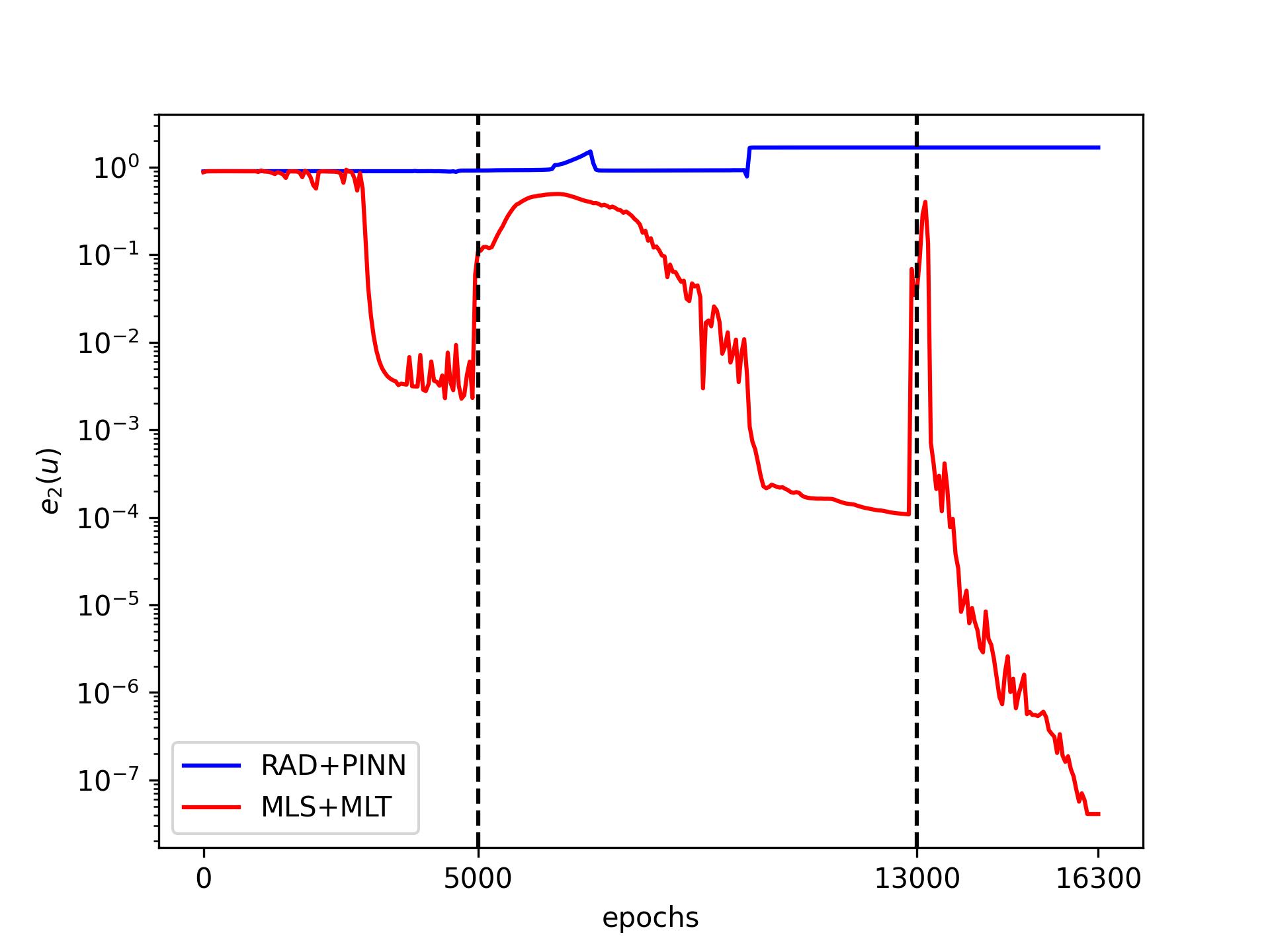}}
\caption{The performance of errors for the one-dimensional advection-diffusion equation  with the solution Eq \eqref{eq:1d_AdvectionDiffusionsolution}. (a) the relative error $e_\infty(u)$ with different training epochs; (b) the relative error $e_2(u)$ with different training epochs.}
\label{fig:1d_AdvectionDiffusionsolution_ErrorEpochs}
\end{figure}

\subsection{Two-dimensional Poisson equation}
\label{sec:OnePeak_2D}
For the following Poisson equation in two-dimension
\begin{equation}
	\label{eq:2d_Poisson}
	\hspace{-0.3cm}
	\begin{array}{r@{}l}
		\left\{
		\begin{aligned}
			 -\Delta u(x,y) & = f(x,y), \quad (x,y) \ \mbox{in} \ \Omega, \\
                  u(x,y) & = g(x,y),  \quad  (x,y) \ \mbox{on} \  \partial \Omega,
		\end{aligned}
		\right.
	\end{array}
\end{equation}
where $\Omega = (-1,1)^2$,  the exact solution which has a peak at $(0,0)$ is chosen as
\begin{equation}
    \label{eq:2d_Peaksolution}
    \hspace{-0.3cm}
    \begin{array}{r@{}l}
        \begin{aligned}
            u = e^{-1000(x^2+y^2)}.
        \end{aligned}
    \end{array}
\end{equation}
The Dirichlet boundary condition $g(x,y)$ and the source function $f(x,y)$ are given by Eq \eqref{eq:2d_Peaksolution}.

\begin{figure}[htbp]
\centering
\subfloat[True solution of Eq \eqref{eq:2d_Poisson}]{\includegraphics[width = 0.45\textwidth]{./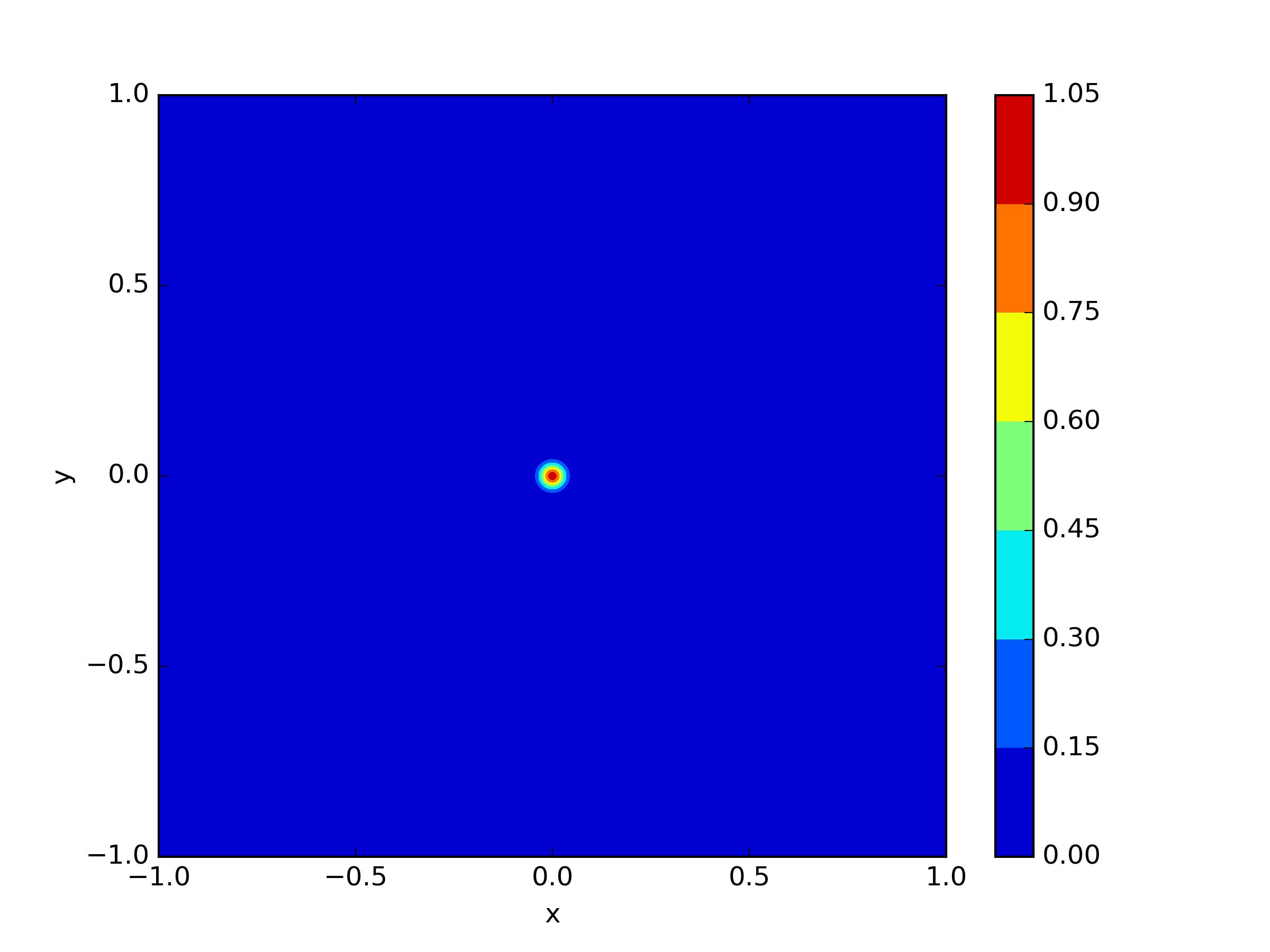}}
\subfloat[True solution of Eq \eqref{eq:2d_Helmholtz}]{\includegraphics[width = 0.45\textwidth]
{./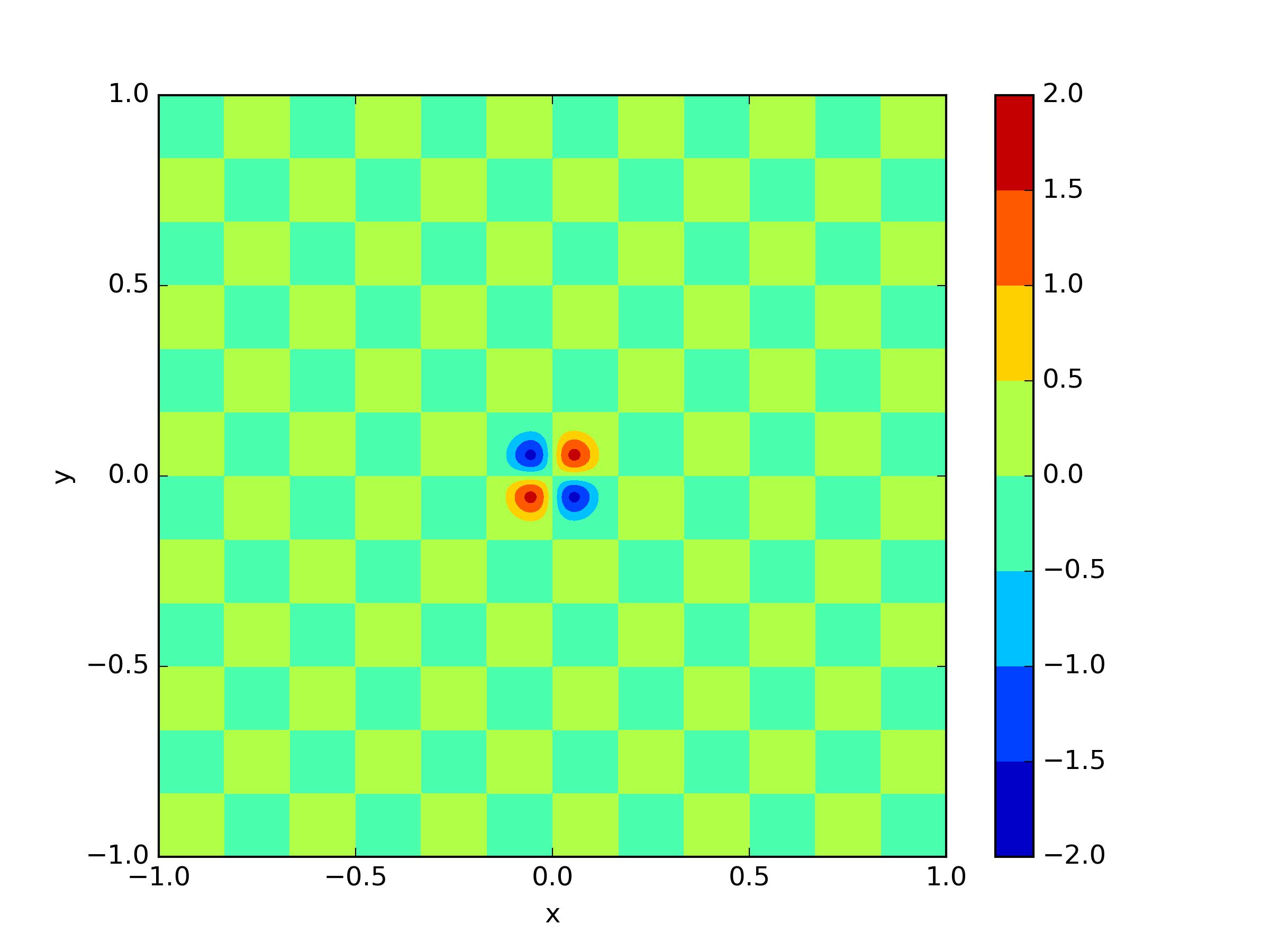}}
\caption{Here are the heatmaps of the true solutions Eq \eqref{eq:2d_Peaksolution} and Eq \eqref{eq:Helmholtz_solution}.}
\end{figure}

In this experiment, we sample 10000 points within $\Omega$ as the residual training set and 1000 points on $ \partial\Omega$ as the boundary training set, while utilizing $400 \times 400$ points for the test set, and we employ a four-level framework for training. In the first-level pre-training, we trained 20000 epochs using the SOAP method and 10000 epochs using the SSB method. In the second-level training, we again trained 20000 epochs of the SOAP method and 10000  epochs of the SSB method. In the third-level training, we employed 5000  epochs of the SSB method. Finally, in the fourth-level training, we used 10000  epochs of the SSB method. 

During training at the different levels, we employed the MLS method described in Section \ref{sec:MLS}. Fig \ref{fig:Peak2D_points} illustrates the effect of the MLS method across the various levels; it can be seen that as the level increases, the points progressively concentrate toward the origin, because the solution has very low regularity in the vicinity of the origin. It can be seen that the MLS method is able to capture this property effectively.


\begin{figure}[htbp]
\centering
\subfloat[1st level]{\includegraphics[width = 0.45\textwidth]{./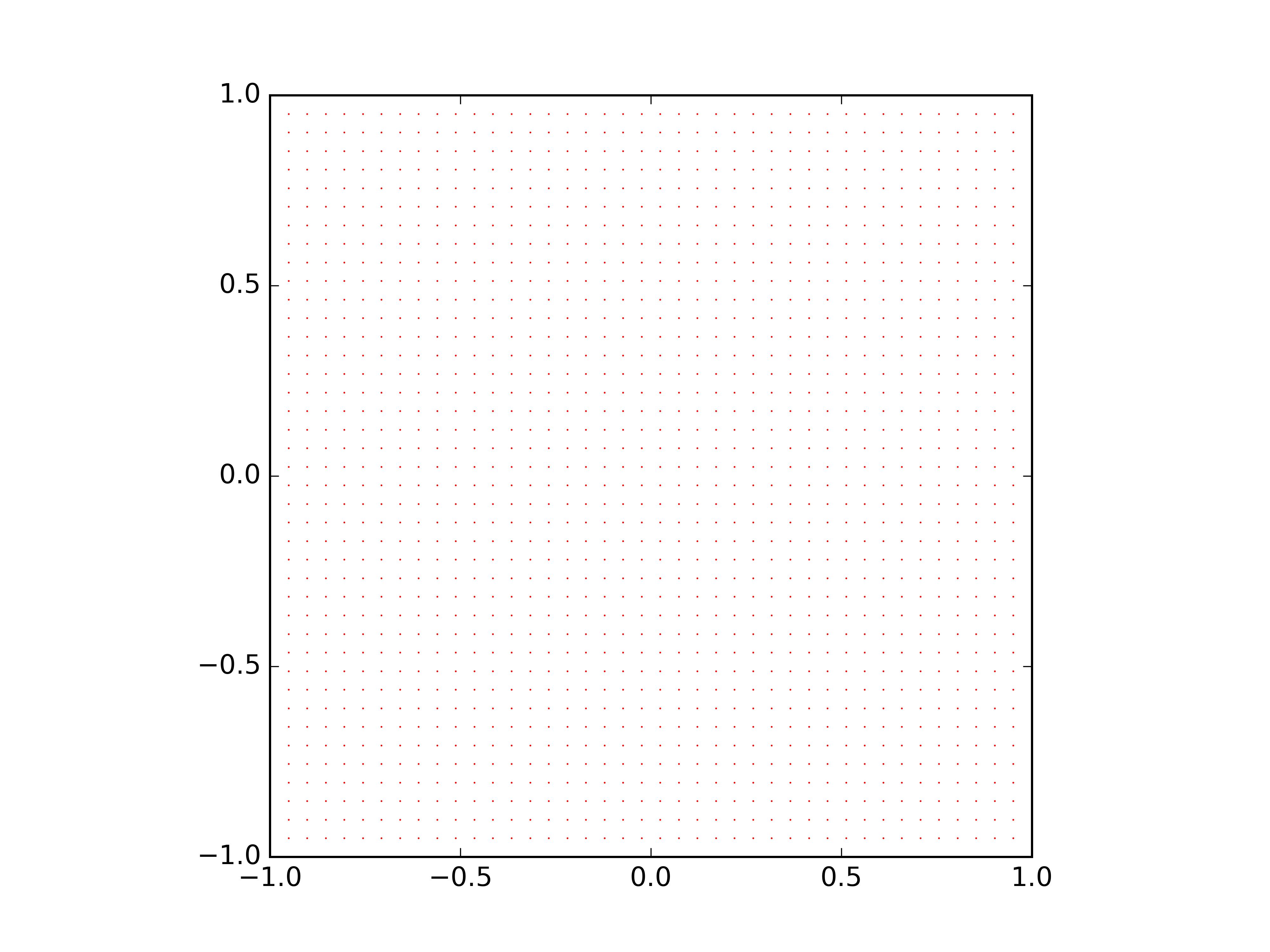}}
\subfloat[2nd level]{\includegraphics[width = 0.45\textwidth]
{./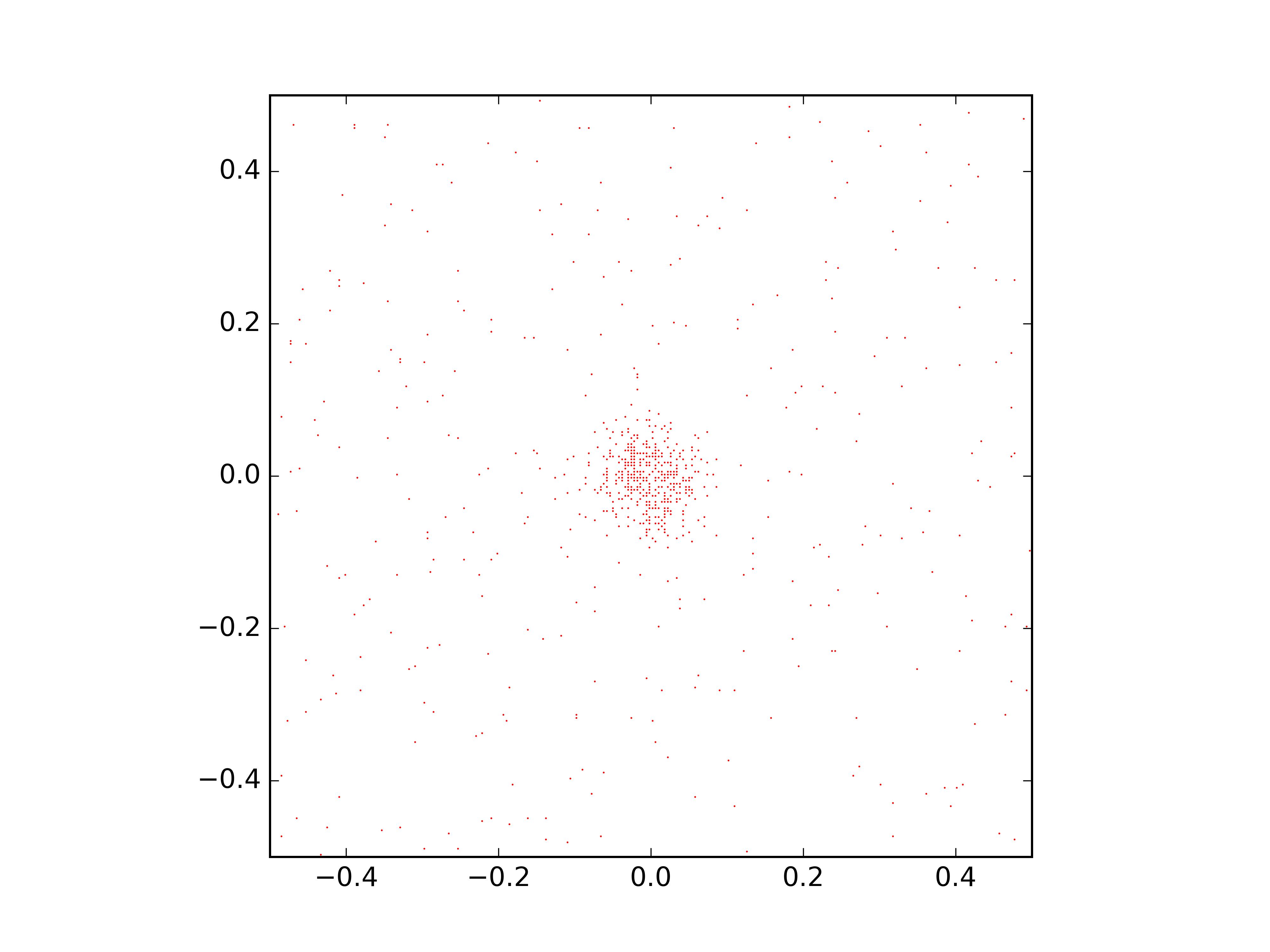}}\\
\subfloat[3rd level]{\includegraphics[width = 0.45\textwidth]{./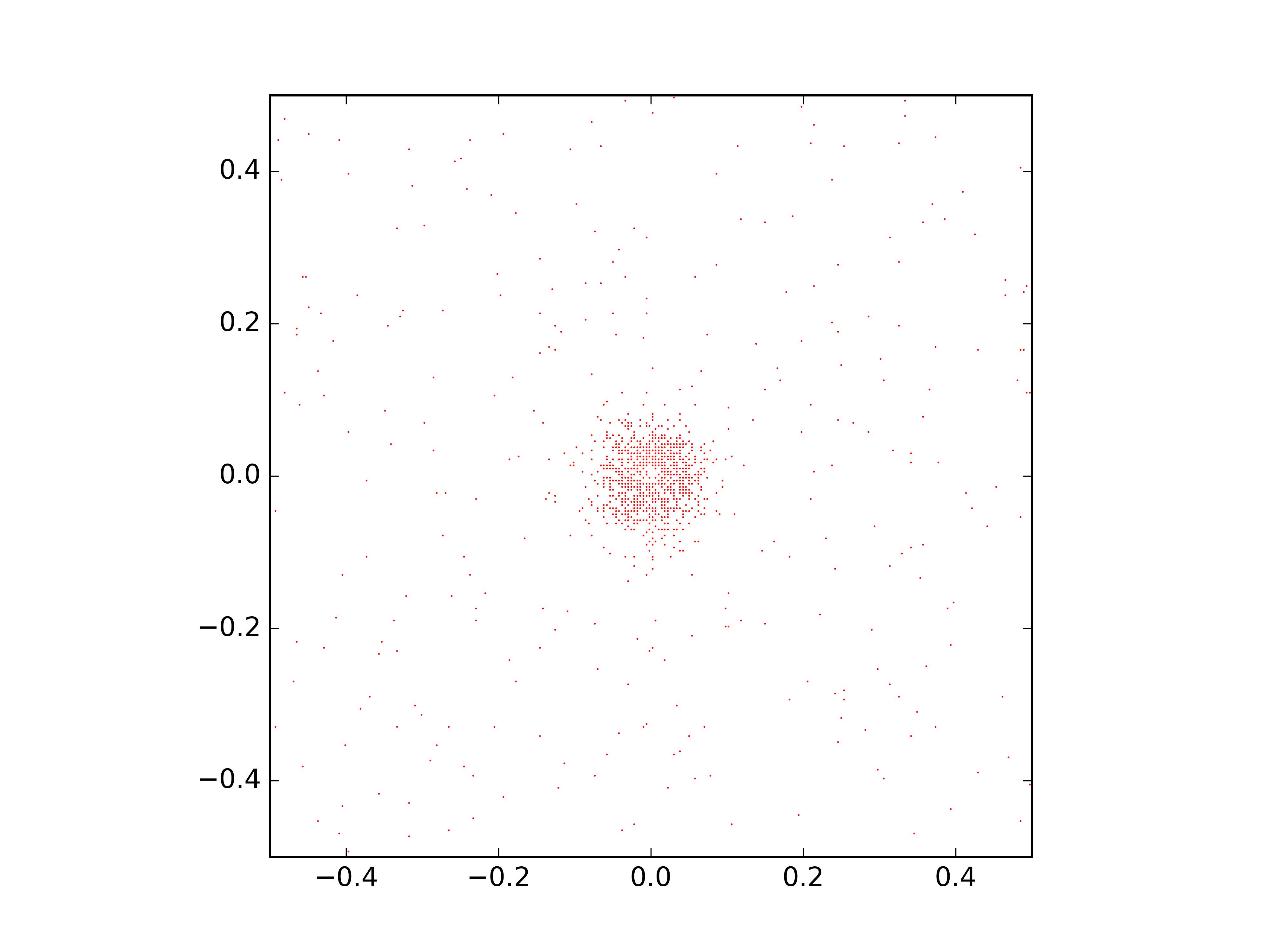}}
\subfloat[4th level]{\includegraphics[width = 0.45\textwidth]{./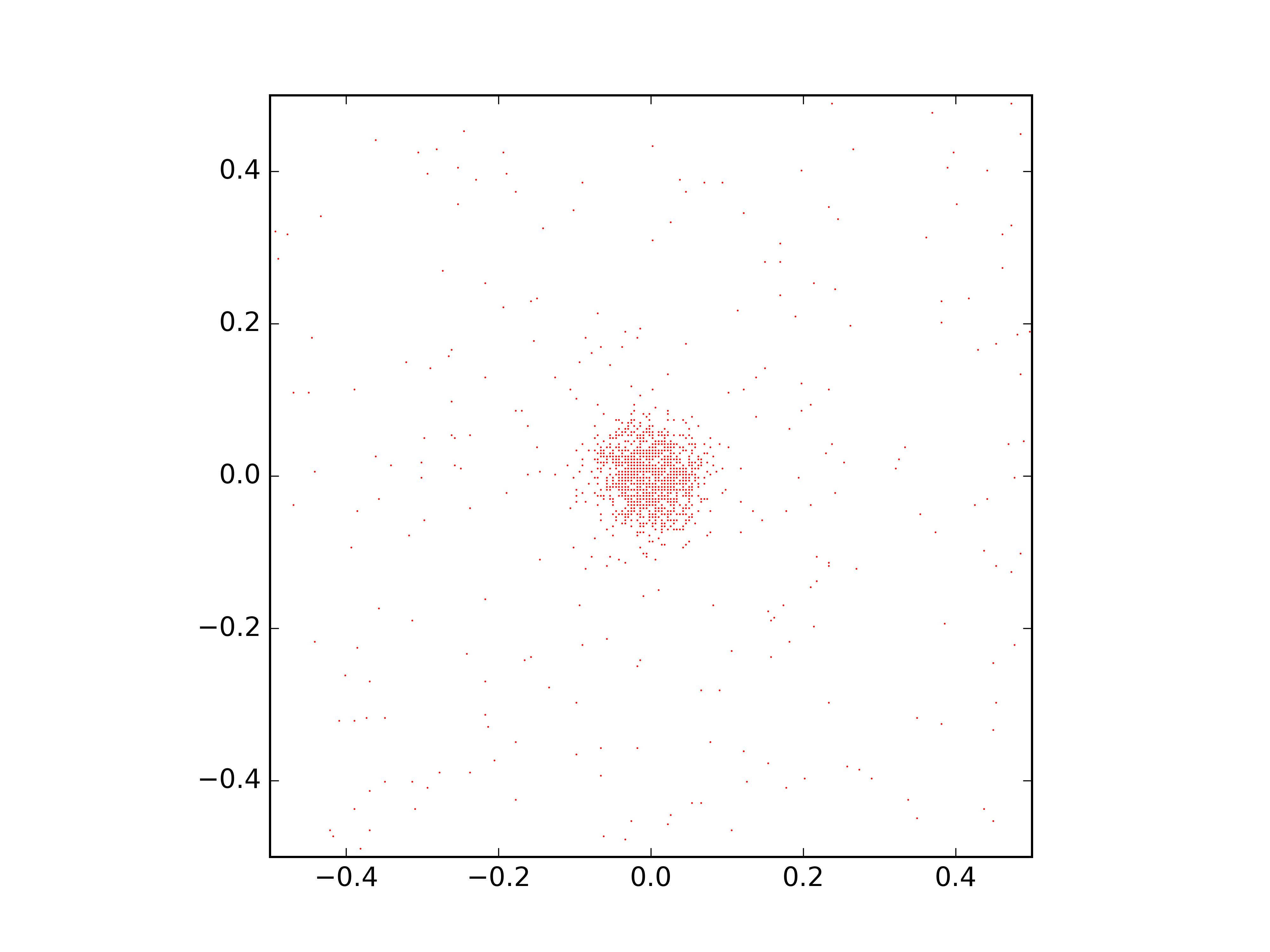}}
\caption{The sampling points at different levels for the two-dimensional Poisson equation  with the solution Eq \eqref{eq:2d_Peaksolution}. This figure shows the behavior of $40 \times 40$ points in $[-0.5,0.5]^2$.}
\label{fig:Peak2D_points}
\end{figure}
\begin{figure}[htbp]
\centering
\subfloat[1st level]{\includegraphics[width = 0.45\textwidth]{./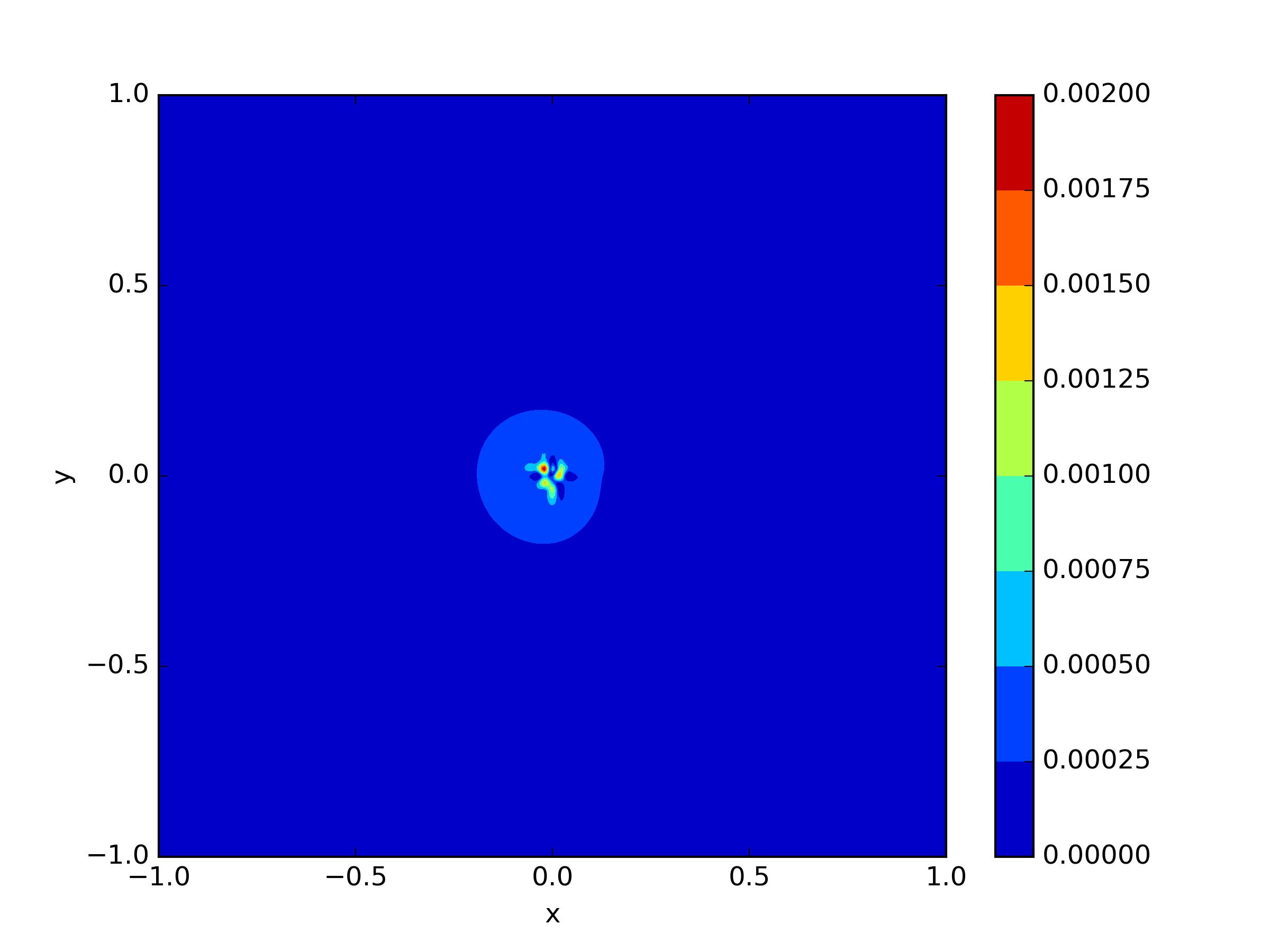}}
\subfloat[2nd level]{\includegraphics[width = 0.45\textwidth]
{./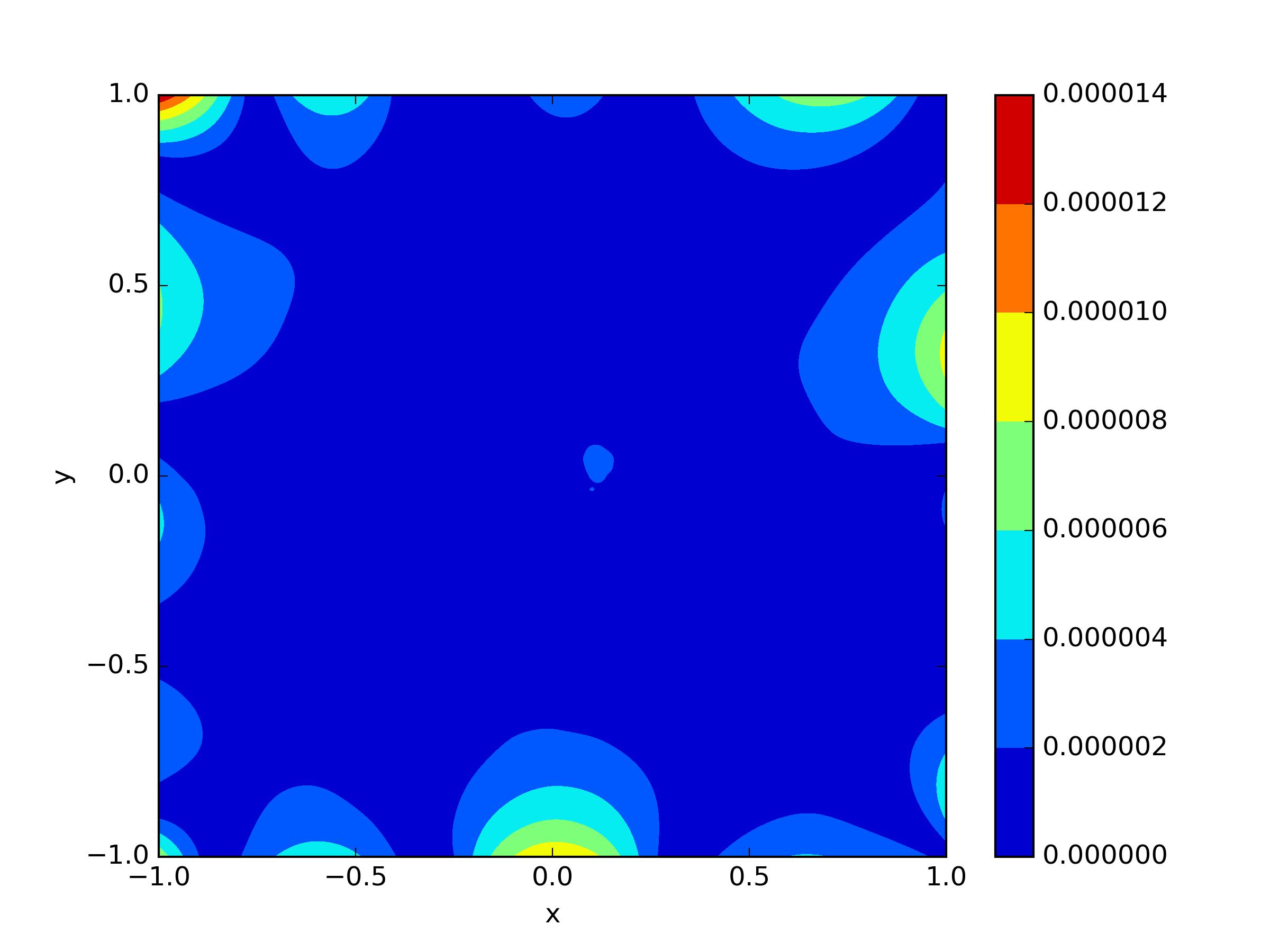}}\\
\subfloat[3rd level]{\includegraphics[width = 0.45\textwidth]{./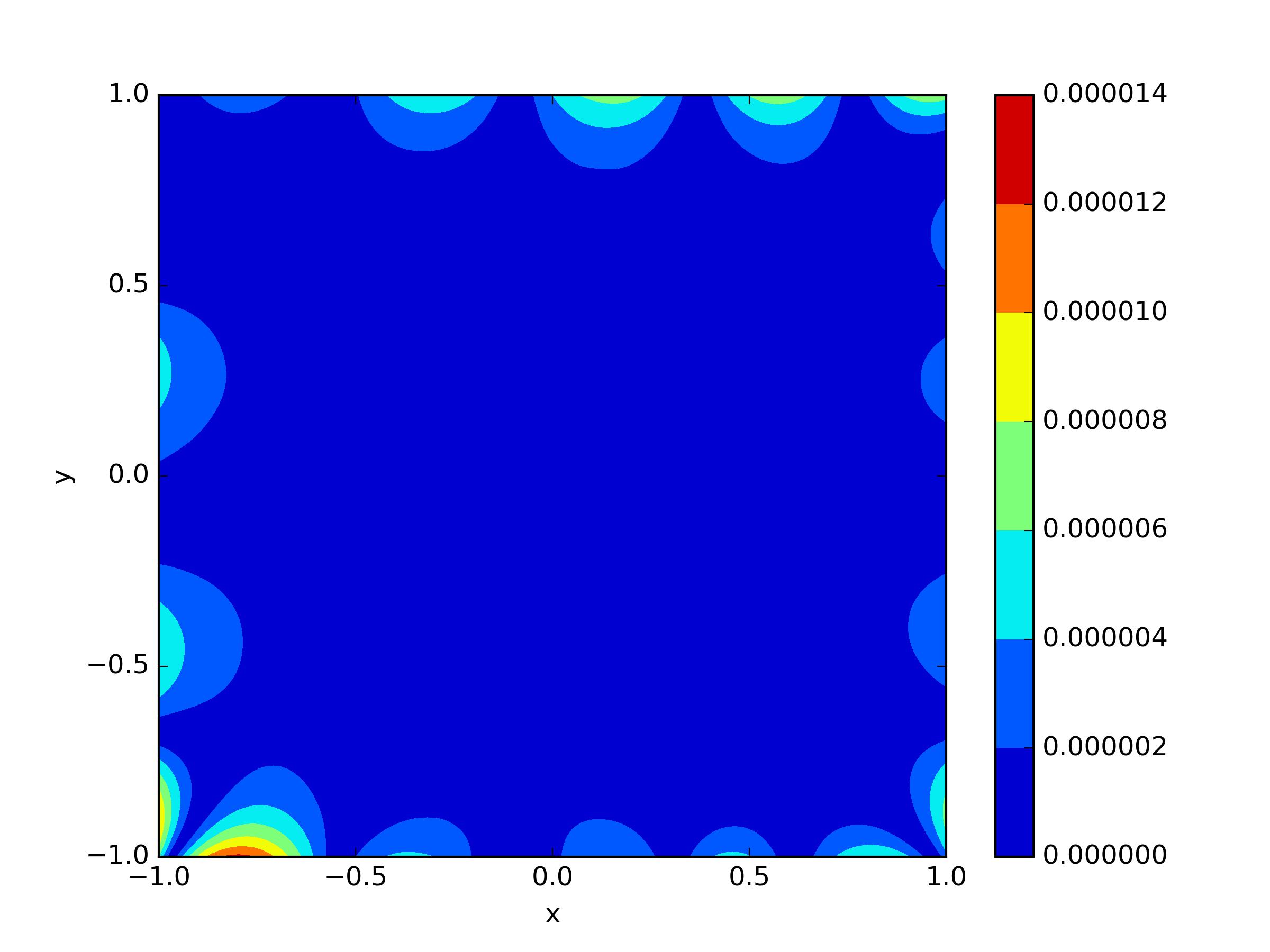}}
\subfloat[4th level]{\includegraphics[width = 0.45\textwidth]{./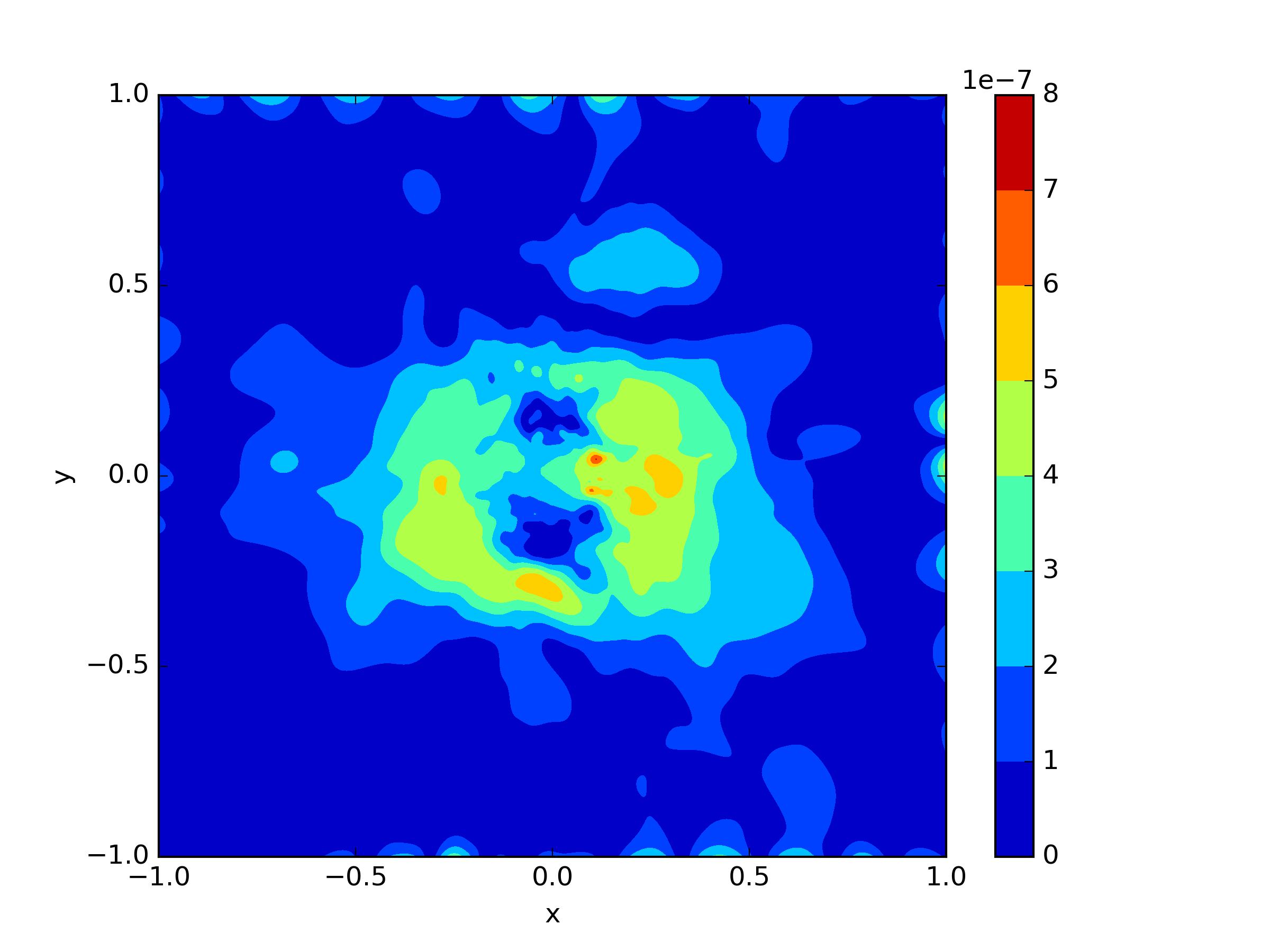}}
\caption{The numerical result of different levels for the two-dimensional Poisson equation  with the solution Eq \eqref{eq:2d_Peaksolution}. }
\label{fig:Peak2D_results}
\end{figure}

The numerical results of different levels are given in Fig \ref{fig:Peak2D_results}. Fig \ref{fig:Peak2D_results} illustrates heat-maps based on absolute error $\vert \bu^*(\bx) - \bu(\bx;\theta) \vert$. It can be observed that as the level increases, the approximation error decreases, demonstrating the effectiveness of our multi-level framework.

To further highlight the advantages of our approach, we compare it here with three alternative strategies.
\begin{itemize}
\item Strategy 1 employs the widely-used RAD-PINN method: it pre-trains for 20000 Adam epochs and 10000 L-BFGS epochs, then performs RAD adaptive sampling, and finally trains again for 20000 Adam epochs and 20000 L-BFGS epochs.
\item  Strategy 2 uses the MLT method but dispenses with adaptive sampling: it pre-trains for 20000 Soap epochs and 10000 SSB epochs, then continues with a second-level training of 20000 Soap epochs and 10000 SSB epochs, followed by a third-level training of 5000 SSB epochs and a fourth-level training of another 5000 SSB epochs.
\item Strategy 3 optimizes with Soap and SSB while still following the RAD-PINN pipeline: it pre-trains for 20000 Soap epochs and 10000 SSB epochs, carries out RAD adaptive sampling, and afterward trains for 20000 Soap epochs and 20000 SSB epochs.
\item  Strategy 4 is the multilevel framework that combines the MLS and MLT methods: it pre-trains for 20000 Soap epochs and 10000 SSB epochs, uses MLS method to generate second-level sampling points and trains at that level for 20000 Soap epochs and 10000 SSB epochs, then uses MLS method to obtain third-level points and trains for 5000 SSB epochs, and finally uses MLS method to obtain fourth-level points and trains for another 5000 SSB epochs.
\end{itemize}

Fig \ref{fig:Peak2D_OnePeak_ErrorEpochs} shows the performance of these four strategies across training epochs. By comparing Strategy 1 with Strategy 4, we can demonstrate the advantages of our method over the standard RAD-PINN; by comparing Strategy 1 with Strategy 3, we can highlight the benefits of SOAP and SSB optimizers over the commonly used Adam and L-BFGS optimizers; by comparing Strategies 2, 3 and 4, we can quantify the importance of adaptive sampling within the MLT pipeline. Furthermore, Table \ref{tab:Peak_2D_Results} provides a comparative analysis of errors obtained using these various methods. Upon examining these results, it is evident that our method yields superior outcomes compared to the other methods.

\begin{figure}[htbp]
\centering
\subfloat[performance of $e_\infty(u)$ ]{\includegraphics[width = 0.45\textwidth]{./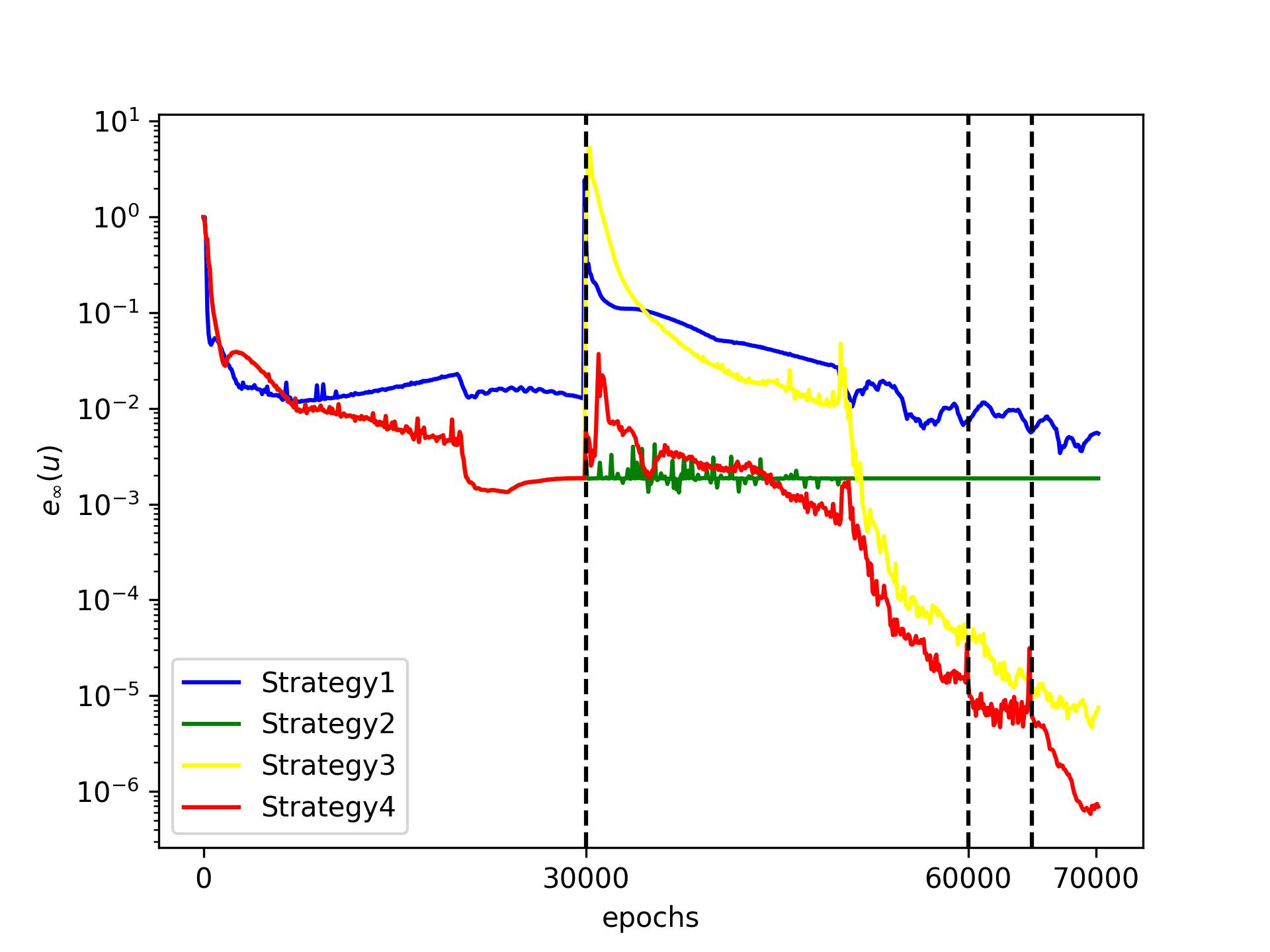}} \quad \quad \quad
\subfloat[performance of $e_2(u)$ ]{\includegraphics[width = 0.45\textwidth]
{./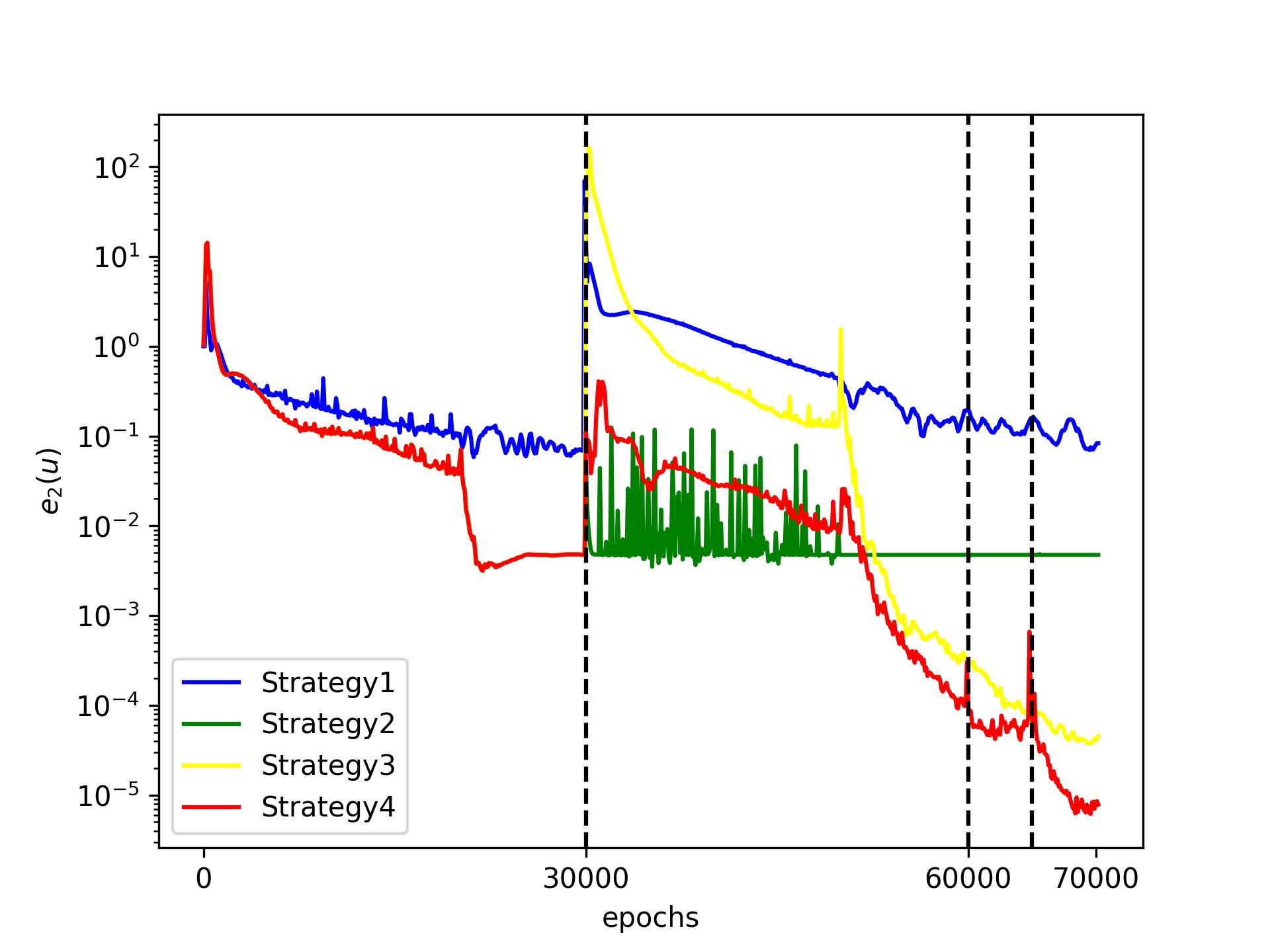}}
\caption{The performance of errors for the two-dimensional Poisson equation  with the solution Eq \eqref{eq:2d_Peaksolution}. (a) the relative error $e_\infty(u)$ with different training epochs; (b) the relative error $e_2(u)$ with different training epochs.}
\label{fig:Peak2D_OnePeak_ErrorEpochs}
\end{figure}

\begin{table}[h]
\scriptsize
\centering
\caption{
Comparison of errors using different methods for two-dimensional Poisson equation with one peak of Eq \eqref{eq:2d_Peaksolution}.
}
\setlength{\tabcolsep}{3.mm}{
\begin{tabular}{|c|c|c|c|c|}
\hline\noalign{\smallskip}
Relative  error     &  Strategy 1 & Strategy 2 & Strategy 3  &  Strategy 4\\
\hline
$e_\infty(u)$  & $ 3.656\times 10^{-3}$  & $1.857 \times 10^{-3}$ & $5.811 \times 10^{-6}$ &  $7.033 \times 10^{-7}$\\

\hline
$e_2(u)$  & $6.810 \times 10^{-2}$  & $4.744 \times 10^{-3}$ & $4.266 \times 10^{-5}$  & $8.285 \times 10^{-6}$\\

\hline
\end{tabular}
}
\label{tab:Peak_2D_Results} 
\end{table}

\subsection{Two-dimensional Helmholtz equation}
\label{sec:Helmholtz_2D}

For the following two-dimensional Helmholtz equation
\begin{equation}
	\label{eq:2d_Helmholtz}
	\hspace{-0.3cm}
	\begin{array}{r@{}l}
		\left\{
		\begin{aligned}
			 \Delta u(x,y) + k^2 u(x,y)& = f(x,y), \quad (x,y) \ \mbox{in} \ \Omega, \\
                  u(x,y) & = 0,  \quad  (x,y) \ \mbox{on} \  \partial \Omega,
		\end{aligned}
		\right.
	\end{array}
\end{equation}
where $\Omega = (=1,1)^2$ and $k=3$, the exact solution is given by
\begin{equation}
    \label{eq:Helmholtz_solution}
    \hspace{-0.3cm}
    \begin{array}{r@{}l}
        \begin{aligned}
            u = 4e^{-100(x^2+y^2)}sin(6\pi x) sin(6\pi y).
        \end{aligned}
    \end{array}
\end{equation}
And the source function $f(x,y)$ is given by Eq \eqref{eq:Helmholtz_solution}. 


In this experiment, we sample 10000 points within $\Omega$ as the residual training set and 1000 points on $ \partial\Omega$ as the boundary training set, while utilizing $400 \times 400$ points for the test set and we employ a three-level framework for training. In the first-level pre-training, we trained 20000 epochs using the SOAP method. In the second-level training, we trained 20000 epochs of the SOAP method and 20000  epochs of the SSB method. Finally, in the third-level training, we employed 10000  epochs of the SSB method. During training at the different levels, we employed the MLS method described in Section \ref{sec:MLS} and the MLT method described in Section \ref{sec:MLT}. Fig \ref{fig:Helmholtz_points} illustrates the effect of the MLS method across the various levels; it can be seen that as the level increases, the points progressively concentrate toward the origin, because the solution has very low regularity in the vicinity of the origin. It can be seen that the MLS method is able to capture this property effectively.

\begin{figure}[htbp]
\centering
\subfloat[1st level]{\includegraphics[width = 0.33\textwidth]{./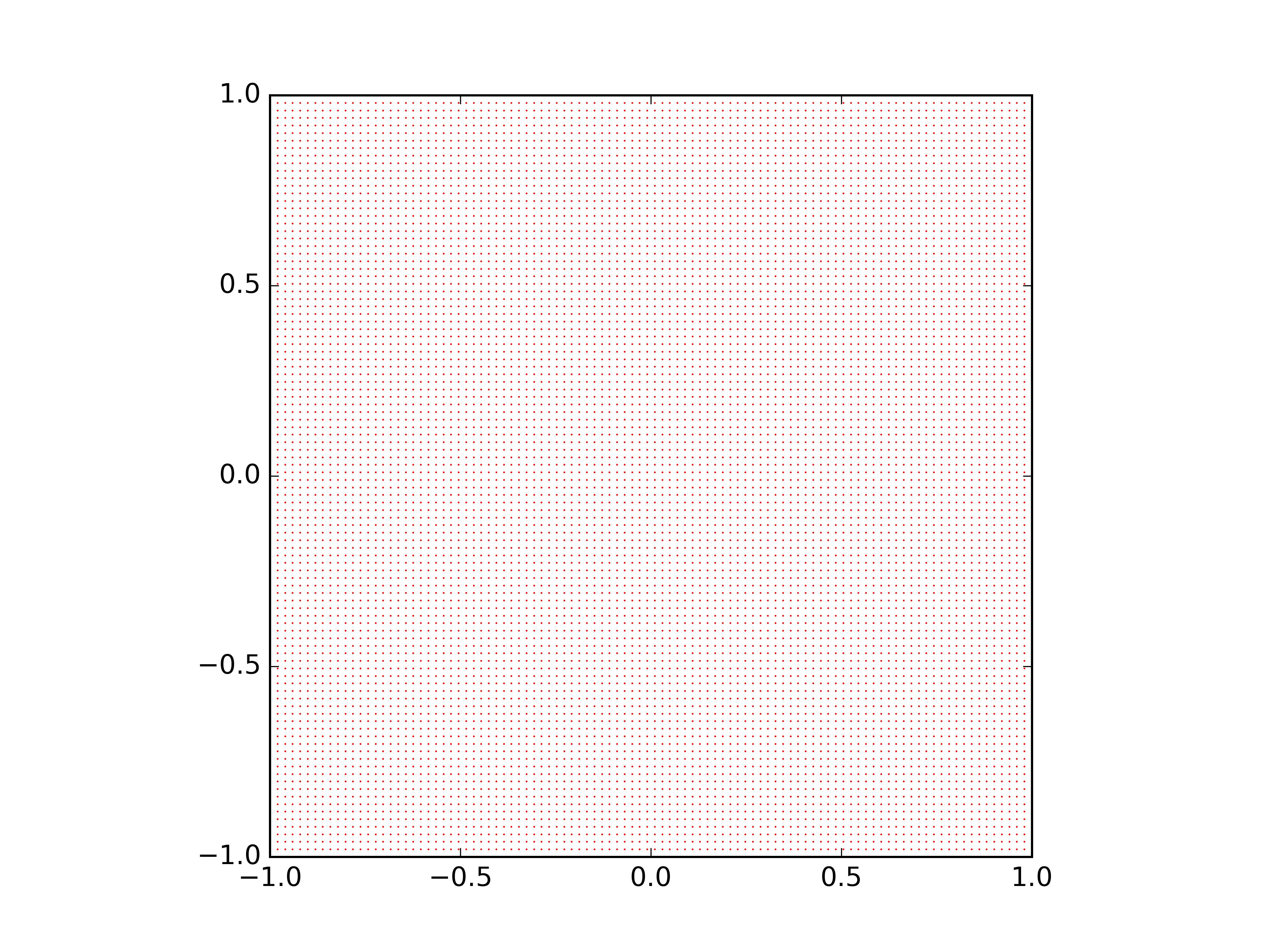}}
\subfloat[2nd level]{\includegraphics[width = 0.33\textwidth]
{./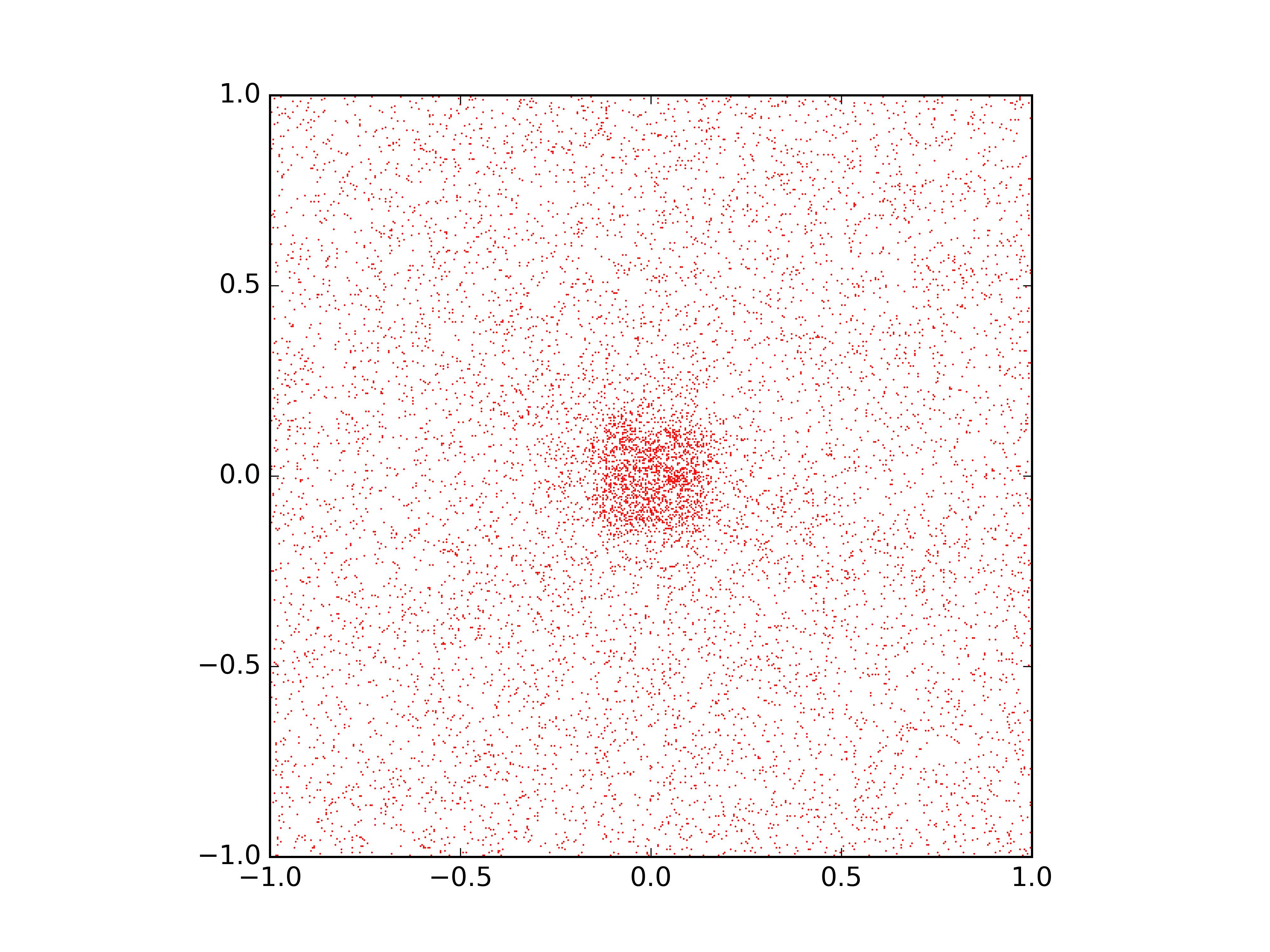}}
\subfloat[3rd level]{\includegraphics[width = 0.33\textwidth]{./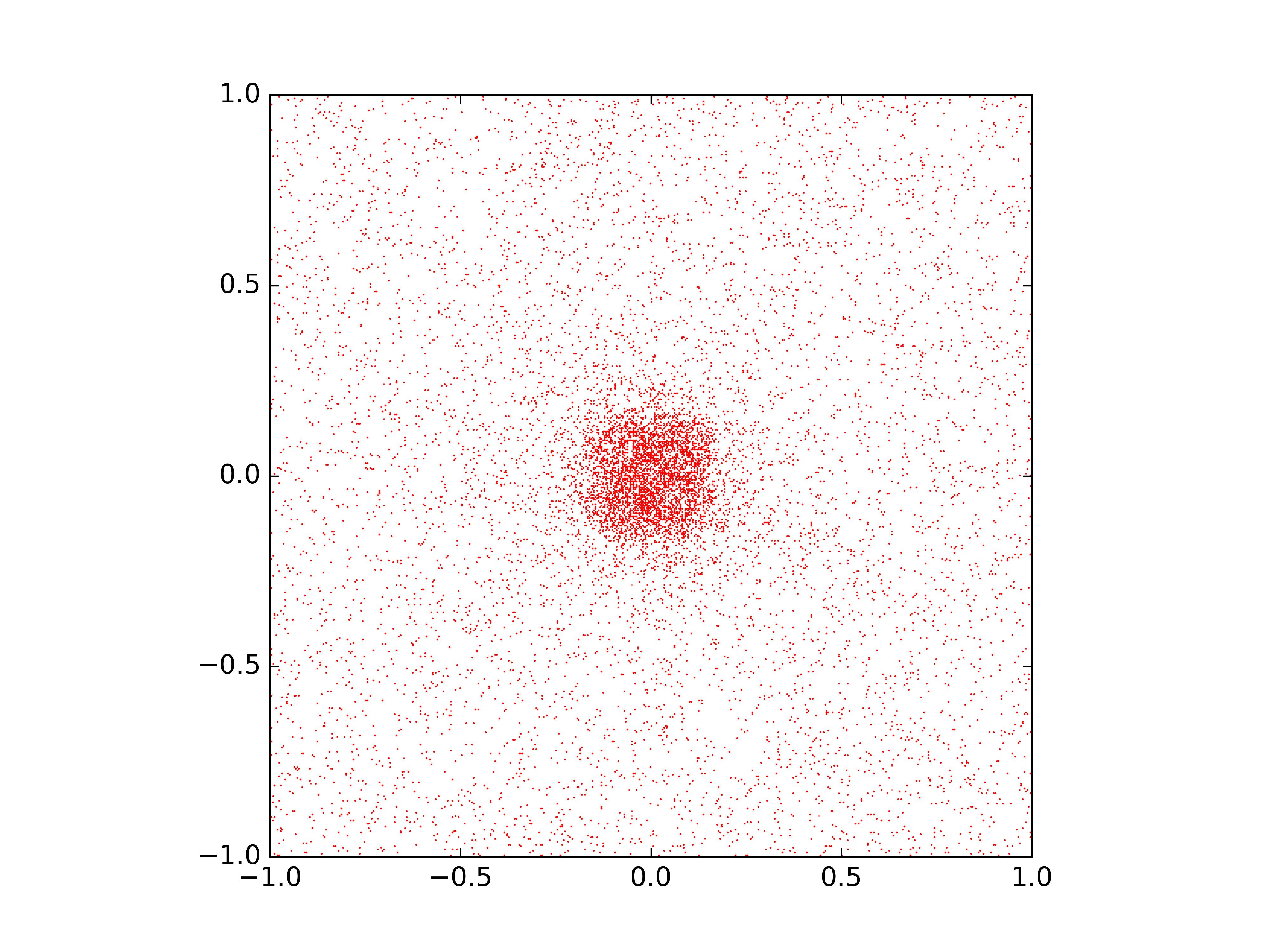}}
\caption{The sampling points at different levels for the two-dimensional Helmholtz equation  with the solution Eq \eqref{eq:Helmholtz_solution}.}
\label{fig:Helmholtz_points}
\end{figure}

\begin{figure}[htbp]
\centering
\subfloat[1st level]{\includegraphics[width = 0.33\textwidth]{./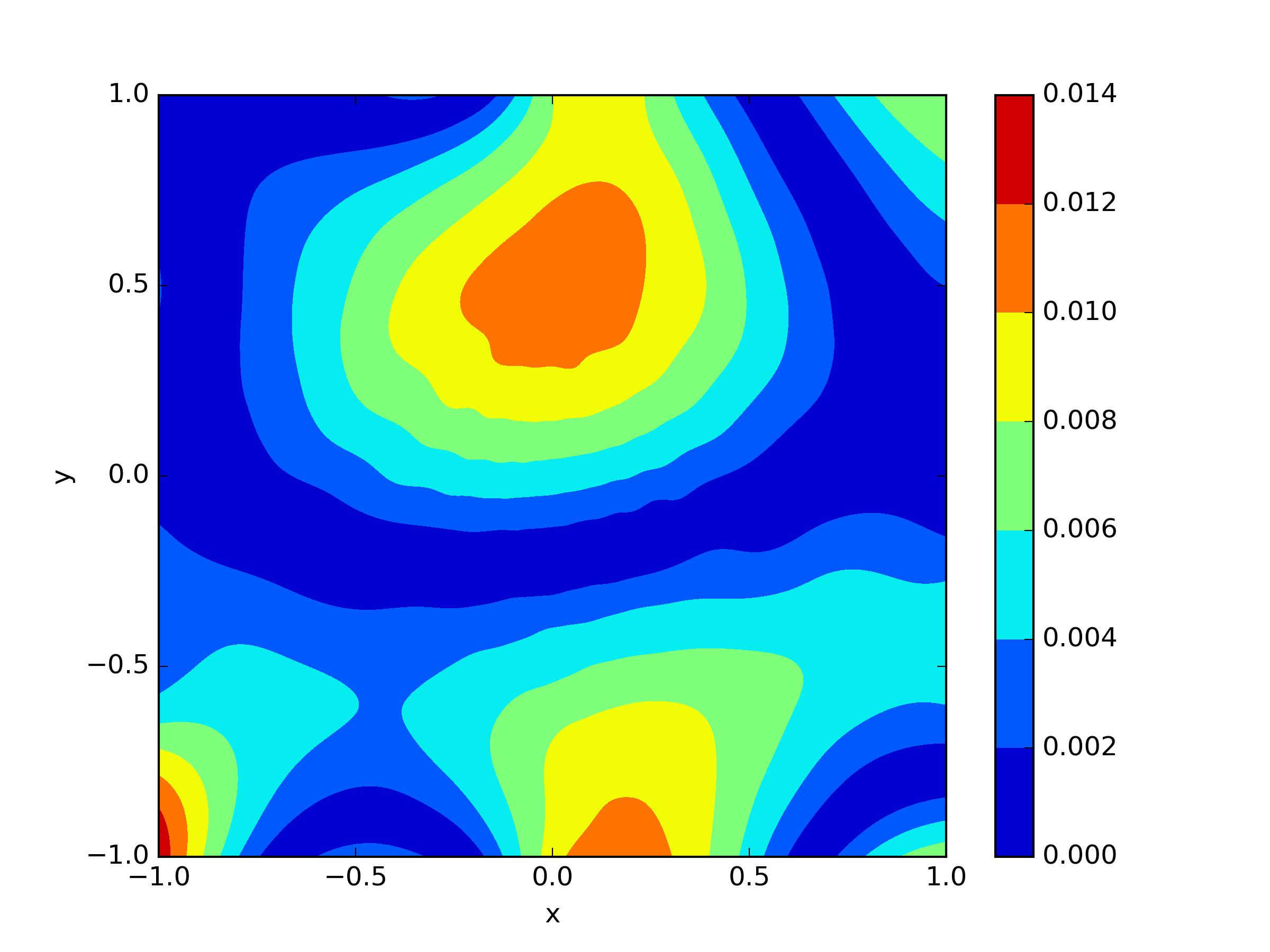}}
\subfloat[2nd level]{\includegraphics[width = 0.33\textwidth]
{./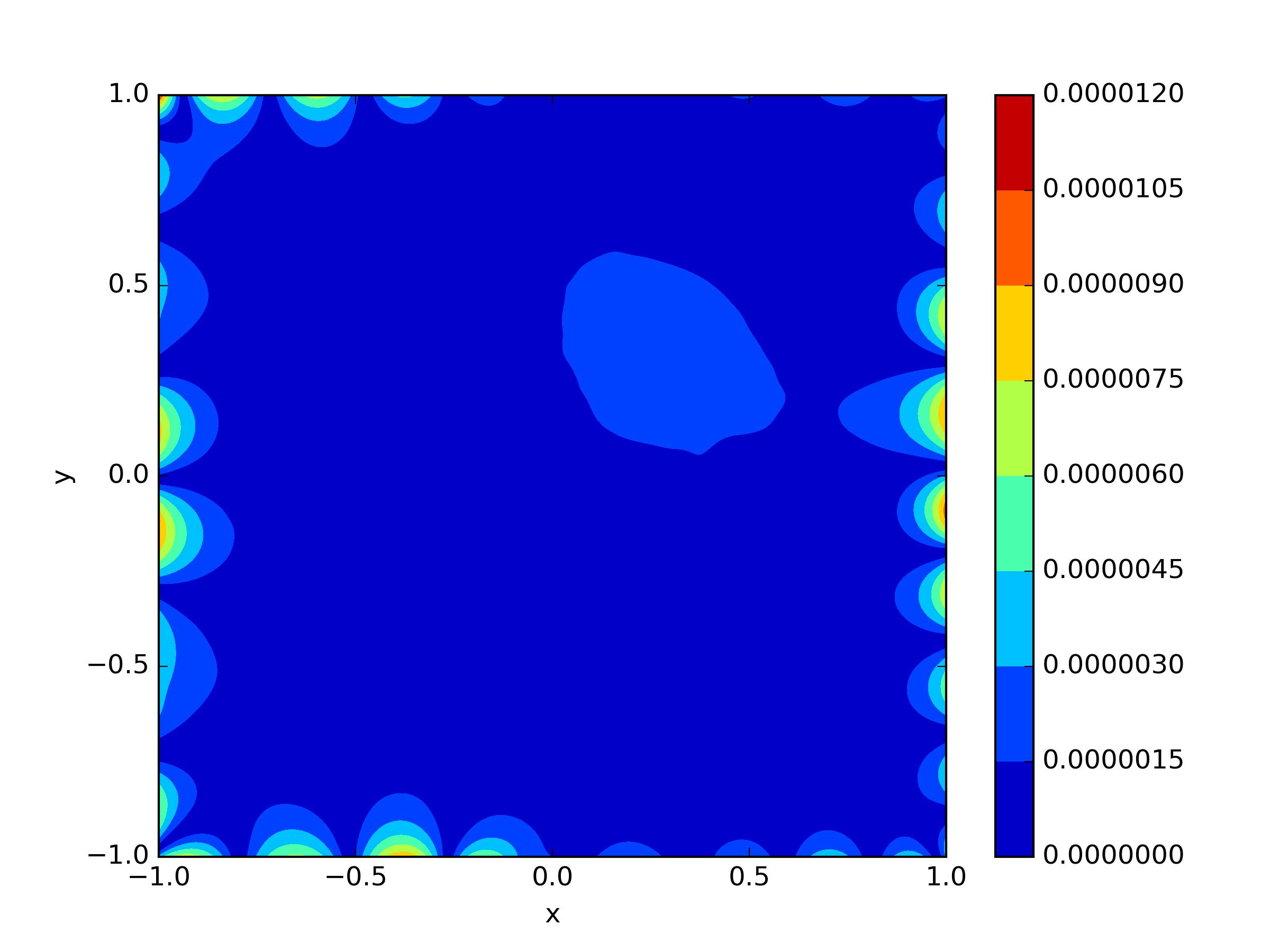}}
\subfloat[3rd level]{\includegraphics[width = 0.33\textwidth]{./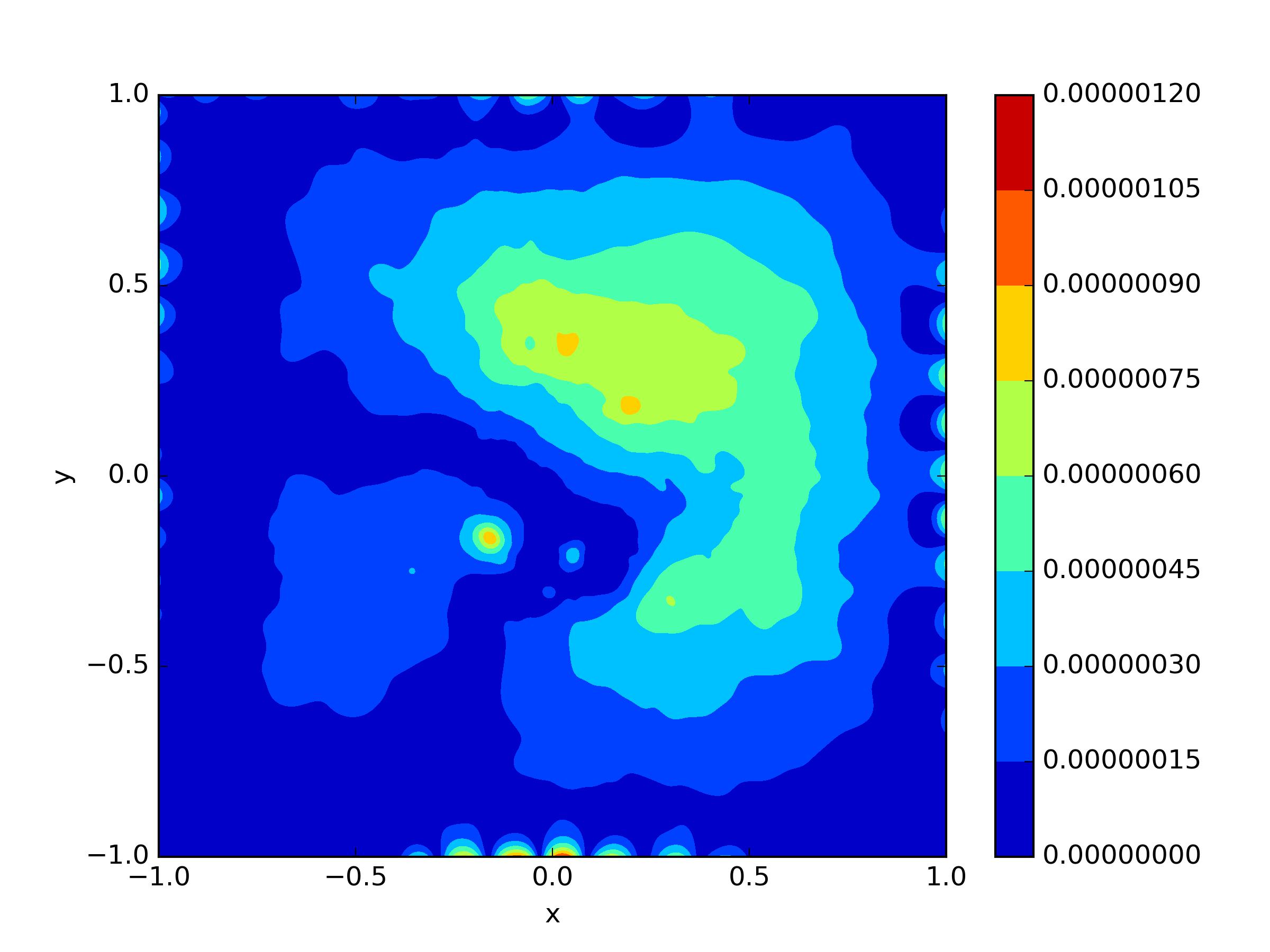}}
\caption{The numerical result of different levels for the two-dimensional Helmholtz equation  with the solution Eq \eqref{eq:Helmholtz_solution}. }
\label{fig:Helmholtz_results}
\end{figure}

The numerical results of different levels are given in Fig \ref{fig:Helmholtz_results}. Fig \ref{fig:Helmholtz_results} displays a heat-map based on absolute error $\vert \bu^*(\bx) - \bu(\bx;\theta) \vert$. It can be observed that as the level increases, the approximation error decreases, demonstrating the effectiveness of the MLT method.

To further highlight the advantages of our approach, we compare it here with three alternative strategies.
\begin{itemize}
\item Strategy 1 employs the MLS-PINN method: it pre-trains for 20000 Adam epochs, then performs  two-level sampling, and finally trains again for 20000 Adam epochs and 30000 LBFGS epochs.
\item  Strategy 2 employs the MLS and MLT method with Adam and LBFGS optimizers: it pre-trains for 20000 Adam epochs, then performs  two-level sampling, and trains again for 20000 epochs and 10000 LBFGS epochs, finally performs three-level sampling and trains again for 10000 LBFGS epochs.
\item Strategy 3 optimizes with Soap and SSB optimizers while still following the  MLS-PINN pipeline: it pre-trains for 20000 Soap epochs, carries out two-level sampling, and afterward trains for 20000 Soap epochs and 30000 SSB epochs.
\item  Strategy 4 is the multilevel framework that combines the MLS and MLT methods: it pre-trains for 20000 Soap epochs, uses MLS method to generate second-level sampling points and trains at that level for 20000 Soap epochs and 20000 SSB epochs, then uses MLS method to obtain third-level points and trains for 10000 SSB epochs.

\end{itemize}
\begin{figure}[htbp]
\centering
\subfloat[performance of $e_\infty(u)$ ]{\includegraphics[width = 0.45\textwidth]{./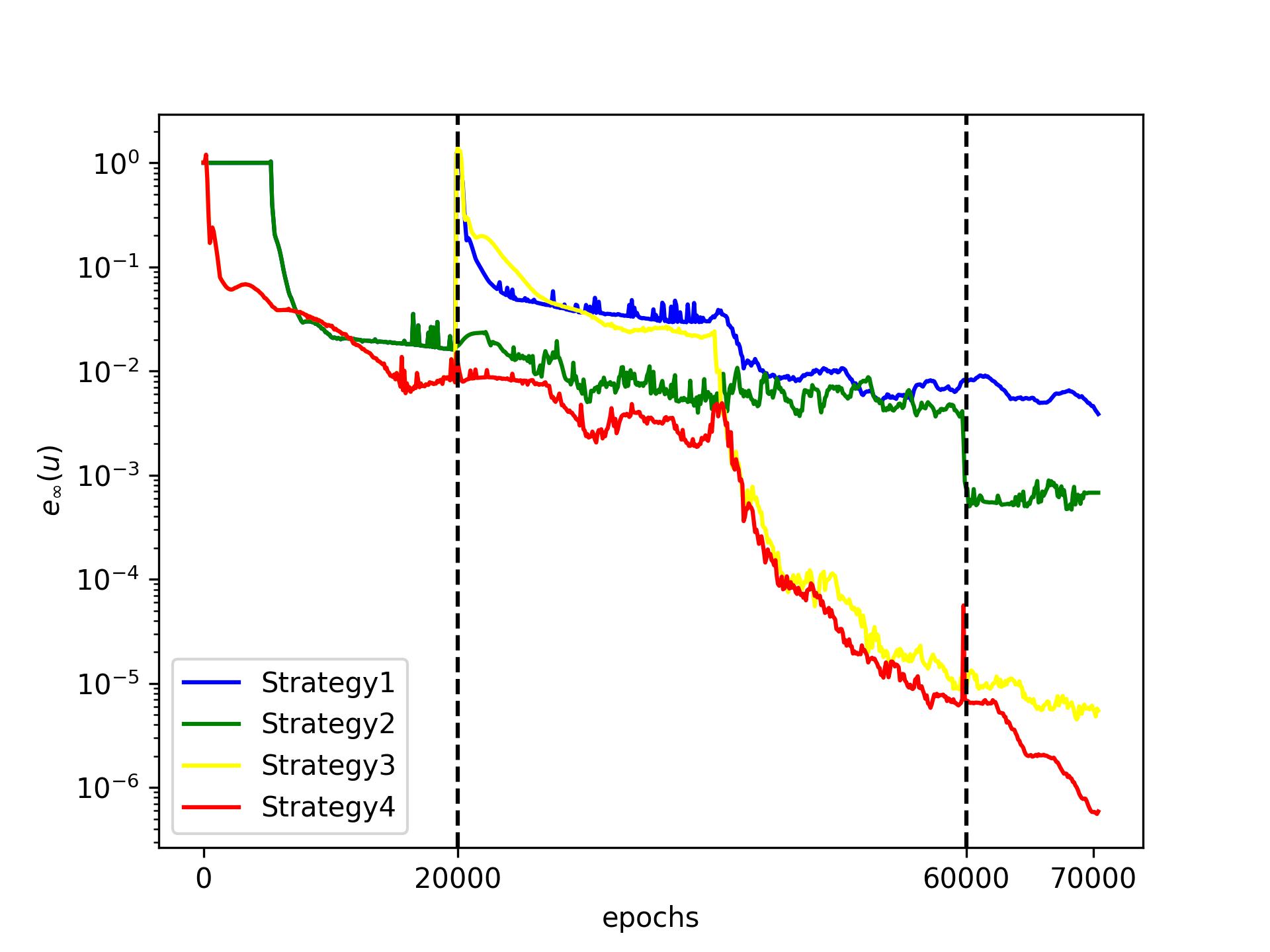}} \quad \quad \quad
\subfloat[performance of $e_2(u)$ ]{\includegraphics[width = 0.45\textwidth]
{./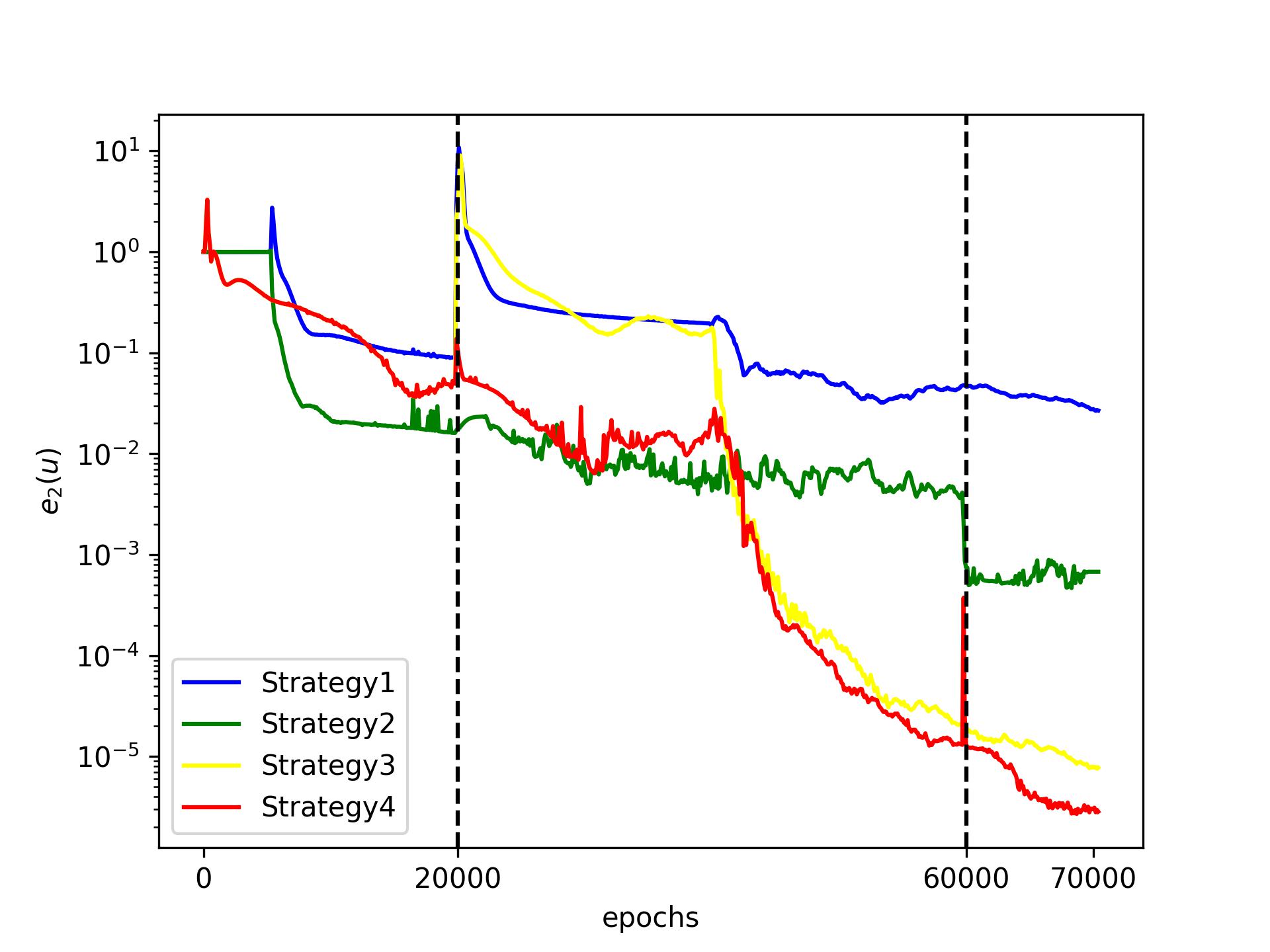}}
\caption{The performance of errors for the two-dimensional Poisson equation  with the solution Eq \eqref{eq:Helmholtz_solution}. (a) the relative error $e_\infty(u)$ with different training epochs; (b) the relative error $e_2(u)$ with different training epochs.}
\label{fig:Helmholtz_ErrorEpochs}
\end{figure}

Fig \ref{fig:Helmholtz_ErrorEpochs} shows the performance of these four strategies across training epochs. By comparing Strategy 1 with Strategy 3 and Strategy 2 with Strategy 4, we can demonstrate the advantages of SOAP and SSB over the commonly used Adam and L-BFGS optimizers. By comparing Strategy 1 with Strategy 2 and Strategy 3 with Strategy 4, we can quantify the benefits of the MLT training approach. Furthermore, Table \ref{tab:Helmholtz_Results} provides a comparative analysis of errors obtained using these various methods. Upon examining these results, it is evident that our method yields superior outcomes compared to the other methods.

\begin{table}[h]
\scriptsize
\centering
\caption{
Comparison of errors using different methods for two-dimensional Helmholtz equation of Eq \eqref{eq:Helmholtz_solution}.
}
\setlength{\tabcolsep}{3.mm}{
\begin{tabular}{|c|c|c|c|c|}
\hline\noalign{\smallskip}
Relative  error     &  Strategy 1 & Strategy 2 & Strategy 3  &  Strategy 4\\
\hline
$e_\infty(u)$  & $ 4.745 \times 10^{-3}$  & $5.031 \times 10^{-4}$ & $5.811 \times 10^{-6}$ &  $6.916 \times 10^{-7}$\\

\hline
$e_2(u)$  & $2.758 \times 10^{-2}$  & $3.116 \times 10^{-3}$ & $7.875 \times 10^{-6}$  & $2.732 \times 10^{-6}$\\

\hline
\end{tabular}
}
\label{tab:Helmholtz_Results} 
\end{table}

\subsection{Poisson Equation with Sharp Solutions}
\label{sec:PoissonSharp_2D}

For the following Poisson equation \cite{dang2024adaptive},

\begin{equation}
	\label{eq:PoissonSharp}
	\hspace{-0.3cm}
	\begin{array}{r@{}l}
		\left\{
		\begin{aligned}
			 -\Delta u(x,y) & = f(x,y), \quad (x,y) \ \mbox{in} \ \Omega, \\
                  u(x,y) & = g(x,y),  \quad  (x,y) \ \mbox{on} \  \partial \Omega,
		\end{aligned}
		\right.
	\end{array}
\end{equation}
where $\Omega = (0,1)^d$,  the exact solution which has a peak at $(0,0)$ is chosen as
\begin{equation}
    \label{eq:PoissonSharpsolution}
    \hspace{-0.3cm}
    \begin{array}{r@{}l}
        \begin{aligned}
            u = 4^d\left(\frac{1}{2}+ \frac{1}{\pi} \arctan \left( A(\frac{1}{16} - \sum_{i=1}^d(x_i-\frac{1}{2}^2))\right)\right)\prod_{i=1}^d(1-x_i)x_i.
        \end{aligned}
    \end{array}
\end{equation}
with A=120. The Dirichlet boundary condition $g(x,y)$ and the source function $f(x,y)$ are given by Eq \eqref{eq:PoissonSharpsolution}.

\subsubsection{Two-dimensional Problem}
We take $d=2$ in Eq \eqref{eq:PoissonSharp} and \eqref{eq:PoissonSharpsolution} here. In this experiment, the network structure was adopted with a size of 32×8. We sample 10000 points within $\Omega$ as the residual training set and 1000 points on $ \partial\Omega$ as the boundary training set, while utilizing $400 \times 400$ points for the test set, and we employ a three-level framework for training. In the first-level pre-training, we trained 20000 epochs using the SOAP method. Then, in the second-level training, we again trained 20000 epochs of the SOAP method and 20000  epochs of the SSB method. Finally,  in the third-level training, we employed 20000  epochs of the SSB method. During training at the different levels, we employed the MLS method described in Section \ref{sec:MLS} and the MLT method described in Section \ref{sec:MLT}. 

 Fig \ref{fig:PoissonSharp2D_points} illustrates the effect of the MLS method across the various levels; it can be seen that as the level increases, the points progressively concentrate toward the origin, because the solution has very low regularity in the vicinity of the origin. It can be seen that the MLS method is able to capture this property effectively. 

\begin{figure}[htbp]
\centering
\subfloat[1st level]{\includegraphics[width = 0.33\textwidth]{./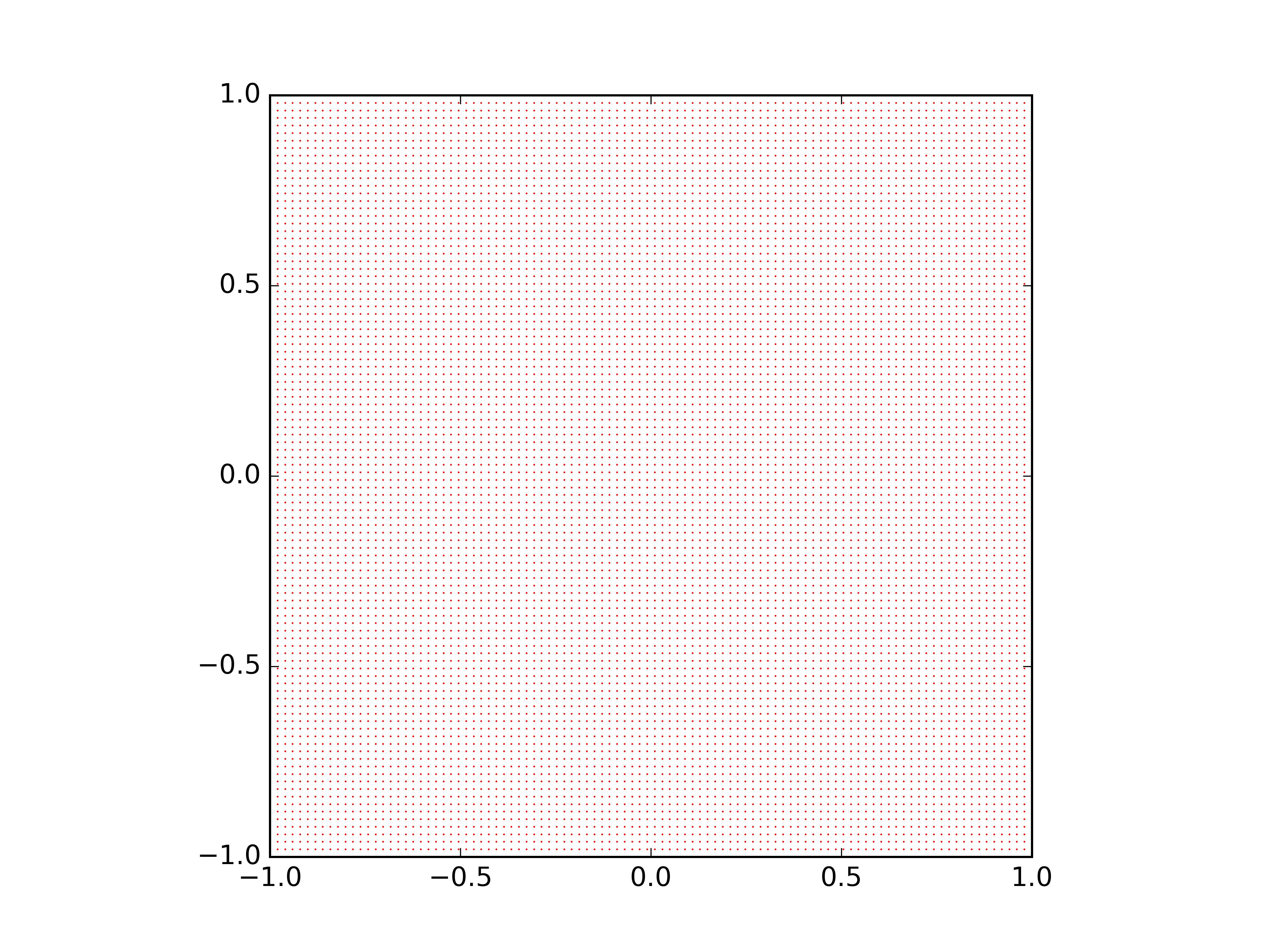}}
\subfloat[2nd level]{\includegraphics[width = 0.33\textwidth]
{./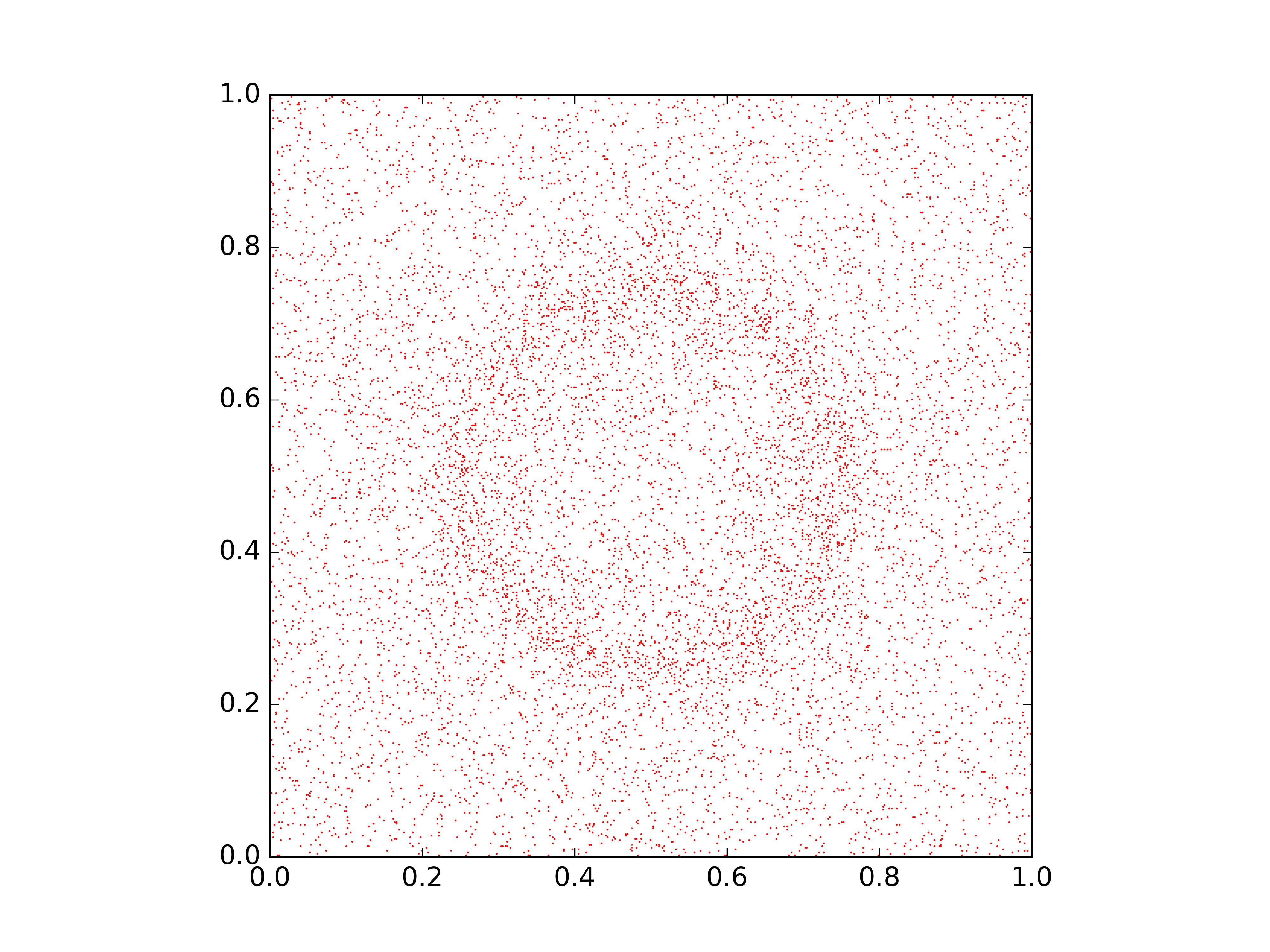}}
\subfloat[3rd level]{\includegraphics[width = 0.33\textwidth]{./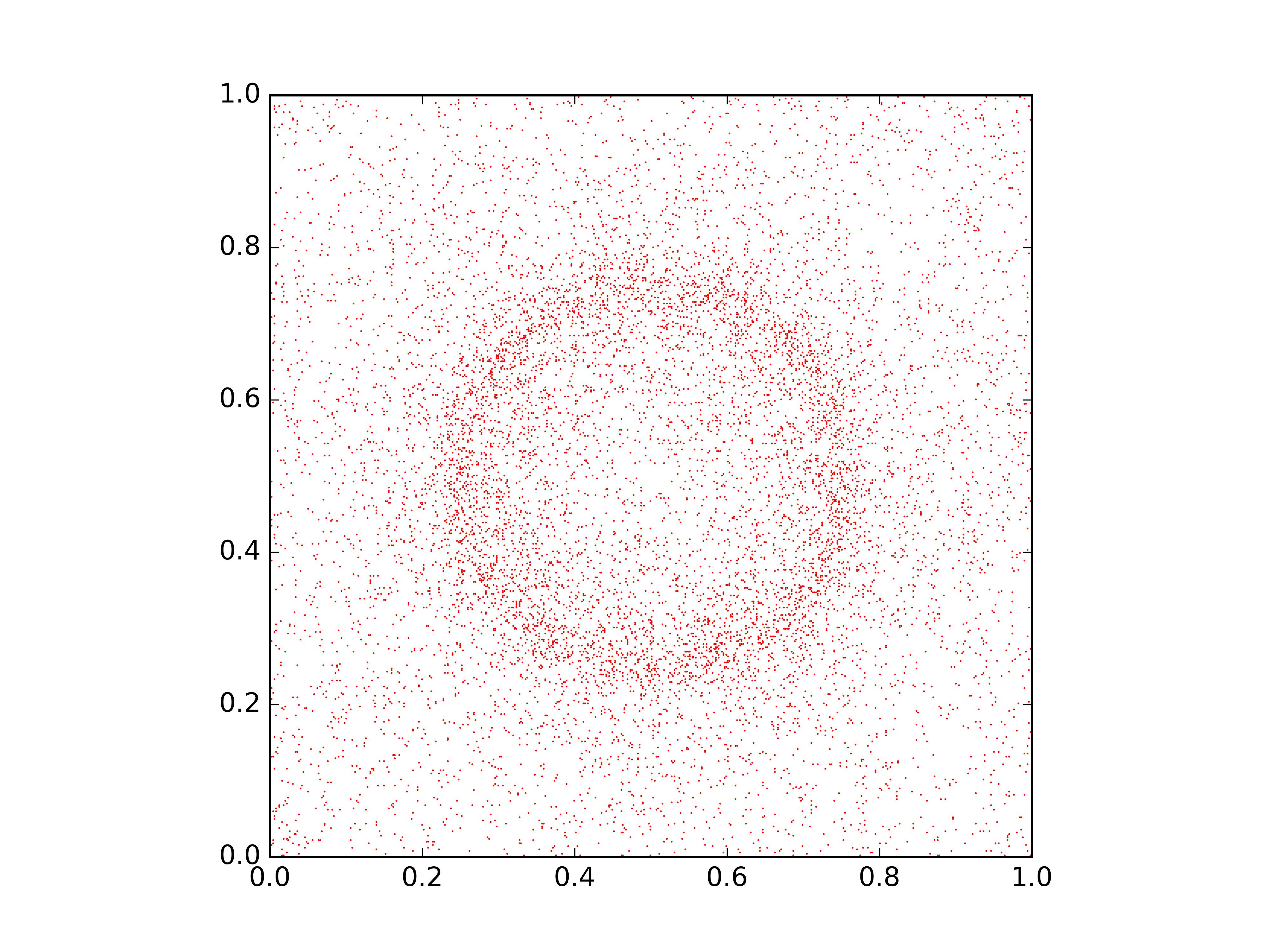}}
\caption{The sampling points at different levels for the two-dimensional Poisson equation with the solution  Eq \eqref{eq:PoissonSharpsolution}.}
\label{fig:PoissonSharp2D_points}
\end{figure}

\begin{figure}[htbp]
\centering
\subfloat[1st prediction]{\includegraphics[width = 0.33\textwidth]{./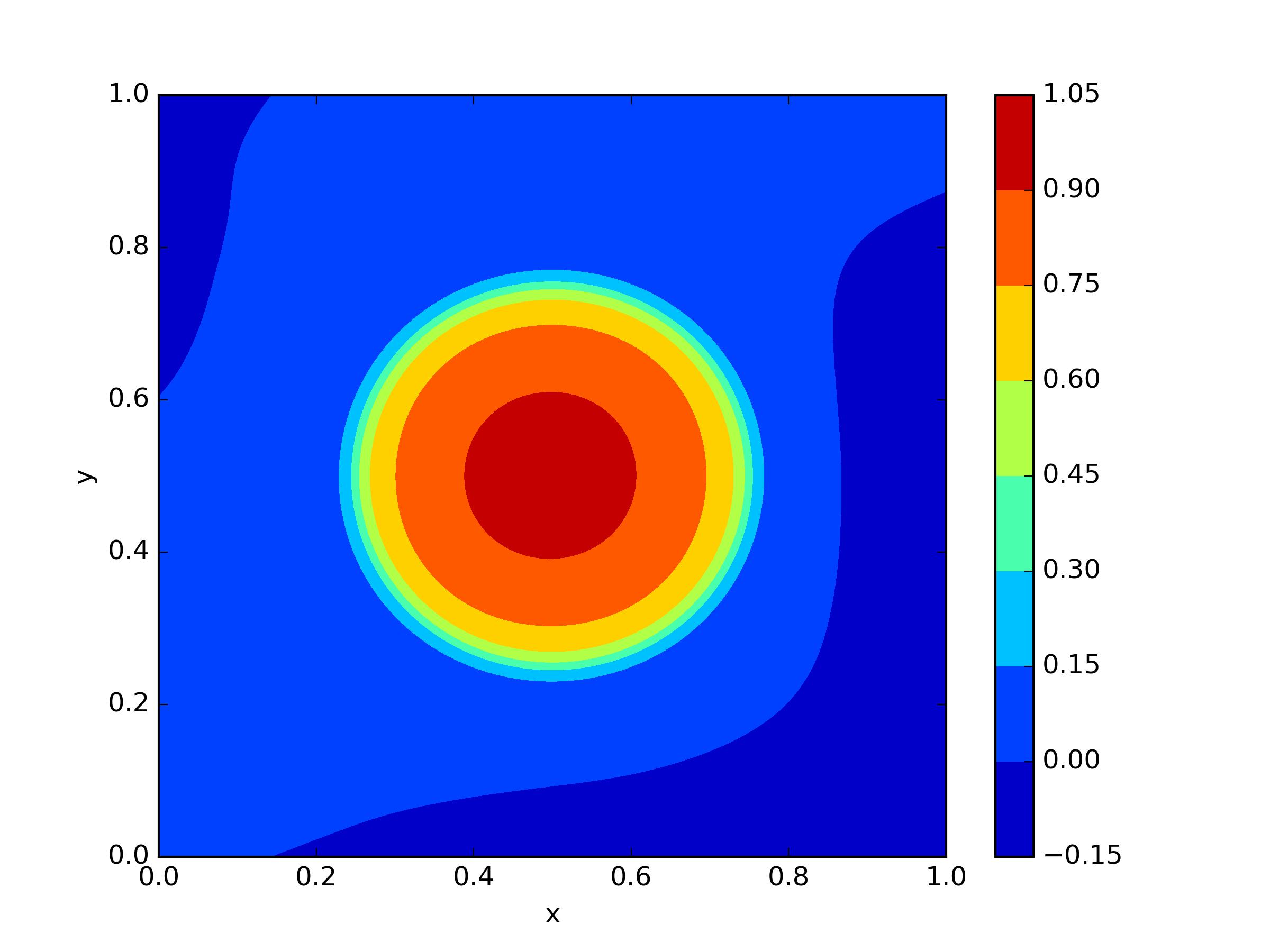}}
\subfloat[2nd prediction]{\includegraphics[width = 0.33\textwidth]
{./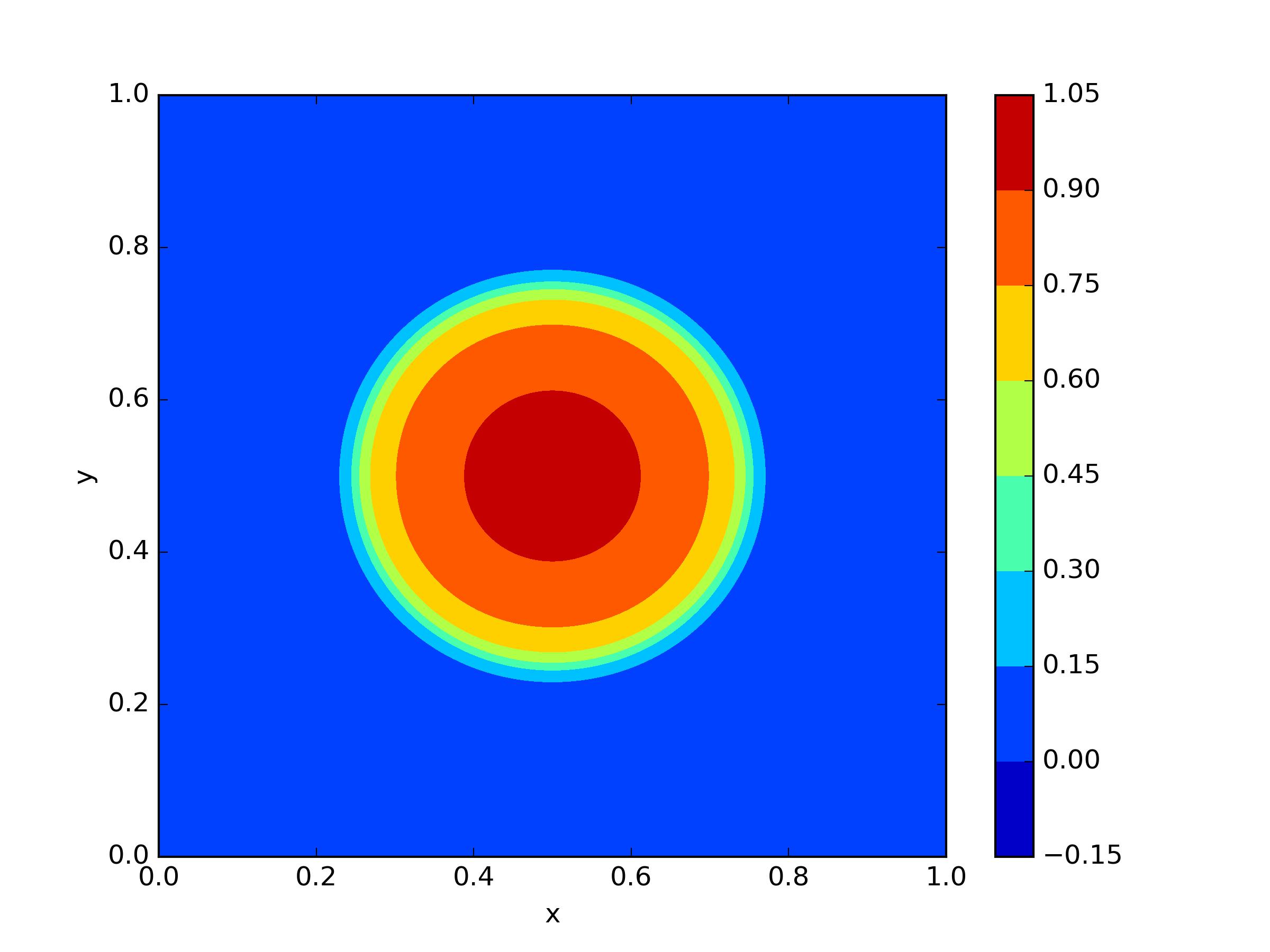}}
\subfloat[3rd prediction]{\includegraphics[width = 0.33\textwidth]{./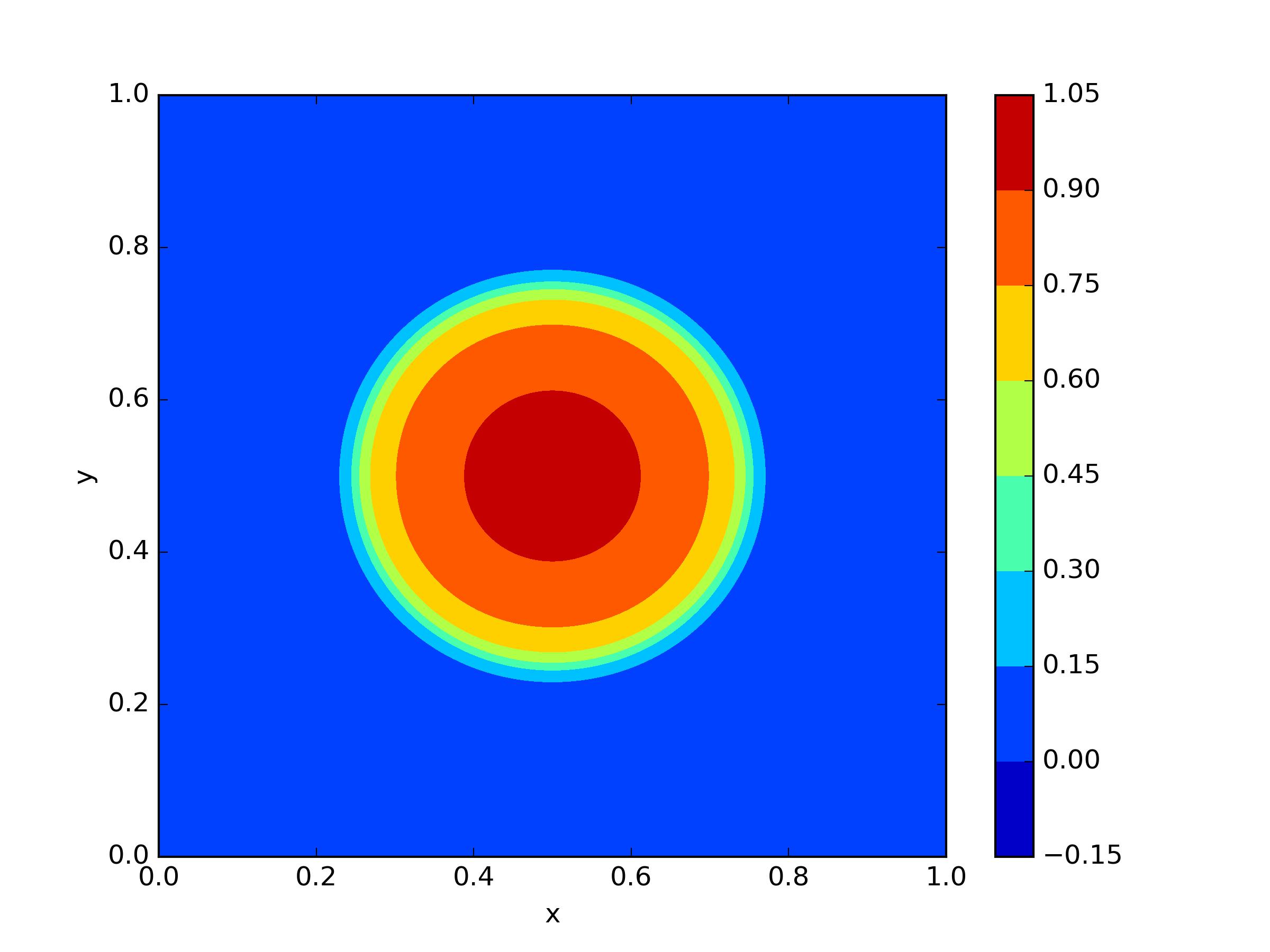}}\\
\subfloat[1st error]{\includegraphics[width = 0.33\textwidth]{./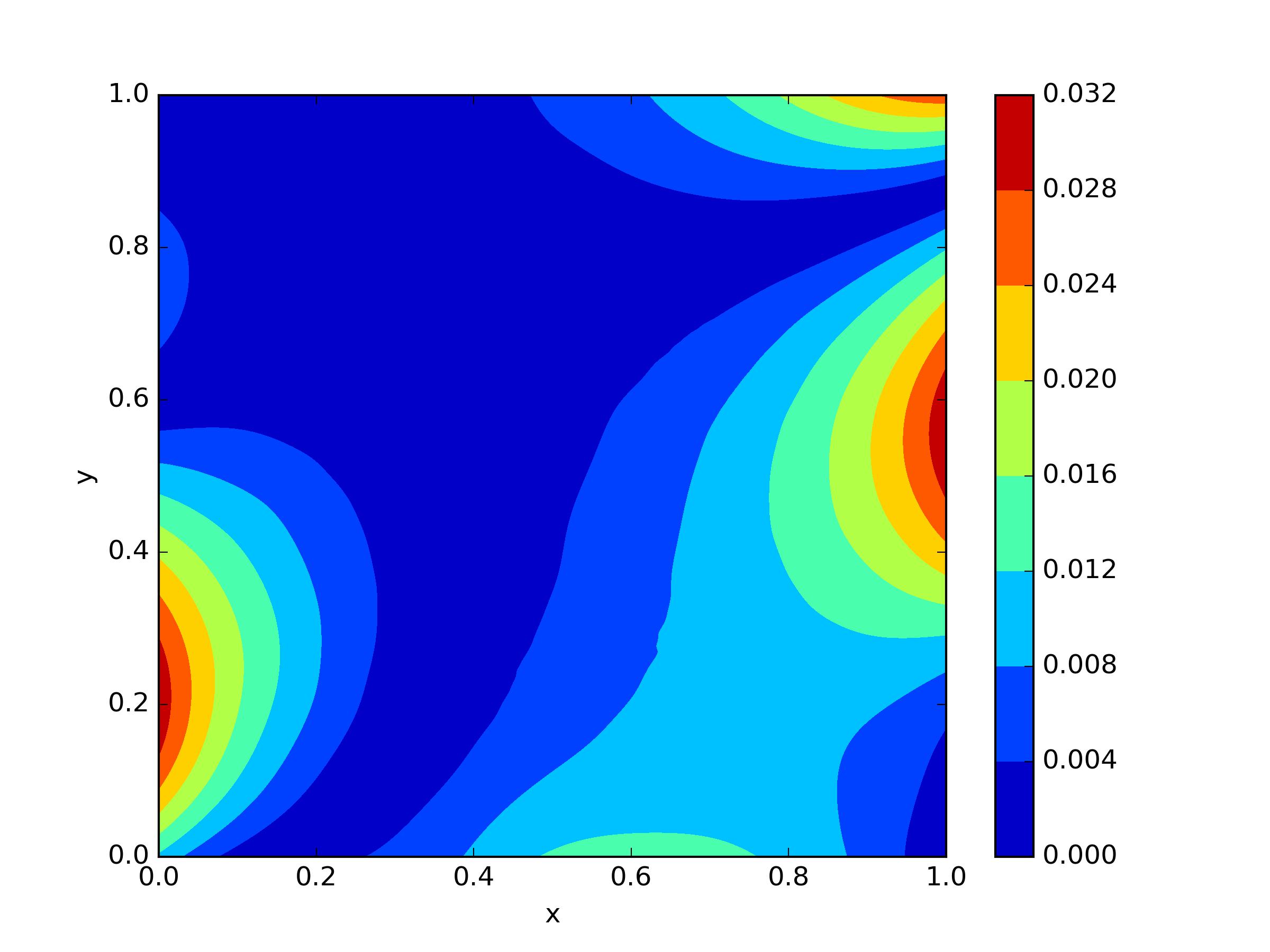}}
\subfloat[2nd error]{\includegraphics[width = 0.33\textwidth]
{./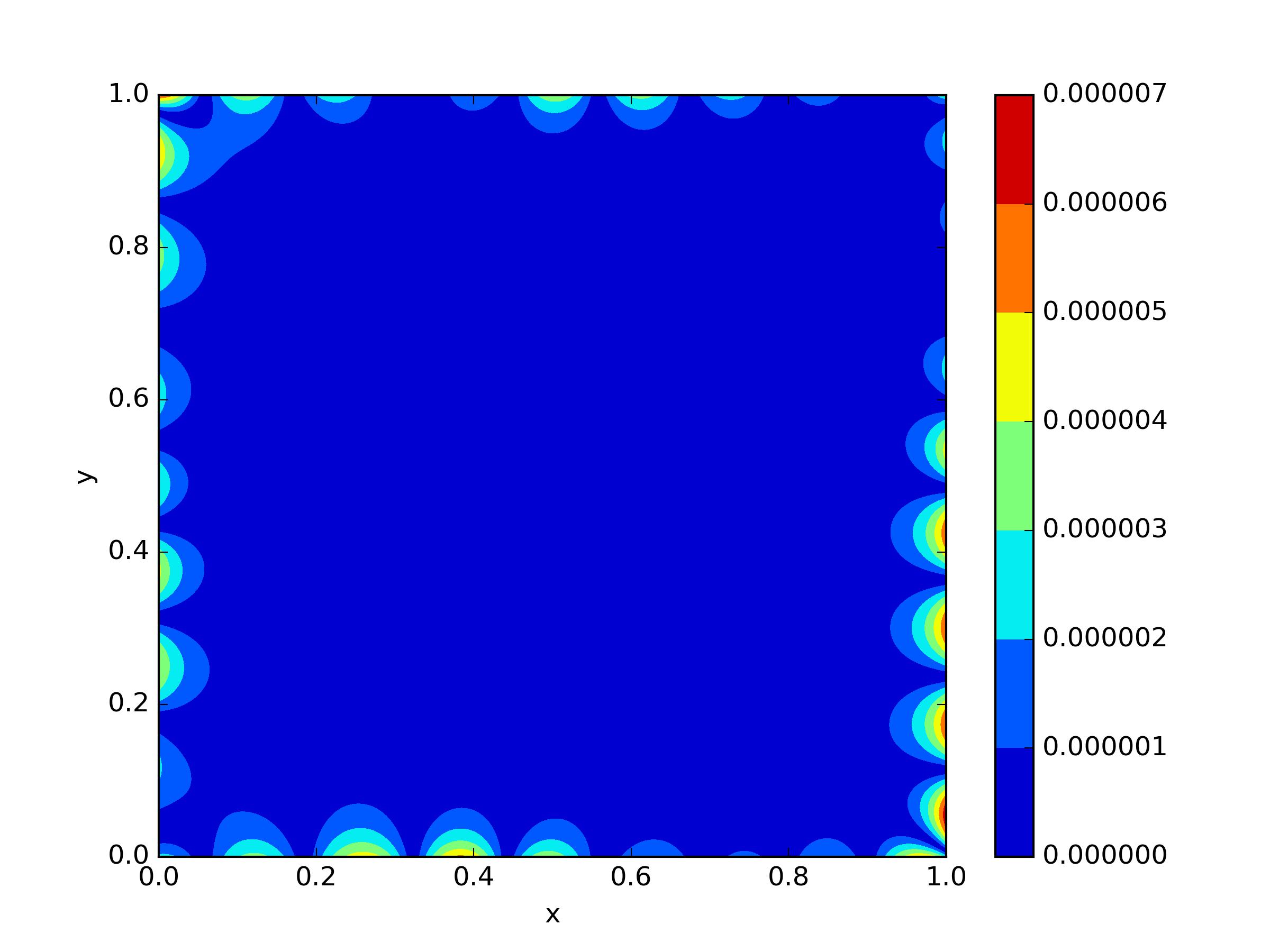}}
\subfloat[3rd error]{\includegraphics[width = 0.33\textwidth]{./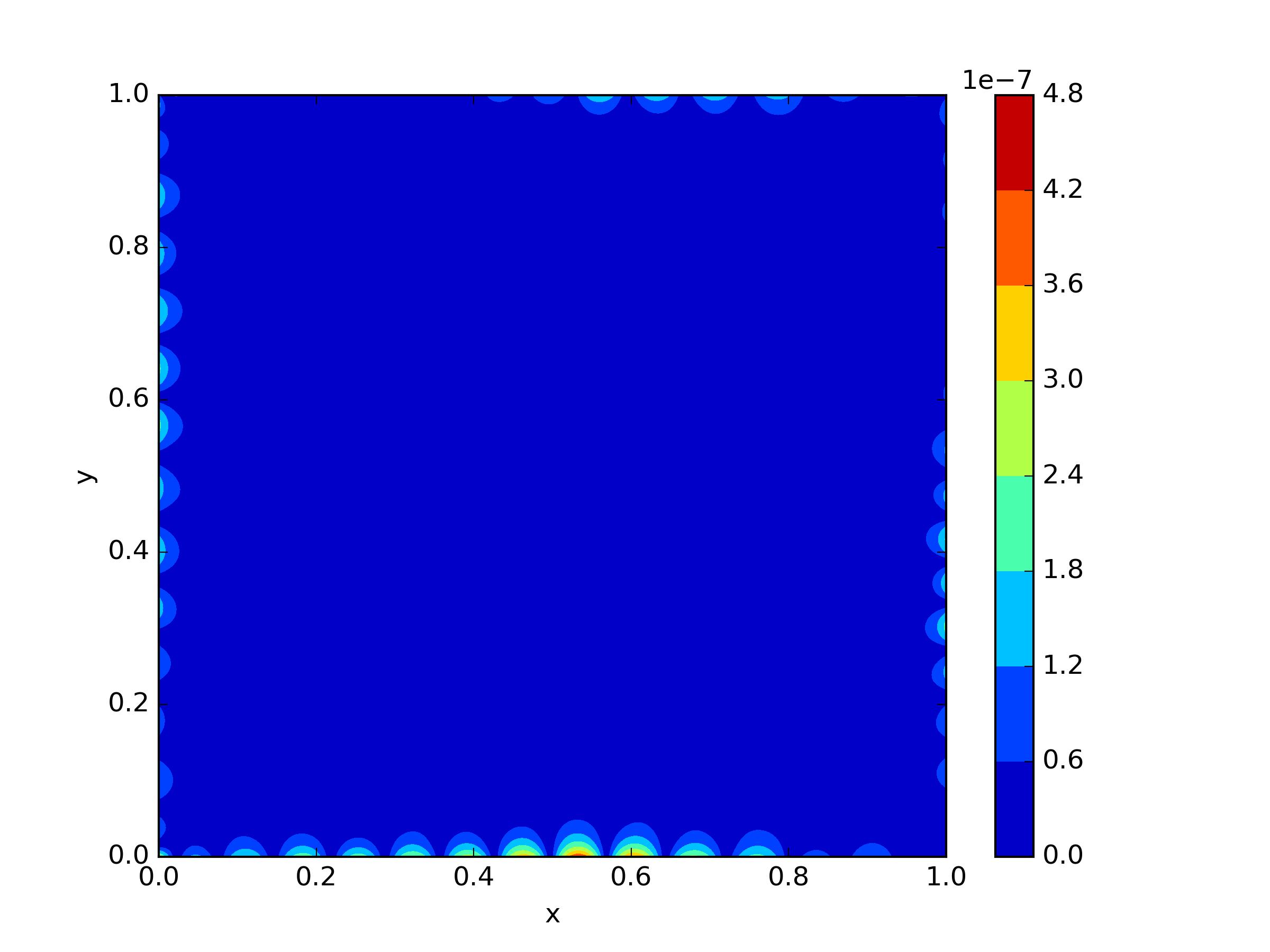}}
\caption{The numerical result of different levels for the two-dimensional Poisson equation with the solution  Eq \eqref{eq:PoissonSharpsolution}. }
\label{fig:PoissonSharp2D_results}
\end{figure}

The numerical results of different levels are given in Fig \ref{fig:PoissonSharp2D_results}. Fig \ref{fig:PoissonSharp2D_results} displays a heat-map based on absolute error $\vert \bu^*(\bx) - \bu(\bx;\theta) \vert$. It can be observed that as the level increases, the approximation error decreases, demonstrating the effectiveness of the MLT method. After three levels of training, the relative errors of the final predicted solution are  $e_\infty(\bu) = 4.400 \times 10^{-7}$ and $e_2(\bu) = 8.197 \times 10^{-8}$. In \cite{dang2024adaptive}, the best relative $L^2$ error result for problem \ref{sec:PoissonSharp_2D} in the two-dimensional case is $e_2(\bu) = 8.769 \times 10^{-7}$. In comparison, the accuracy of our method is superior.

\subsubsection{Three-dimensional Problem}
We take $d=3$ in Eq \eqref{eq:PoissonSharp} and \eqref{eq:PoissonSharpsolution} here. In this experiment, the network structure was adopted with a size of 32×8. We sample 50000 points within $\Omega$ as the residual training set and 3000 points on $ \partial\Omega$ as the boundary training set, while utilizing 160000 points for the test set, and we employ a three-level framework for training. In the first-level pre-training, we trained 20000 epochs using the SOAP method and 10000 epochs using the SSB method. Then, in the second-level training, we again trained 20000 epochs of the SOAP method and 10000  epochs of the SSB method. Finally,  in the third-level training, we employed 20000 epochs of the SSB method. 

During training at the different levels, we employed the MLS method described in Section \ref{sec:MLS}. 
 Fig \ref{fig:PoissonSharp3D_points} illustrates the effect of the MLS method across the various levels; it can be seen that as the level increases, the points progressively concentrate toward the origin, because the solution has very low regularity in the vicinity of the origin. It can be seen that the MLS method is able to capture this property effectively.

\begin{figure}[htbp]
\centering
\subfloat[1st level]{\includegraphics[width = 0.33\textwidth]{./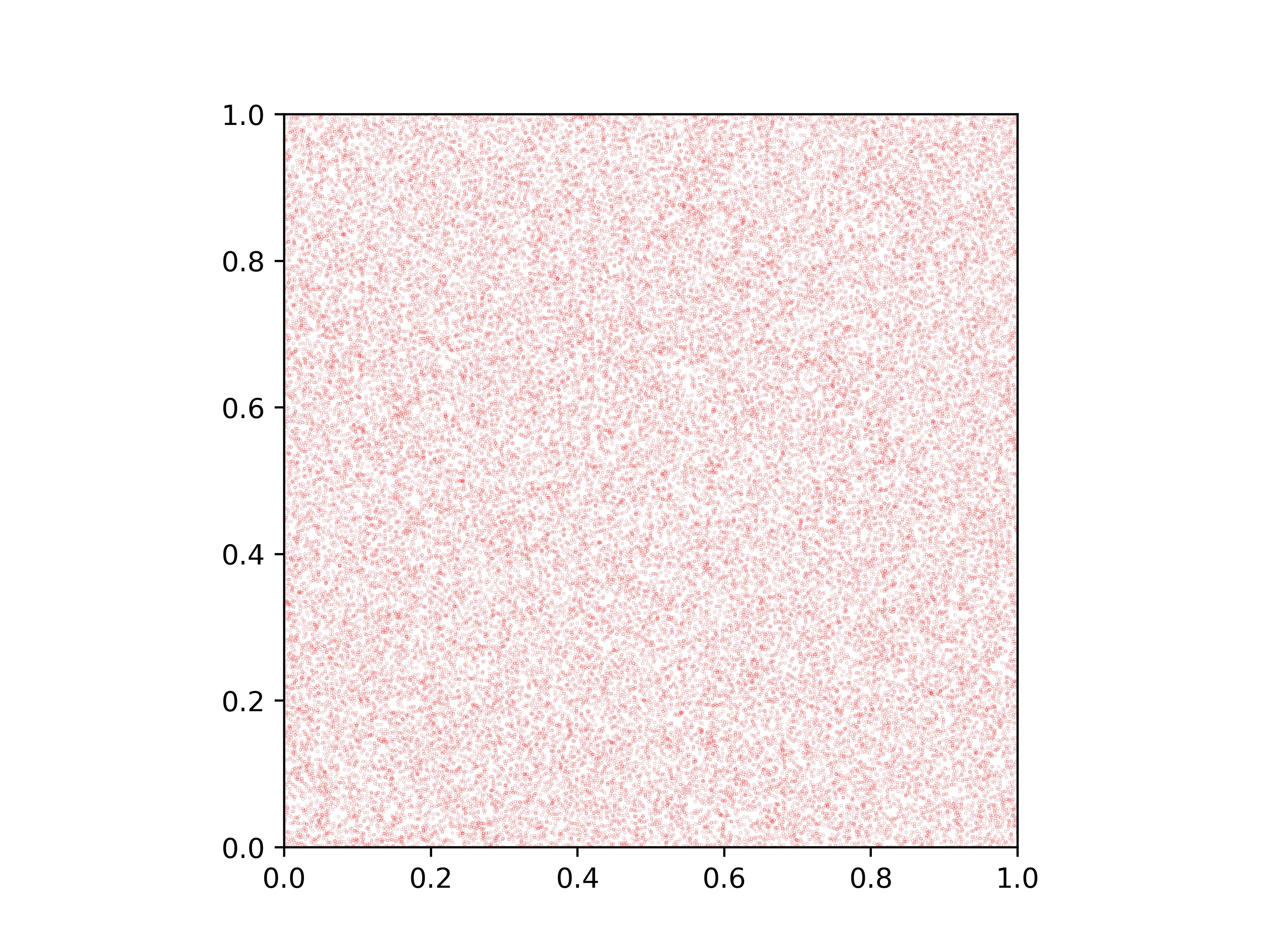}}
\subfloat[2nd level]{\includegraphics[width = 0.33\textwidth]
{./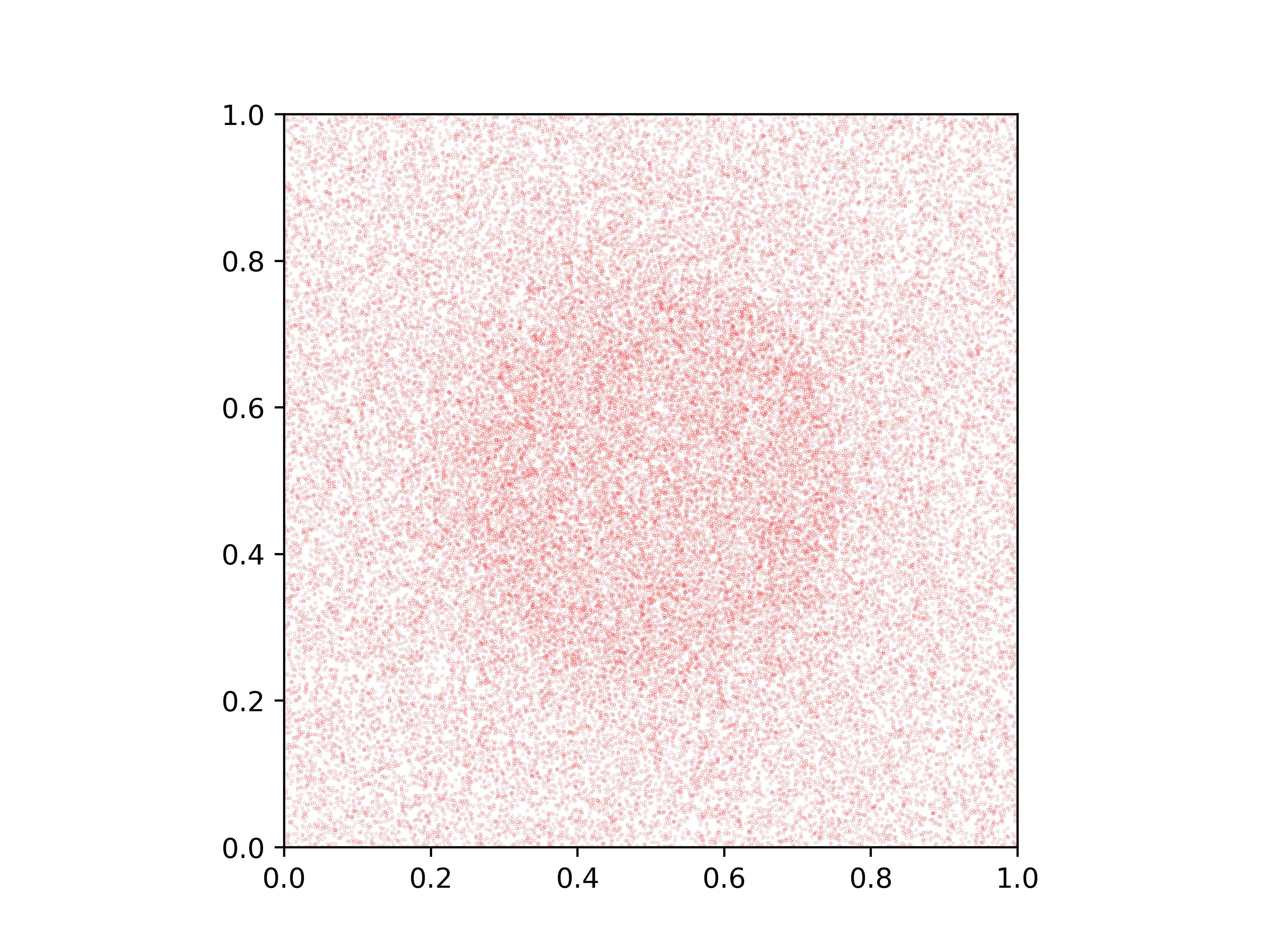}}
\subfloat[3rd level]{\includegraphics[width = 0.33\textwidth]{./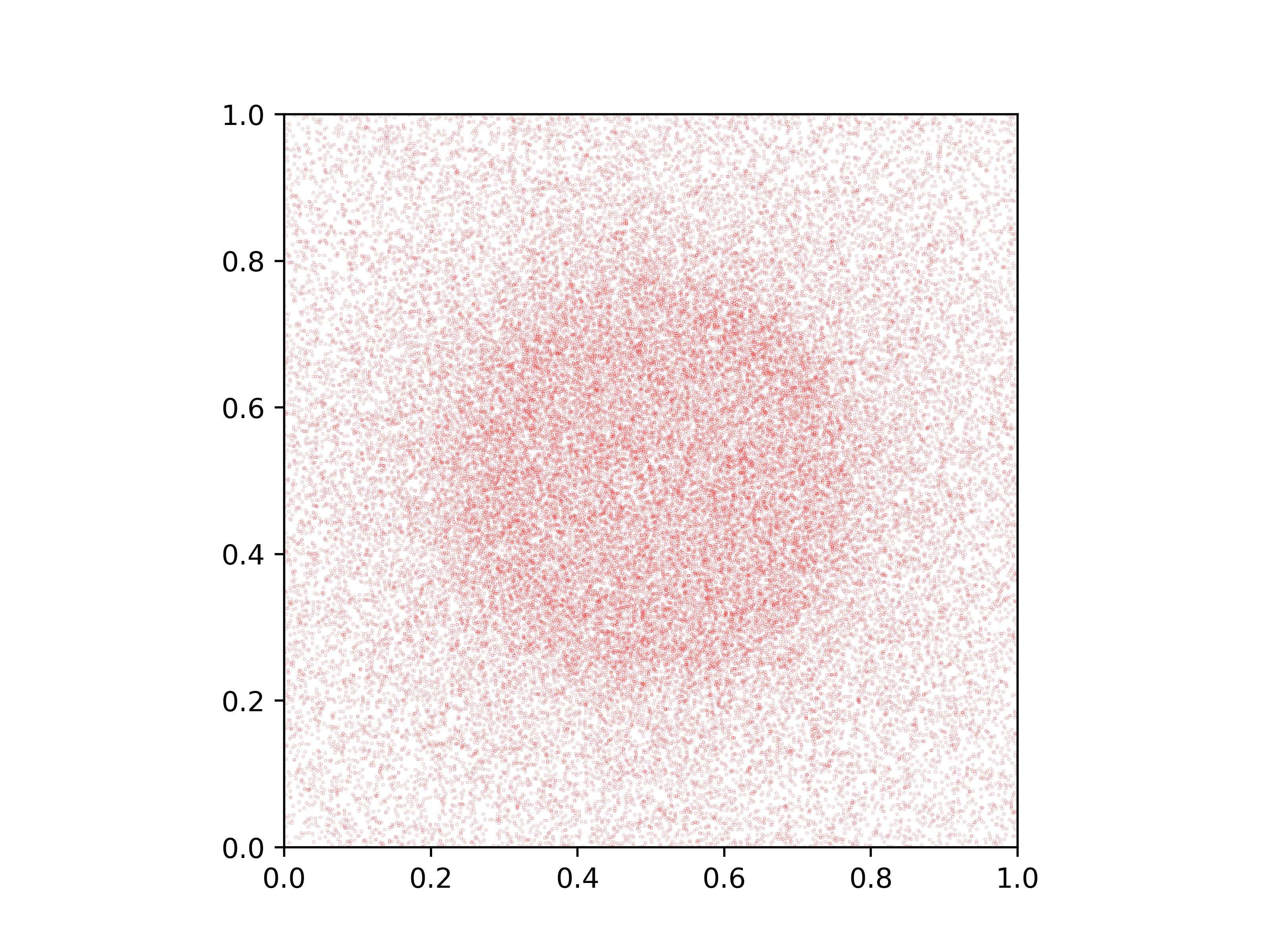}}
\caption{The sampling points at different levels for the three-dimensional Poisson equation with the solution  Eq \eqref{eq:PoissonSharpsolution}.}
\label{fig:PoissonSharp3D_points}
\end{figure}


\begin{figure}[htbp]
\centering
\subfloat[1st  prediction]{\includegraphics[width = 0.33\textwidth]{./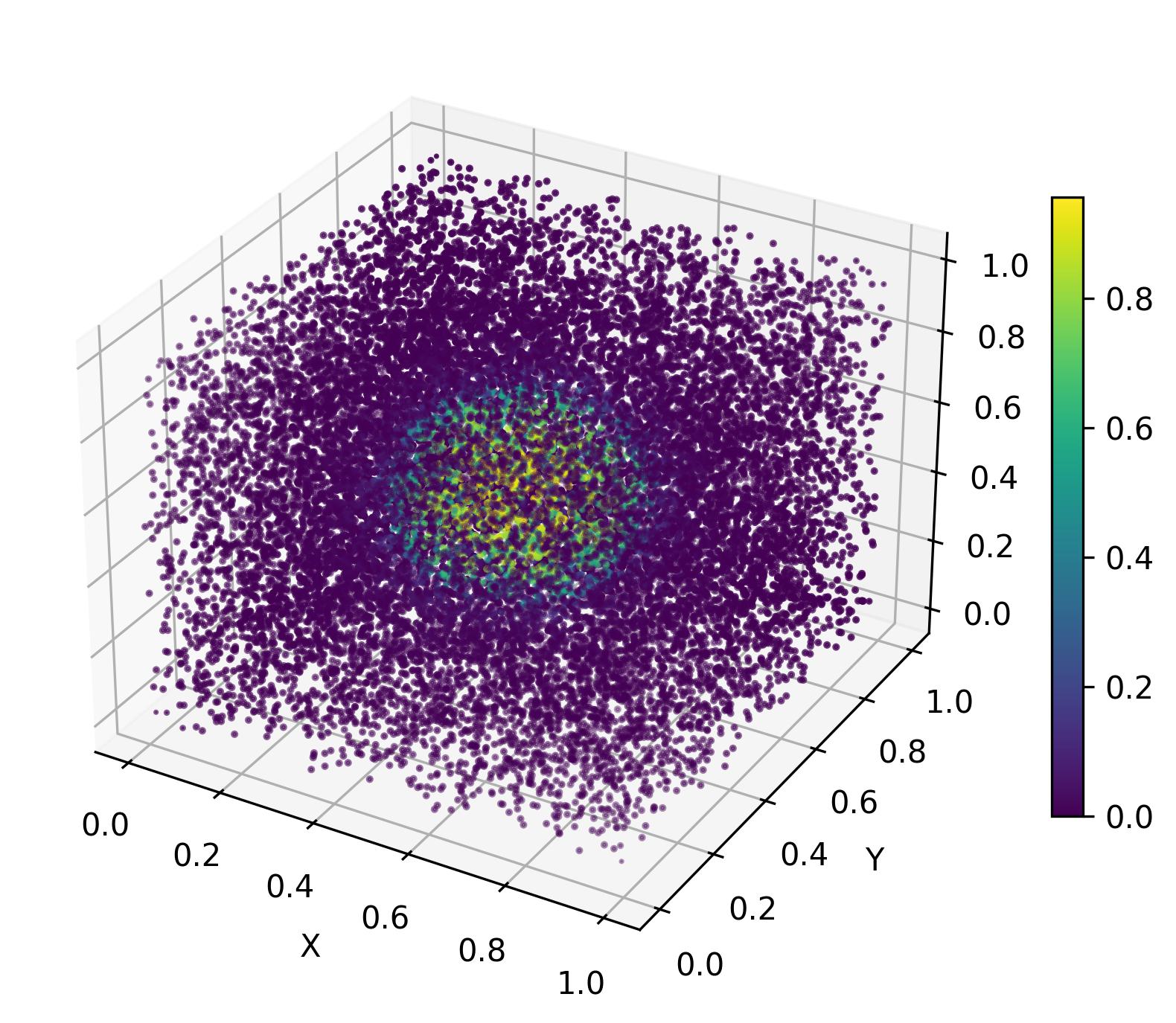}}
\subfloat[2nd prediction]{\includegraphics[width = 0.33\textwidth]
{./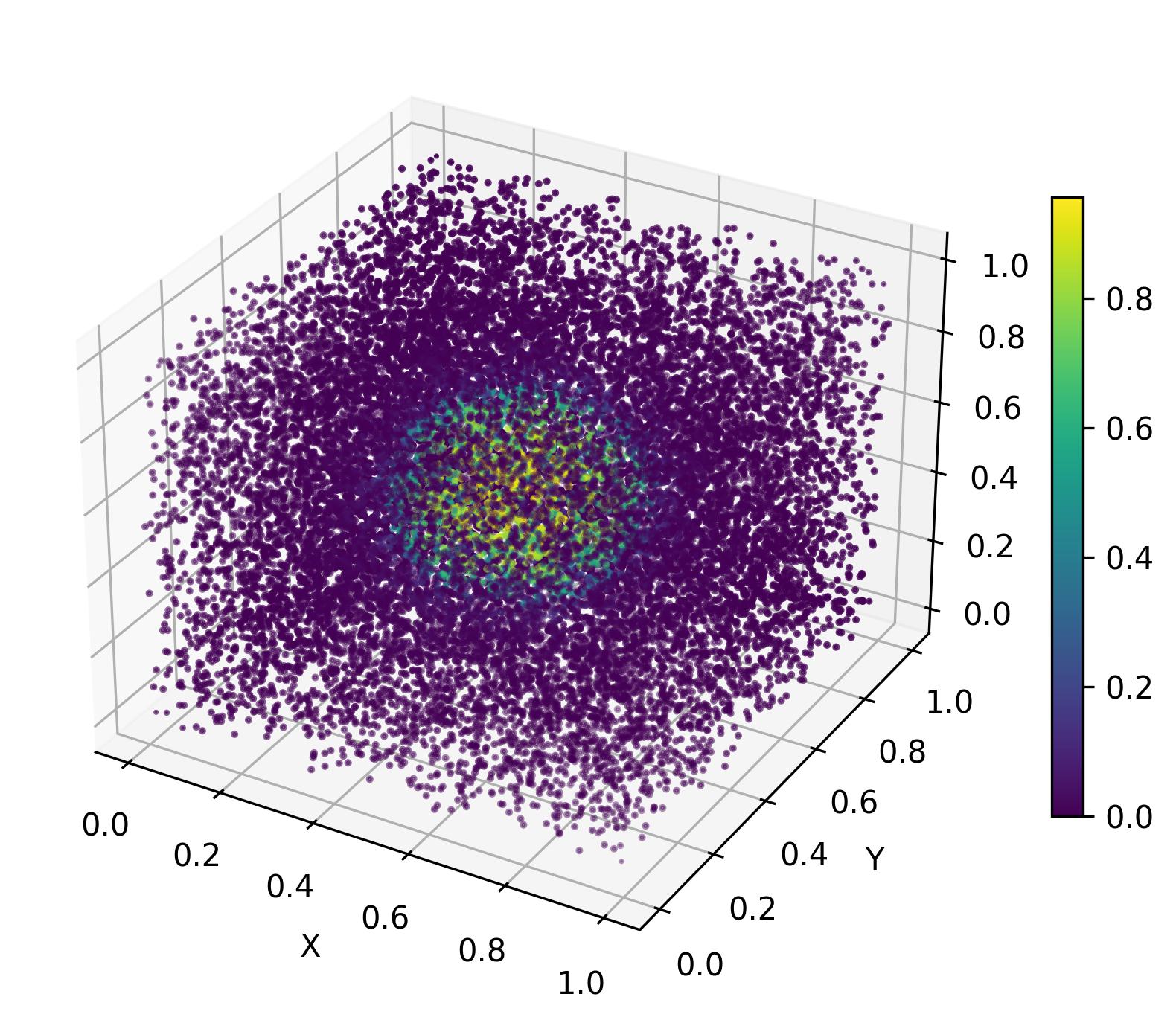}}
\subfloat[3rd prediction]{\includegraphics[width = 0.33\textwidth]{./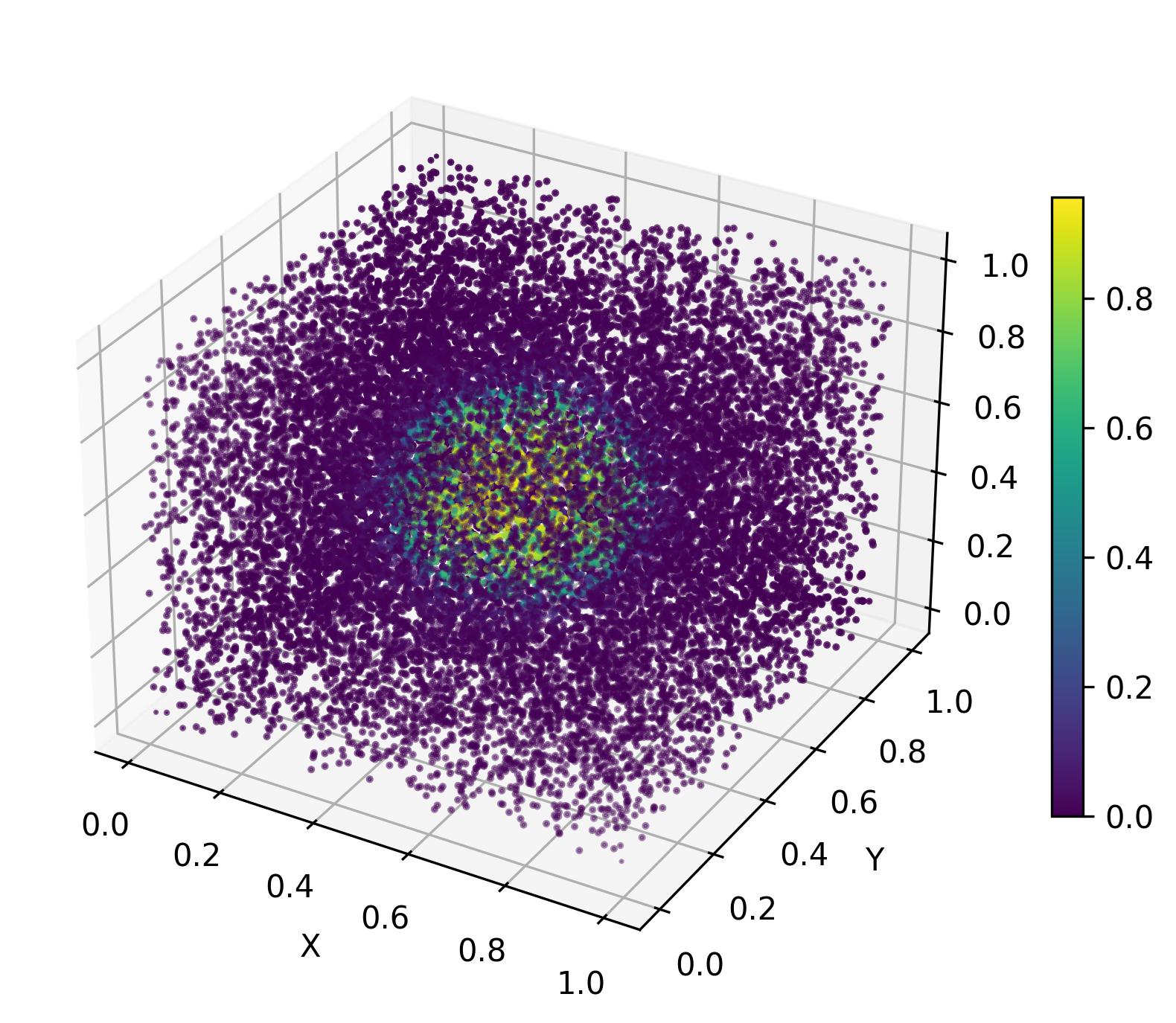}}\\
\subfloat[1st  error]{\includegraphics[width = 0.33\textwidth]{./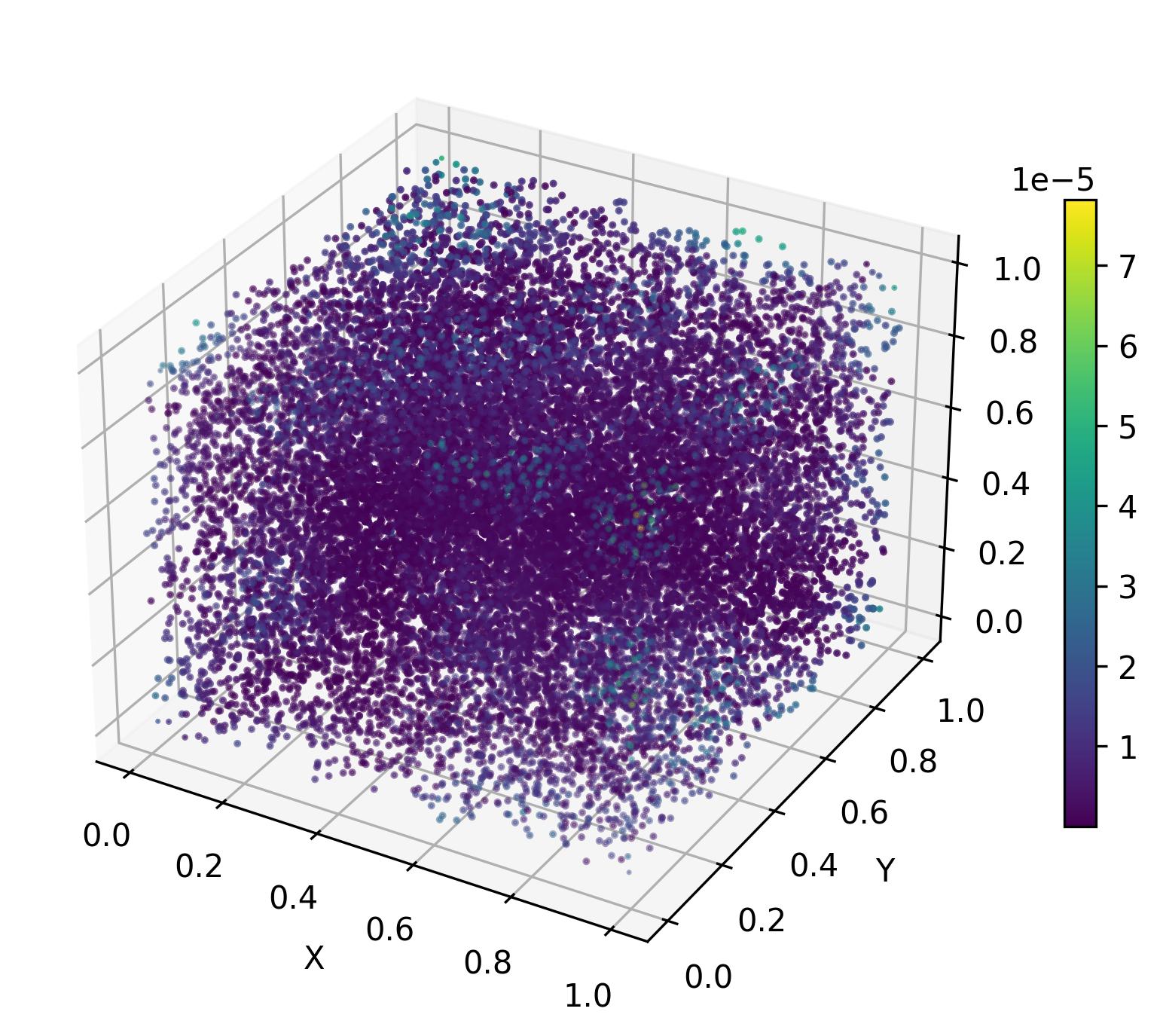}}
\subfloat[2nd  error]{\includegraphics[width = 0.33\textwidth]
{./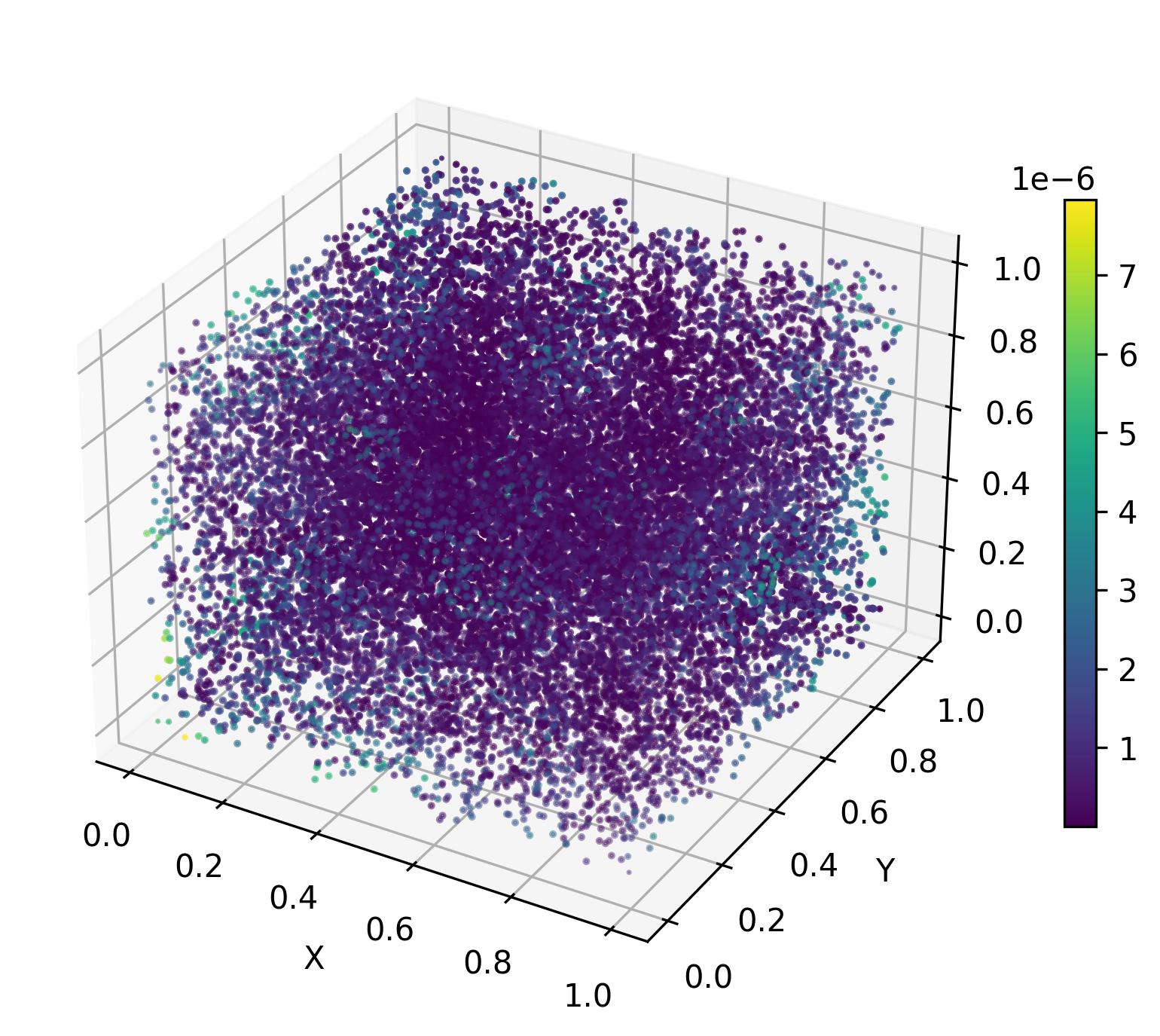}}
\subfloat[3rd  error]{\includegraphics[width = 0.33\textwidth]{./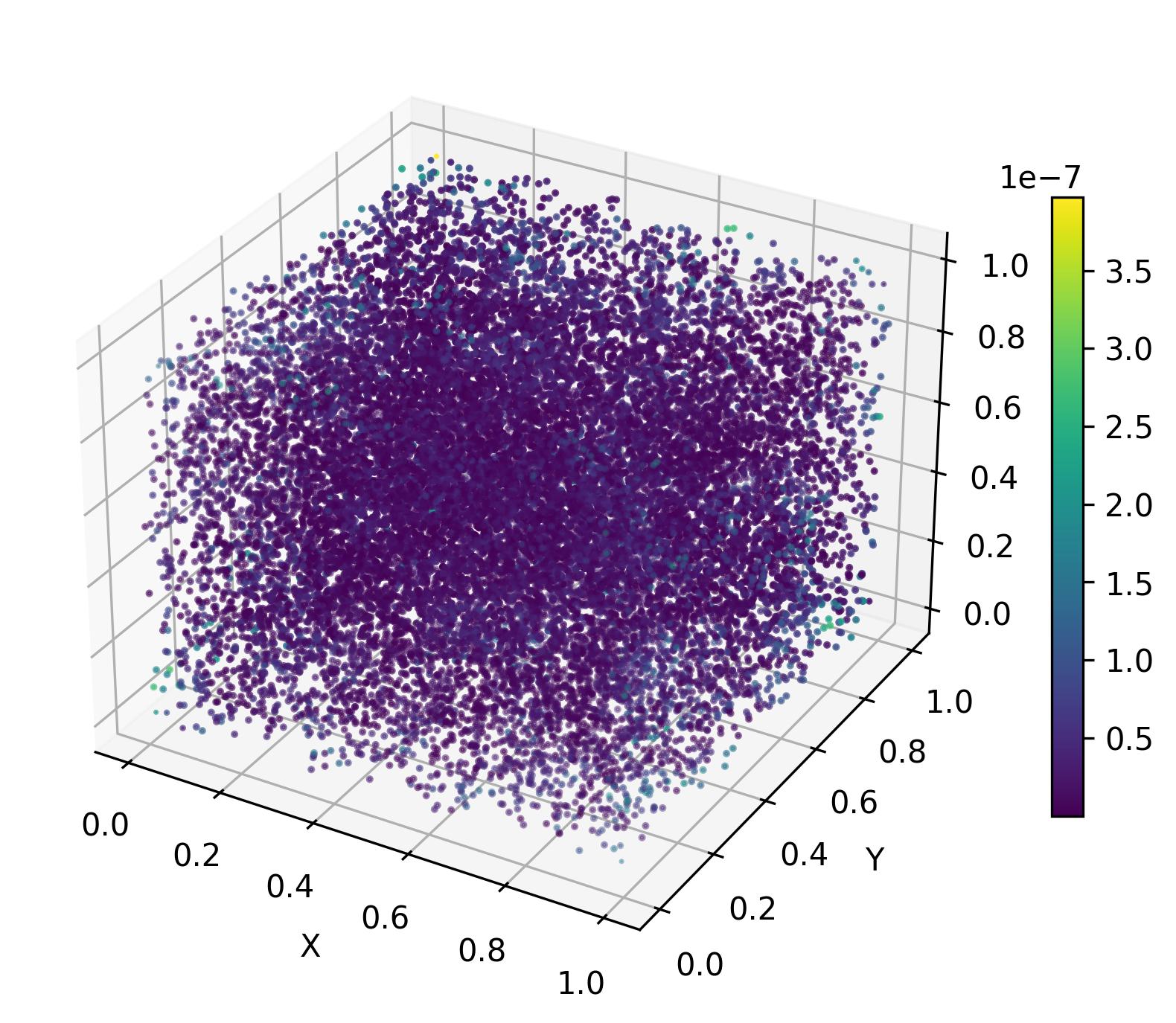}}
\caption{The numerical result of different levels for the three-dimensional Poisson equation with the solution  Eq \eqref{eq:PoissonSharpsolution}. For clearer visualization, only 20000 random points within the computational domain were sampled when generating this plot. }
\label{fig:PoissonSharp3D_results}
\end{figure}

The numerical results of different levels are given in Fig \ref{fig:PoissonSharp3D_results}. Fig \ref{fig:PoissonSharp3D_results} displays a heat-map based on absolute error $\vert \bu^*(\bx) - \bu(\bx;\theta) \vert$. It can be observed that as the level increases, the approximation error decreases, demonstrating the effectiveness of the MLT method. After three levels of training, the relative errors of the final predicted solution are  $e_\infty(\bu) = 8.707 \times 10^{-7}$ and $e_2(\bu) = 1.442 \times 10^{-7}$. In \cite{dang2024adaptive}, the best relative $L^2$ error result for problem \ref{sec:PoissonSharp_2D} in the three-dimensional case is $e_2(\bu) = 8.330 \times 10^{-4}$. In comparison, the accuracy of our method is superior.

\subsection{Two-dimensional Poisson Equation with L shaped domain}
\label{sec:Lshape_2D}

For the following Poisson equation in two-dimension
\begin{equation}
	\label{eq:2d_Lshape}
	\hspace{-0.3cm}
	\begin{array}{r@{}l}
		\left\{
		\begin{aligned}
			 -\Delta u(x,y) & = f(x,y), \quad (x,y) \ \mbox{in} \ \Omega, \\
                  u(x,y) & = g(x,y),  \quad  (x,y) \ \mbox{on} \  \partial \Omega,
		\end{aligned}
		\right.
	\end{array}
\end{equation}
where the domain is L shaped $\Omega = (-1,1)^2  \backslash (0,1) \times (-1,0) $,  the exact solution  is chosen as
\begin{equation}
    \label{eq:2d_Lshapesolution}
    \hspace{-0.3cm}
    \begin{array}{r@{}l}
        \begin{aligned}
            u = r^{2/3}sin\frac{2\theta}{3} - \frac{x^2+y^2-2}{4}.
        \end{aligned}
    \end{array}
\end{equation}
where r and $\theta$ are the coordinates in the polar coordinate system, i.e., $r=\sqrt{x^2+y^2}, \tan(\theta) =\frac{y}{x}$. 

In this experiment, we sample $7500$ points within $\Omega$ as the residual training set and 1000 points on $ \partial\Omega$ as the boundary training set, while utilizing 119600 points for the test set, and we employ a three-level framework for training . In the first-level pre-training, we trained 20000 epochs using the SOAP method. Then, in the second-level training, we again trained 10000 epochs of the SOAP method and 5000 epochs of the SSB method.  Finally, in the third-level training, we employed 2500 epochs of the SOAP method and 12500  epochs of the SSB method. 

During training at the different levels, we employed the MLS method described in Section \ref{sec:MLS}. The large gradient of the solution near the origin is a major cause of difficulty in solving the problem. To mitigate the impact of the gradient term, we adjusted the parameters ($\gamma=0.001$,$c=0.1$) in the MLS method. Fig \ref{fig:2dLshape_points} illustrates the effect of the MLS method across the various levels; it can be seen that as the level increases, the points progressively concentrate toward $(0,0)$, because the solution has very low regularity in the vicinity of $(0,0)$. It can be seen that the MLS method is able to capture this property effectively.
\begin{figure}[htbp]
\centering
\subfloat[1st level]{\includegraphics[width = 0.33\textwidth]{./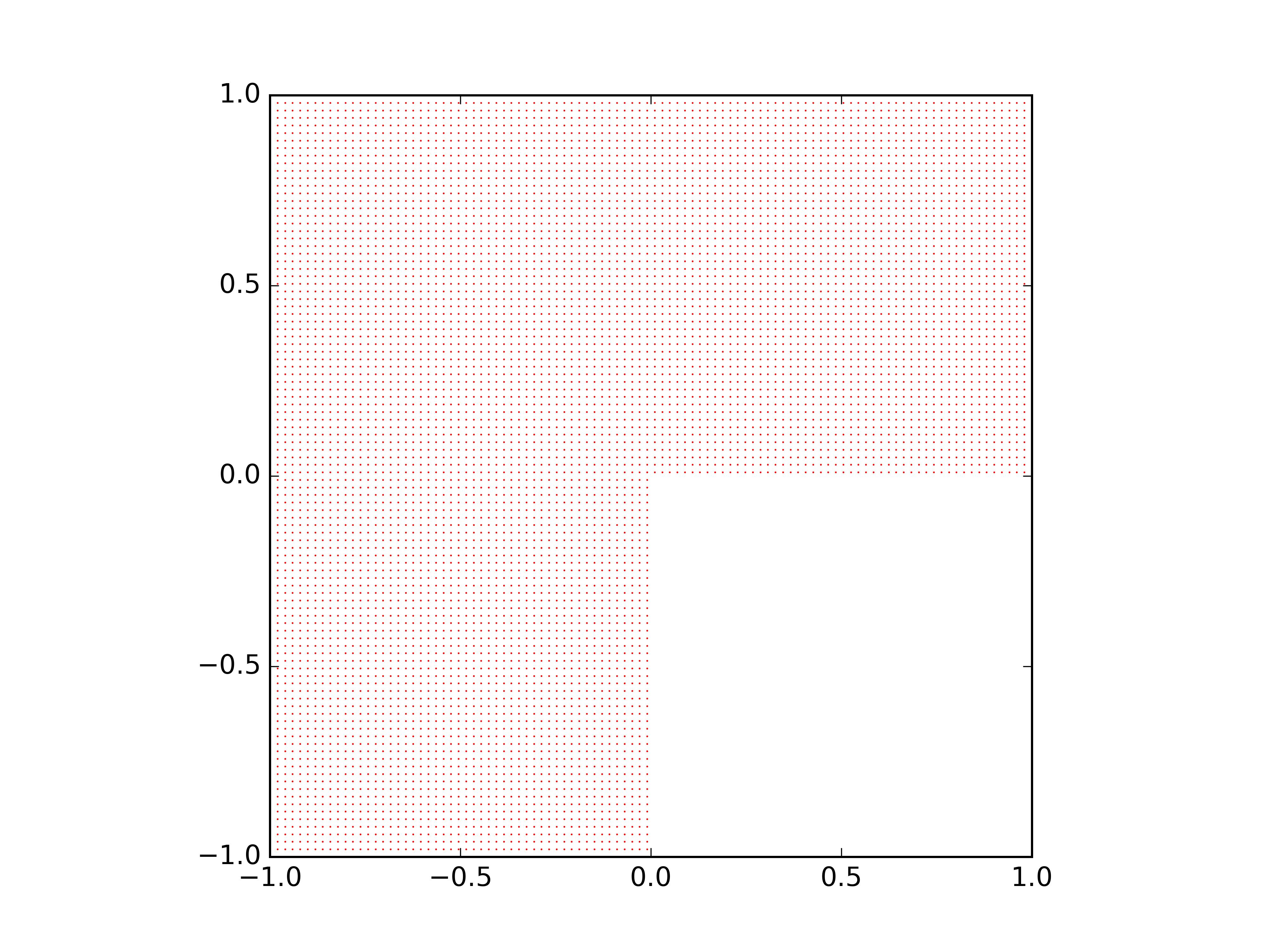}}
\subfloat[2nd level]{\includegraphics[width = 0.33\textwidth]
{./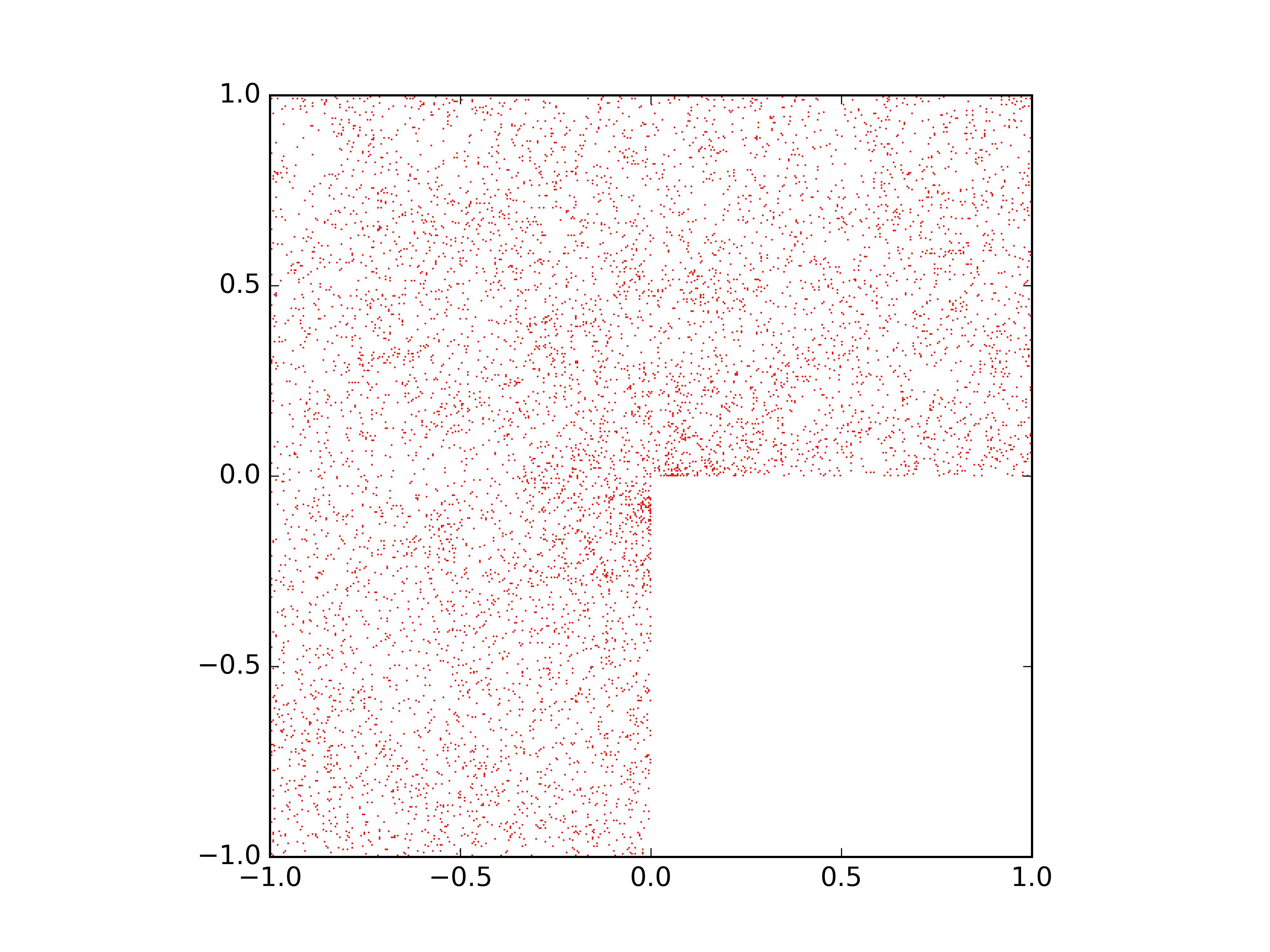}}
\subfloat[3rd level]{\includegraphics[width = 0.33\textwidth]{./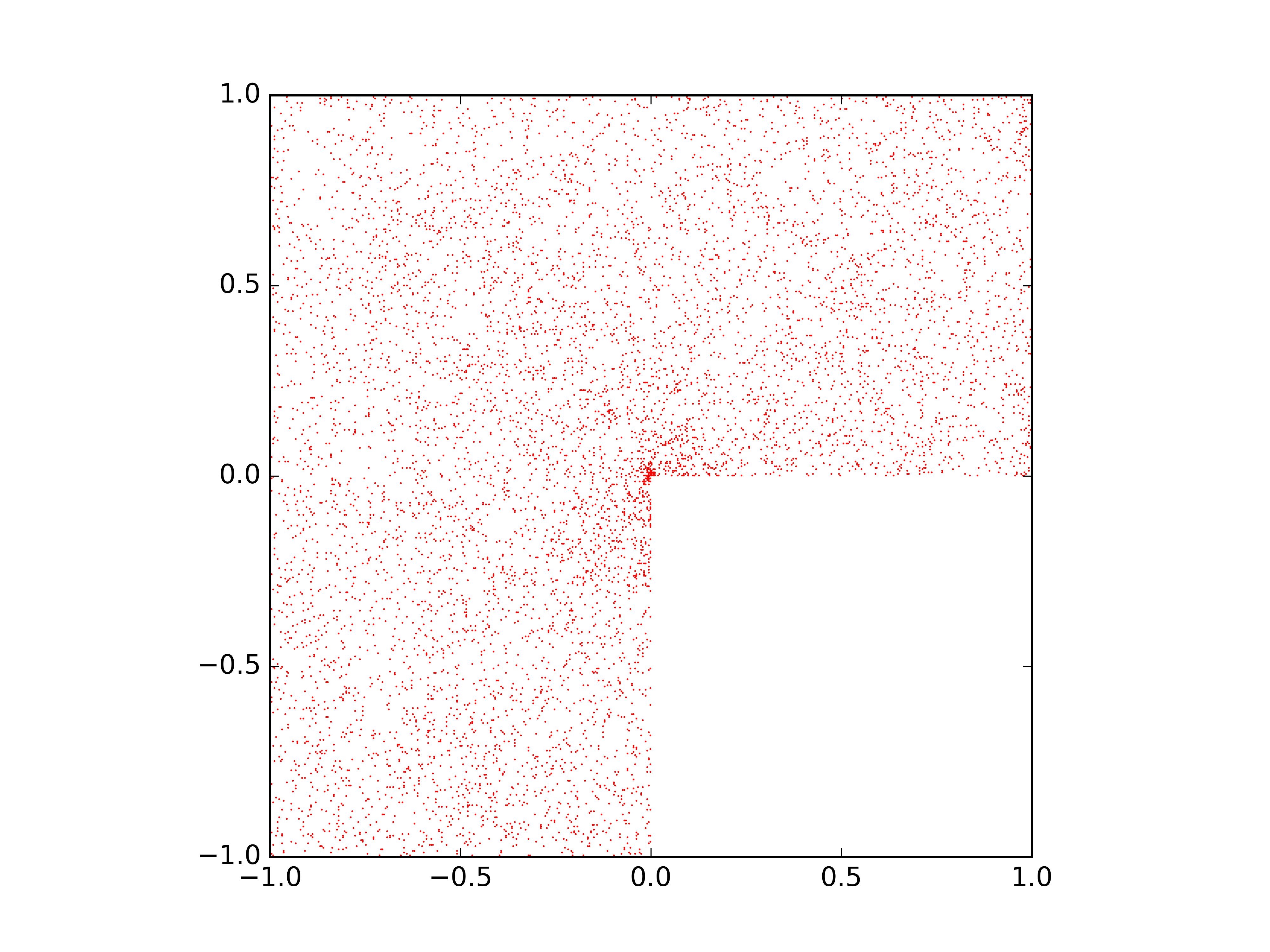}}
\caption{The sampling points at different levels for the two-dimensional Poisson equation  with the solution Eq \eqref{eq:2d_Lshapesolution}.}
\label{fig:2dLshape_points}
\end{figure}

The numerical results of different levels are given in Fig \ref{fig:2dLshape_results}. Fig \ref{fig:2dLshape_results} illustrates figures based on the predicted solutions $\bu(\bx;\theta) \vert$ and the absolute error $\vert \bu^*(\bx) - \bu(\bx;\theta) \vert$. It can be observed that as the level increases, the approximation error decreases, demonstrating the effectiveness of our multi-level deep framework.

\begin{figure}[htbp]
\centering
\subfloat[1st prediction]{\includegraphics[width = 0.33\textwidth]{./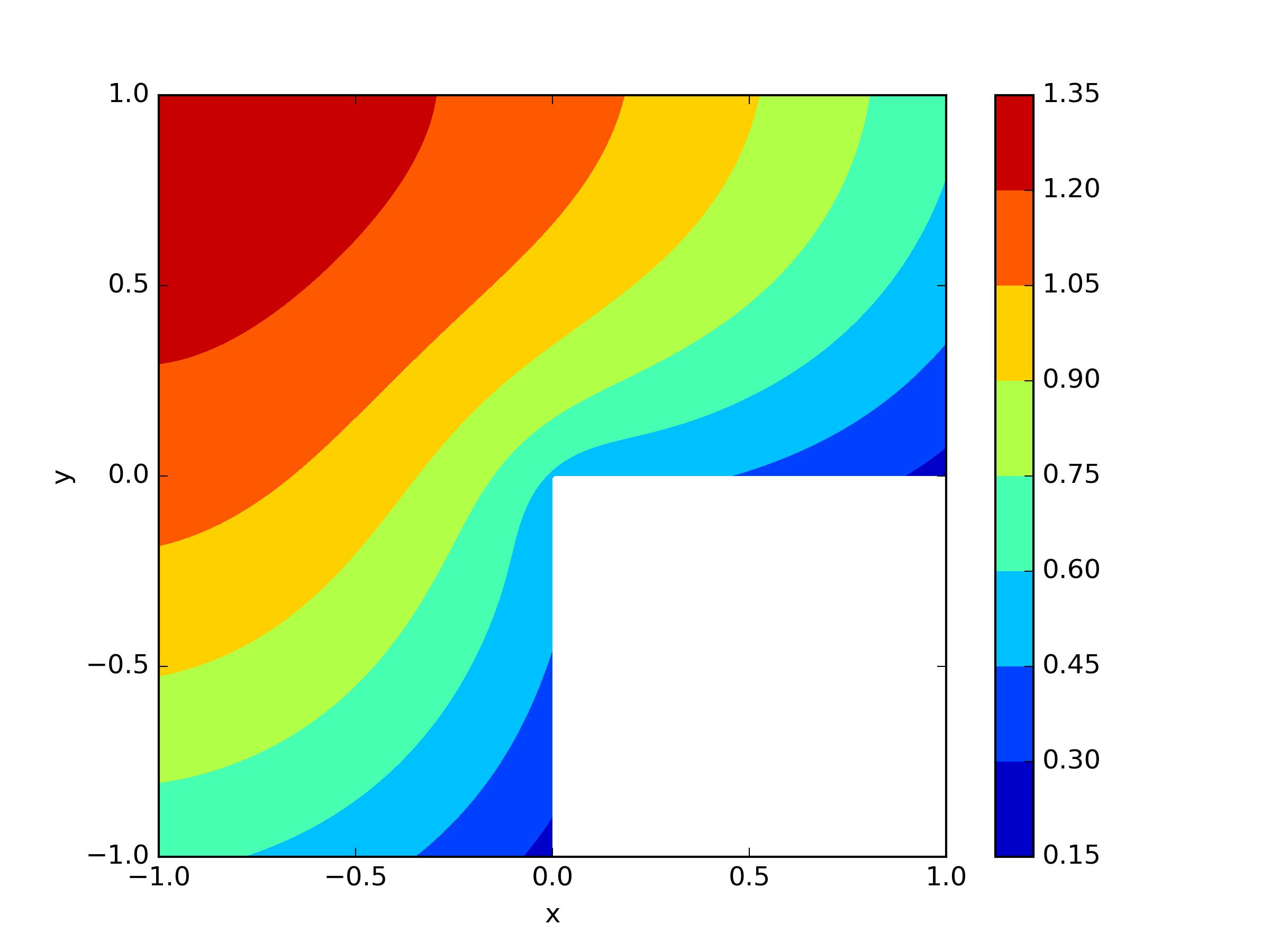}}
\subfloat[2nd prediction]{\includegraphics[width = 0.33\textwidth]
{./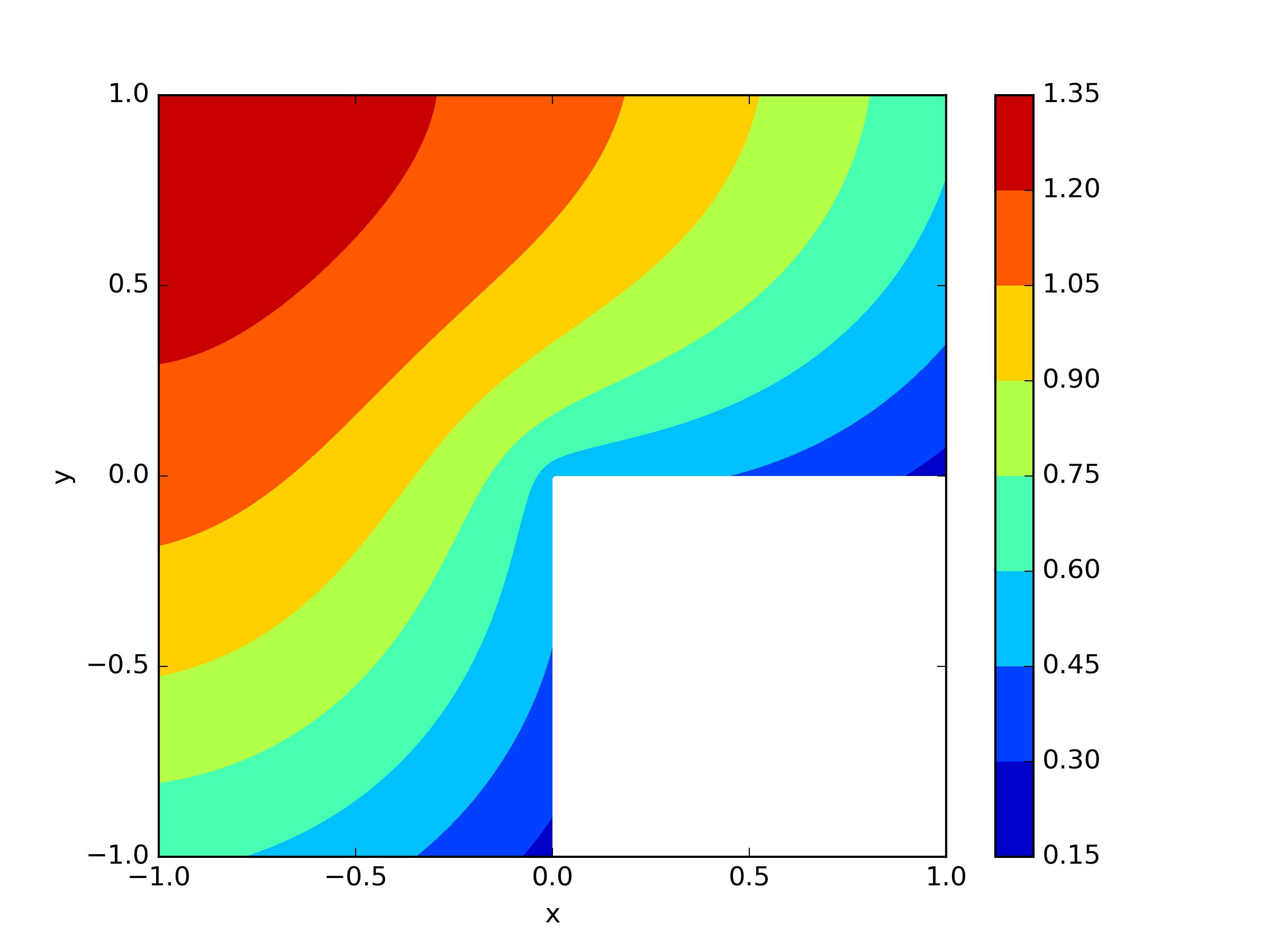}}
\subfloat[3rd prediction]{\includegraphics[width = 0.33\textwidth]{./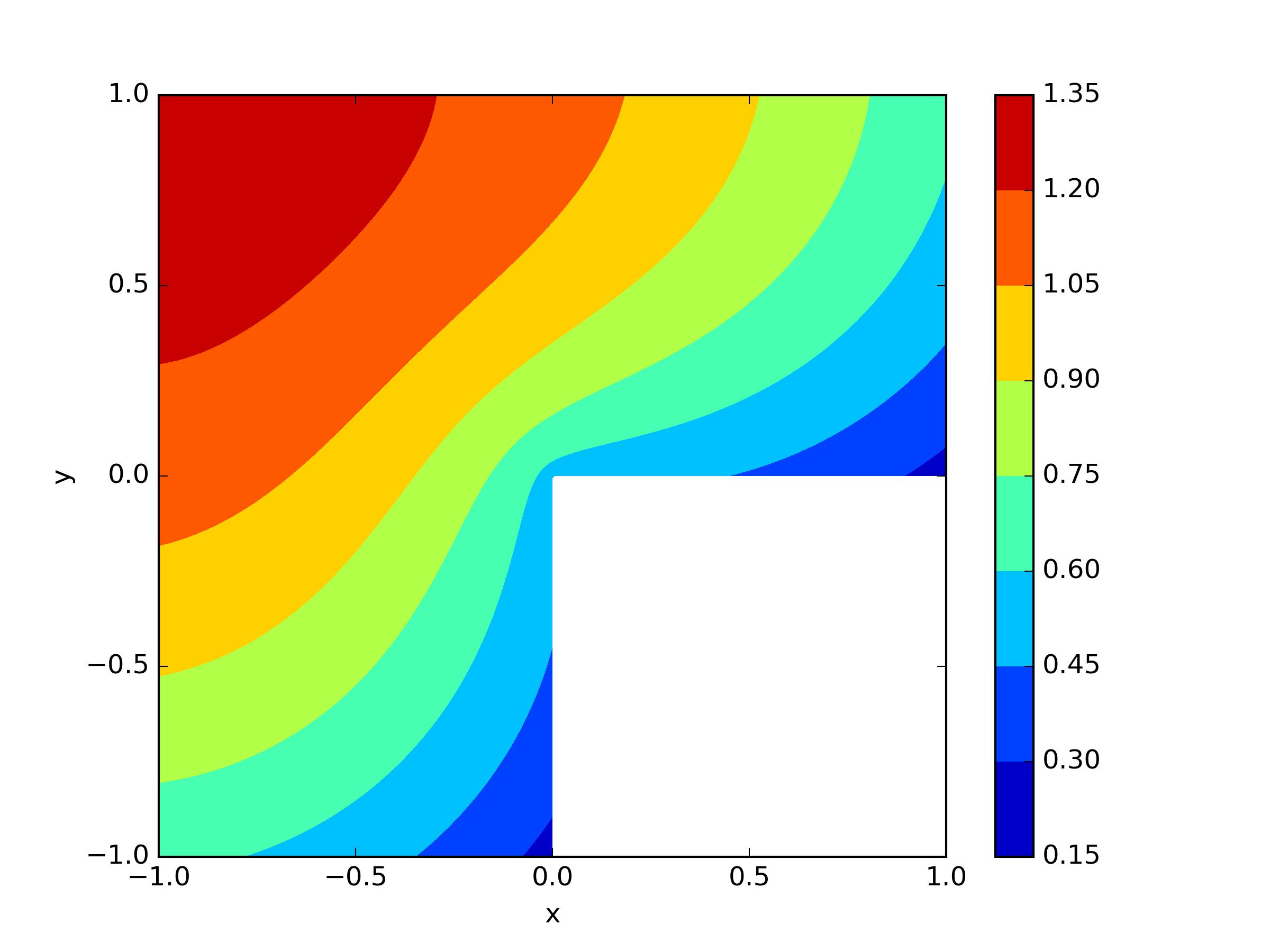}}\\
\subfloat[1st error]{\includegraphics[width = 0.33\textwidth]{./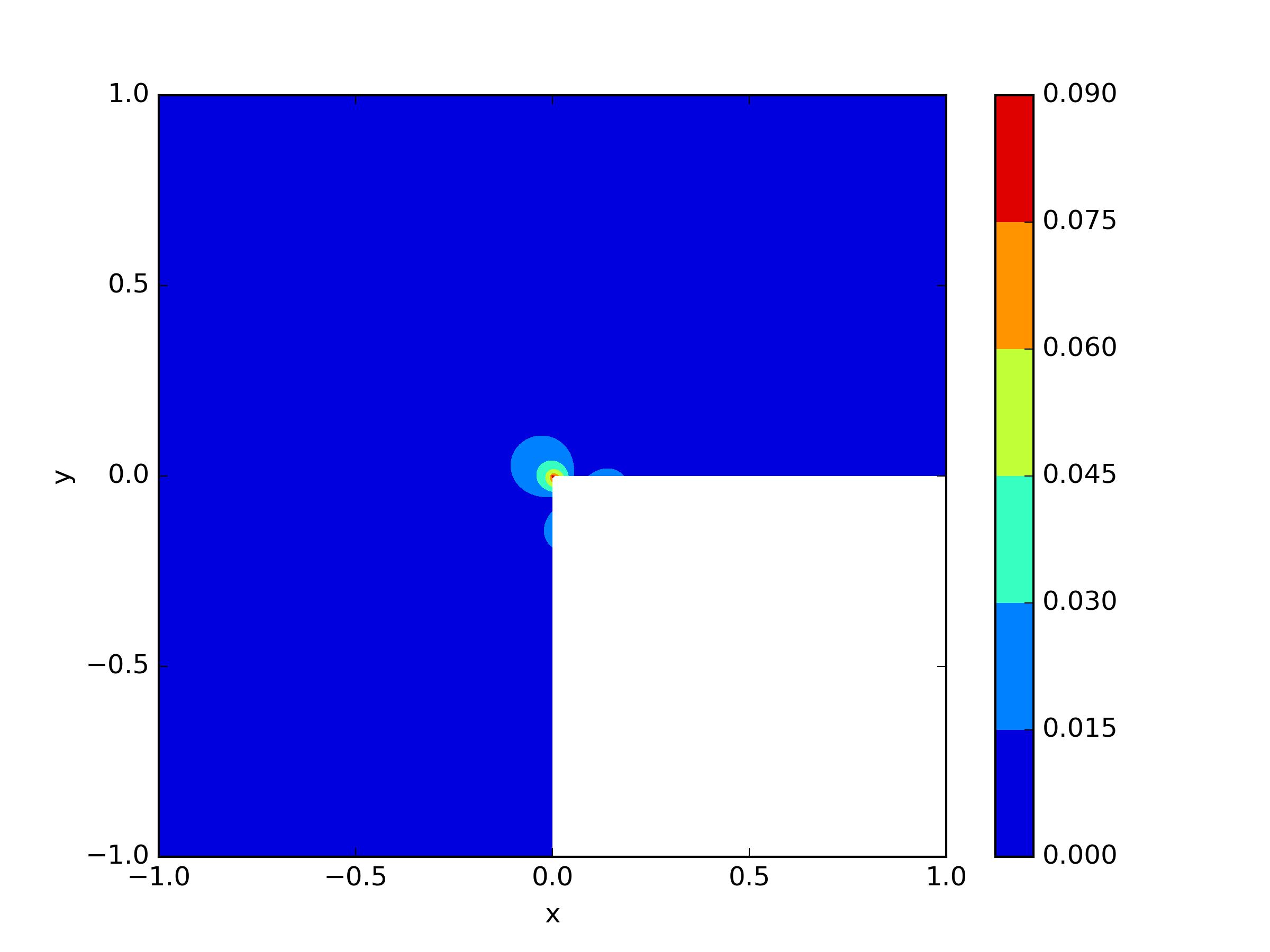}}
\subfloat[2nd error]{\includegraphics[width = 0.33\textwidth]
{./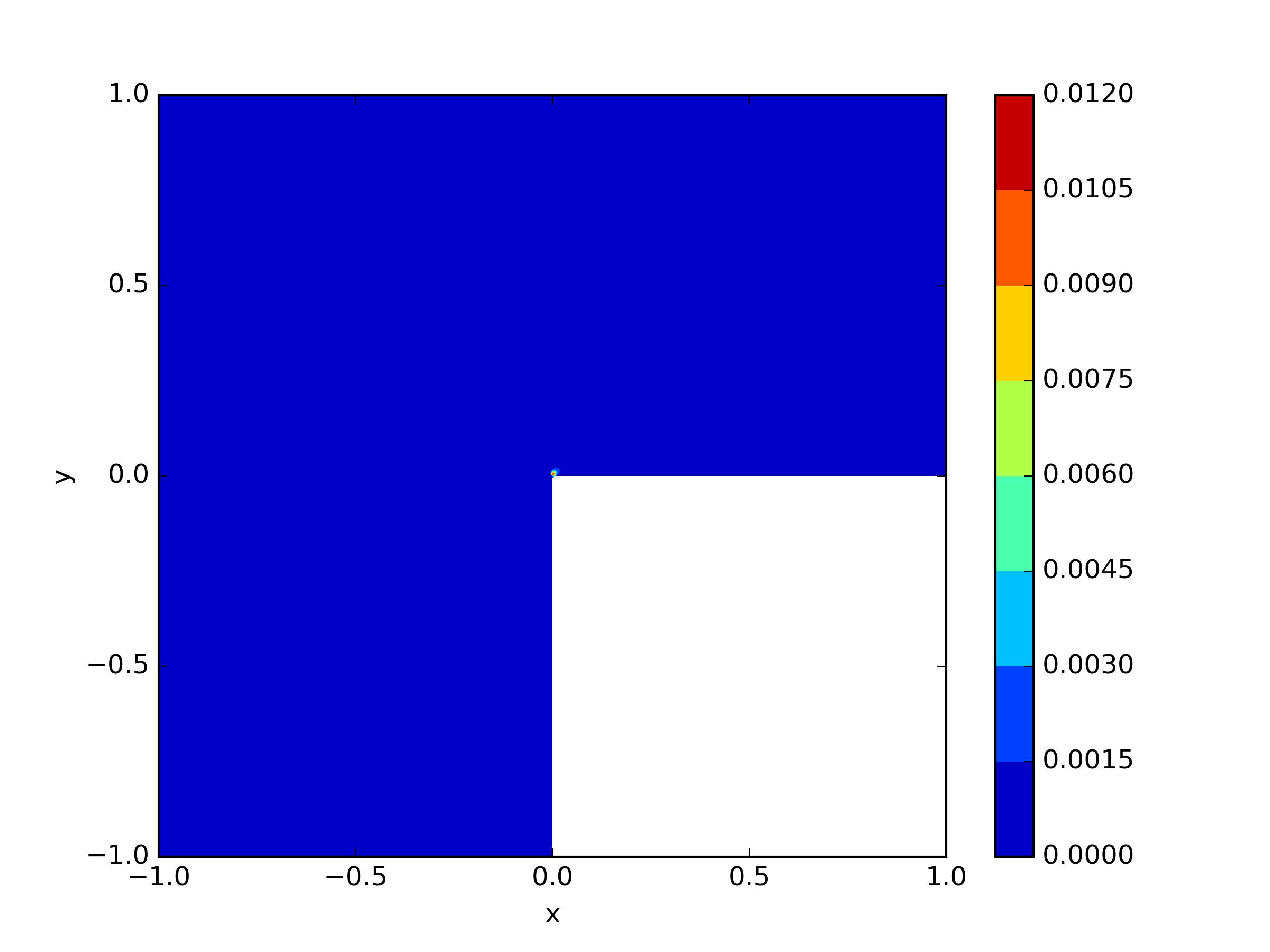}}
\subfloat[3rd error]{\includegraphics[width = 0.33\textwidth]{./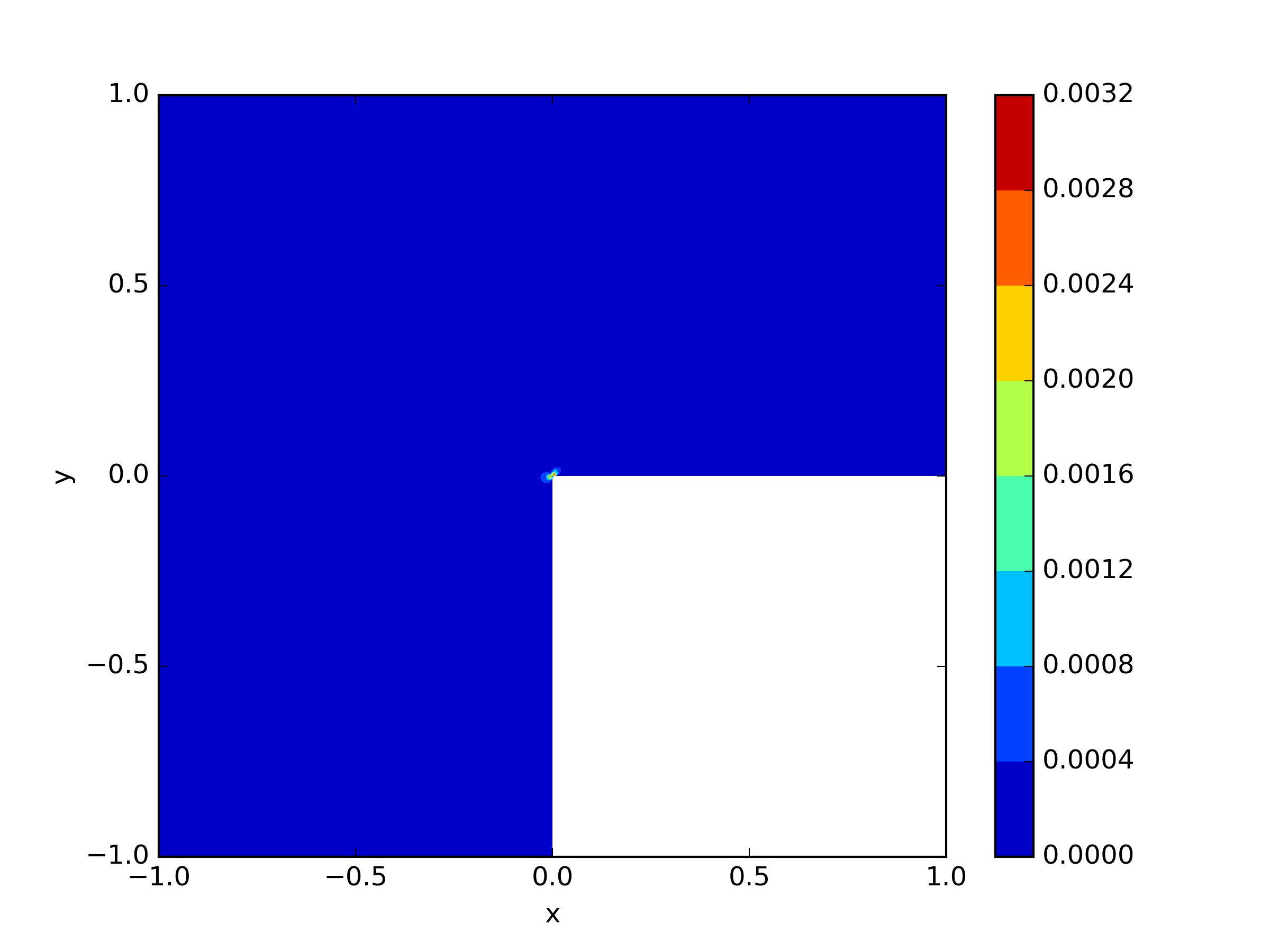}}
\caption{The numerical result of different levels for the two-dimensional Poisson equation  with the solution Eq \eqref{eq:2d_Lshapesolution}. (a)--(c) The predicted $\bu(\bx;\theta) $. (d)–(f) Performance of the absolute error $\vert \bu^*(\bx) - \bu(\bx;\theta) \vert$.}
\label{fig:2dLshape_results}
\end{figure}

In  Fig \ref{fig:2d_Lshape_ErrorEpochs}, we compare the commonly used RAD method in PINNs under the same number of training epochs. 
Specifically, the training process consists of pre-training with 20000 iterations of Adam optimizer, followed by sampling with the RAD method, finally trains again for 12500 Adam epochs and 17500 L-BFGS epochs. Fig \ref{fig:2d_Lshape_ErrorEpochs} illustrates the variation of the relative errors during the training process, from which the superiority of our method can be observed. 
Ultimately,  the relative errors of our method in solving the two-dimensional Poisson equation  with the solution Eq \eqref{eq:2d_Lshapesolution} are  $e_\infty(\bu) = 2.209\times 10^{-3}$ and $e_2(\bu) =2.716 \times 10^{-5}$. 

\begin{figure}[htbp]
\centering
\subfloat[performance of $e_\infty(u)$ ]{\includegraphics[width = 0.45\textwidth]{./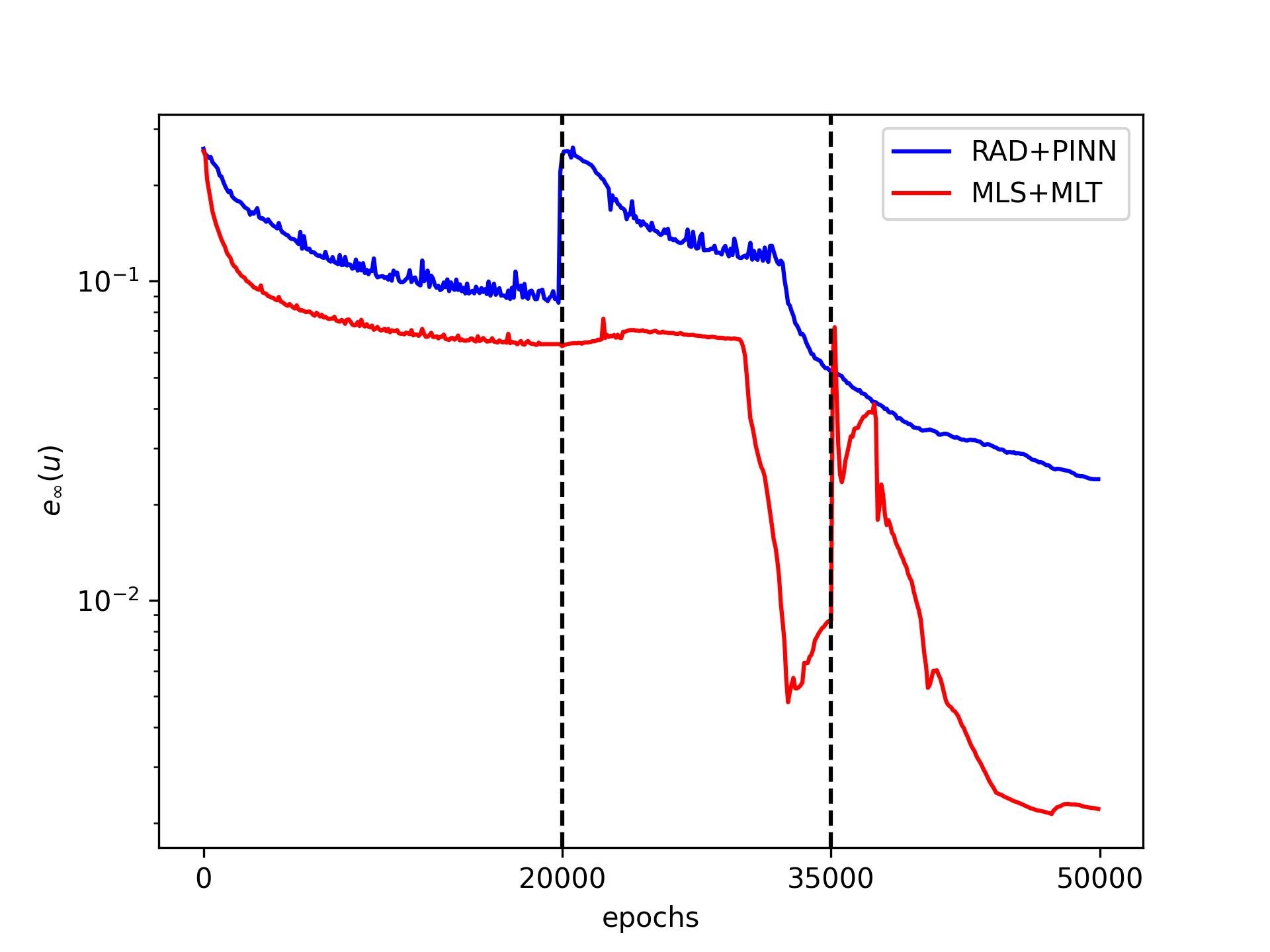}} \quad \quad \quad
\subfloat[performance of $e_2(u)$ ]{\includegraphics[width = 0.45\textwidth]
{./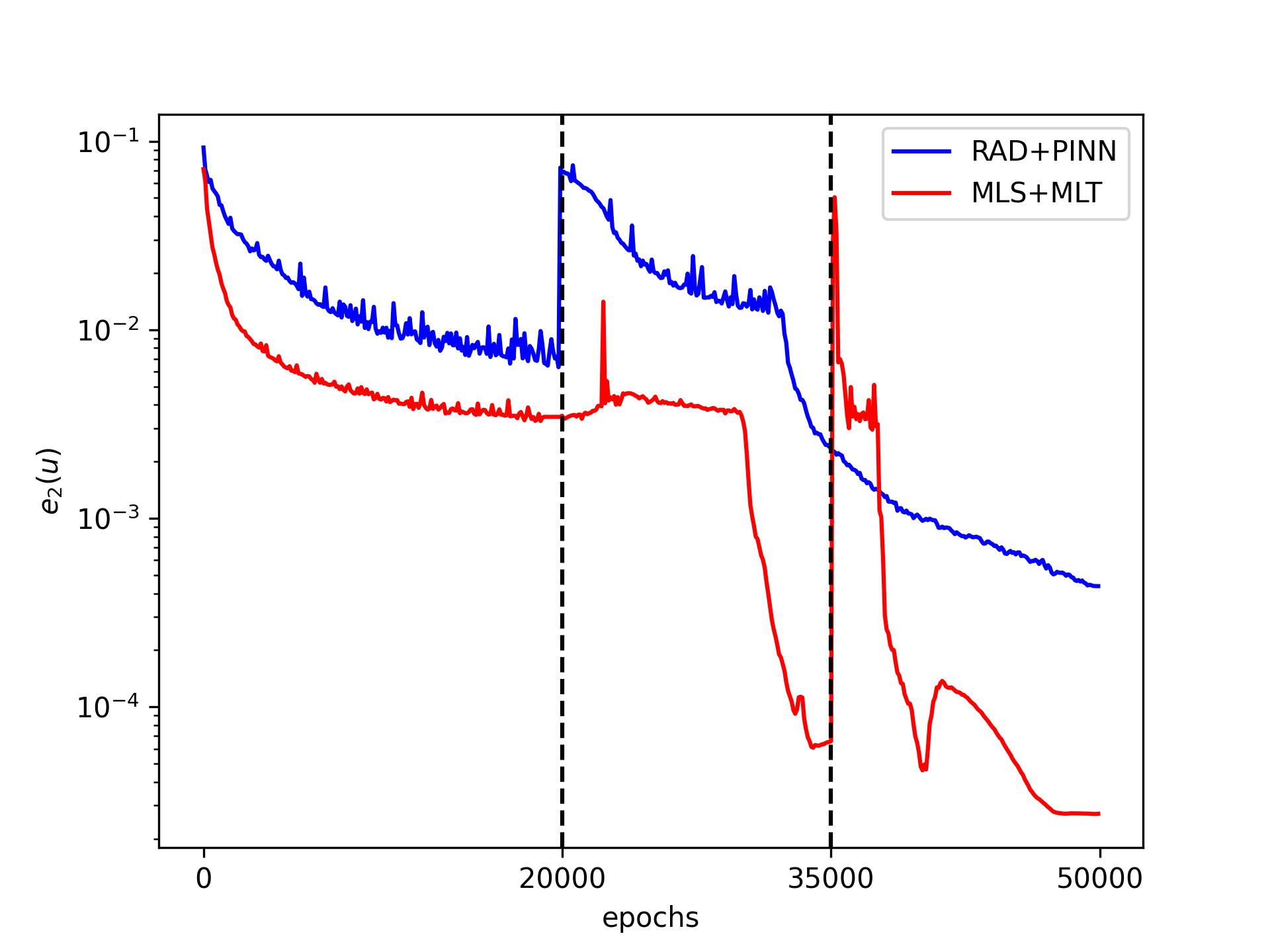}}
\caption{The performance of errors for the two-dimensional Poisson equation  with the solution Eq \eqref{eq:2d_Lshapesolution}. (a) the relative error $e_\infty(u)$ with different training epochs; (b) the relative error $e_2(u)$ with different training epochs.}
\label{fig:2d_Lshape_ErrorEpochs}
\end{figure}

\subsection{Fourth-order Biharmonic equation}
\label{sec:Biharmonic_2D}

For the following Biharmonic equation in two-dimension
\begin{equation}
	\label{eq:2d_Biharmonic}
	\hspace{-0.3cm}
	\begin{array}{r@{}l}
		\left\{
		\begin{aligned}
			 &-\Delta^2 u  = f, \quad\mbox{in} \ \Omega, \\
                 & u-\epsilon_1 \partial_n(\Delta u)  = 0,  \quad  \mbox{on} \  \partial \Omega, \\
                 & \Delta u + \epsilon_2 \partial_n( u)  = 0, \quad  \mbox{on} \  \partial \Omega,
		\end{aligned}
		\right.
	\end{array}
\end{equation}
where the domain is the unit disk $\Omega =\left\{(x,y) | x^2+y^2 <1 \right\} $, and the source term is the Dirac function  $\delta_{(0,0)}$. The exact solution  is given by
\begin{equation}
    \label{eq:2d_Biharmonicsolution}
    \hspace{-0.3cm}
    \begin{array}{r@{}l}
        \begin{aligned}
            u = \frac{r^2}{8\pi} \ln r + c_1 r^2 + c_2, \ c_1 = \frac{1}{4+2\epsilon_2}(-\frac{1}{2\pi}-\frac{\epsilon_2}{8\pi}), \ c_2 =-c_1 +\frac{\epsilon_1}{2\pi}.
        \end{aligned}
    \end{array}
\end{equation}
where r and $\theta$ are the coordinates in the polar coordinate system. 

In this experiment, we take $\epsilon_1 = \epsilon_2 = 10^{-5}$ and sample $100 \times 100$ points within $\Omega$ as the residual training set and 256 points on $ \partial\Omega$ as the boundary training set, while utilizing $400 \times 400$ points for the test set, and we employ a three-level framework for training . In the first-level pre-training, we trained 20000 epochs using the SOAP method. Then, in the second-level training, we again trained 5000 epochs of the SOAP method and 10000 epochs of the SSB method.  Finally, in the third-level training, we employed 300 epochs the SSB method. 

During training at the different levels, we employed the MLS method described in Section \ref{sec:MLS}. To enhance the concentration trend of the sampling points, we adjusted the parameters ($\gamma=0$, $s=1$) in the MLS method. Fig \ref{fig:2dBiharmonic_points} illustrates the effect of the MLS method across the various levels; it can be seen that as the level increases, the points progressively concentrate toward $(0,0)$, because the solution has very low regularity in the vicinity of $(0,0)$. It can be seen that the MLS method is able to capture this property effectively.
\begin{figure}[htbp]
\centering
\subfloat[1st level]{\includegraphics[width = 0.33\textwidth]{./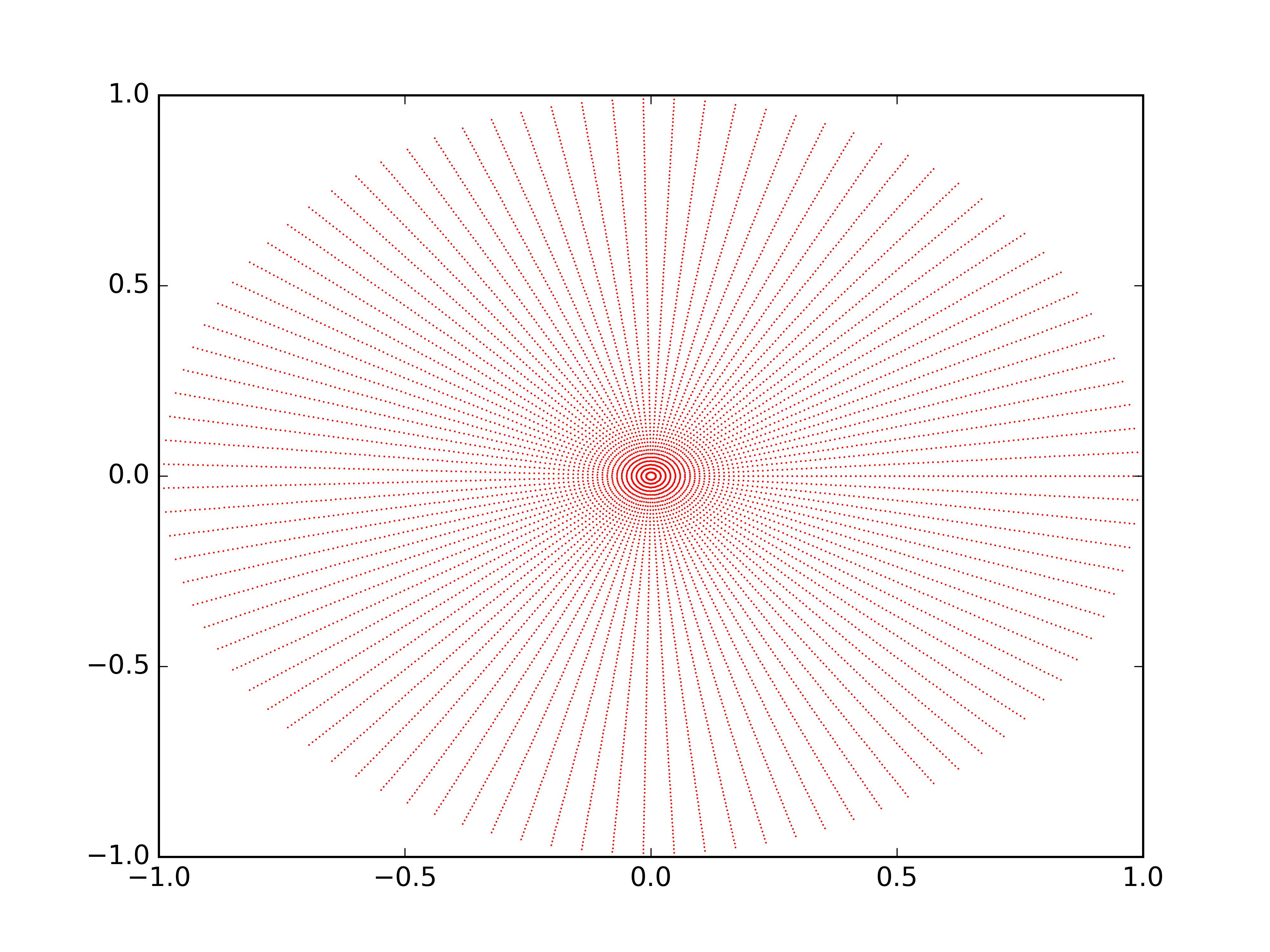}}
\subfloat[2nd level]{\includegraphics[width = 0.33\textwidth]
{./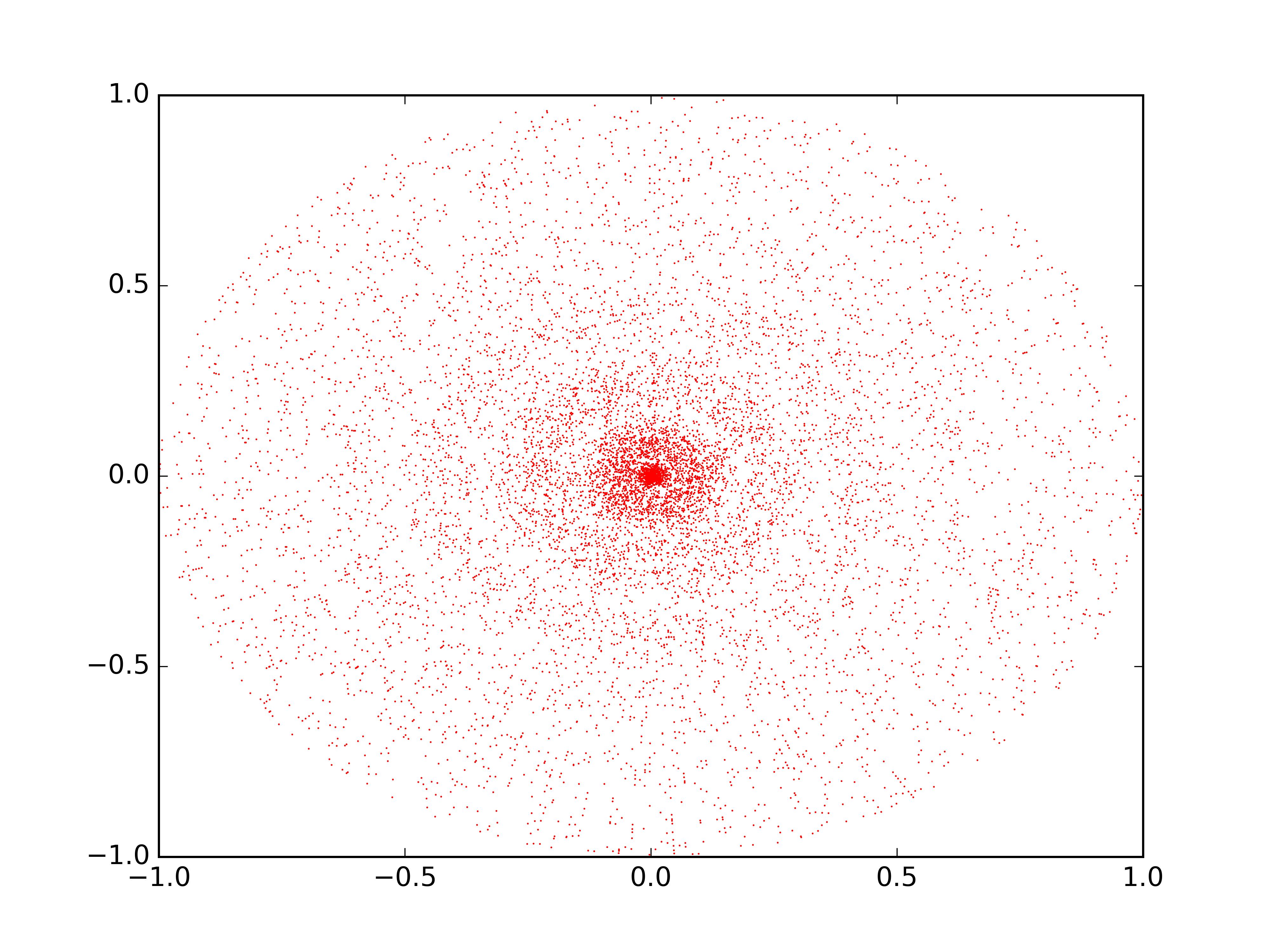}}
\subfloat[3rd level]{\includegraphics[width = 0.33\textwidth]{./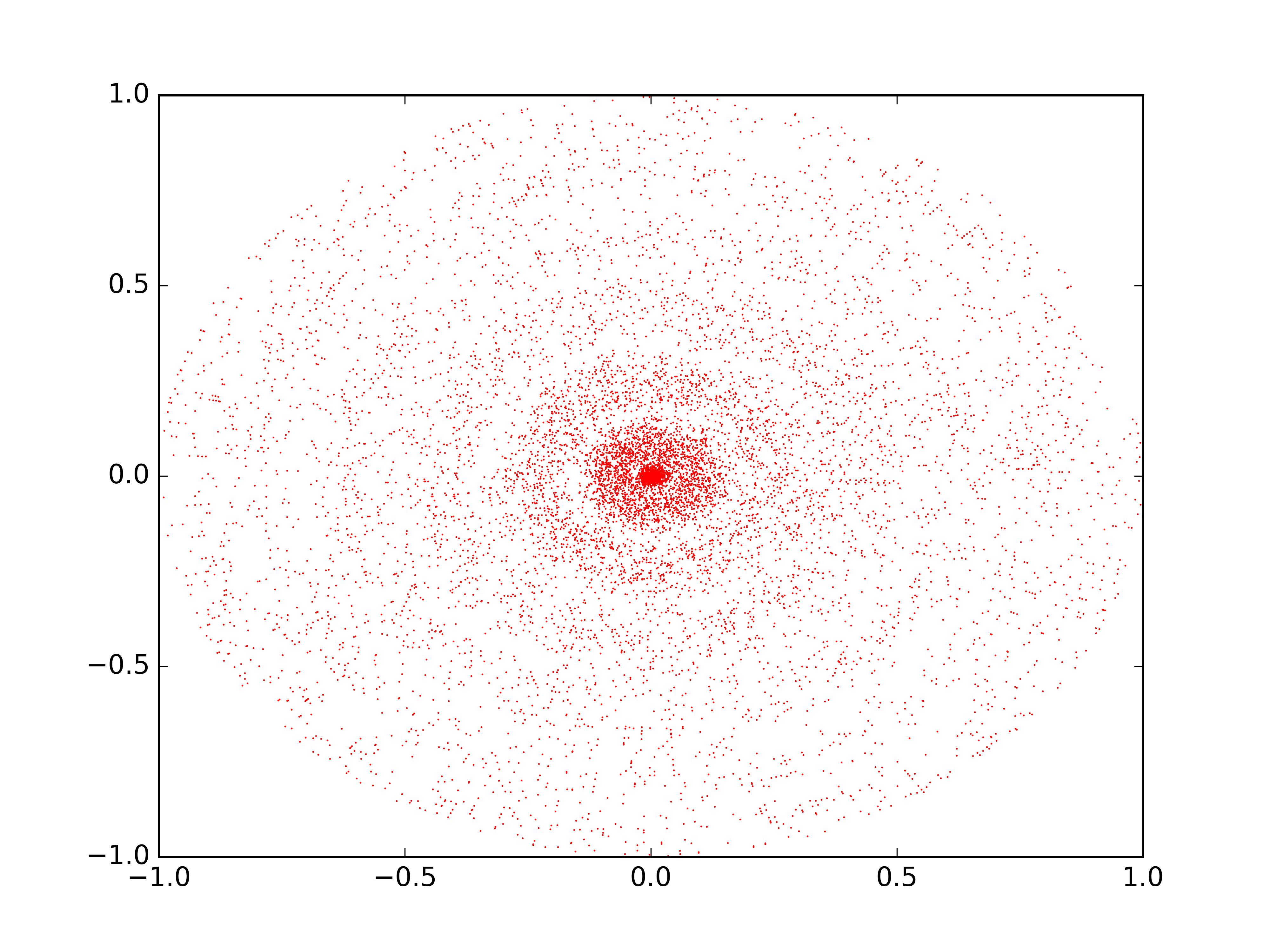}}
\caption{The sampling points at different levels for the two-dimensional Biharmonic equation  with the solution Eq \eqref{eq:2d_Biharmonicsolution}.}
\label{fig:2dBiharmonic_points}
\end{figure}

The numerical results of different levels are given in Fig \ref{fig:2dBiharmonic_results}. Fig \ref{fig:2dBiharmonic_results} illustrates figures based on the predicted solutions $\bu(\bx;\theta) \vert$ and the absolute error $\vert \bu^*(\bx) - \bu(\bx;\theta) \vert$. It can be observed that as the level increases, the approximation error decreases, demonstrating the effectiveness of our multi-level deep framework.

\begin{figure}[htbp]
\centering
\subfloat[1st prediction]{\includegraphics[width = 0.33\textwidth]{./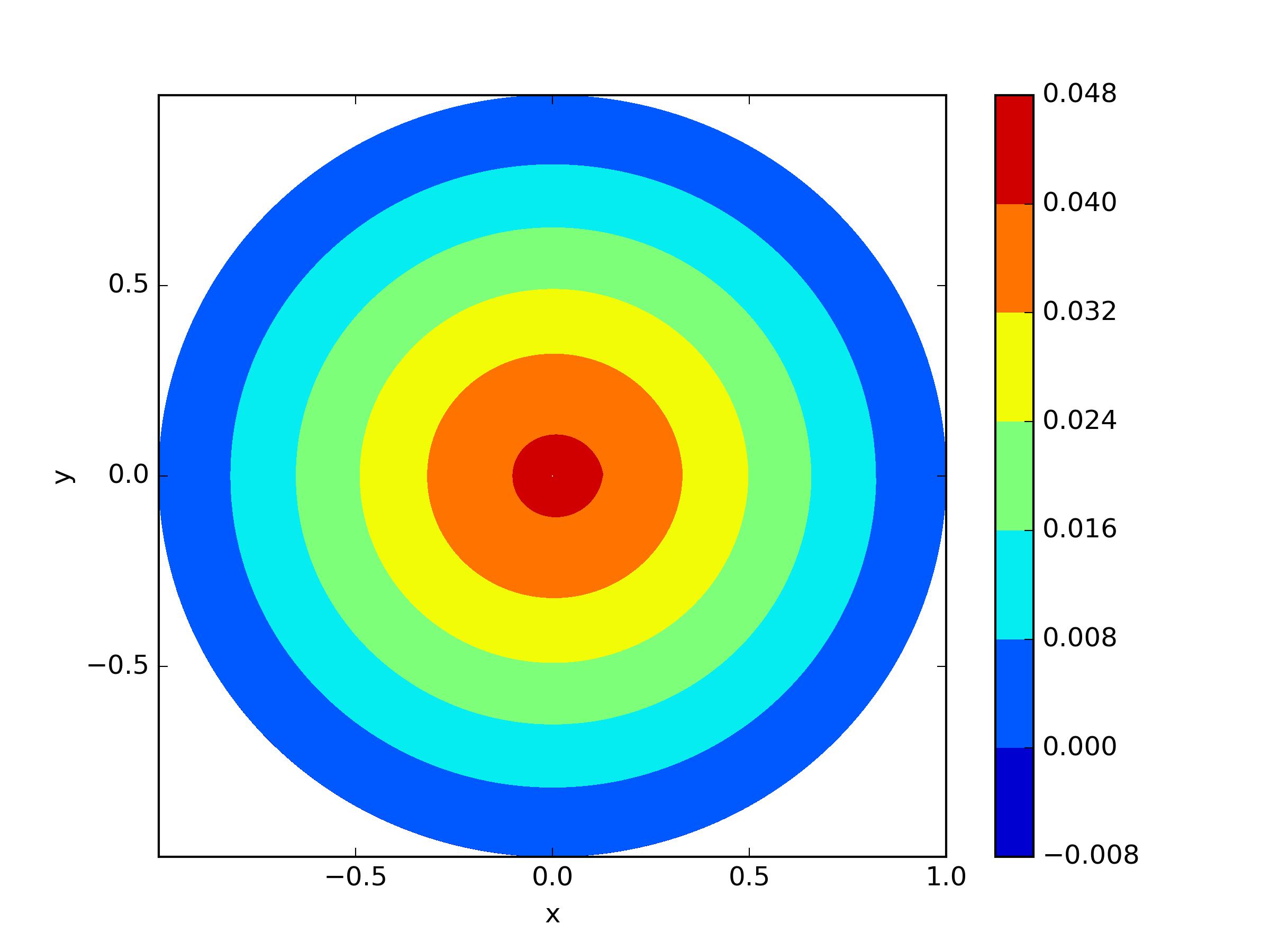}}
\subfloat[2nd prediction]{\includegraphics[width = 0.33\textwidth]
{./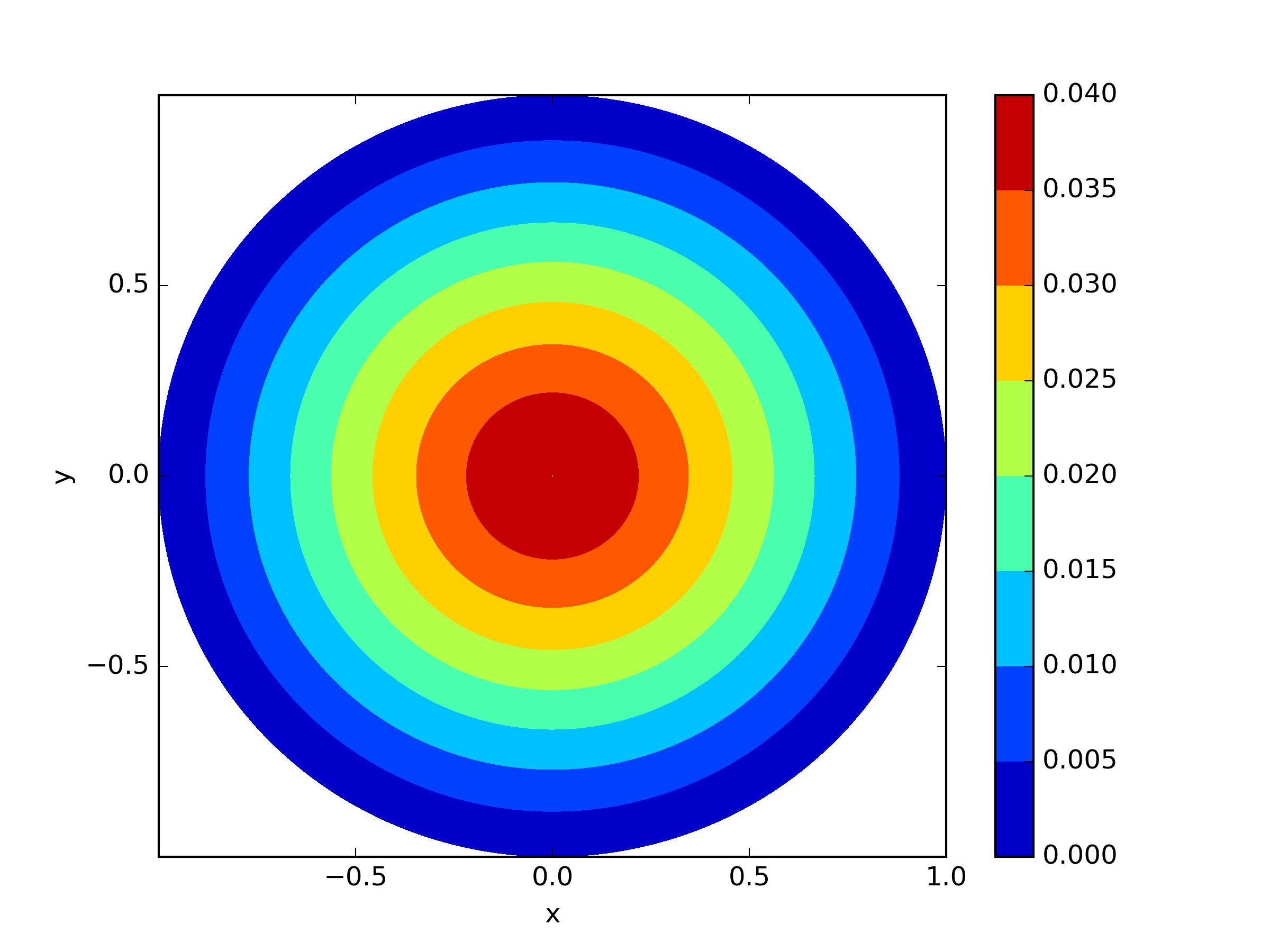}}
\subfloat[3rd prediction]{\includegraphics[width = 0.33\textwidth]{./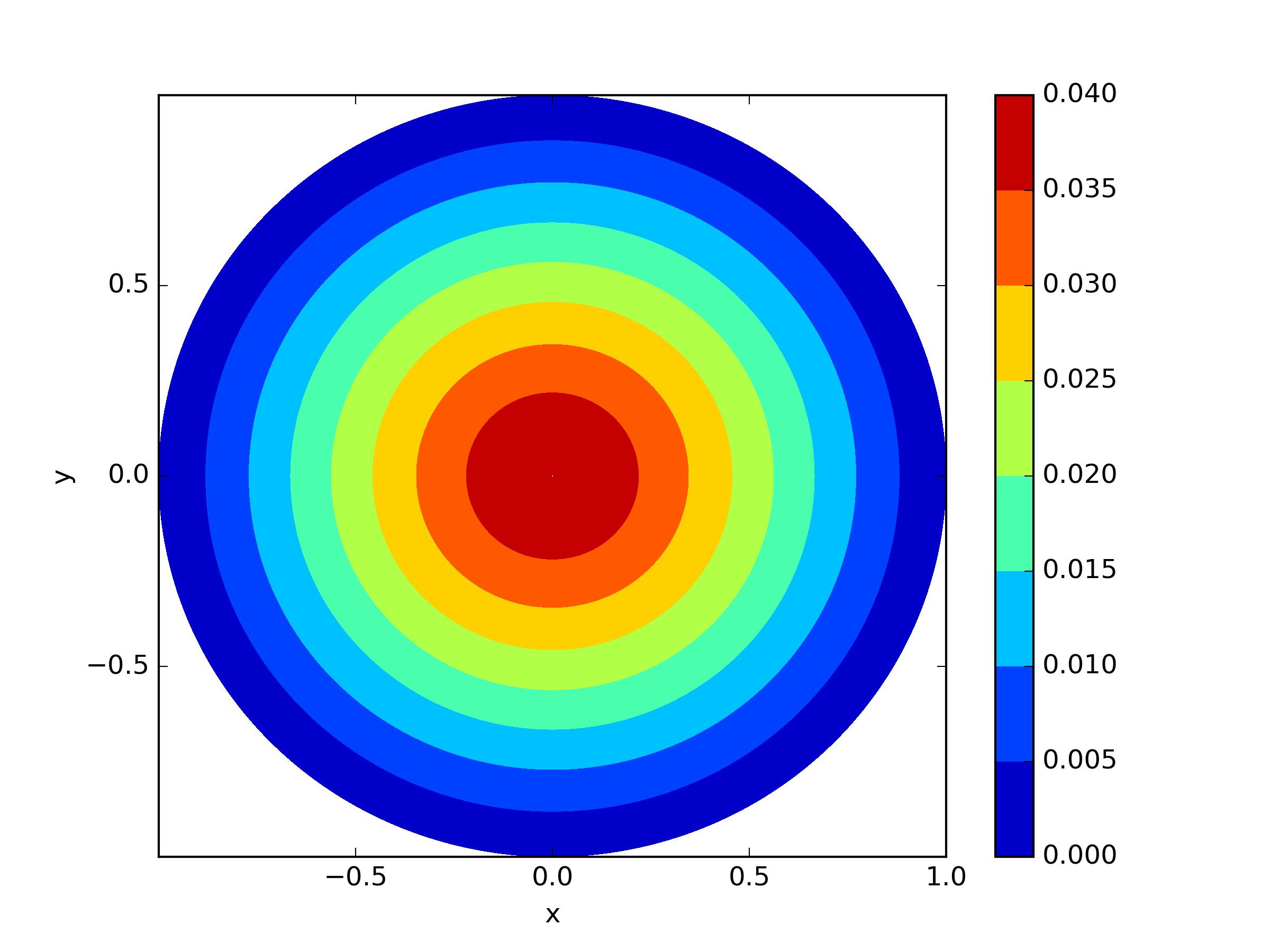}}\\
\subfloat[1st error]{\includegraphics[width = 0.33\textwidth]{./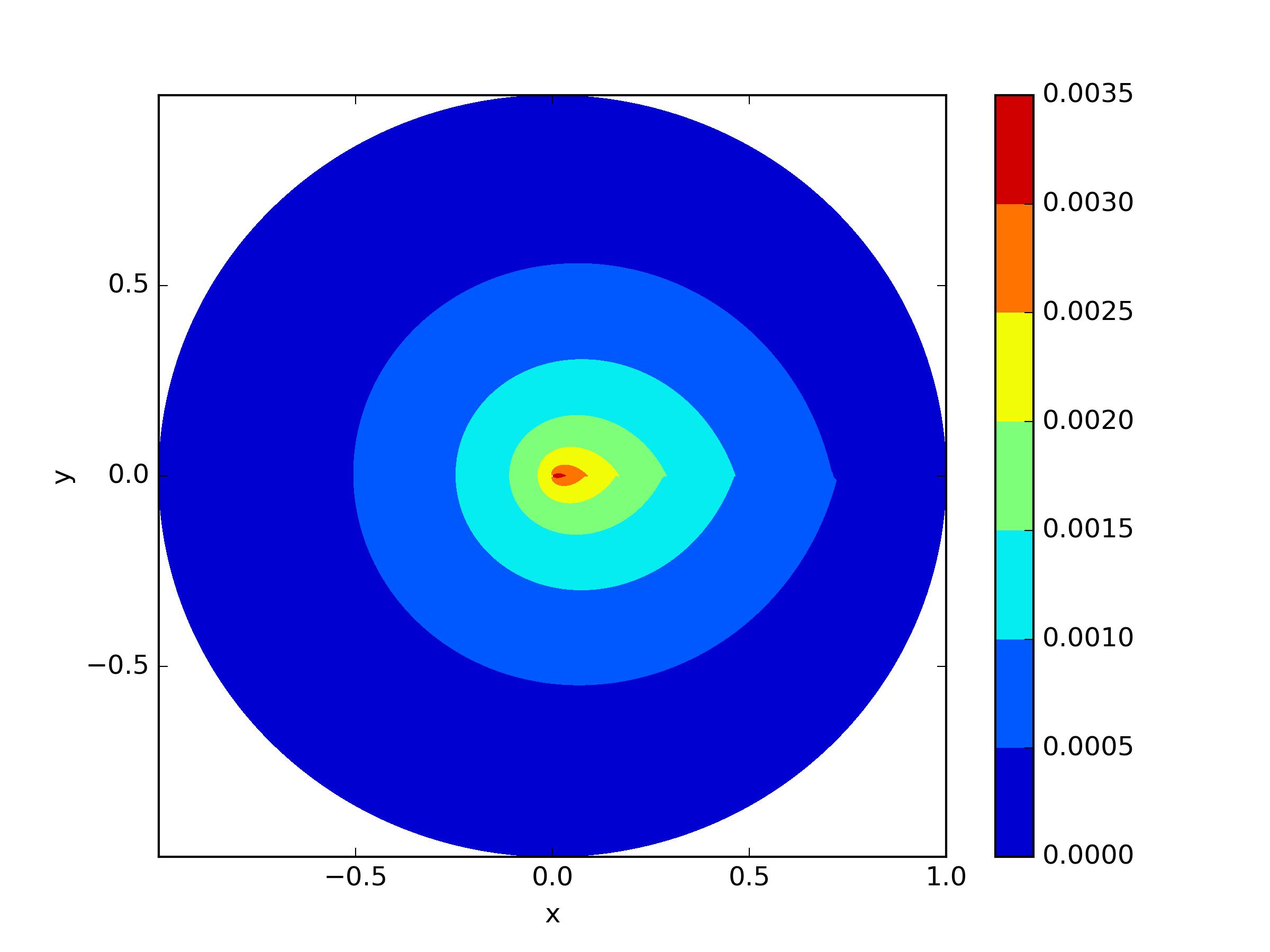}}
\subfloat[2nd error]{\includegraphics[width = 0.33\textwidth]
{./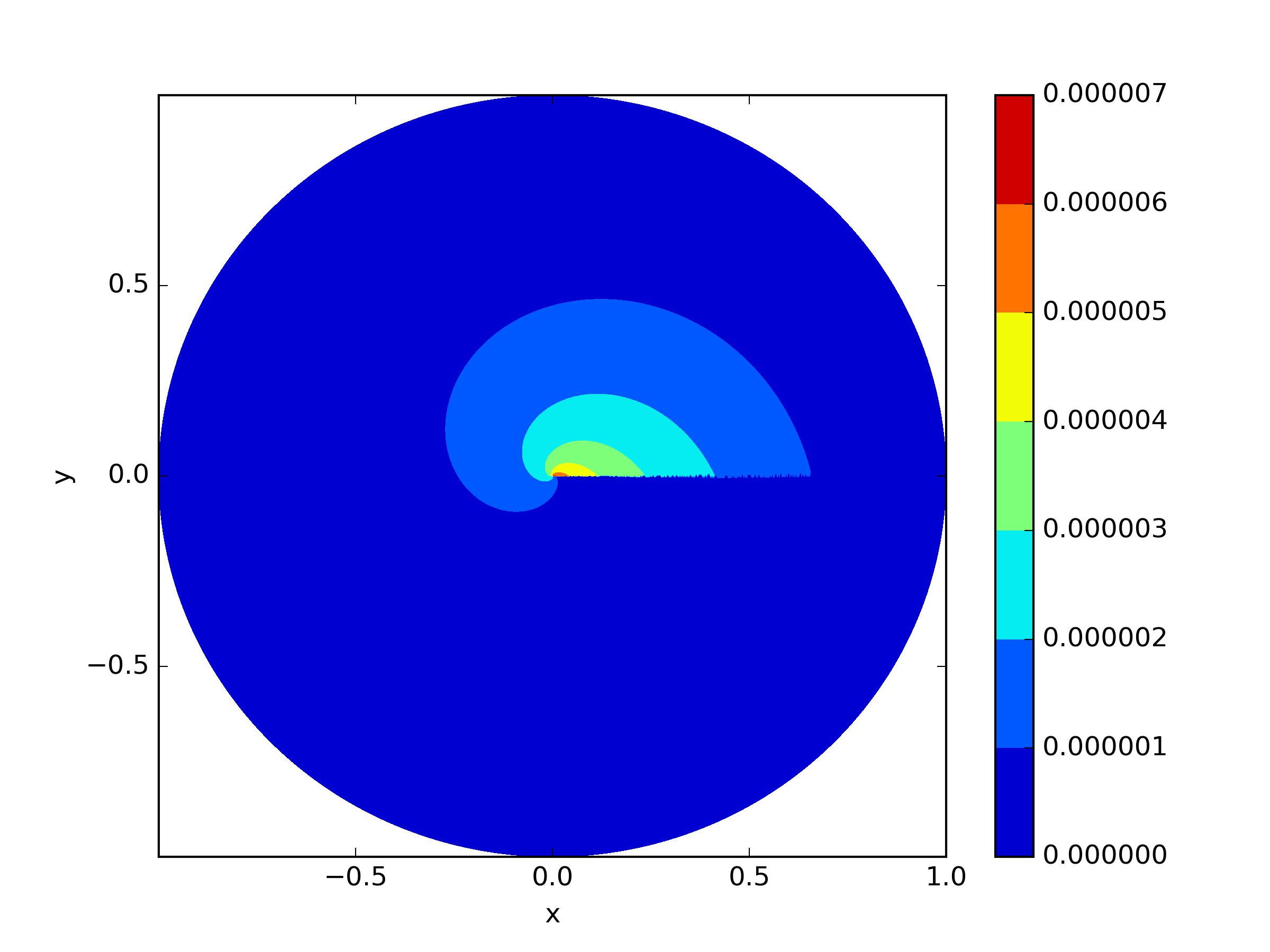}}
\subfloat[3rd error]{\includegraphics[width = 0.33\textwidth]{./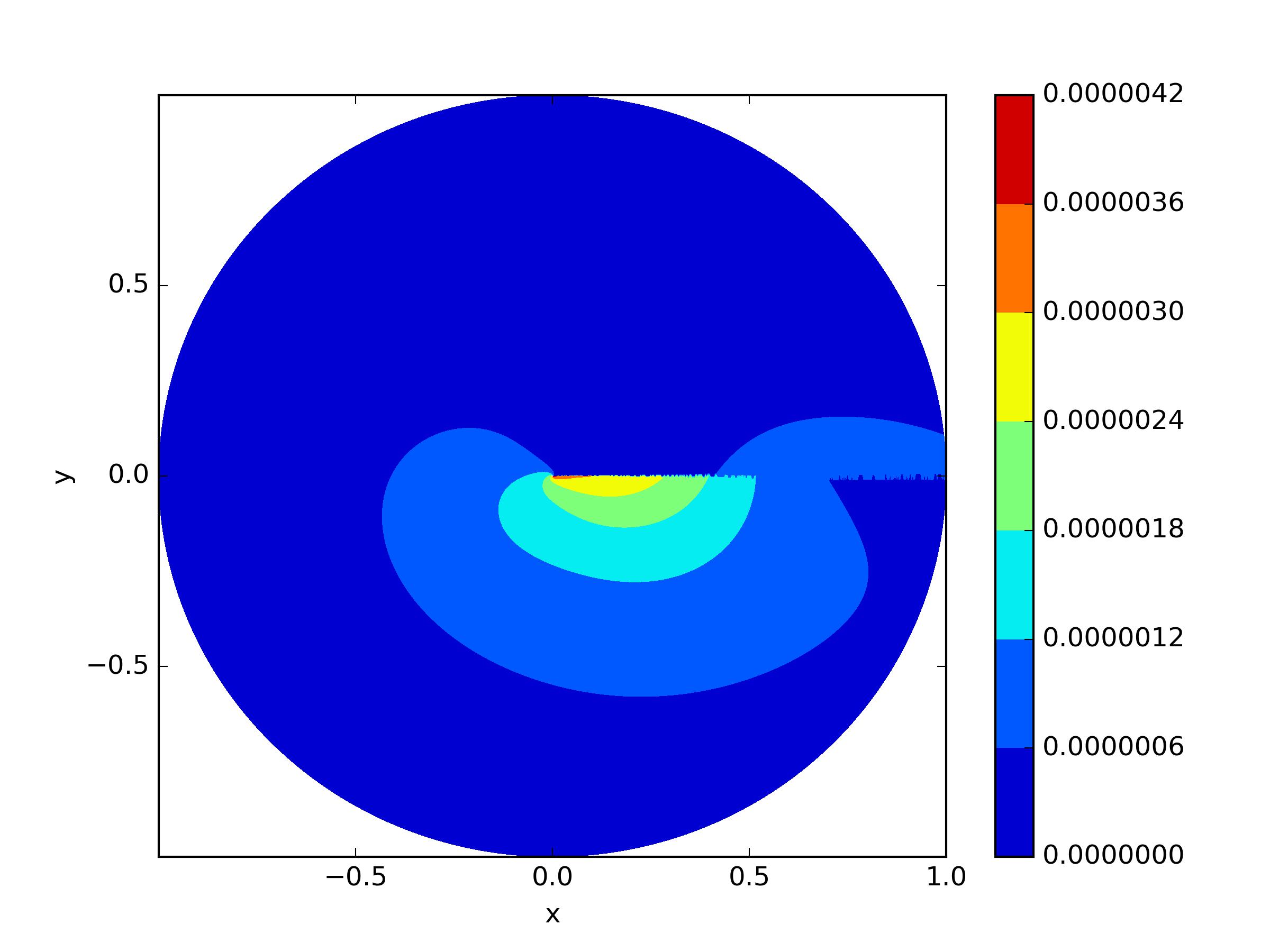}}
\caption{The numerical result of different levels for the two-dimensional Biharmonic equation  with the solution Eq \eqref{eq:2d_Biharmonicsolution}. (a)--(c) The predicted $\bu(\bx;\theta) $. (d)–(f) Performance of the absolute error $\vert \bu^*(\bx) - \bu(\bx;\theta) \vert$.}
\label{fig:2dBiharmonic_results}
\end{figure}

Ultimately,  the relative errors of our method in solving the two-dimensional Biharmonic equation  with the solution Eq \eqref{eq:2d_Biharmonicsolution} are  $e_\infty(\bu) = 1.651\times 10^{-4}$ and $e_2(\bu) =4.774 \times 10^{-5}$. 

\subsection{Two-dimensional inviscid Prandtl equation}
\label{sec:inviscidPrandtl_2D}
For the following unsteady Prandtl equations with zero pressure,
\begin{equation}
	\label{eq:inviscidPrandtl_2D}
	\hspace{-0.3cm}
	\begin{array}{r@{}l}
		\left\{
		\begin{aligned}
			 &u_t + u u_x + vu_y  = 0, \quad  0 \leq x \leq 1 , 0 \leq y \leq  \infty, \\
             &u_x + v_y = 0 \\
             &v(x,0,t) = 0  \\
                  &u_0(x,y)  = u(x,y,0) = y+ \sin 2\pi x.
		\end{aligned}
		\right.
	\end{array}
\end{equation}

In this inviscid problem, as \cite{hong2003singularity}, we specify a finite computational domain  $\Omega = [0,1] \times [0,2] \times [0,0.16]$ and impose periodic boundary conditions in x.
\begin{equation}
    \label{eq:inviscidPrandtl_periodic}
    \hspace{-0.3cm}
    \begin{array}{r@{}l}
        \begin{aligned}
            u(0,y,t)  &= u(1,y,t), \quad  0 \leq y \leq 2 , 0 \leq t \leq  0.16\\
            v(0,y,t)  &= v(1,y,t)
        \end{aligned}
    \end{array}
\end{equation}

It is well known that, under general initial data, the inviscid Prandtl equation \eqref{eq:inviscidPrandtl_2D} does not admit a global smooth solution. In finite time, singularity arises from the compression of the fluid in the x-direction induced by nonlinear advection.  

In this case, we will compare our numerical results with those reported in \cite{hong2003singularity} (Fig 3.1). The governing equations and initial/boundary conditions used in our study are identical to those described in that reference. In Fig 3.1 of \cite{hong2003singularity}, the authors recast the inviscid Prandtl equations in conservation form \ref{eq:inviscidPrandtl_conservation1} and explicitly discretize the resulting PDE in time.

\begin{subequations}\label{eq:inviscidPrandtl_conservation}
    \begin{align}
        &u_t + (u^2)_x + (uv)_y = 0, \label{eq:inviscidPrandtl_conservation1} \\
        &u_x + v_y = 0 \label{eq:inviscidPrandtl_conservation2}
    \end{align}
\end{subequations}

They evaluate the numerical x-flux for $u^2$ and adopt a Lax–Wendroff flux for the y-flux uv; v is computed from u by integrating \ref{eq:inviscidPrandtl_conservation2}, discretized using central differences. Further details of the scheme can be found in \cite{hong2003singularity}.

The challenge in the numerical solution lies in capturing the spike of $v(x,y,t)$ with breaking time $t = \frac{1}{2 \pi } \approx 0.16$. Therefore, in this example we apply the multi-level learning framework only to v, while u is handled by the default approach—its values are recomputed at every level of training.

We sample 100000 points in $\Omega$ as the residual training set. And 100 points in each hyperplane on $\partial \Omega$.  Additionally, we uniformly sample 20000 points on the boundary to softly enforce the periodic boundary condition in x, and another 10000 points to softly enforce the initial condition. At the same time, the boundary condition for $v(x,y,t)$ at y = 0 is imposed exactly (hard enforcement).
In this experiment, during the first-level pre-training, we trained 20000 epochs using the SOAP method. In the second-level training, we trained 20000 epochs of the SOAP method and 30000  epochs of the SSB method. Finally, in the third-level training, we employed 50000  epochs of the SSB method. The distributions of residual sampling points  are summarized in Fig \ref{fig:inviscidPrandtl2D_points}.

\begin{figure}[htbp]
\centering
\subfloat[1st level]{\includegraphics[width = 0.33\textwidth]{./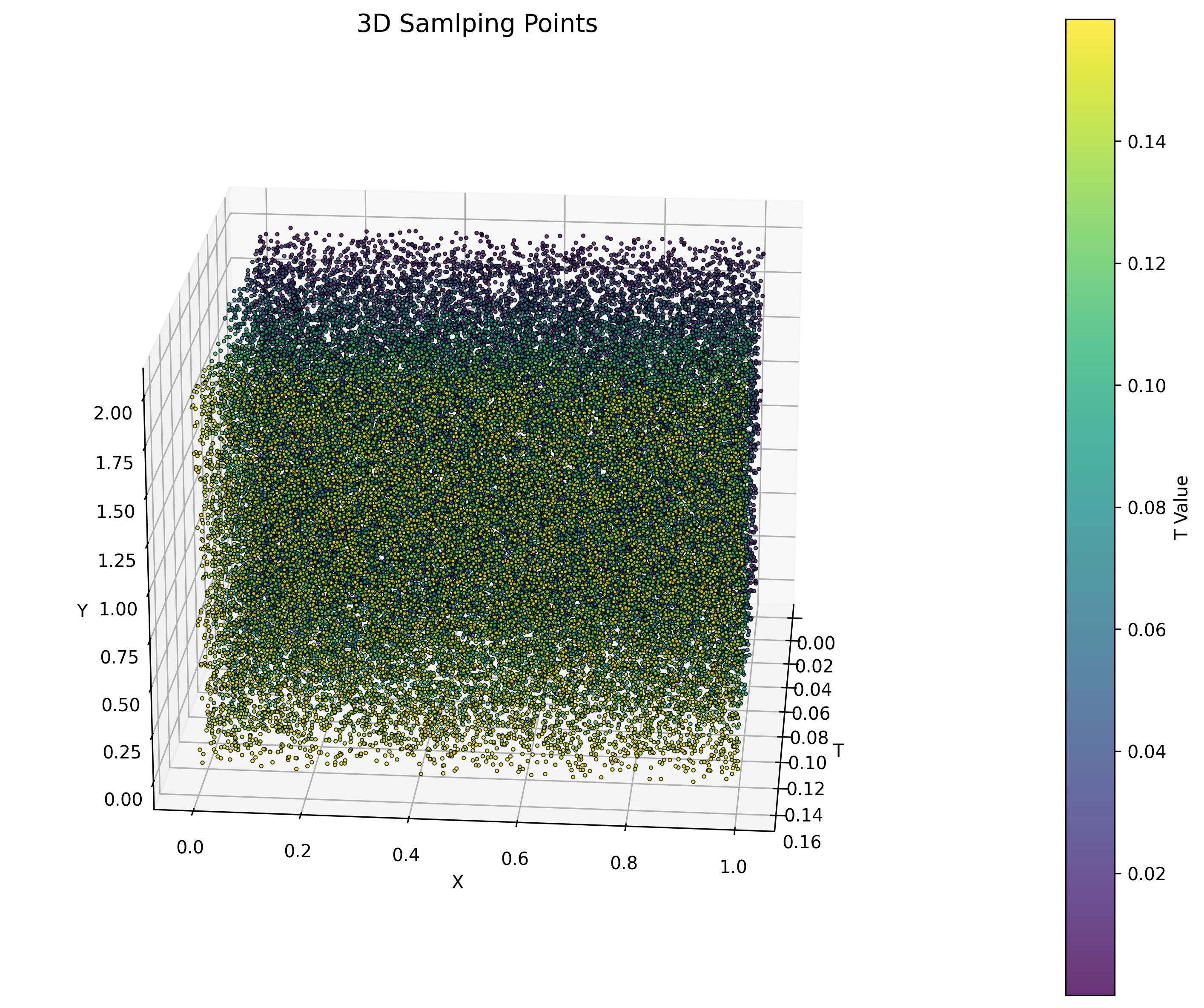}}
\subfloat[2nd level]{\includegraphics[width = 0.33\textwidth]
{./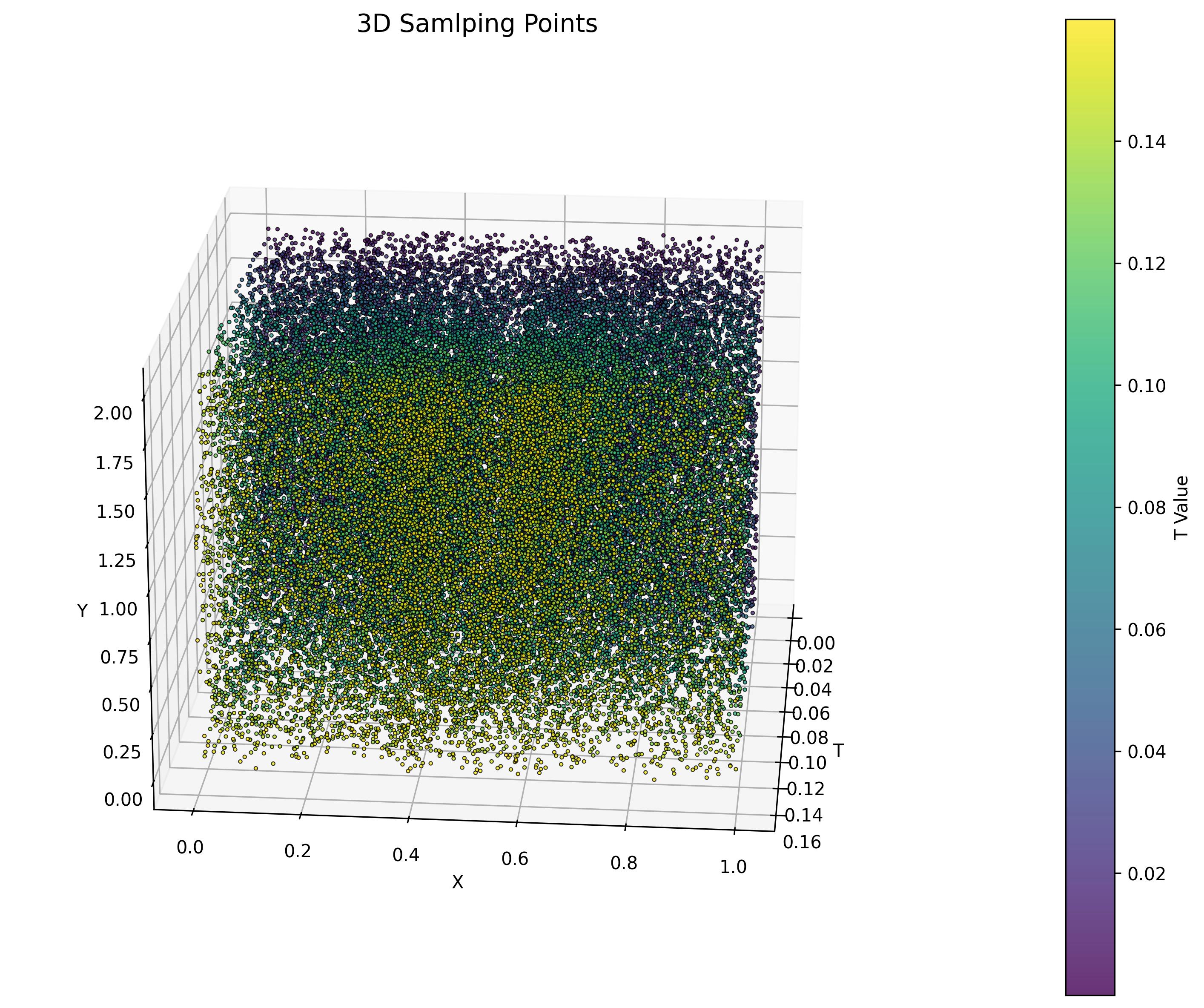}}
\subfloat[3rd level]{\includegraphics[width = 0.33\textwidth]{./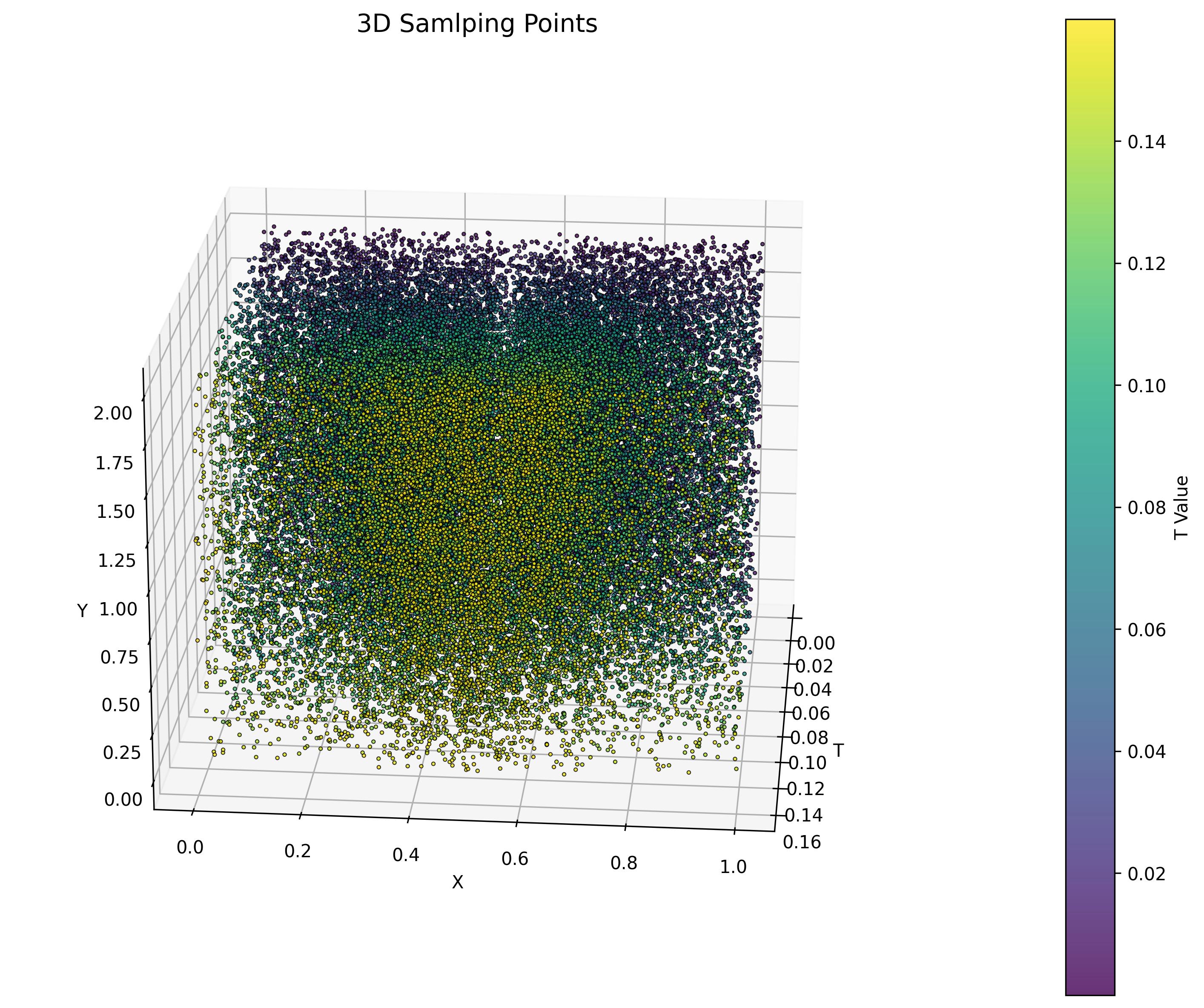}}
\caption{(a)-(c) The sampling points at different levels for two-dimensional inviscid Prandtl equation}
\label{fig:inviscidPrandtl2D_points}
\end{figure}

In Fig \ref{fig:inviscidPrandtl2D_Surface} , we present surface plots of the neural network predictions for $u(x,y,t)$ and $v(x,y,t)$ at t = 0.16 at different training levels. In Figure \ref{fig:inviscidPrandtl2D_line}, we show two-dimensional graphs of the predictions of  $u(x,y,t)$ and $v(x,y,t)$ at y = 1 for $t = 0, 0.08,0.16$, again at each level.
By comparing Fig 3.1 in \cite{hong2003singularity}, it is immediately apparent that the results obtained with our method are are comparable to the high-precision numerical scheme employed in \cite{hong2003singularity}. A closer inspection of Fig \ref{fig:inviscidPrandtl2D_Surface} further reveals that our approach captures the spike of $v(x,y,t)$ even more accurately.

\begin{figure}[htbp]
\centering
\subfloat[$u(x,y,0.16)$ at 1st level]{\includegraphics[width = 0.33\textwidth]{./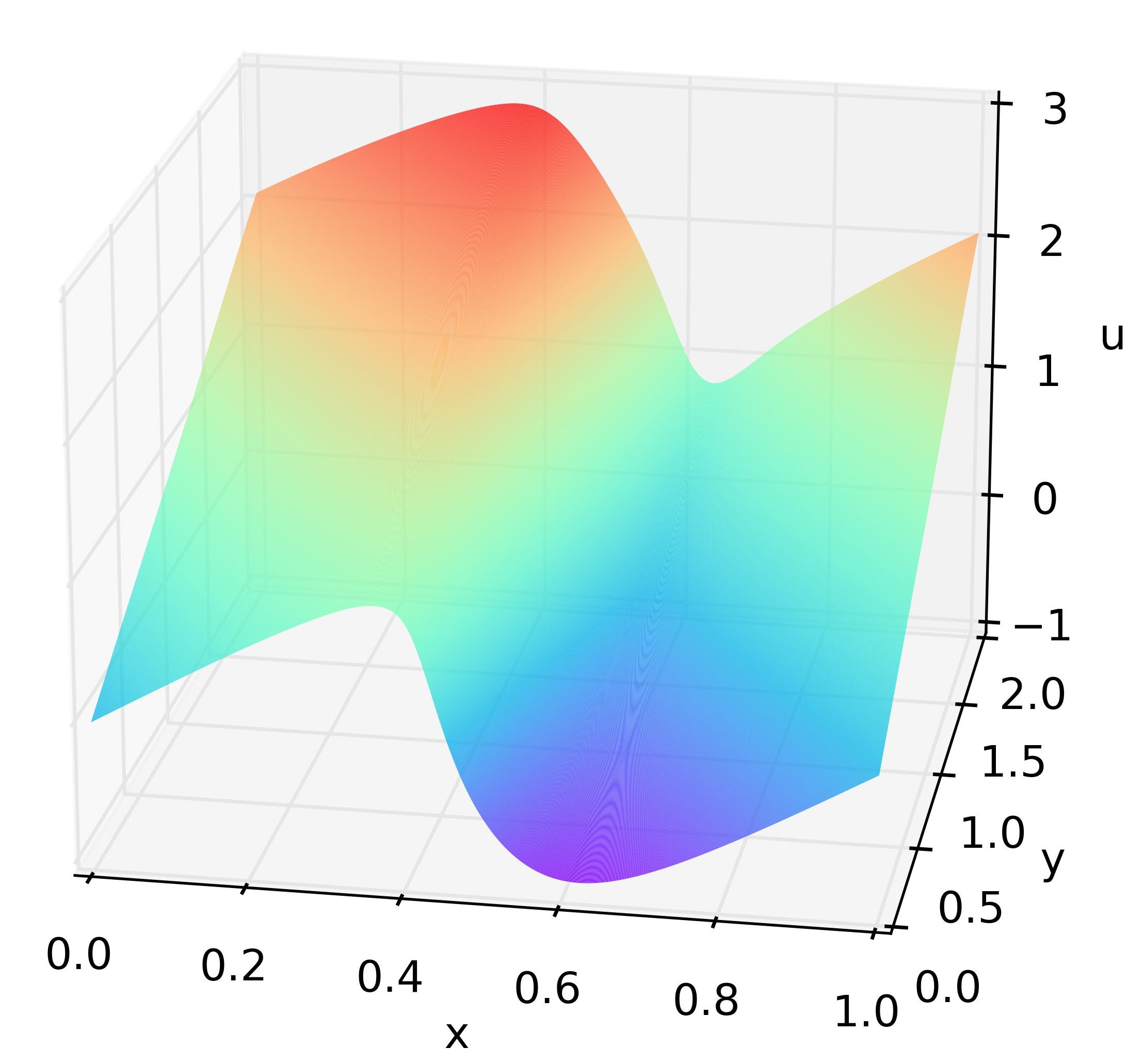}}
\subfloat[$u(x,y,0.16)$ at 2nd level]{\includegraphics[width = 0.33\textwidth]
{./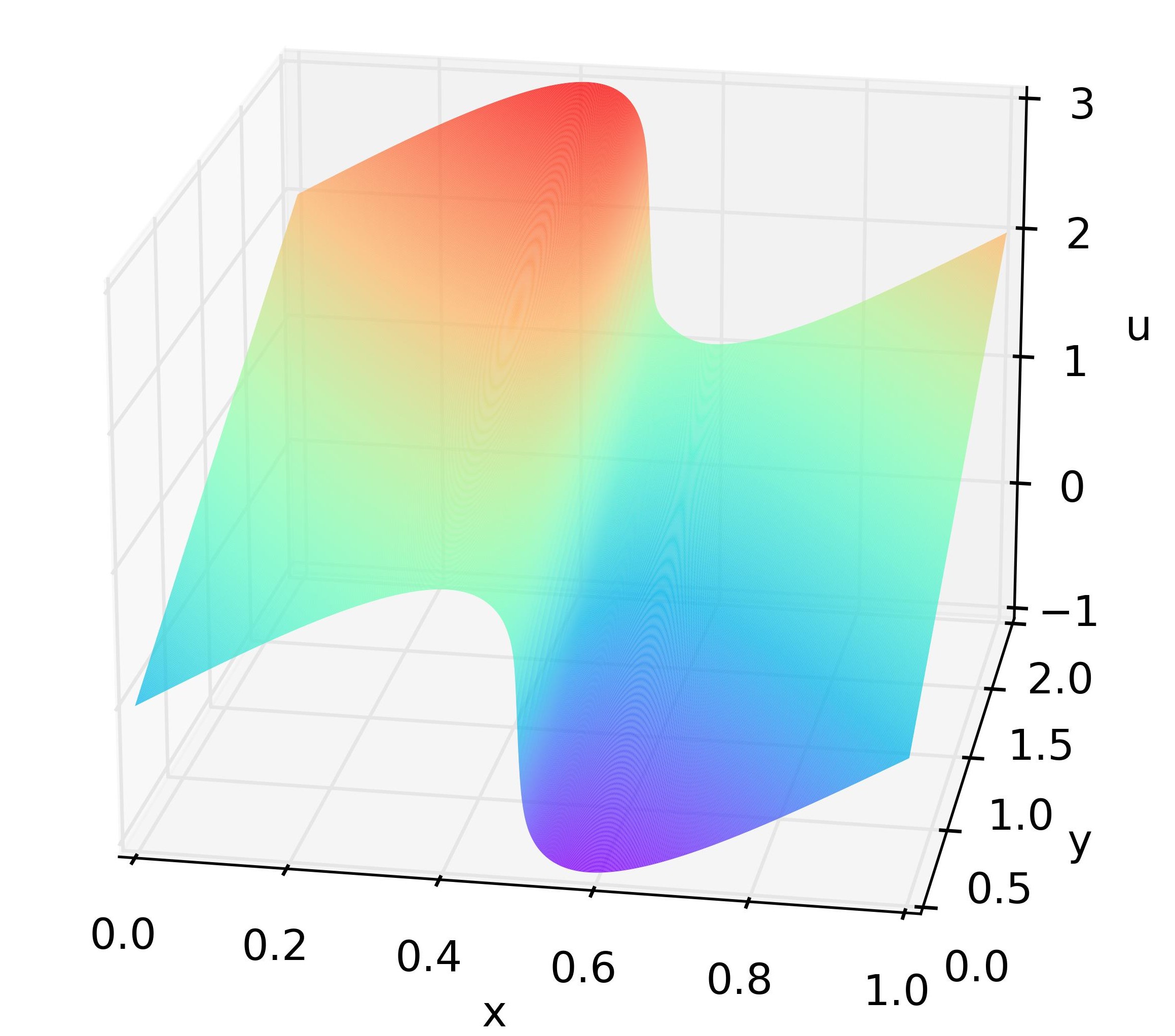}}
\subfloat[$u(x,y,0.16)$ at 3rd level]{\includegraphics[width = 0.33\textwidth]{./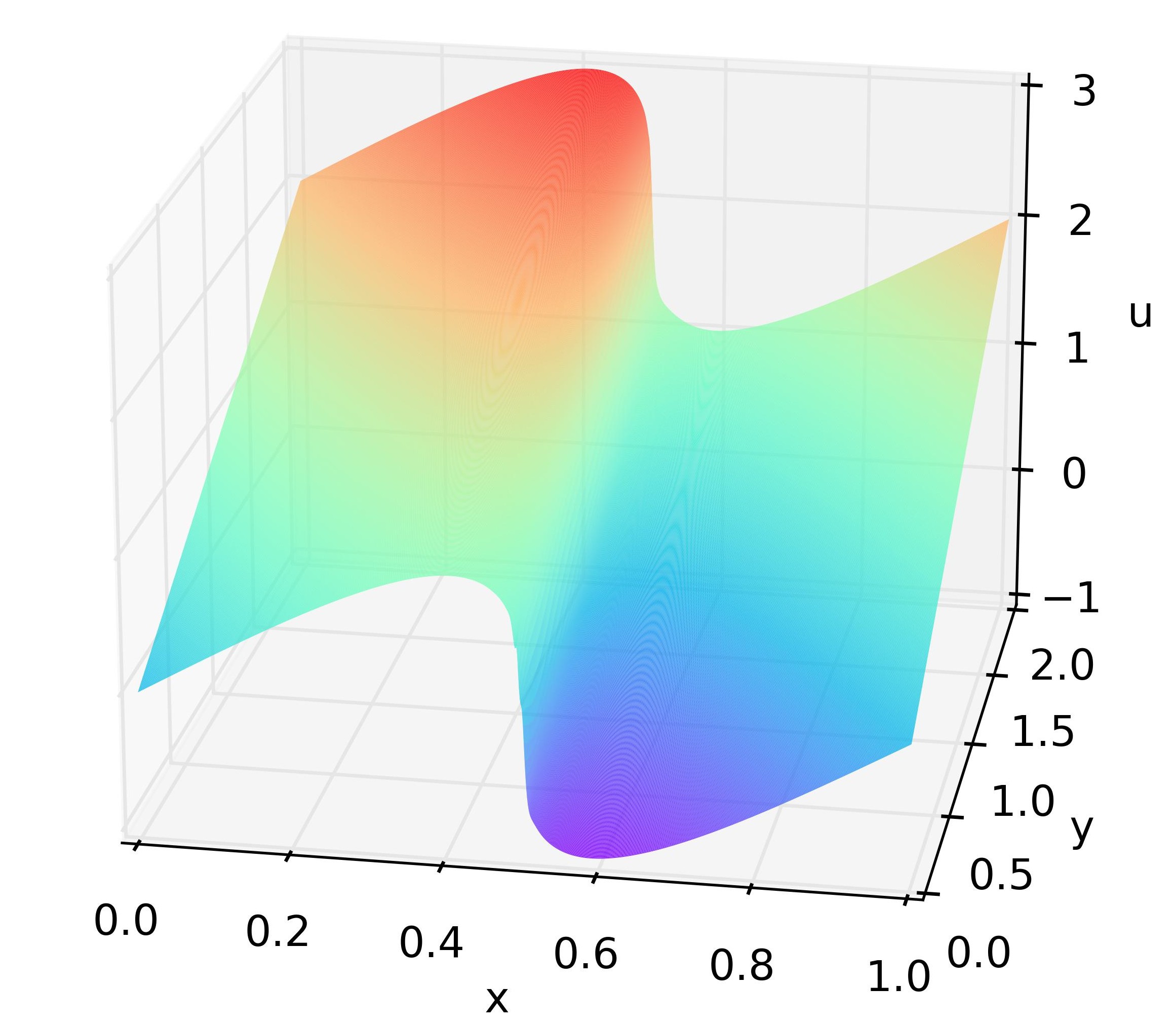}} \\
\subfloat[$v(x,y,0.16)$ at 1st level]{\includegraphics[width = 0.33\textwidth]{./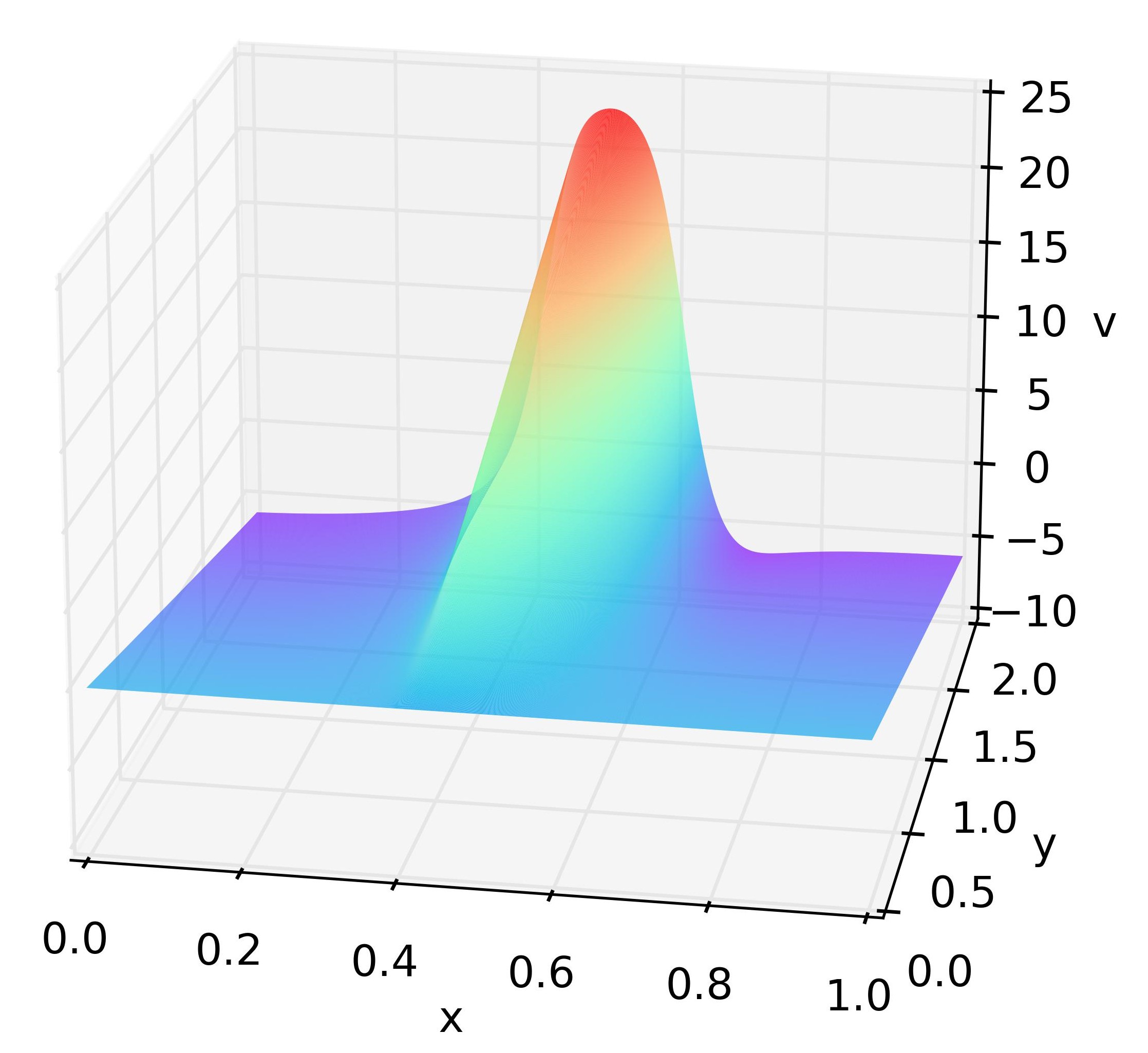}}
\subfloat[$v(x,y,0.16)$ at 2nd level]{\includegraphics[width = 0.33\textwidth]
{./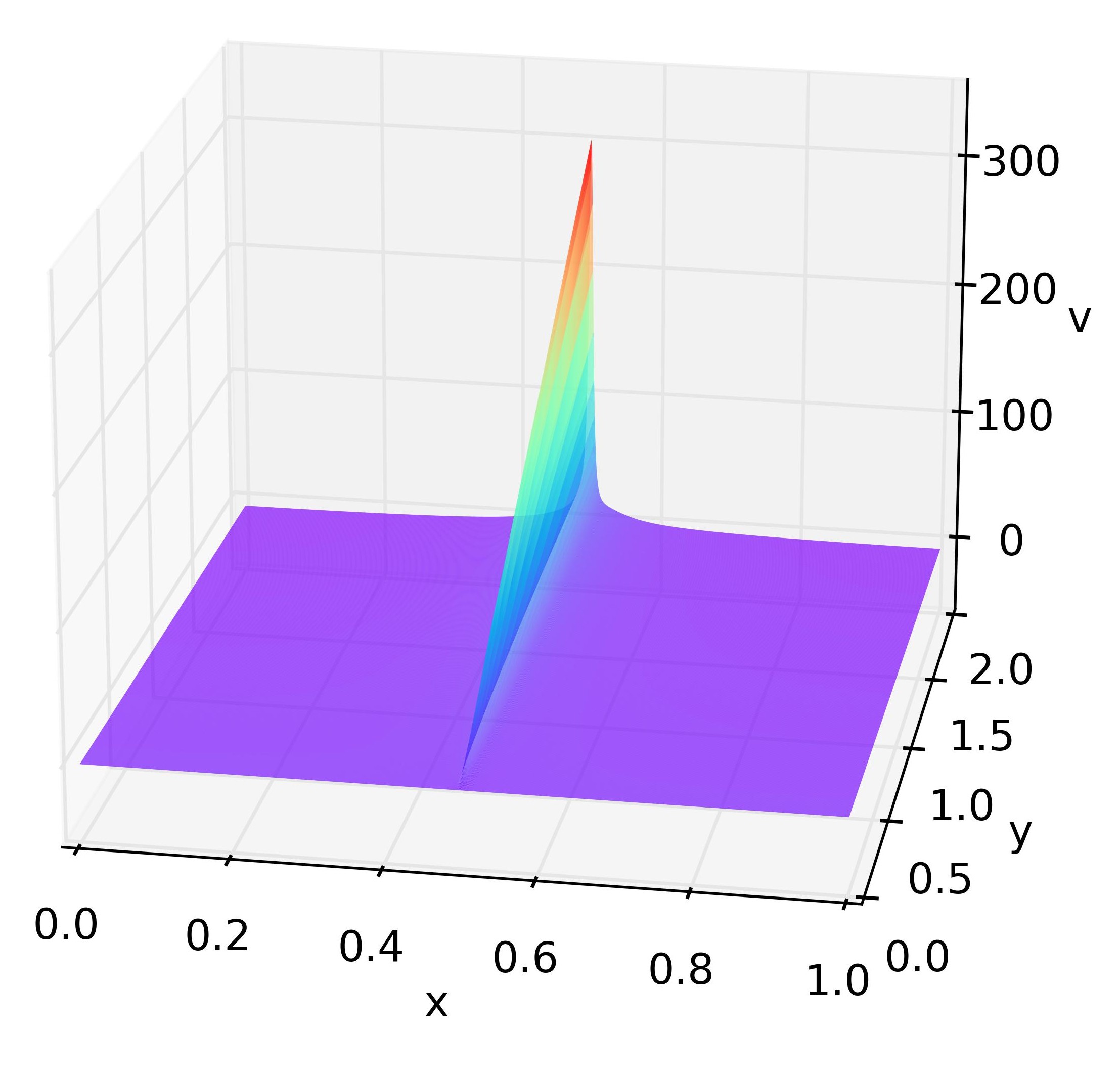}}
\subfloat[$v(x,y,0.16)$ at 3rd level]{\includegraphics[width = 0.33\textwidth]{./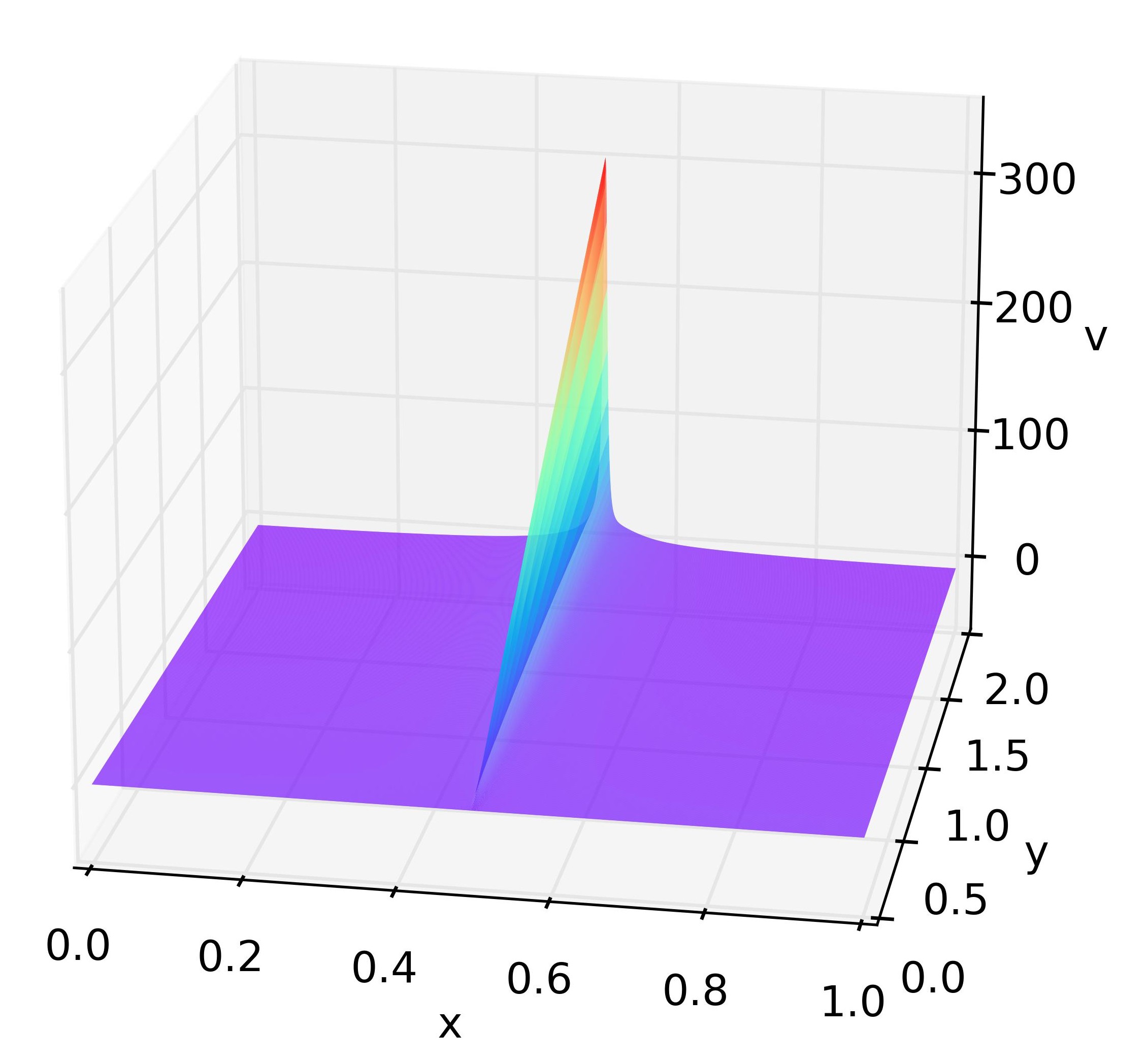}}
\caption{The surface plots of $u(x,y,0.16)$ and $v(x,y,0.16)$ at different levels for two-dimensional inviscid Prandtl equation.}
\label{fig:inviscidPrandtl2D_Surface}
\end{figure}

\begin{figure}[htbp]
\centering
\subfloat[$u(x,1,t)$ at 1st level]{\includegraphics[width = 0.33\textwidth]{./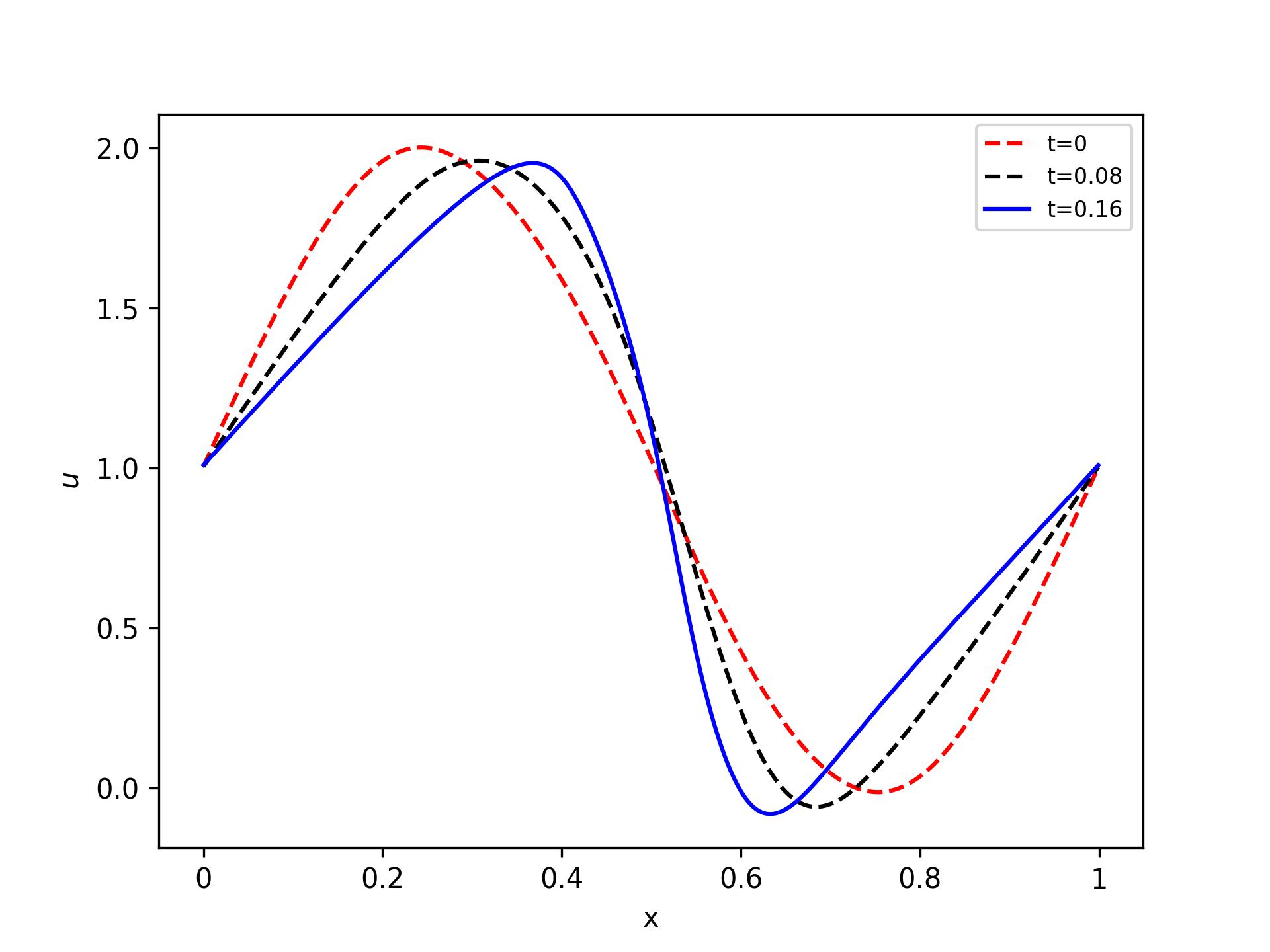}}
\subfloat[$u(x,1,t)$ at 2nd level]{\includegraphics[width = 0.33\textwidth]
{./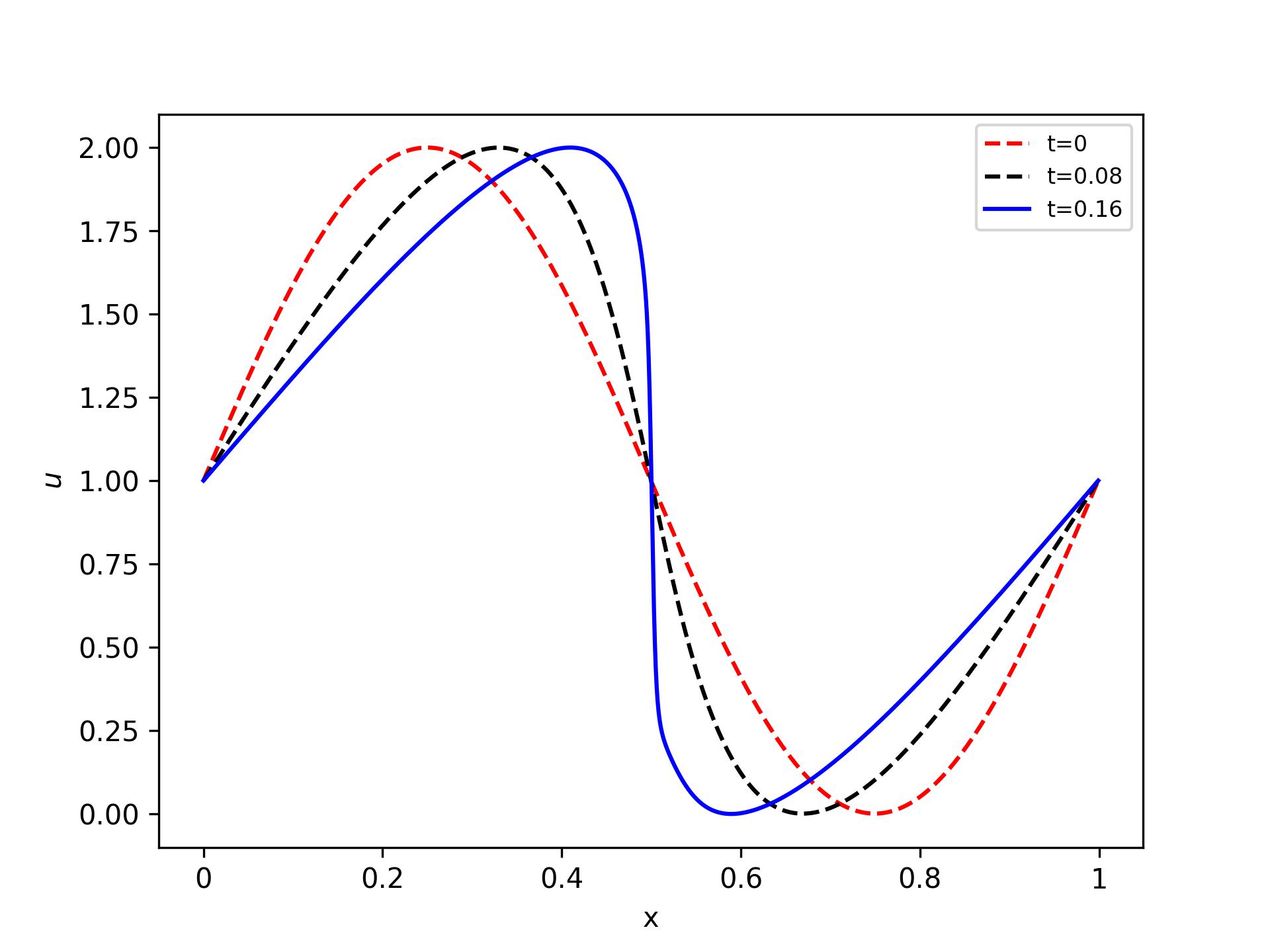}}
\subfloat[$u(x,1,t)$ at 3rd level]{\includegraphics[width = 0.33\textwidth]{./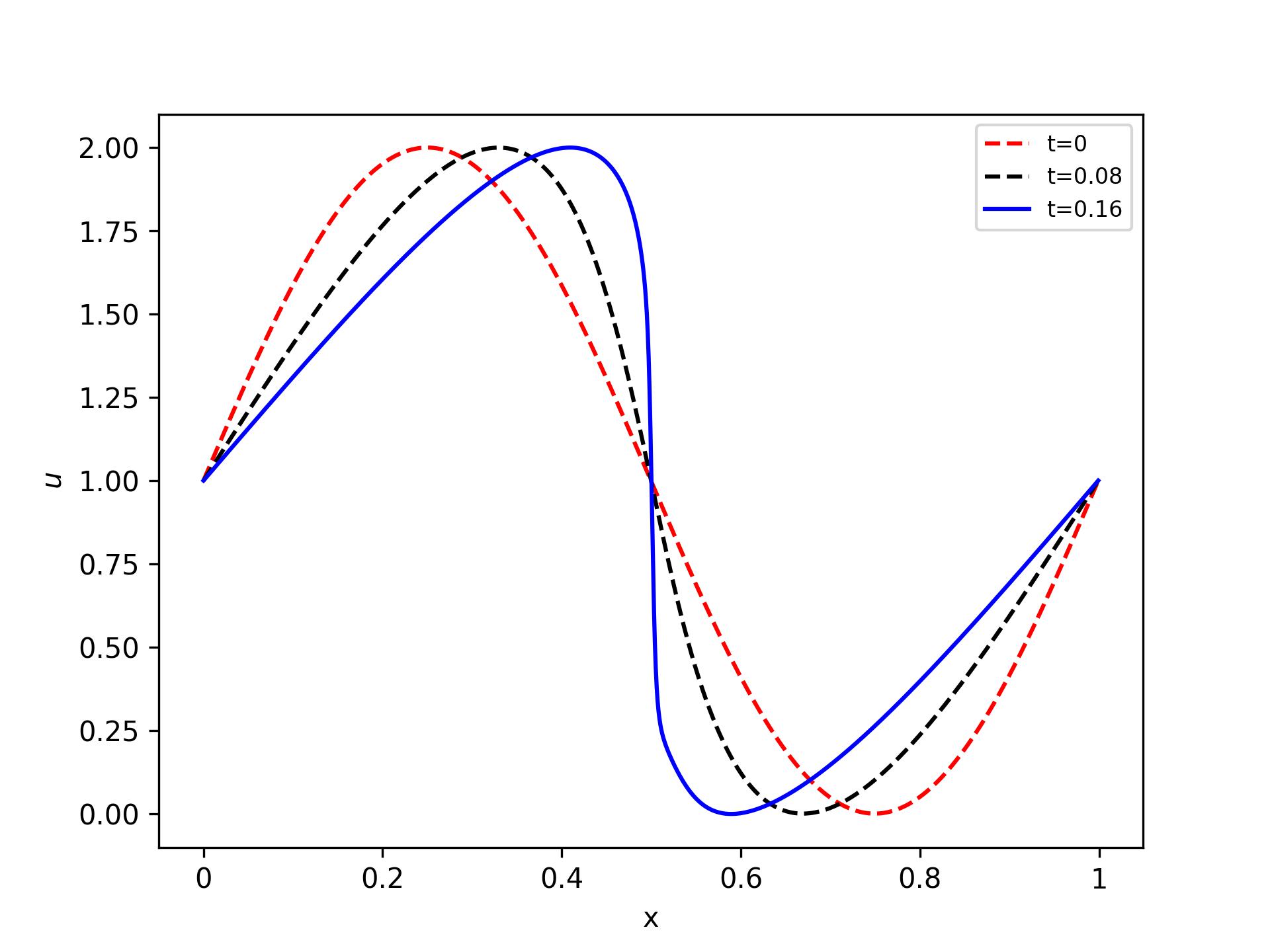}} \\
\subfloat[$v(x,1,t)$ at 1st level]{\includegraphics[width = 0.33\textwidth]{./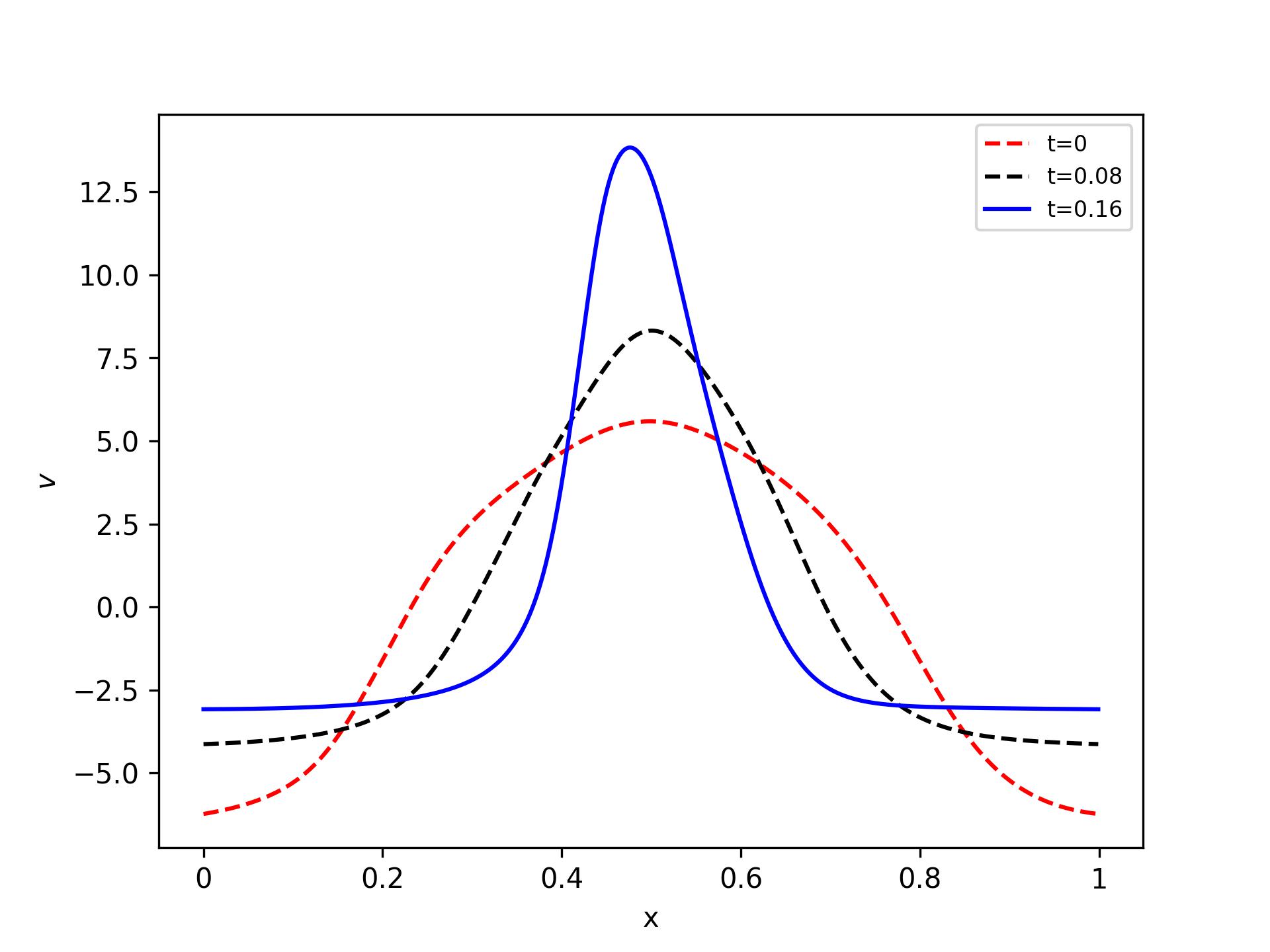}}
\subfloat[$v(x,1,t)$ at 2nd level]{\includegraphics[width = 0.33\textwidth]
{./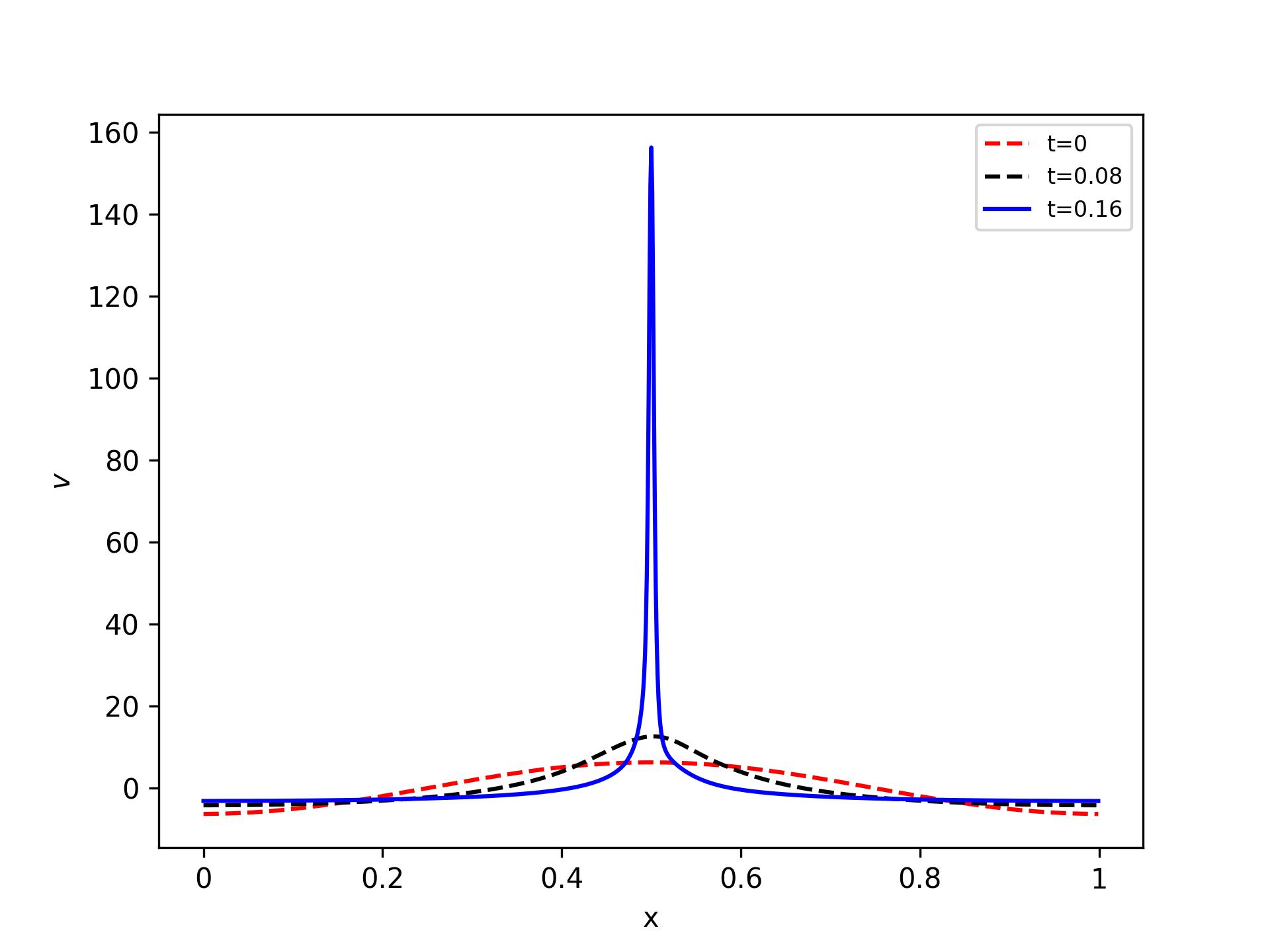}}
\subfloat[$v(x,1,t)$ at 3rd level]{\includegraphics[width = 0.33\textwidth]{./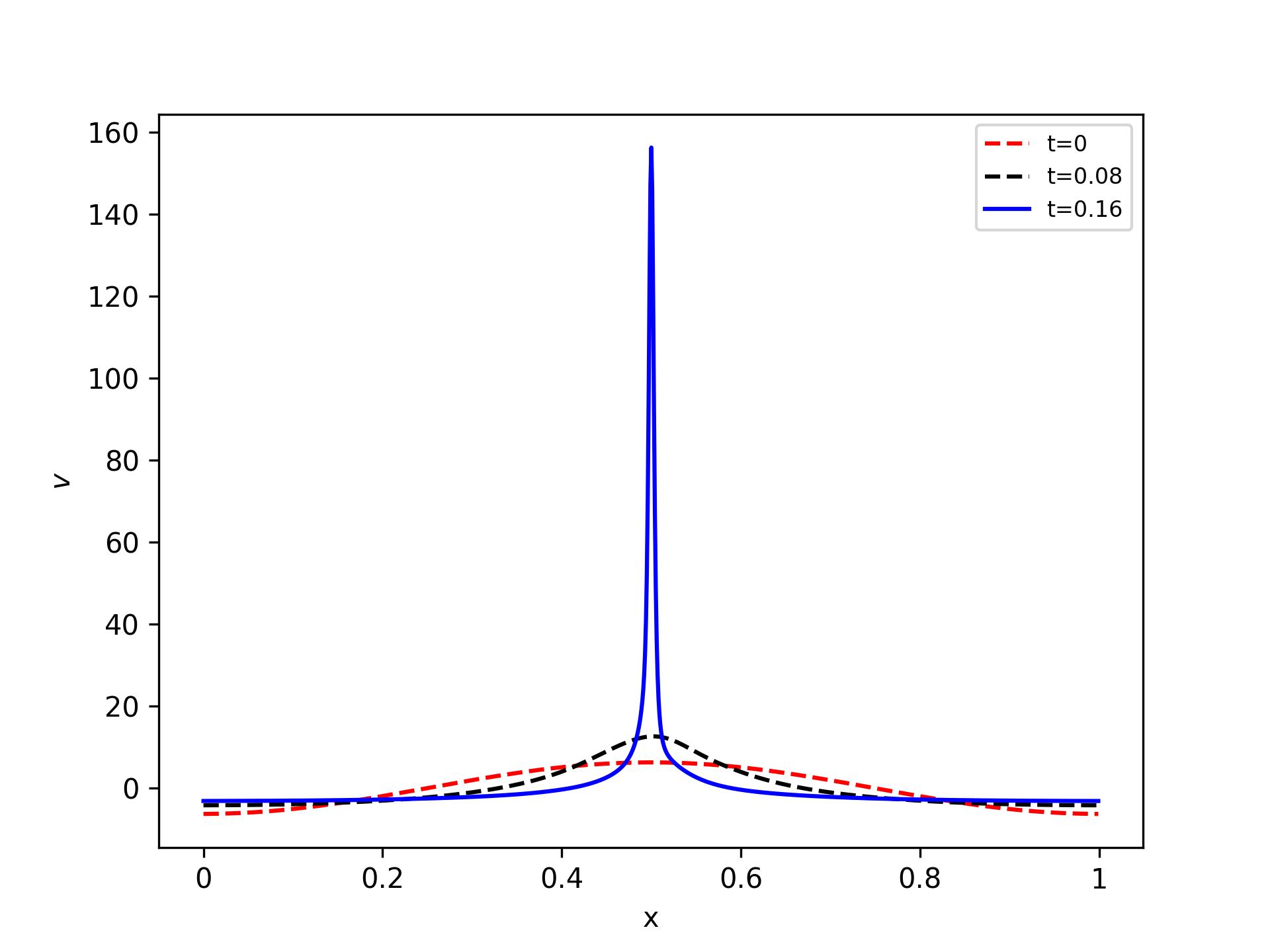}}
\caption{The plots of $u(x,1,t)$ and $v(x,1,t)$ at different levels for two-dimensional inviscid Prandtl equation.}
\label{fig:inviscidPrandtl2D_line}
\end{figure}

\subsection{Two-dimensional viscous Prandtl equation}
\label{sec:ViscousPrandtl_2D}
For the following unsteady Prandtl equations with zero pressure,
\begin{equation}
	\label{eq:ViscousPrandtl_2D}
	\hspace{-0.3cm}
	\begin{array}{r@{}l}
		\left\{
		\begin{aligned}
         &u_t + u u_x + vu_y  = \nu u_{yy} , \quad  0 \leq x \leq 1 , 0 \leq y \leq  \infty, \\
        & u_x + v_y = 0 ,\\
        & u(x,0,t)=v(x,0,t) = 0 ,\\
         & u_y(x,y,t) \rightarrow 1, \quad as \ y \rightarrow  \infty, \\
          &u_0(x,y)  = u(x,y,0) = y+ \sin 2\pi x,
		\end{aligned}
		\right.
	\end{array}
\end{equation}
where $\nu = 0.1$.

In this viscous problem, as \cite{hong2003singularity}, we specify a finite computational domain  $\Omega = [0,1] \times [0,2] \times [0,0.16]$.


In this case, we will compare our numerical results with those reported in \cite{hong2003singularity} (Fig 4.3). The governing equations and initial/boundary conditions used in our study are identical to those described in that reference. Further details of the scheme can be found in \cite{hong2003singularity}.




The challenge in the numerical solution lies in capturing the spike of $v(x,y,t)$ with breaking time $t = \frac{1}{2 \pi } \approx 0.16$. Therefore, in this example we apply the multi-level learning framework only to v, while u is handled by the default approach—its values are recomputed at every level of training.

We sample 100000 points in $\Omega$ as the residual training set.  Additionally, we uniformly sample 40000 points on the boundary to softly enforce the boundary condition in x, and another 5000 points to softly enforce the initial condition. At the same time, the boundary condition for $v(x,y,t)$ at y = 0 is imposed exactly (hard enforcement).
In this experiment, during the first-level pre-training, we trained 20000 epochs using the SOAP method. In the second-level training, we trained 20000 epochs of the SOAP method and 30000  epochs of the SSB method. Finally, in the third-level training, we employed 50000  epochs of the SSB method. The distributions of residual sampling points as well as the behavior of the loss function across different training levels, are summarized in Fig \ref{fig:ViscousPrandtl2D_points}.

\begin{figure}[htbp]
\centering
\subfloat[1st level]{\includegraphics[width = 0.33\textwidth]{./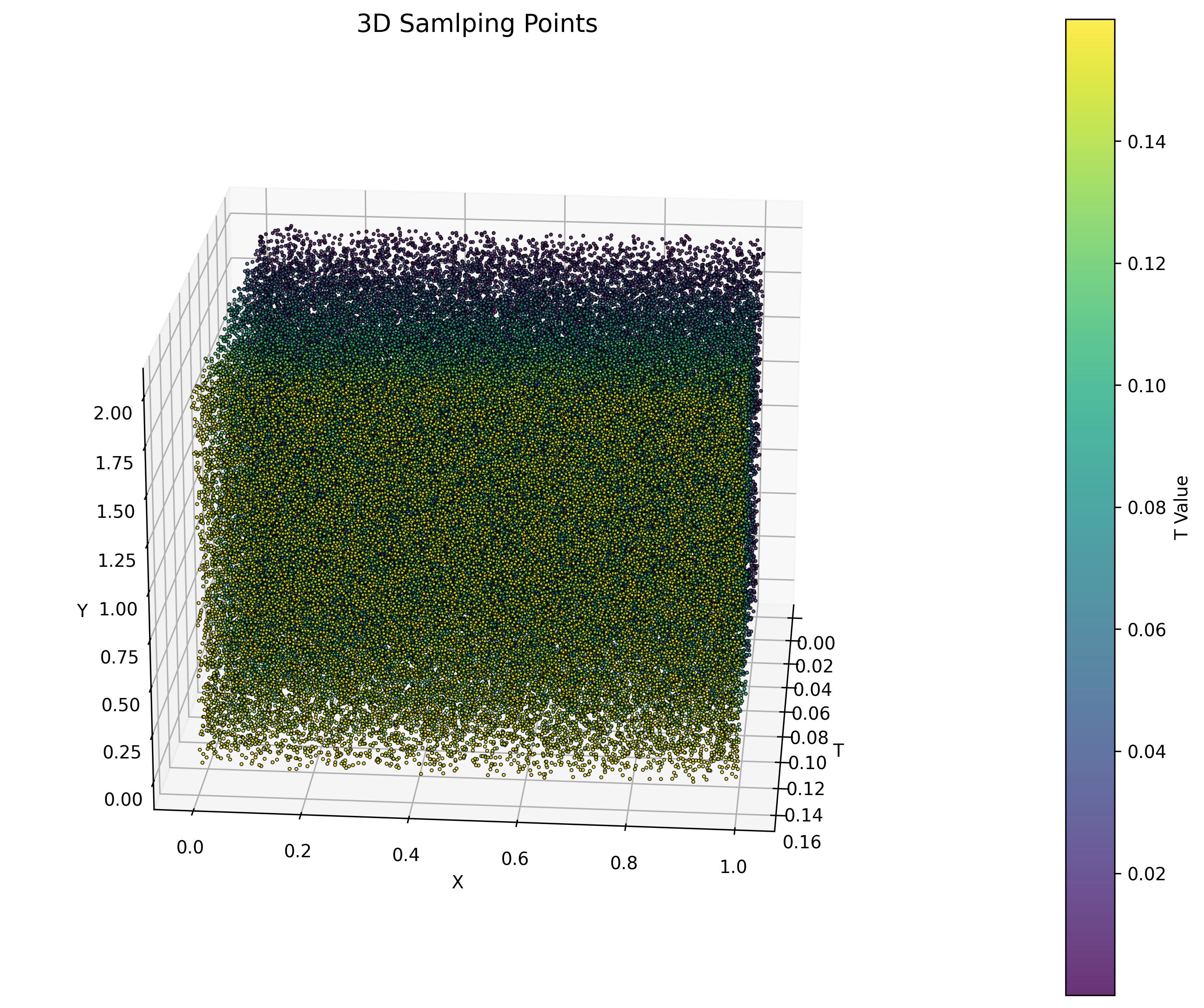}}
\subfloat[2nd level]{\includegraphics[width = 0.33\textwidth]
{./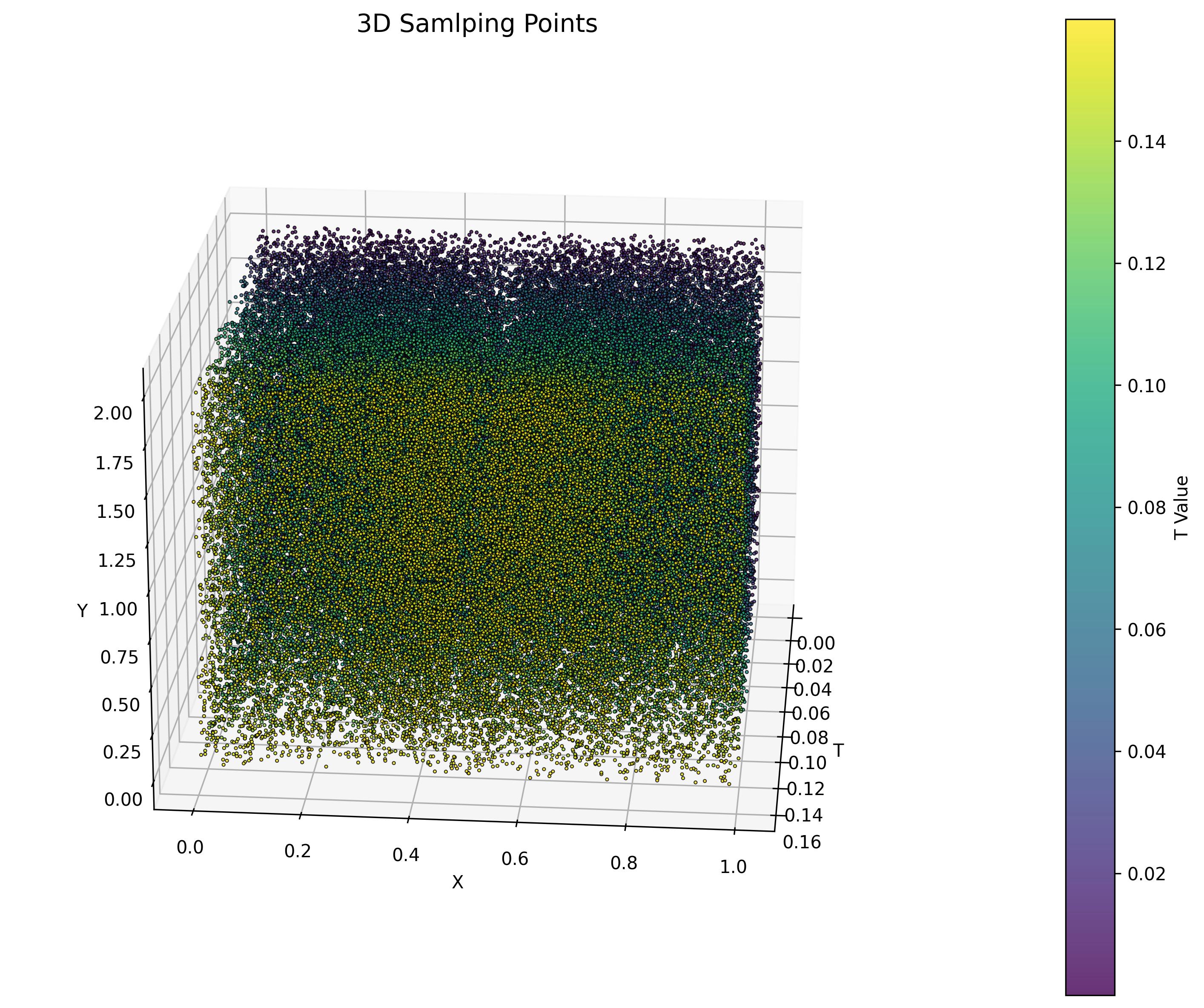}}
\subfloat[3rd level]{\includegraphics[width = 0.33\textwidth]{./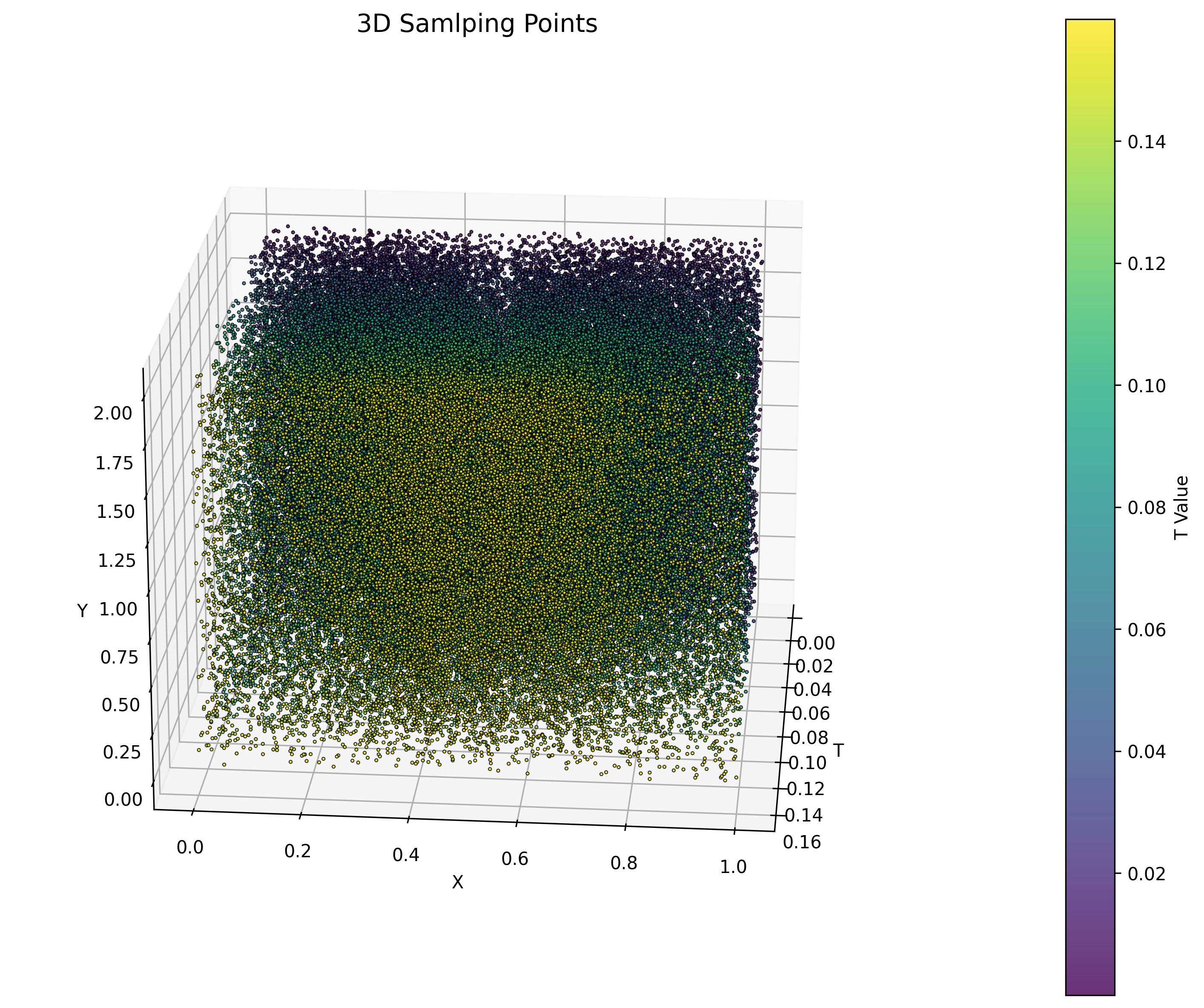}}
\caption{(a)-(c) the sampling points at different levels for two-dimensional viscous Prandtl equation.}
\label{fig:ViscousPrandtl2D_points}
\end{figure}

In Fig \ref{fig:ViscousPrandtl2D_Surface}, we present surface plots of the neural network predictions for $u(x,y,t)$ and $v(x,y,t)$ at t = 0.16 at different training levels.
In Figure \ref{fig:ViscousPrandtl2D_line} , we show two-dimensional graphs of the predictions of  $u(x,y,t)$ and $v(x,y,t)$ at x = 0.5 for $t = 0.08,0.16$  at each level.
In Figure \ref{fig:ViscousPrandtl2D_line2} , we show two-dimensional graphs of the predictions of  $u(x,y,t)$ and $v(x,y,t)$ at y = 1 for $t = 0, 0.08,0.16$, again at each level.

By comparing Fig 4.3 in \cite{hong2003singularity}, it is immediately apparent that the results obtained with our method are are comparable to the high-precision numerical scheme employed in \cite{hong2003singularity}. A closer inspection of Fig \ref{fig:ViscousPrandtl2D_Surface} further reveals that our approach captures the spike of $v(x,y,t)$ even more accurately.

\begin{figure}[htbp]
\centering
\subfloat[$u(x,y,0.16)$ at 1st level]{\includegraphics[width = 0.33\textwidth]{./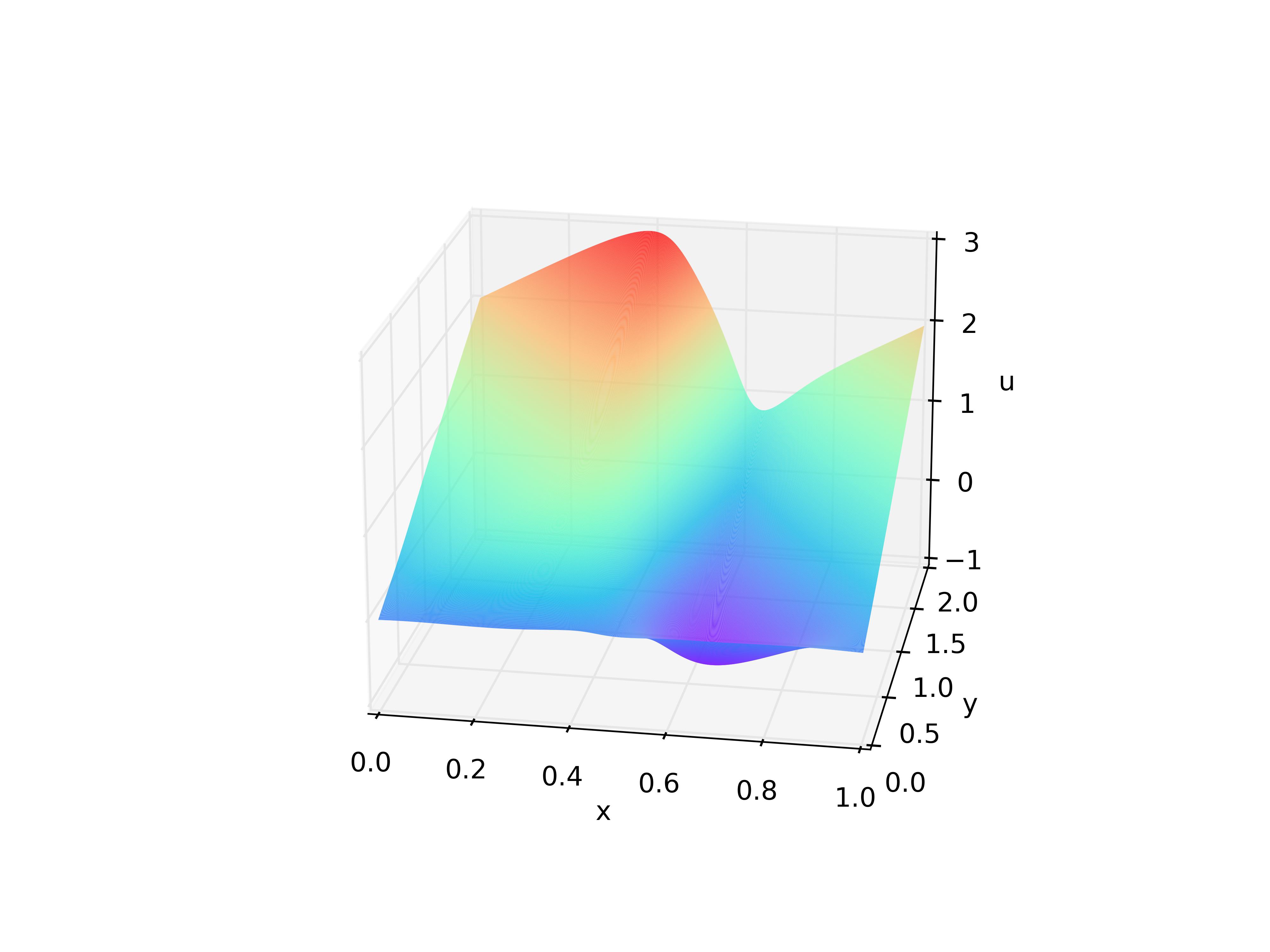}}
\subfloat[$u(x,y,0.16)$ at 2nd level]{\includegraphics[width = 0.33\textwidth]
{./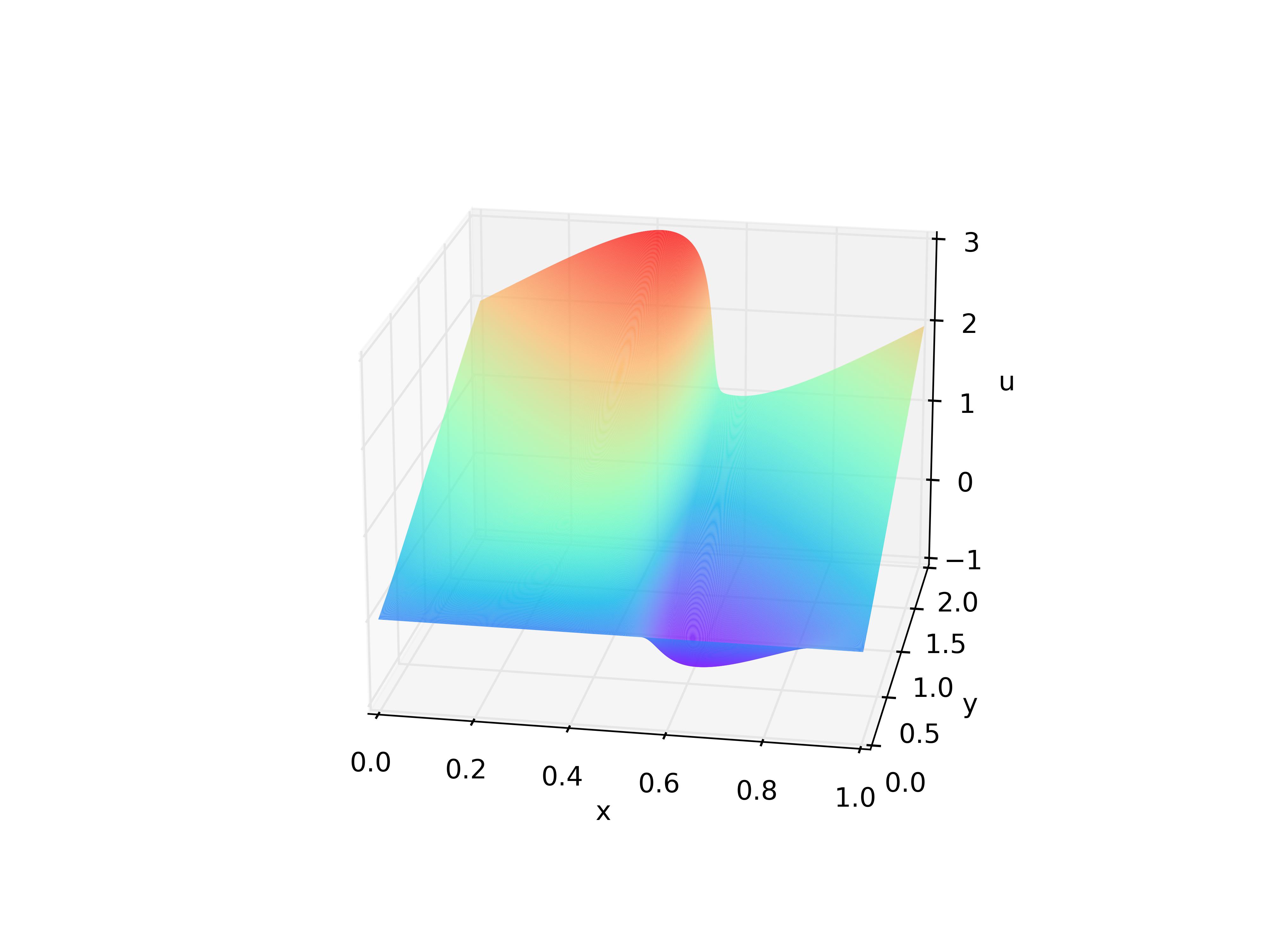}}
\subfloat[$u(x,y,0.16)$ at 3rd level]{\includegraphics[width = 0.33\textwidth]{./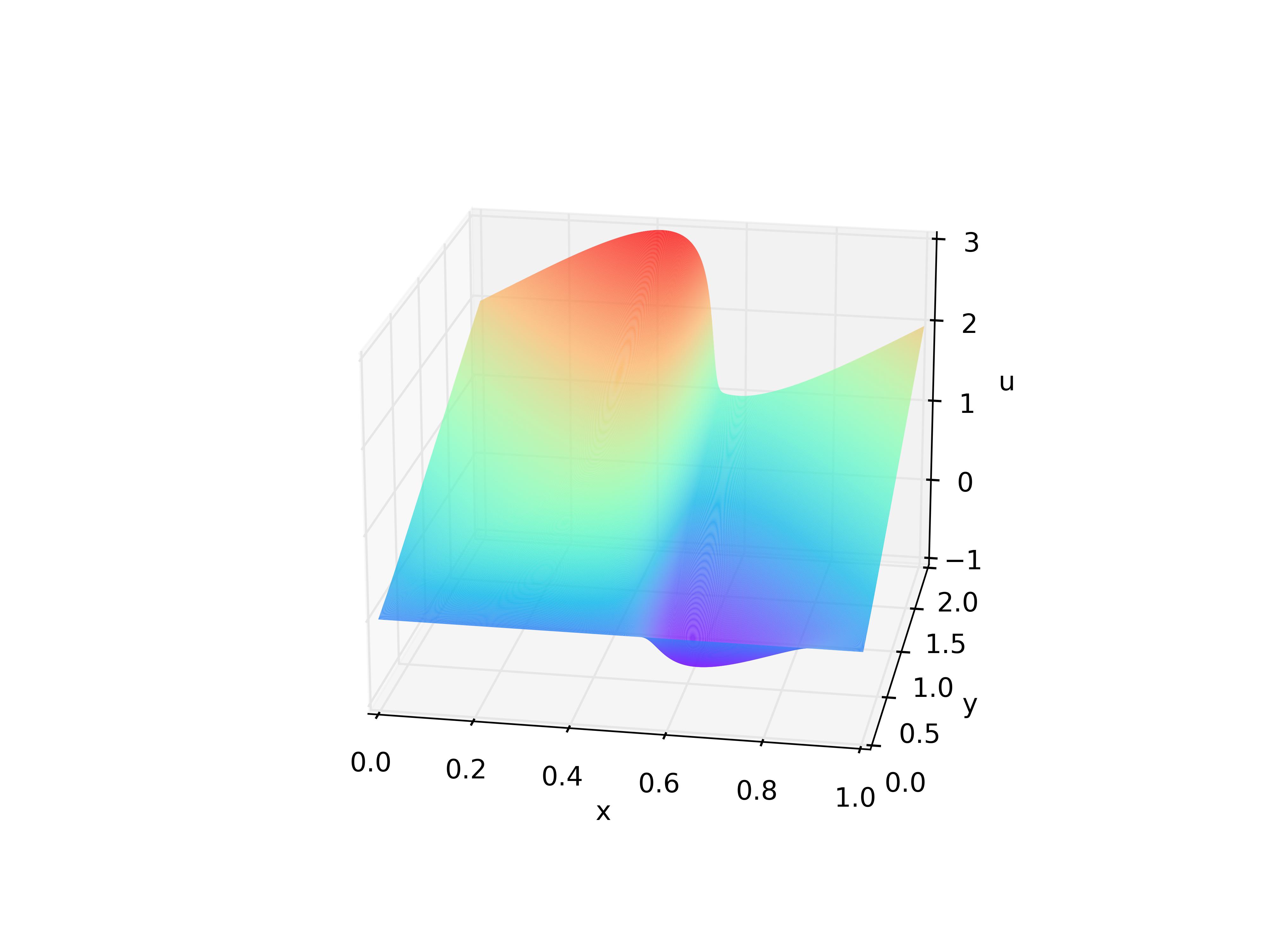}} \\
\subfloat[$v(x,y,0.16)$ at 1st level]{\includegraphics[width = 0.33\textwidth]{./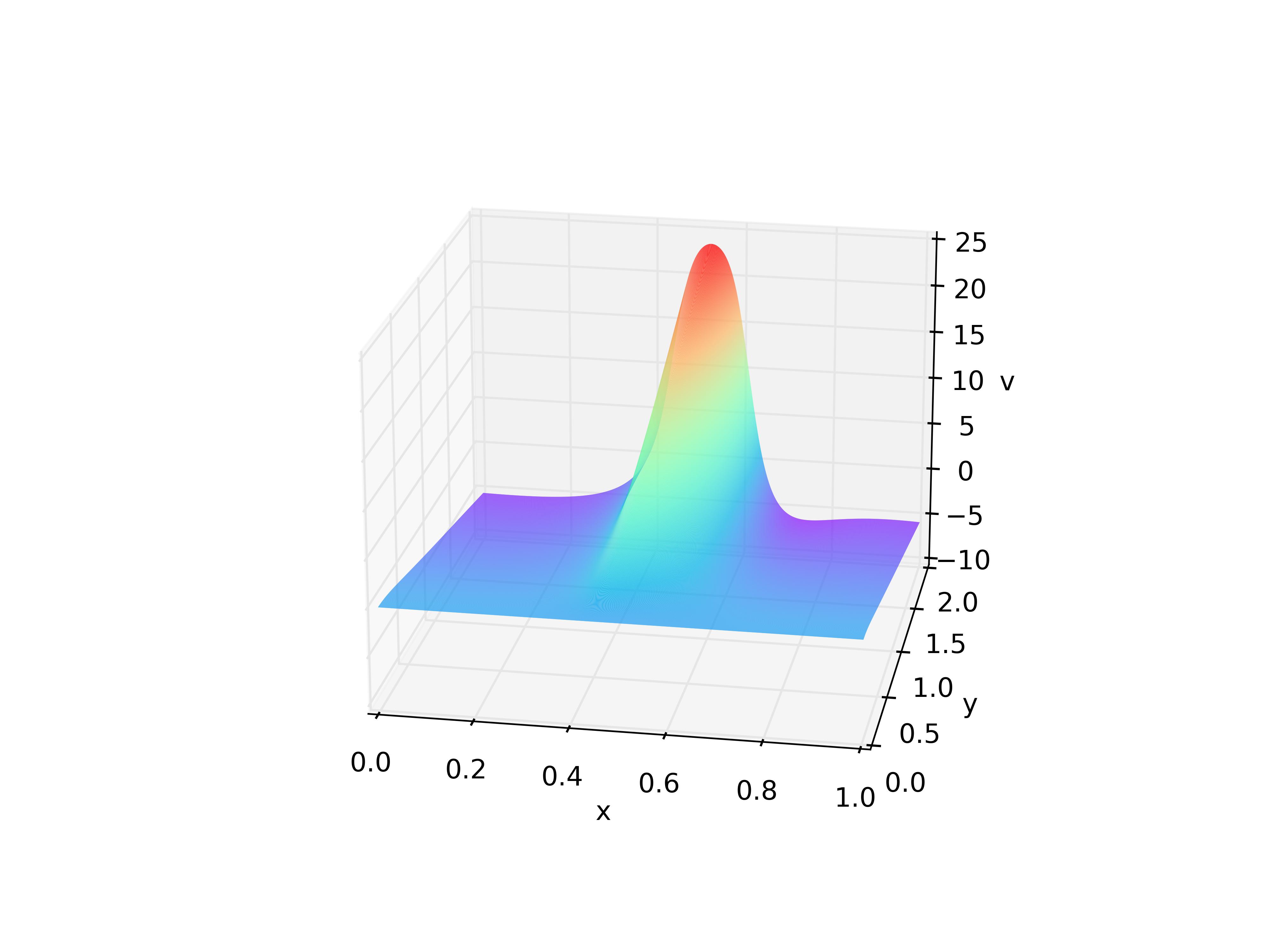}}
\subfloat[$v(x,y,0.16)$ at 2nd level]{\includegraphics[width = 0.33\textwidth]
{./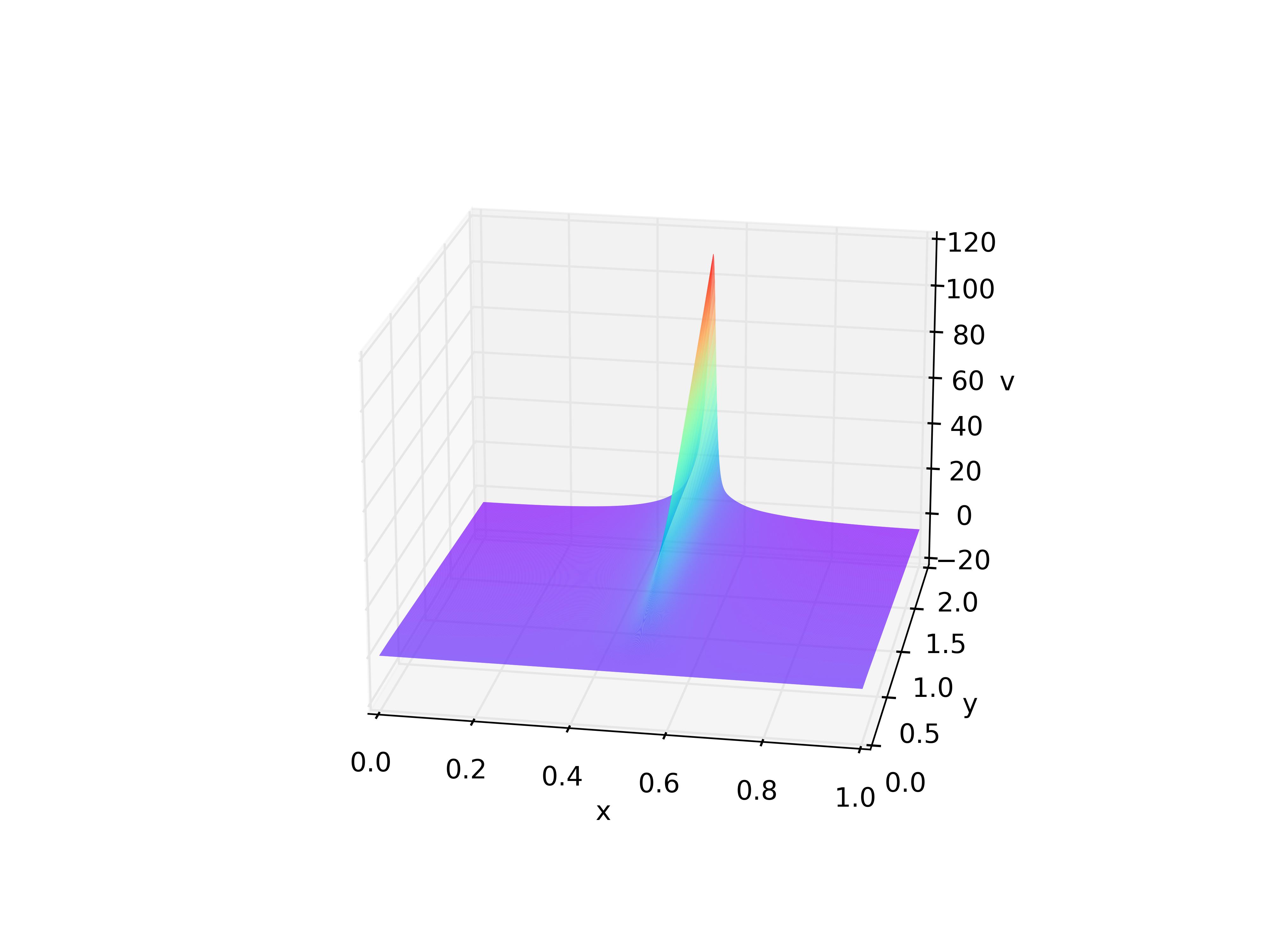}}
\subfloat[$v(x,y,0.16)$ at 3rd level]{\includegraphics[width = 0.33\textwidth]{./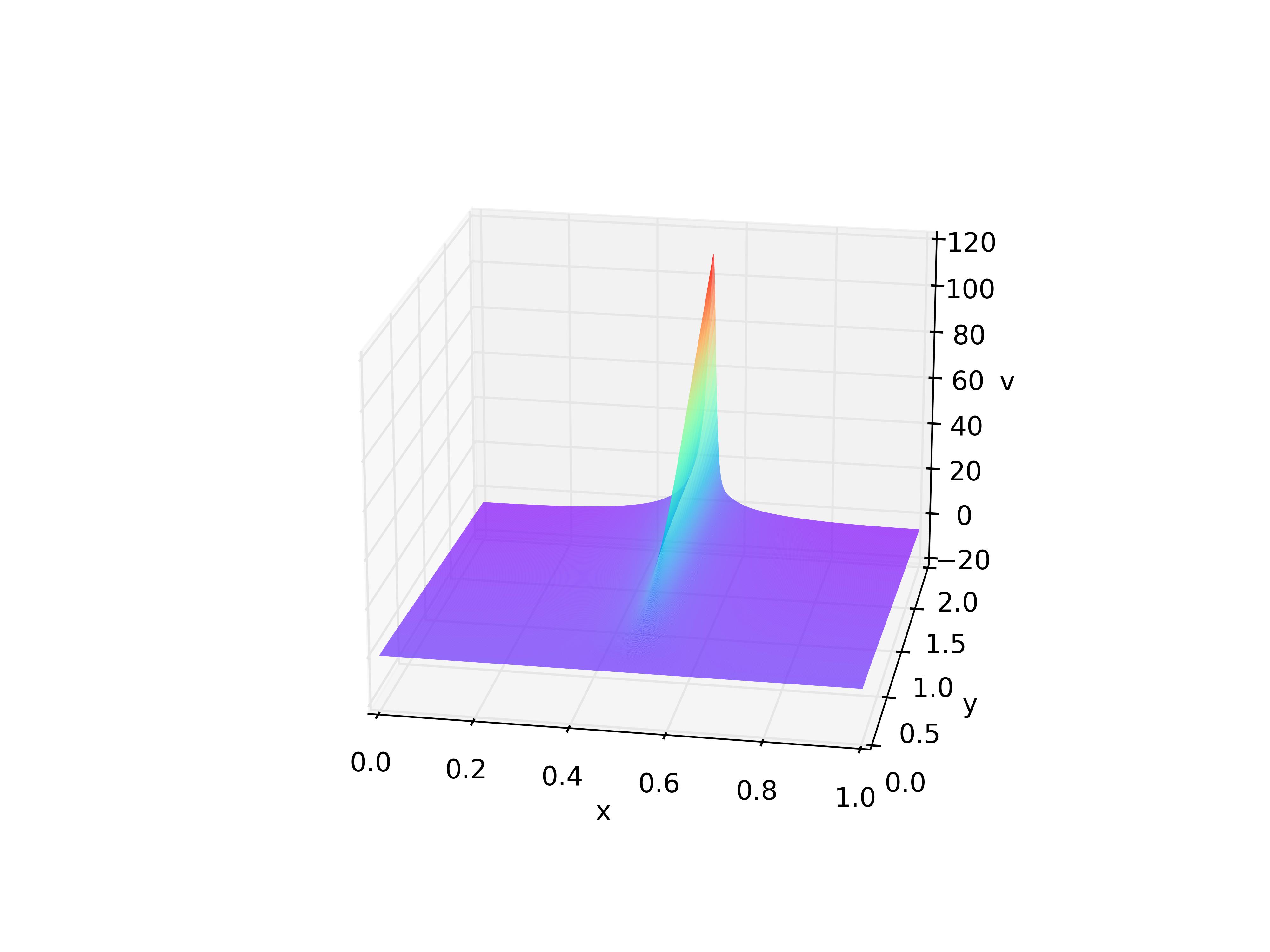}}
\caption{The surface plots of $u(x,y,0.16)$ and $v(x,y,0.16)$ at different levels for two-dimensional Prandtl equation.}
\label{fig:ViscousPrandtl2D_Surface}
\end{figure}

\begin{figure}[htbp]
\centering
\subfloat[$\bu(0.5,y,t)$ at 1st level]{\includegraphics[width = 0.33\textwidth]{./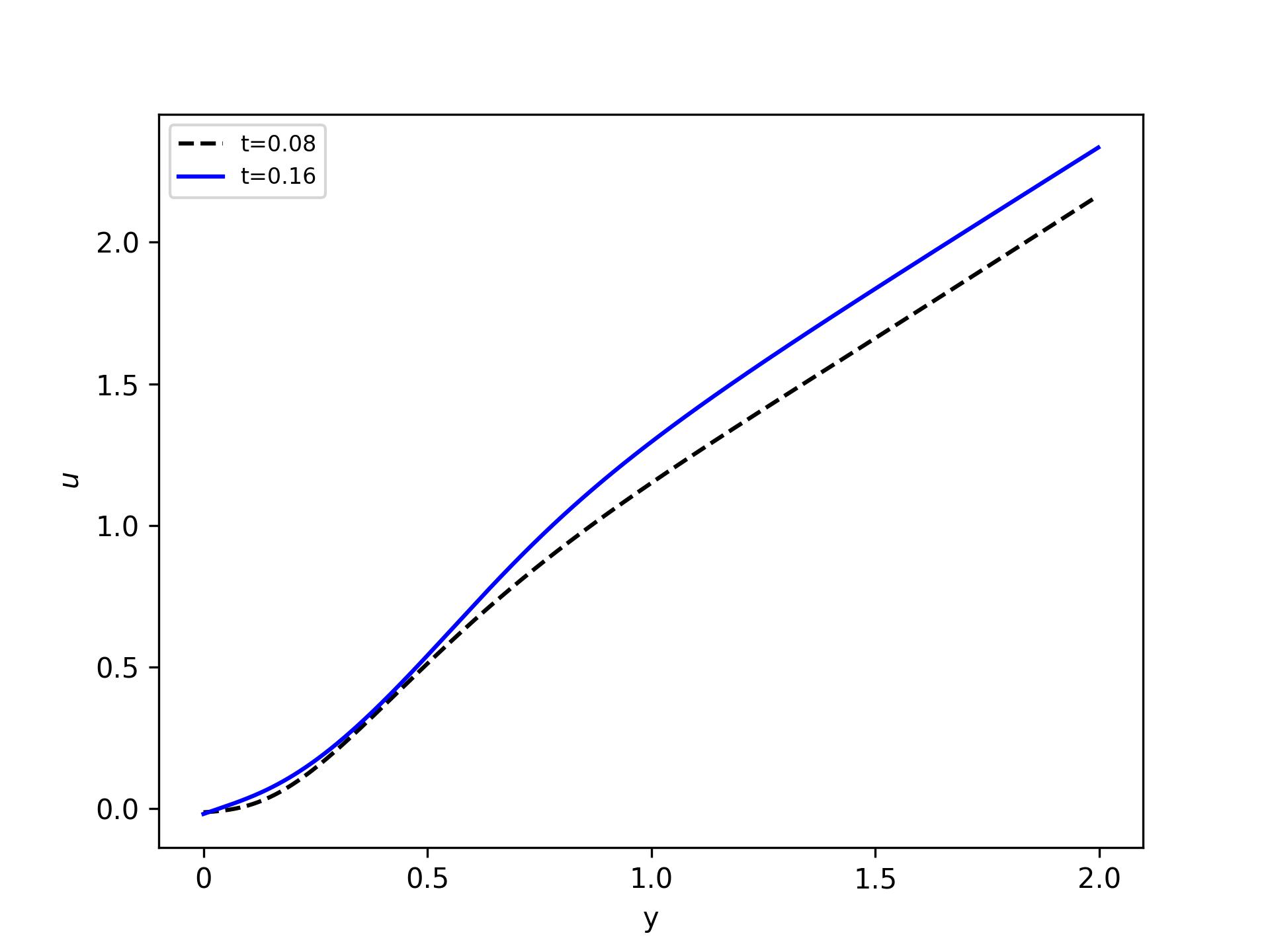}}
\subfloat[$\bu(0.5,y,t)$  at 2nd level]{\includegraphics[width = 0.33\textwidth]
{./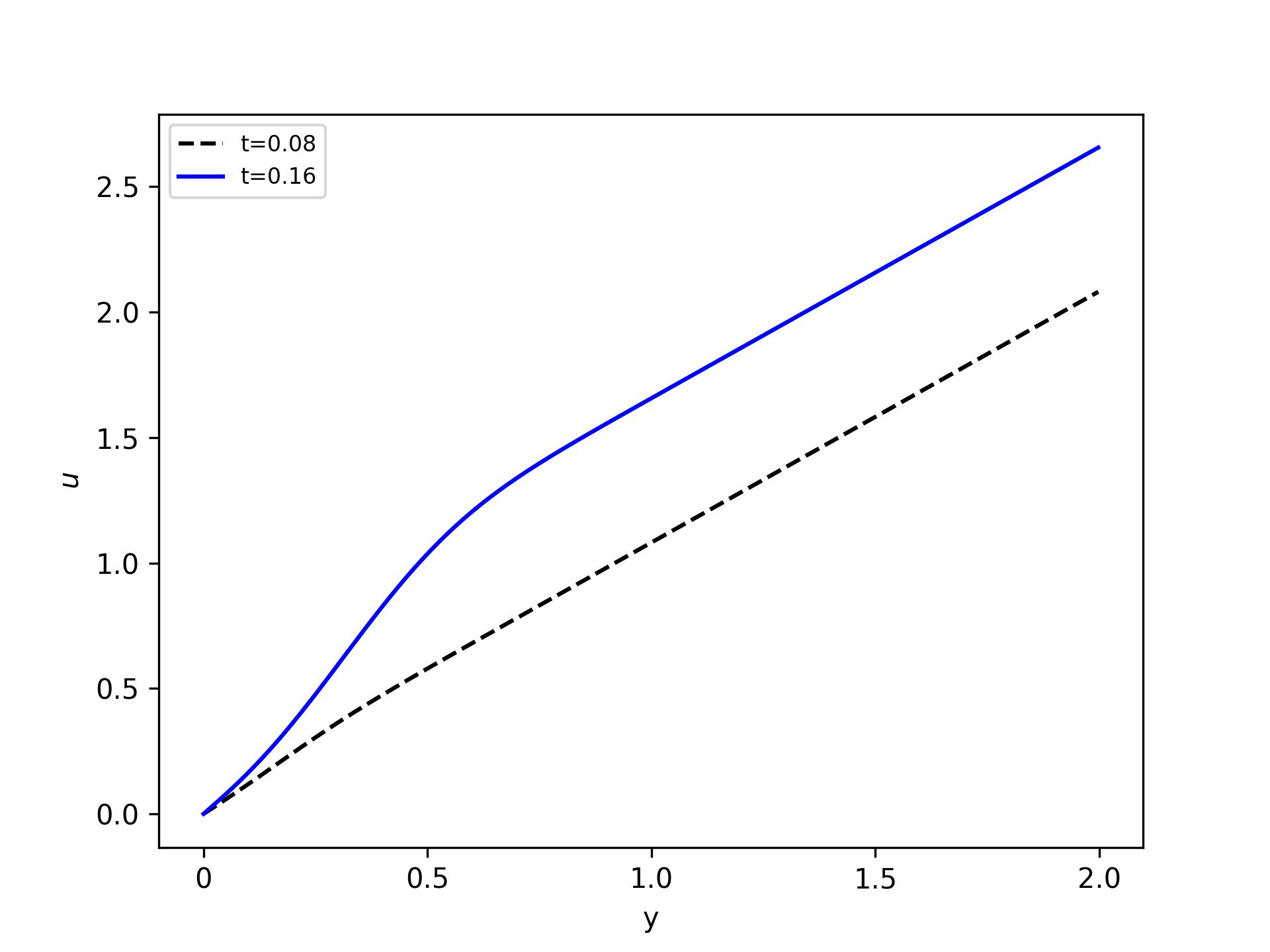}}
\subfloat[$\bu(0.5,y,t)$  at 3rd level]{\includegraphics[width = 0.33\textwidth]{./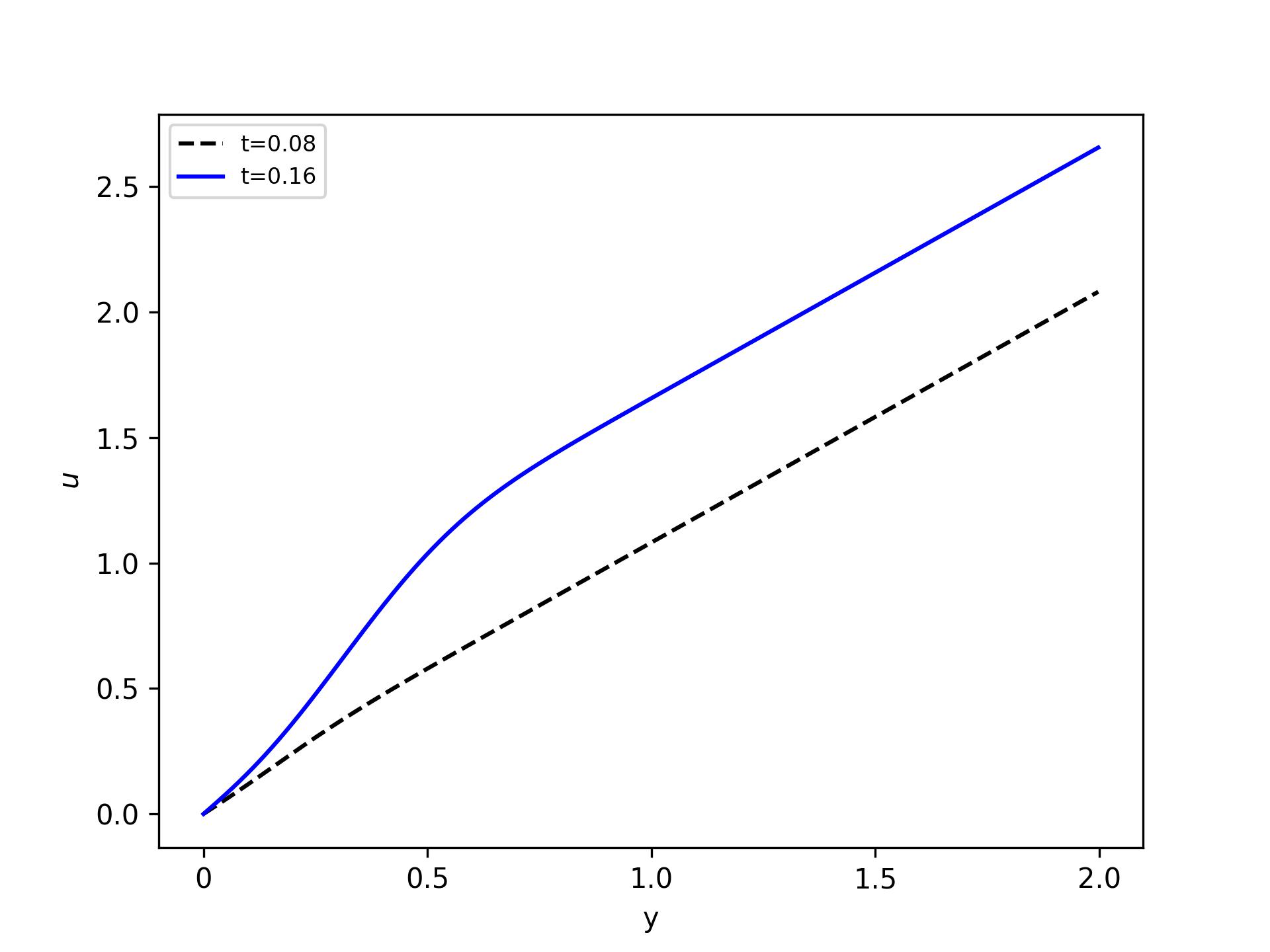}} \\
\subfloat[$\bv(0.5,y,t)$  at 1st level]{\includegraphics[width = 0.33\textwidth]{./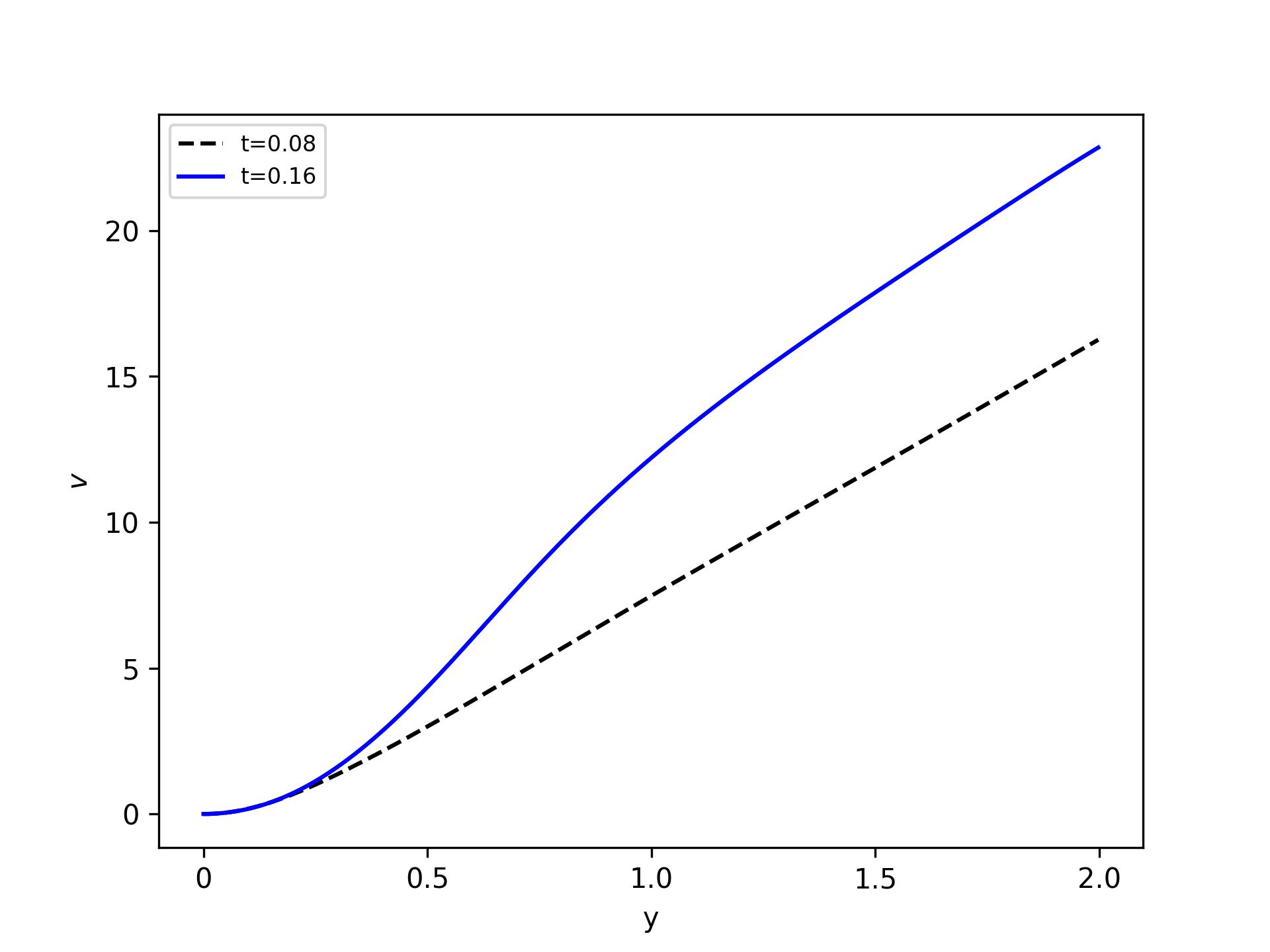}}
\subfloat[$\bv(0.5,y,t)$ at 2nd level]{\includegraphics[width = 0.33\textwidth]
{./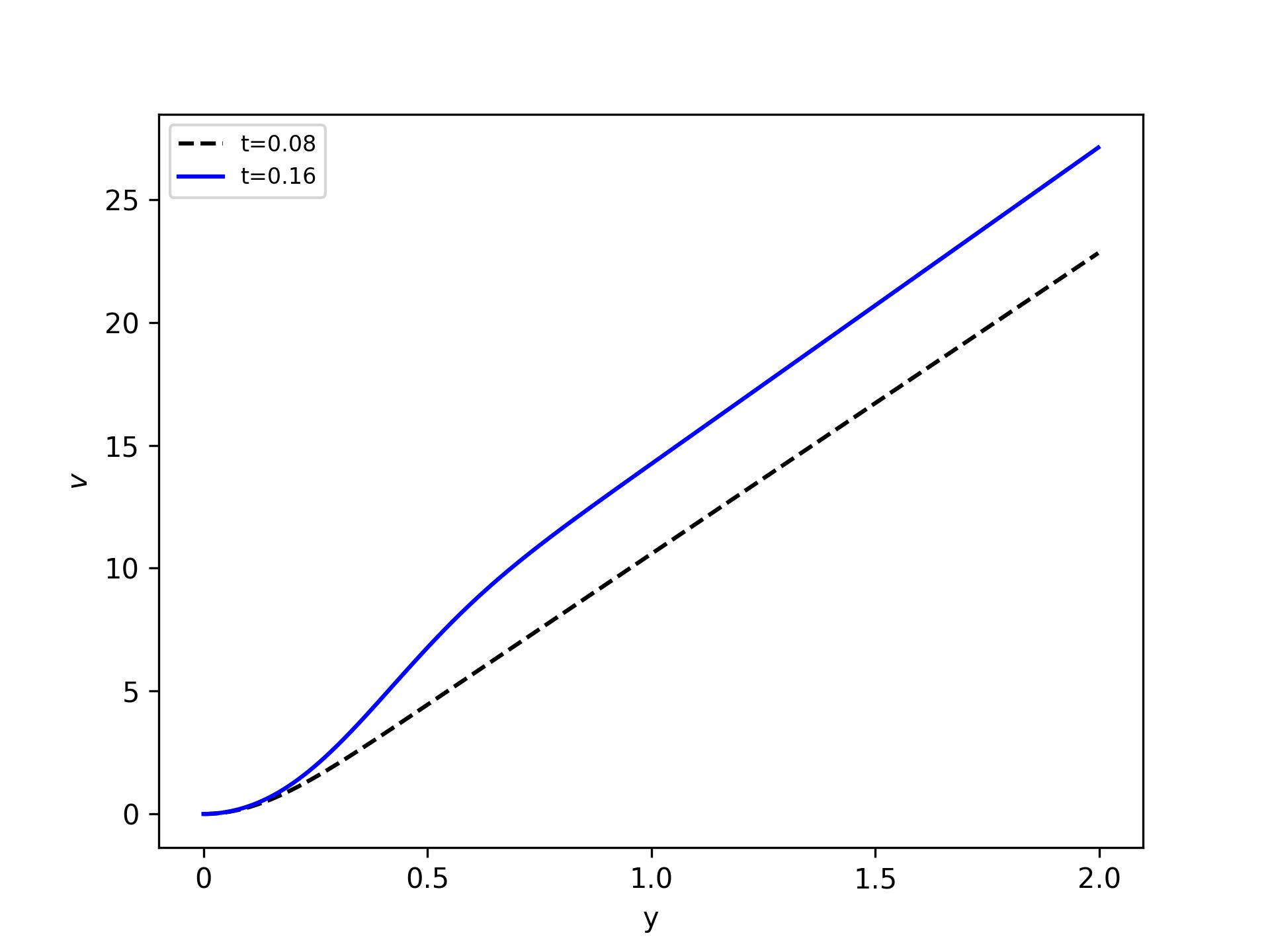}}
\subfloat[$\bv(0.5,y,t)$ at 3rd level]{\includegraphics[width = 0.33\textwidth]{./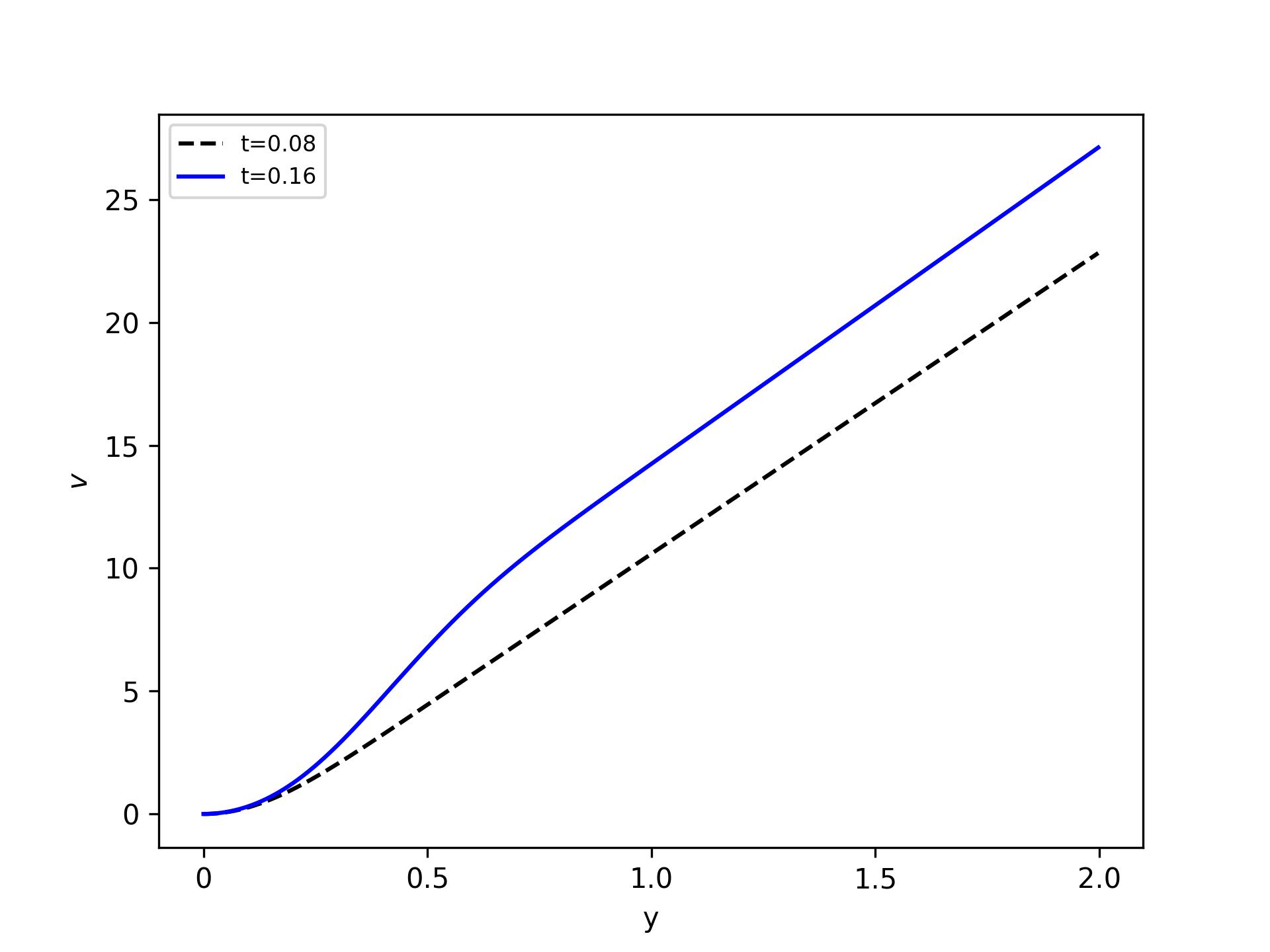}}
\caption{The plots of $\bu(0.5,Y,t)$ and $\bv(0.5,Y,t)$ at different levels for two-dimensional viscous Prandtl equation.}
\label{fig:ViscousPrandtl2D_line}
\end{figure}

\begin{figure}[htbp]
\centering
\subfloat[$\bu(x,1,t)$ at 1st level]{\includegraphics[width = 0.33\textwidth]{./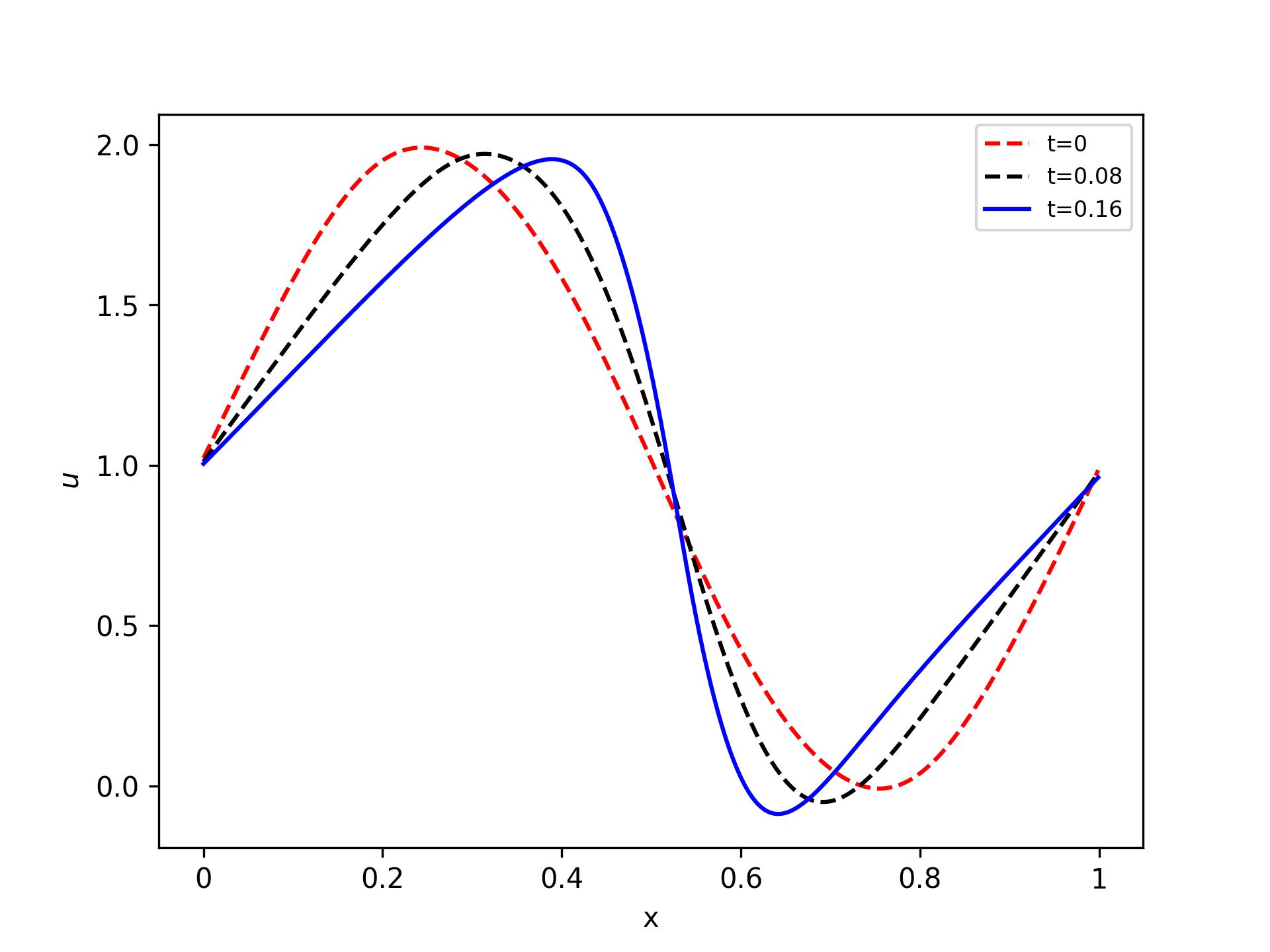}}
\subfloat[$\bu(x,1,t)$ at 2nd level]{\includegraphics[width = 0.33\textwidth]
{./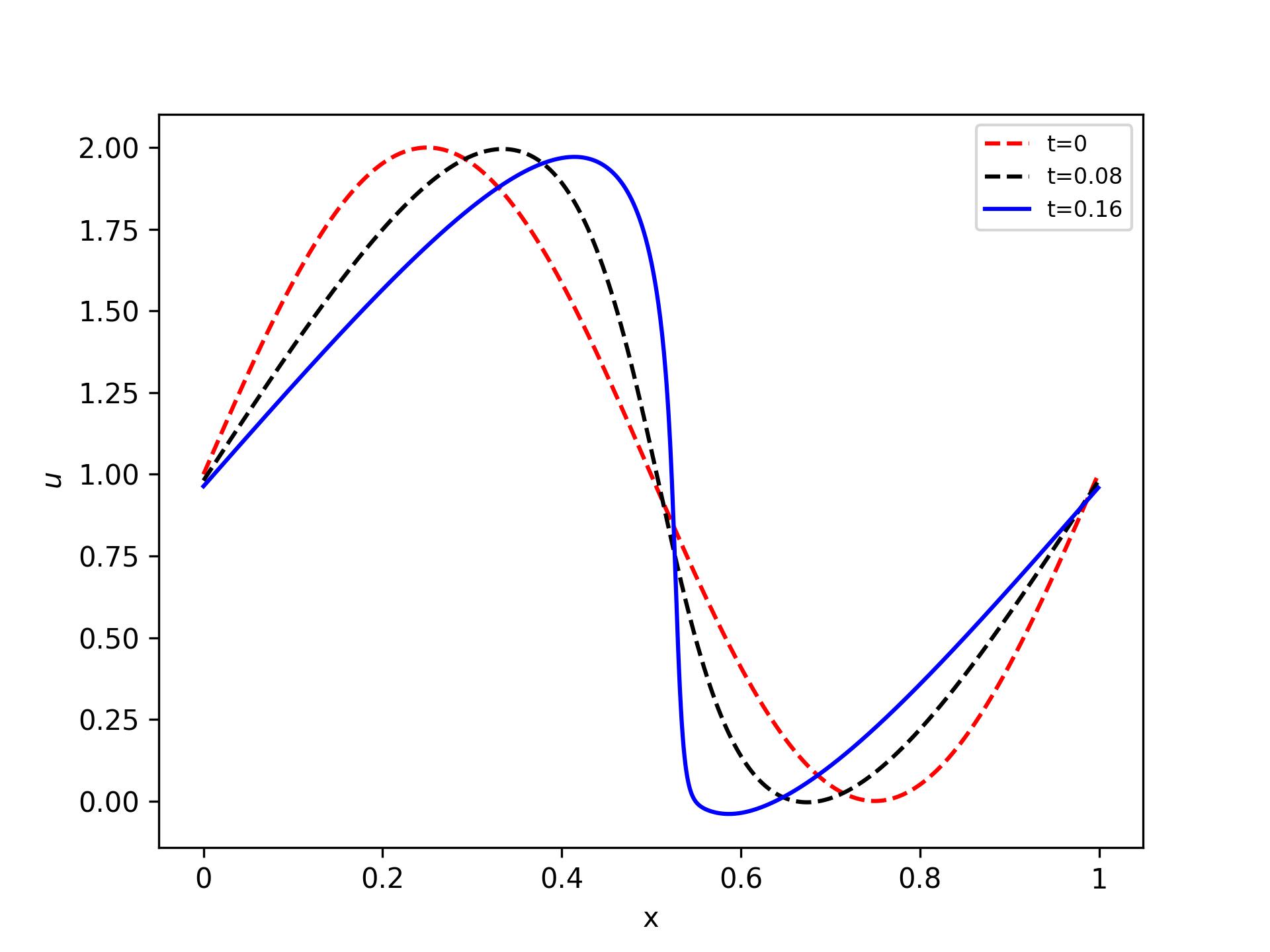}}
\subfloat[$\bu(x,1,t)$ at 3rd level]{\includegraphics[width = 0.33\textwidth]{./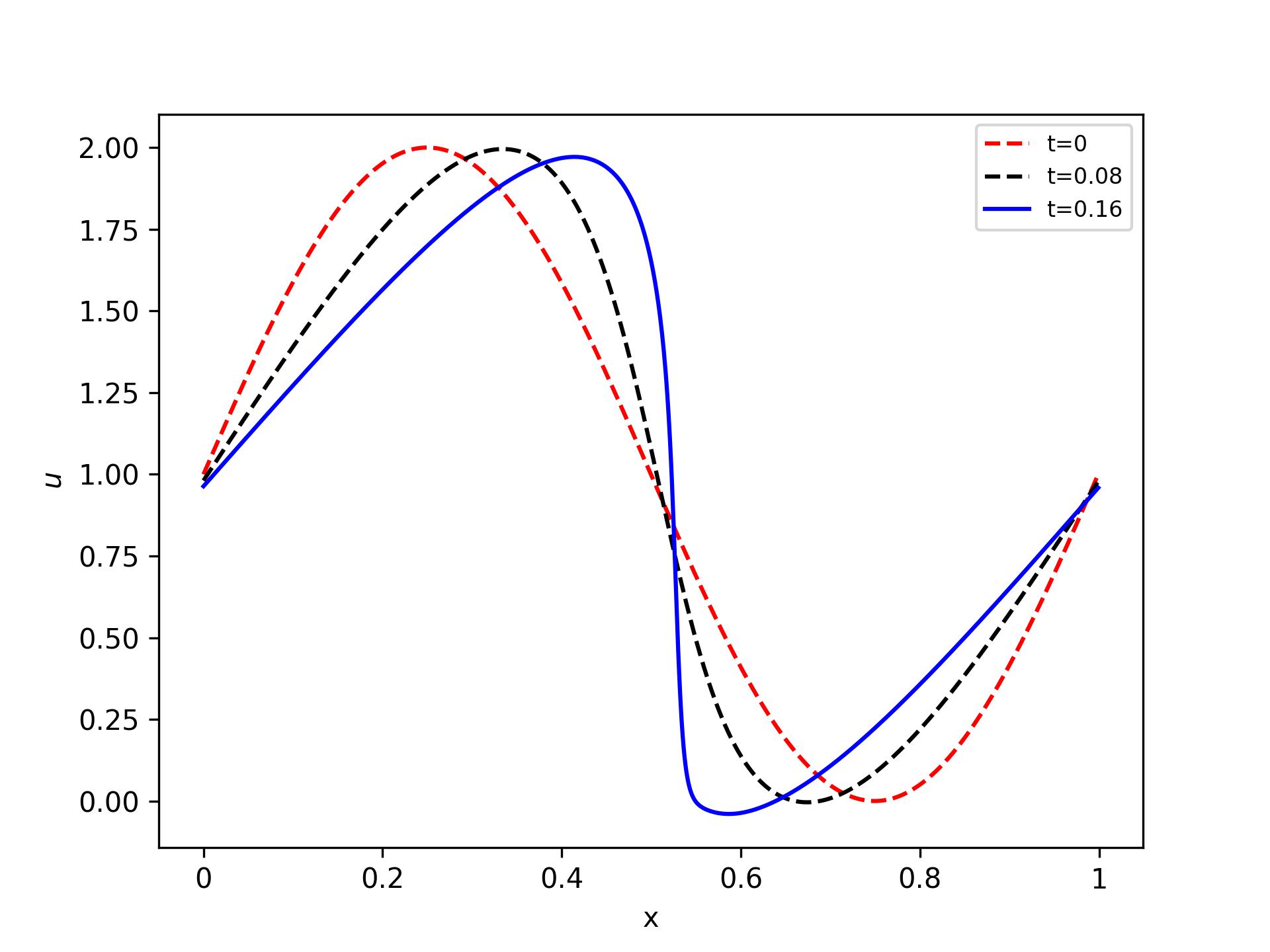}} \\
\subfloat[$\bv(x,1,t)$ at 1st level]{\includegraphics[width = 0.33\textwidth]{./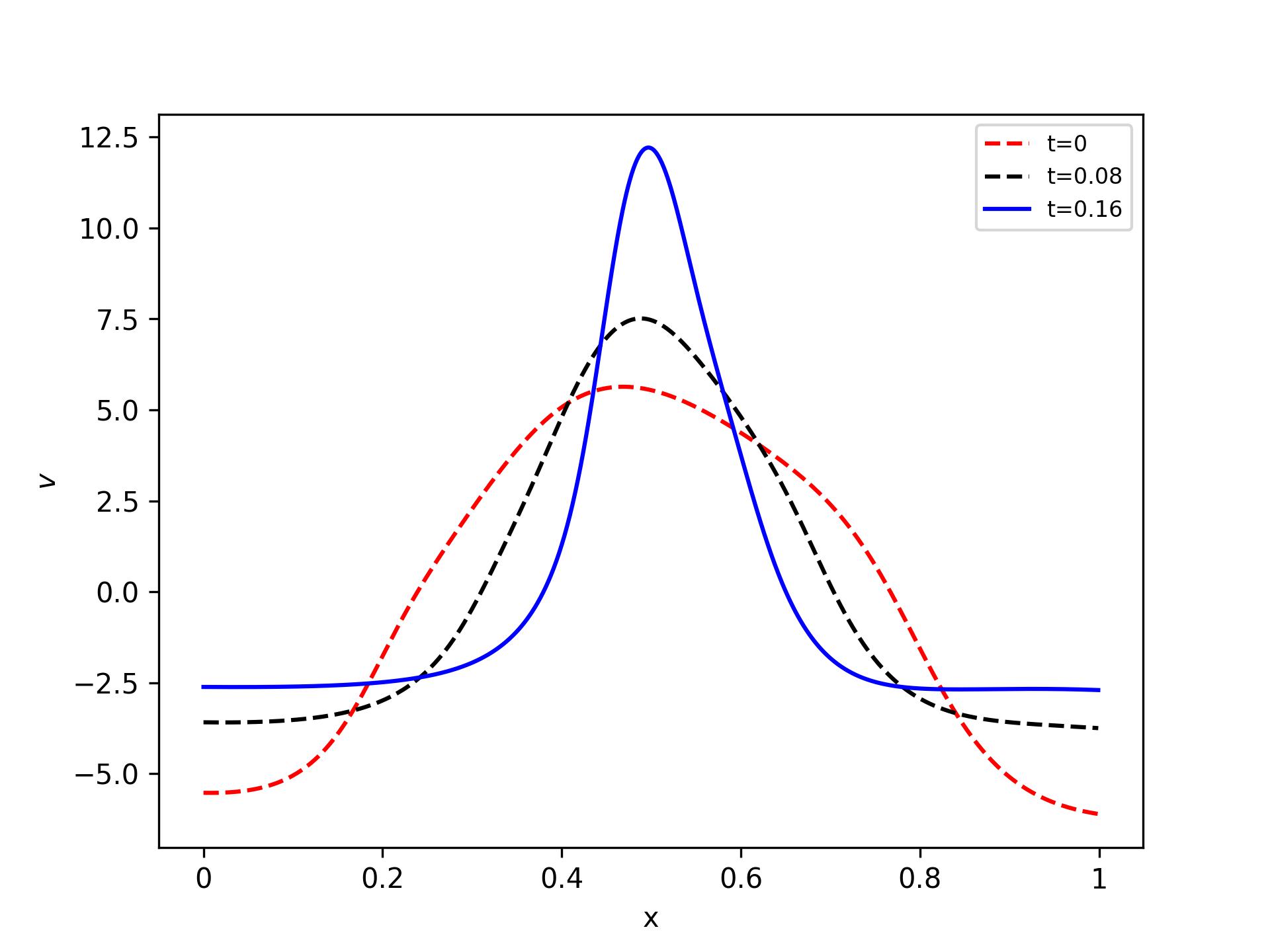}}
\subfloat[$\bv(x,1,t)$ at 2nd level]{\includegraphics[width = 0.33\textwidth]
{./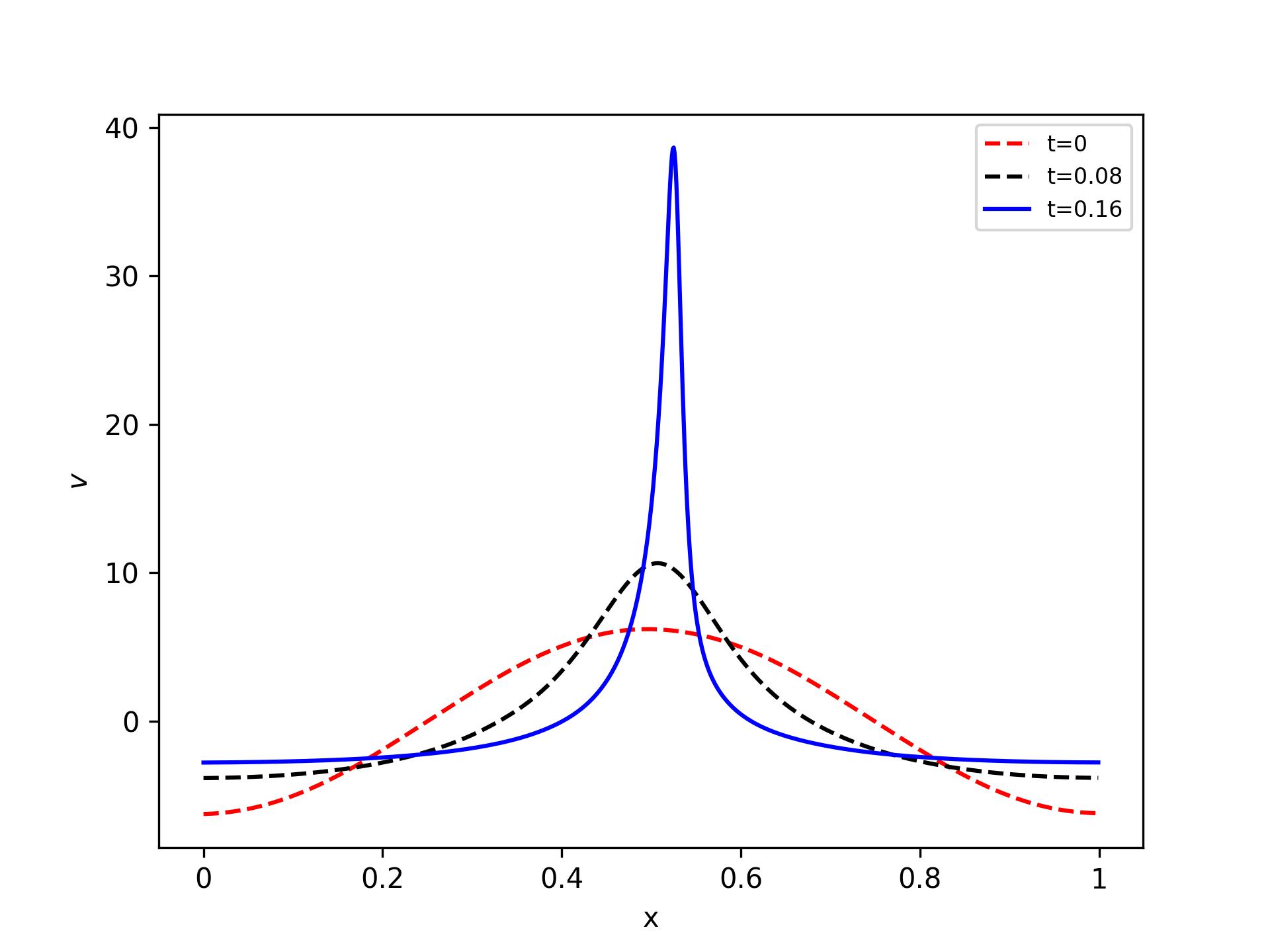}}
\subfloat[$\bv(x,1,t)$ at 3rd level]{\includegraphics[width = 0.33\textwidth]{./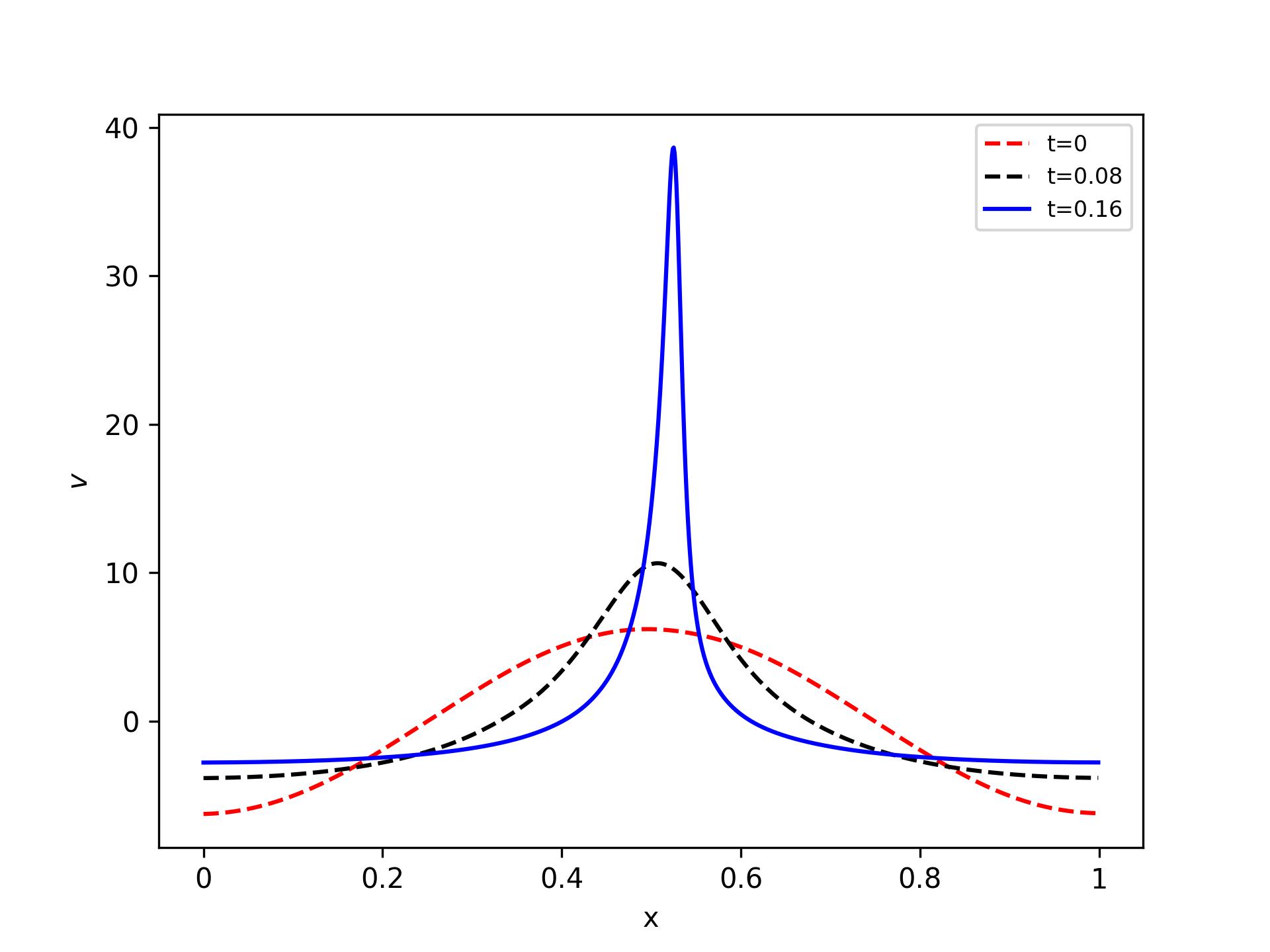}}
\caption{The plots of $\bu(x,1,t)$ and $\bv(x,1,t)$ at different levels for two-dimensional viscous Prandtl equation.}
\label{fig:ViscousPrandtl2D_line2}
\end{figure}

\section{Conclusions}
\label{sec:conclusion}


In this paper, inspired by the multigrid method in traditional numerical methods, we have developed a framework that is distinct from the usual single training. This framework, known as the multilevel framework, consists of a multi-level sampling method based on residual and solution information, and a multi-level training method analogous to the multigrid method. Moreover, during the training process, we have introduced the advanced training methods, namely the SOAP method and the SSB method. Not only that, but we have also conducted a detailed theoretical analysis, theoretically proving the effectiveness of the proposed framework. Numerical experiments have demonstrated that our proposed method has significant advantages when dealing with low-regularity and high-frequency problems.
In the future, we will focus on how to apply the multilevel framework to neural network models other than PINNs and on developing more intelligent multi-level sampling methods.



\section*{Acknowledgment}

This research is partially sponsored by the National Key R \& D Program of China (No.2022YFE03040002) and the National Natural Science Foundation of China (No.12371434). 


\section*{Data Availability Statement}
The data that support the findings of this study are available from the corresponding author upon reasonable request.

\section*{Conflict of Interest}

The authors have no conflicts to disclose.



\section*{Appendix A}
\label{sec:AppendixA}

\vfill

\bibliographystyle{cas-model2-names}
\bibliography{MLF}

\end{document}